\newfont{\bb}{msbm10 at 11pt}
\newfont{\bbsmall}{msbm8 at 8pt}
\def\rth{\mathbb{R}^3}
\def\R{\mathbb{R}}
\def\B{\mathbb{B}}
\def\N{\mathbb{N}}
\def\Z{\mathbb{Z}}
\def\S{\Sigma}
\def\C{\mathbb{C}}
\def\Pe{\mathbb{P}}
\def\D{\mathbb{D}}
\def\Pe{\mathbb{P}}
\def\esf{\mathbb{S}}
\newcommand{\la}{\looparrowright}
\newcommand{\ben}{\begin{enumerate}}
\newcommand{\bit}{\begin{itemize}}
\newcommand{\een}{\end{enumerate}}
\newcommand{\eit}{\end{itemize}}
\newcommand{\wh}{\widehat}
\newcommand{\ds}{\displaystyle}
\newcommand{\Int}{\mbox{\rm Int}}
\newcommand{\Ind}{\mbox{\rm Index}}
\newcommand{\Inj}{\mbox{\rm Inj}}
\newcommand{\wt}{\widetilde}
\newcommand{\ed}{\end{document}}
\newcommand{\ov}{\overline}
\def\a{{\alpha}}
\def\t{{\theta}}
\def\g{{\gamma}}
\def\G{{\Gamma}}
\def\l{{\lambda}}
\def\L{\Lambda}
\def\de{{\delta}}
\def\be{{\beta}}
\def\ve{{\varepsilon}}
\def\cA{\mathcal{A}}
\def\cU{\mathcal{U}}
\def\cP{\mathcal{P}}
\def\cS{\mathcal{S}}
\def\cY{\mathcal{Y}}
\def\cB{\mathcal{B}}
\def\cE{\mathcal{E}}
\def\cV{\mathcal{V}}
\def\cD{\mathcal{D}}
\def\cH{\mathcal{H}}
\def\cW{\mathcal{W}}
\def\cte.{\mathop{\rm cte.}\nolimits}
\def\N{\mathbb{N}}
\def\B{\mathbb{B}}
\def\R{\mathbb{R}}
\def\Z{\mathbb{Z}}
\def\C{\mathbb{C}}
\def\A{\mathbb{A}}
\def\D{\mathbb{D}}
\newtheorem{theorem}{Theorem}[section]
\newtheorem{lemma}[theorem]{Lemma}
\newtheorem{proposition}[theorem]{Proposition}
\newtheorem{remark}[theorem]{Remark}
\newtheorem{corollary}[theorem]{Corollary}
\newtheorem{definition}[theorem]{Definition}
\newtheorem{question}[theorem]{Question}
\newtheorem{claim}[theorem]{Claim}
\newtheorem{example}[theorem]{Example}
\newcommand{\W}{W}
\numberwithin{equation}{section}
\definecolor{pp}{rgb}{.5,0,.7}
\definecolor{rr}{rgb}{.8,0,.3}
\begin{document}

\begin{title}
{Hierarchy structures in finite index CMC surfaces}
\end{title}

\begin{author}
{William H. Meeks III\thanks{This research was partially supported
 by CNPq - Brazil, grant no. 400966/2014-0. }
 \and Joaqu\'\i n P\' erez\thanks{Research of both
 authors was partially supported by
MINECO/MICINN/FEDER grant no. PID2020-117868GB-I00,
regional grant P18-FR-4049,
and by the “Maria de Maeztu” Excellence Unit IMAG,
reference CEX2020-001105-M, funded by MCINN/AEI/10.13039/501100011033/ CEX2020-001105-M.
}}
\end{author}
\maketitle
\vspace{-.6cm}

\begin{abstract}
Given $\ve_0>0$,  $I\in \N\cup \{0\}$ and $K_0,H_0\geq0$,
let $X$ be a complete Riemannian $3$-manifold with
injectivity radius  $\Inj(X)\geq \ve_0$ and with the  supremum
of absolute sectional curvature  at most
$K_0$, and let $ M \la X$ be a complete
immersed surface of constant mean curvature
$H\in [0,H_0]$ with index at most $I$.
For such $ M \la X$, we prove Structure Theorem~\ref{mainStructure}
which describes how the
interesting ambient geometry of the immersion is organized
locally around at most $I$ points of
$M$ where the norm of the second fundamental form takes on
large local maximum values.
\vspace{.3cm}

\noindent{\it Mathematics Subject Classification:} Primary 53A10,
   Secondary 49Q05, 53C42

\noindent{\it Key words and phrases:} Constant mean curvature,
finite index $H$-surfaces,
area estimates for constant mean curvature surfaces, curvature
estimates for one-sided stable minimal surfaces.
\end{abstract}
\maketitle
%\clearpage
\tableofcontents

\section{Introduction}  \label{sec:introduction}
Let  $X$ denote a complete Riemannian $3$-manifold with positive
injectivity radius $\Inj(X)$ and bounded absolute sectional curvature.
Let $M$ be a complete immersed surface in $X$ of constant mean curvature
$H\geq 0$, which we call
an {\it $H$-surface} in $X$. The Jacobi operator of $M$ is the
Schr\"{o}dinger operator
\[
L=\Delta +|A_M|^2+ \mbox{Ric}(N),
\]
where
$\Delta $ is the Laplace-Beltrami operator on $M$, $|A_M|^2$ is the square
of the norm of its second fundamental form and $\mbox{Ric}(N)$ denotes
the Ricci curvature of $X$ in the direction of the unit normal vector $N$
to $M$; the index of $M$ is the index of $L$,
\[
\mbox{Index}(M)=\lim _{R\to \infty }\mbox{Index}(B_M(p,R)),
\]
where $B_M(p,R)$ is the intrinsic metric ball in $M$ of
radius $R>0$ centered at a point $p\in M$, and
$\mbox{Index}(B_M(p,R))$ is the number of negative eigenvalues of $L$
on $B_M(p,R)$ with Dirichlet boundary conditions. Here, we have assumed
that the immersion is two-sided (this holds in particular if $H>0$).
In the case, $H=0$ and the immersion is one-sided, then the index is
defined in a similar manner using compactly supported variations
in the normal bundle; see Definition~\ref{DefIndexNO} for details.

The primary goal of this paper is to describe the structure of complete
immersed $H$-surfaces $F\colon M\la X$ (also called {\it $H$-immersions})
which have a fixed  bound $I\in \N\cup \{0\}$ on their
index and a fixed upper bound $H_0$ for their constant mean curvatures
$H$, in certain small intrinsic neighborhoods of points with sufficiently
large norm $|A_M|$ of their second fundamental forms, see
Theorem~\ref{mainStructure}.
When $M$ has non-empty boundary, we will assume,
after a choice of some $\ve_0\in (0,\Inj(X))$, that
there is an  upper bound $A_0$ of $|A_M|$ in
the intrinsic $\ve_0$-neighborhood of the boundary of $M$.
Theorem~\ref{mainStructure} plays an important theoretical
role in understanding global
properties of such surfaces in much the same way that the local structure
theorems of Colding-Minicozzi \cite{cm23,cm25}
(for embedded minimal surfaces) and of
Meeks-Tinaglia~\cite{mt15} (for embedded
$H$-surfaces with $H>0$)  play a fundamental
role in understanding global properties of complete embedded $H$-surfaces
of finite genus, especially in the case where $X=\rth$. However,
we point out that the results in this paper do not depend on the
results for embedded $H$-surfaces of Colding-Minicozzi and Meeks-Tinaglia;
for applications of Theorem~\ref{mainStructure}
to the global theory of finite index $H$-surfaces in Riemannian
3-manifolds, see~\cite{mpe19}.

In the sequel, we will denote by $B_X(x,r)$ (resp. $\ov{B}_X(x,r)$)
the open (resp. closed) metric ball centered at a point
$x\in X$ of radius $r>0$. For a Riemannian surface $M$ with smooth
compact boundary $\partial M$,
$\kappa(M)=\int_{\partial M}\kappa_g$ will stand for the 
total geodesic curvature of $\partial M$, where $\kappa _g$ denotes the
pointwise geodesic curvature of $\partial M$  with
respect to the inward pointing unit conormal vector of $M$ along $\partial M$.

\begin{definition}
\label{def:L}
{\em
For every $I\in \N\cup \{ 0\}$,   $\ve_0>0$, and $H_0,A_0,K_0\geq 0$, we denote by
\[
\L=\L(I, H_0,\ve_0,A_0,K_0)
\]
the space of all $H$-immersions $F\colon M\la X$
satisfying the following conditions:
\begin{enumerate}[({A}1)]
\item $X$ is a complete Riemannian 3-manifold with injectivity radius
$\Inj(X)\geq \ve_0$ and absolute sectional curvature bounded from above
by $K_0$.
\item $M$ is a complete surface with smooth boundary (possibly empty)
and when $\partial M\neq \varnothing$,
there are points in $M$ of distance greater than $\ve_0$ from
$\partial M$.
\item $H\in [0, H_0]$ and $F$ has index at most $I$.
\item  If $\partial M\neq \varnothing$, then for any
$\ve \in (0,\infty ]$ we let
$U(\partial M,\ve)=\{x\in M\mid d_M(x,\partial M)<\ve \}$
be the open intrinsic $\ve $-neighborhood of $\partial M$. Then,
$|A_M|$ is bounded from above by $A_0$ in $U(\partial M,\ve_0)$.
\end{enumerate}
}
\end{definition}

Suppose that $(F\colon M\la X)\in \L$ and $\partial M\neq \varnothing$.
For any positive $\ve_1 \leq \ve_2\in [0,\infty ]$, let
\[
U(\partial M,\ve_1,\ve_2 )=U(\partial M,\ve_2)\setminus \overline{U(\partial M,\ve_1)},\quad
\ov{U}(\partial M,\ve_1,\ve_2 )=\ov{U(\partial M,\ve_2)}\setminus U(\partial M,\ve_1).
\]
When $\partial M=\varnothing$, we  define $U(\partial M,\ve_1,\infty )=
\ov{U}(\partial M,\ve_1,\infty )$  as $M$.

In the next result we will make use of harmonic coordinates
$\varphi_x\colon U\to B_X(x,r)$ defined on an open subset $U$ of $\R^3$
containing the origin, taking values in a geodesic ball $B_X(x,r)$
centered at a point $x\in X$ of positive radius $r$ less
than the injectivity radius of $X$ at $x$
and with a $C^{1,\a}$ control of the ambient metric on $X$, see
Definition~\ref{defharm} for details.

\begin{theorem}[Structure Theorem for finite index $H$-surfaces]
\label{mainStructure}
%\
%\par
%\noindent
Given $\ve_0>0$, $K_0, H_0, A_0\geq 0$, $I\in \N\cup \{0\}$ and
$\tau \in (0,\pi /10]$, there exist  $A_1\in [A_0,\infty)$,
$\de_1,\de\in (0,\ve_0/2]$ with $\de_1\leq \de/2$,
such that the following hold:
	
For any $(F\colon M\la  X)\in \L=\L(I, H_0,\ve_0,A_0,K_0)$,
there exists %an $r_F\in [\de_1,\de]$ and
a (possibly empty) finite collection $\cP_F=\{p_1,\ldots,p_k\}\subset
U(\partial M,\ve_0 ,\infty )$ of points, $k\leq I$, and
numbers $r_F(1),\ldots ,{r_F}(k)\in [\de_1,\frac{\de}{2}]$
with $r_F(1)>4r_F(2)>\ldots >4^{k-1}r_F(k)$, satisfying the following:
\begin{enumerate}
\item
\label{it1}
\underline{Portions with concentrated curvature:}
Given $i=1,\ldots ,k$, let $\Delta_i$ be the component of
\newline
$F^{-1}(\ov{B}_X(F(p_i),r_F(i)))$ containing $p_i$.
Then:
\begin{enumerate}[a.]
\item $\Delta _i\subset \ov{B}_M(p_i,\frac{5}{4}r_F(i))$ (in particular,
$\Delta _i$ is compact).
\item $\Delta _i$ has smooth boundary
and $F(\partial \Delta_i)\subset \partial \ov{B}_X(F(p_i),r_F(i))$.
\item  $B_M(p_i,\frac{7}{5}r_F(i))\cap
B_M(p_j,\frac{7}{5}r_F(j))=\varnothing $ for $i\neq j$.
In particular, the intrinsic distance between $\Delta _i,\Delta _j$ is
greater than $\frac{3}{10}\de_1$ for every $i\neq j$.
\item $|A_M|(p_i)=\max_{\Delta_i}|A_M|= \max \{ |A_M|(p)\ :
\ p\in M\setminus \cup _{j=1}^{i-1}B_M(p_j,\frac54 r_F(j))\} \geq  A_1$,
see Figure~\ref{fig1}.
\begin{figure}
\begin{center}
\includegraphics[width=11cm]{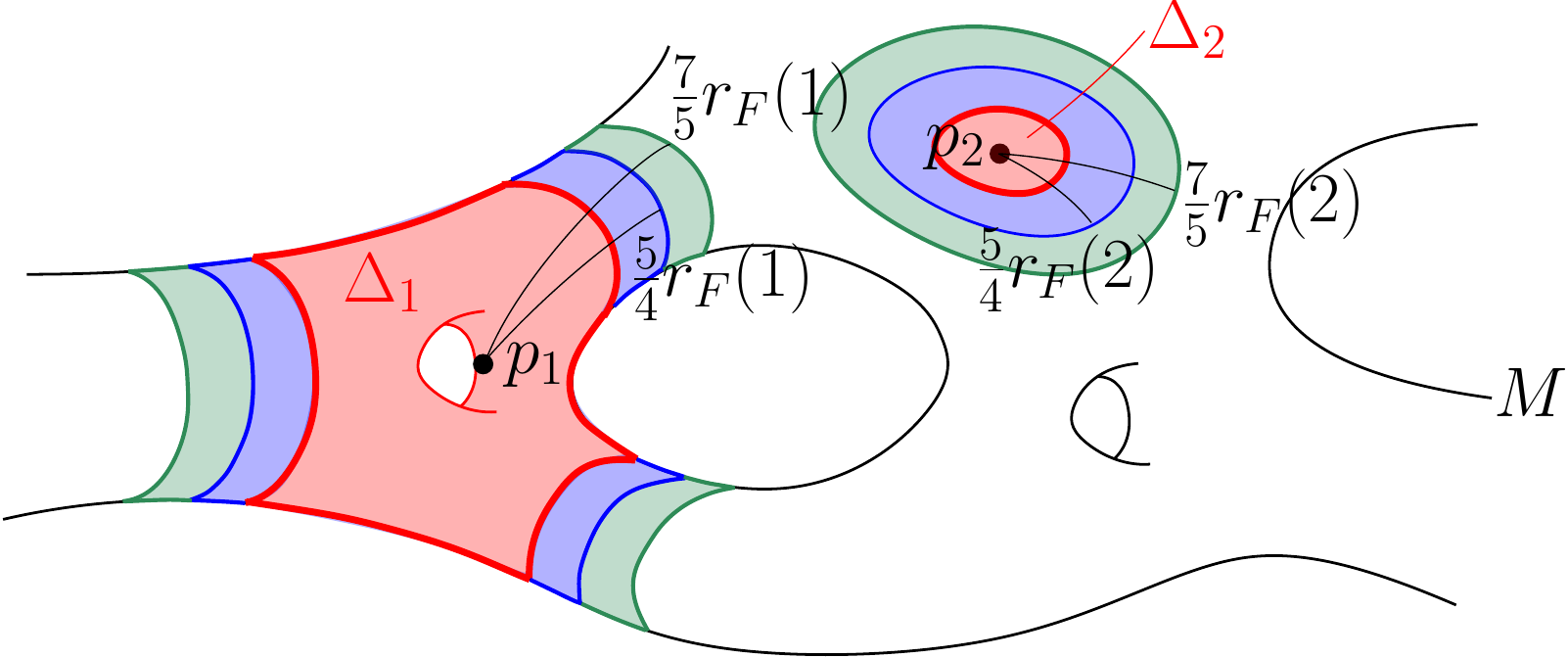}
\caption{The second fundamental form concentrates
inside the intrinsic compact regions $\Delta_i$ (in red),
each of which is mapped through the immersion $F$
to a surface inside the extrinsic ball in $X$ centered at
$F(p_i)$ of radius $r_F(i)>0$, with $F(\partial \Delta_i)\subset \partial
\ov{B}_X(F(p_i),r_F(i))$.
Although the boundary $\partial \Delta_i$ might not be at constant
intrinsic distance from the `center' $p_i$, $\Delta_i$ lies entirely
inside the intrinsic ball centered at $p_i$ of radius $\frac{5}{4}r_F(i)$.
The intrinsic open balls $B_M(p_i,\frac{7}{5}r_F(i))$ are pairwise
disjoint.}
\label{fig1}
\end{center}
\end{figure}
			
\item The index  $\Ind(\Delta_i)$ of $\Delta_i$ is positive.
\end{enumerate}
	
\item
\label{it3}
\underline{Transition annuli:}
For $i=1,\ldots ,k$ fixed, let $e(i)\in \N$ be the number of boundary
components of $\Delta _i$. Then, there exist planar disks $\D_{1},\ldots ,
\D_{e(i)}\subset T_{F(p_i)}X$ of radius $2r_F(i)$ centered at the origin
in $T_{F(p_i)}X$, such that if we denote by
\[
P_{i,h}=\varphi _{F(p_i)}(\D _h),\quad h\in \{ 1,\ldots ,e(i)\} ,
\]
(here $\varphi_{F(p_i)}$ denotes  a harmonic chart centered at $F(p_i)$,
see Definition~\ref{defharm}), then
\[
F(\Delta_i)\cap[\ov{B}_X(F(p_i),r_F(i))\setminus  B_X(F(p_i),r_F(i)/2)]
\]
consists of
$e(i)$ annular multi-graphs\footnote{See Definition~\ref{DefMulti} for
this notion of multi-graphs.} $G_{i,1},\ldots,G_{i,e(i)}$ over their
projections to $P_{i,1},\ldots ,P_{i,e(i)}$, with multiplicities
$m_{i,1},\ldots m_{i,e(i)}\in \N$ respectively, and whose related
graphing functions $u$ satisfy
\begin{equation}
\frac{|u(x)|}{|x|}+|\nabla u|(x)\leq \tau ,
\label{estimu}
\end{equation}
where we have taken coordinates $x$ in each of the $P_{i,h}$ and denoted
by $|x|$ the extrinsic distance to $F(p_i)$ in the ambient metric of $X$,
see Figure~\ref{fig2}.
\begin{figure}
\begin{center}
\includegraphics[width=10cm]{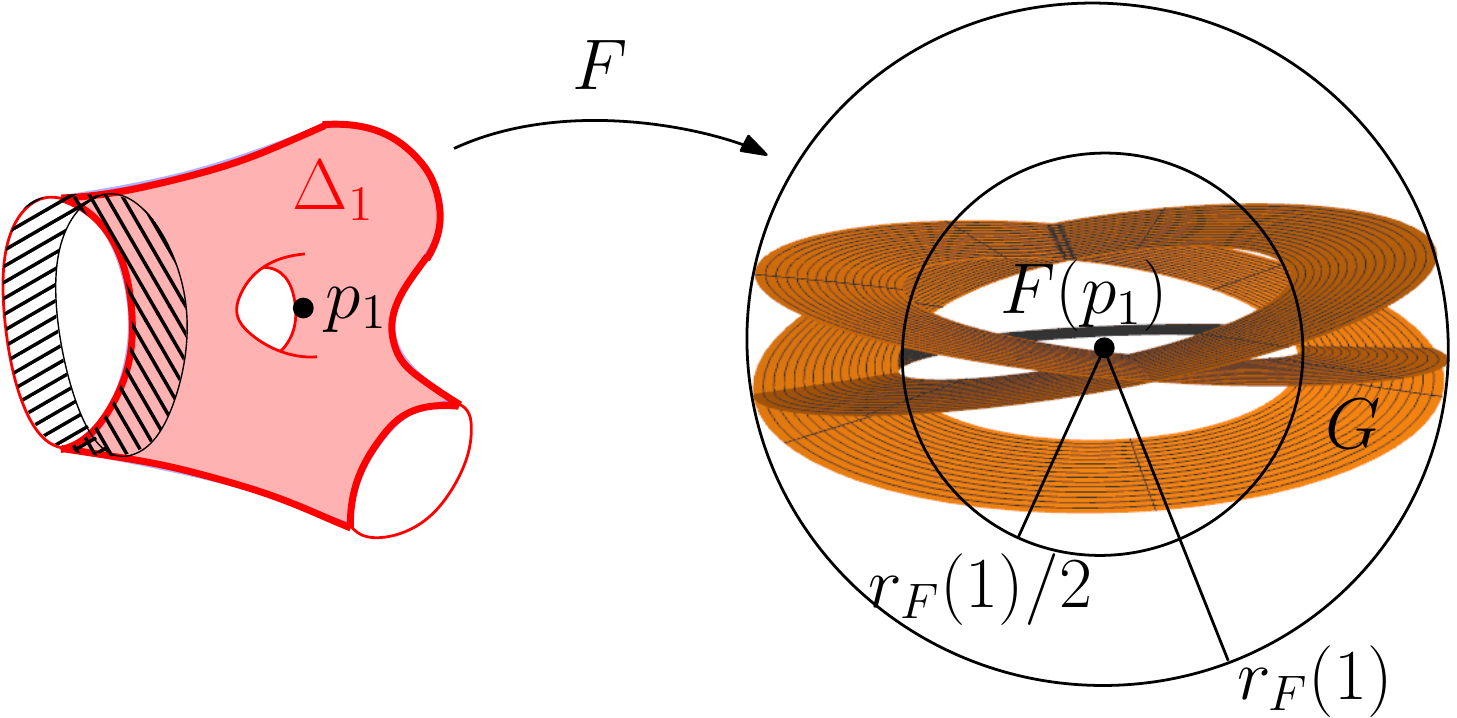}
\caption{The transition annuli: On the right, one has the extrinsic
representation in $X$ of one of the annular multi-graphs $G$ in
$F(\Delta_1)\cap[\ov{B}_X(F(p_1),r_F(1))\setminus  B_X(F(p_1),r_F(1)/2)]$;
in this case, the multiplicity of the multi-graph is 3.
On the left, one has the intrinsic representation of
the same annulus (shadowed); there is one such annular multi-graph for
each boundary component of $\Delta _i$.}
\label{fig2}
\end{center}
\end{figure}		
\item \label{it7}
\underline{Region with uniformly bounded curvature:} 
$|A_{M}|< A_1$  on $\wt{M}:=M\setminus \bigcup_{i=i}^k \Int(\Delta_i)$.
\end{enumerate}
	
\noindent
Moreover, the following additional properties hold:
\begin{enumerate}[A.]
\item  \label{it2}
$\sum_{i=1}^k I(\Delta_i)\leq I$, where $I(\Delta_i)=\Ind(\Delta_i)$.
		
\item \label{it4} \underline{Geometric and topological estimates:} Given
$i=1,\ldots ,k$, let $m(i):=\sum_{h=1}^{e(i)}m_{i,h}$ be the total
spinning of the boundary of $\Delta_i$, let $g(\Delta_i)$ denote the
genus of $\Delta_i$ (in the case $\Delta_i$ is non-orientable,
$g(\Delta_i)$ denotes the genus of its oriented
cover\footnote{If $\Sigma$ is a compact non-orientable surface
and	$\wh{\S}\stackrel{2:1}{\to}\S$ denotes the
oriented cover of $\S$, then the genus of $\wh{\S}$ plus 1 equals the
number of cross-caps in $\S$.}). Then, $m(i)\geq 2$ and the following
upper estimates hold:
\begin{enumerate}
\item If $I(\Delta_i)=1$, then $\Delta_i$ is
orientable, $g(\Delta_i)=0$, and $(e(i),m(i))
\in \{ (2,2),(1,3)\}$.

\item If   $\Delta_i$ is orientable and $I(\Delta_i)\geq 2$, then
$m(i)\leq 3I(\Delta_i)-1$,
$e(i)\leq 3I(\Delta_i)-2$,
and $g(\Delta_i)\leq 3I(\Delta_i)-4$.

\item If  $\Delta_i$ is non-orientable, then $I(\Delta_i)\geq 2$,
$m(i)\leq 3I(\Delta_i)-1$, $e(i)\leq 3I(\Delta_i)-2$  and
$g(\Delta_i)\leq 6I(\Delta_i)-8$.

\item
$\chi(\Delta_i)\geq -6I(\Delta_i) +2m(i)+e(i)$, and thus,
$\chi(\cup_{i=1}^k \Delta_i) \geq -6I +2S+e$, where
\[
e=\sum_{i=1}^ke(i),\qquad S=\sum_{i=1}^km(i).
\]

\item $|\kappa(\Delta_i)-2\pi m(i)|\leq \frac{\tau}{m(i)}$,
and so, the total geodesic curvature $\kappa(\widetilde{M})$
of $\widetilde{M}$ along $\partial \wt{M}\setminus \partial M$ satisfies
$\left| \kappa(\wt{M})+2\pi S\right| \leq
\frac{\tau}{2}k$, % 	\, I\, \zeta(I)<\textcolor{red}{\tau k}. %k\pi.
and so,
\begin{equation}
2\pi S-\frac{\tau}{2} k \leq\sum_{i=1}^k\kappa(\Delta_i)\leq
2\pi S+\frac{\tau}{2} k .
\label{2.3a}
\end{equation}
\item
$-\int _{\Delta_i}K >3\pi,$ and so,
\begin{equation} \label{2.4}
	-\int _{\cup_{i=1}^k\Delta_i}K
	=-2\pi\chi(\cup_{i=1}^k\Delta_i)+\int _{\cup_{i=1}^k\partial \Delta_i}\kappa_g>3k\pi.
\end{equation}
\end{enumerate}
\item
\label{it6}
\underline{Genus estimate outside the concentration of curvature:}
If $M$ is orientable, $k\geq1 $ and the genus $g(M)$ of $M$ is finite,
then the genus $g(\widetilde{M})$ of $\widetilde{M}$ satisfies
\[
0\leq g(M)-g(\wt{M})\leq %\left\{
%\begin{array}{rl}
3I-2. %& \mbox{if $M$ is orientable,}
%\\
%5I  & \mbox{if $M$ is non-orientable and connected.}
%\end{array}\right.
\]

\item
\label{it8}
\underline{Area estimate outside the concentration of curvature:}
If $k\geq 1$, then
\[
\mbox{\rm Area}(\widetilde{M})\geq 14\pi \sum_{i=1}^k m(i)r_F(i)^2
\geq 2\pi \sum_{i=1}^k m(i)r_F(i)^2
\geq\mbox{\rm Area}\left( \bigcup_{i=1}^k\Delta_i\right) \geq
k\pi \de_1^2.
\]

\item
\label{itF}
There exists a $C>0$,  depending   on $\ve_0,K_0,H_0$ and independent of
$I$, such that
\begin{equation}
\mbox{\rm Area}(M)\geq
\left\{
\begin{array}{ll}
{\displaystyle C\max\{1,\mbox{\rm Radius}(M)\} }&
\mbox{if $\partial M\neq \varnothing$,}
\\
{\displaystyle C\max\{1,\mbox{\rm Diameter}(M)\} } & \mbox{if $\partial
M=\varnothing$,}
\end{array}\right.
\label{Area}
\end{equation}
where
\begin{eqnarray}
\mbox{\rm Radius}(M)&=&\sup_{x\in M} d_M(x,\partial M)\in (0,\infty]
\quad \mbox{if }\partial M\neq \varnothing,\nonumber
\\
\mbox{\rm Diameter}(M)&=&\sup_{x,y\in M} d_M(x,y) \qquad \qquad \quad \;
\mbox{if }\partial M=\varnothing.\nonumber	
\end{eqnarray}
In particular, if $M$ has infinite radius
or if $M$ has empty boundary and it is non-compact, then
its area is infinite.
\end{enumerate}
\end{theorem}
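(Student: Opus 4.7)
The plan is to prove Theorem~\ref{mainStructure} by a greedy point-picking procedure combined with a compactness-and-blow-up argument that calibrates the constants $A_1,\delta,\delta_1$ uniformly over the class $\Lambda$. The geometric picture is that the second fundamental form must concentrate at a finite set of at most $I$ points; around each such $p_i$ there is a compact region $\Delta_i$ that, when rescaled by $r_F(i)^{-1}$, closely resembles a complete finite-index minimal surface of finite total curvature in $\R^3$, glued back to the rest of $M$ through $e(i)$ multi-graph annuli.

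First, I would calibrate $A_1,\delta,\delta_1$ by contradiction. Suppose no such triple works; then one can find sequences $(F_n\colon M_n \la X_n)\in \Lambda$ and $p_n\in M_n$ with $|A_{M_n}|(p_n)\to\infty$ at which the claimed multi-graph structure fails at every admissible scale. After replacing $p_n$ with an Ekeland-type near-local-maximum, one controls $|A_{M_n}|$ uniformly in a $|A_{M_n}|(p_n)^{-1}$-neighborhood. Rescale the ambient metric on $X_n$ by $|A_{M_n}|(p_n)^2$; the bounds $\Inj(X_n)\geq \ve_0$ and absolute sectional curvature at most $K_0$ produce, via the harmonic charts of Definition~\ref{defharm}, a pointed Cheeger--Gromov limit $(X_n,F_n(p_n))\to(\R^3,0)$. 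Since $H_n/|A_{M_n}|(p_n)\to 0$, a subsequential limit of the rescaled immersions is a complete minimal immersion in $\R^3$ of index at most $I$; by Fischer--Colbrie it has finite total curvature with each end asymptotic to a plane or a catenoid, producing the claimed multi-graph structure on all large annular shells and contradicting the hypothesized failure.

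With $A_1,\delta,\delta_1$ fixed, I would build the points $p_i$ and radii $r_F(i)$ inductively. Let $p_1$ realize the maximum of $|A_M|$ on $U(\partial M,\ve_0,\infty)$; if this maximum is less than $A_1$, set $k=0$, and otherwise choose $r_F(1)\in [\delta_1,\delta/2]$ by a stopping-time argument on concentric extrinsic balls, selecting the largest radius for which the component $\Delta_1$ of $F^{-1}(\ov{B}_X(F(p_1),r_F(1)))$ through $p_1$ is compact, intrinsically contained in $B_M(p_1,\tfrac54 r_F(1))$, and meets $\partial\ov{B}_X(F(p_1),r_F(1))$ transversally with multi-graph collars. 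Iterate on $M\setminus B_M(p_1,\tfrac54 r_F(1))$, shrinking the allowed radius by the factor $4$ to enforce the pairwise intrinsic separation of item~\ref{it1}(c). The iteration halts when the remaining supremum of $|A_M|$ falls below $A_1$; item~\ref{it7} is then immediate from the termination criterion. Since each $\Delta_i$ has positive integer index, inherited from the non-planar blow-up limit, and the $\Delta_i$ are pairwise disjoint, index additivity on disjoint domains forces $k\leq I$, proving item~\ref{it2}.

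The remaining items follow from the multi-graph structure on the annular shells. For item~\ref{it3}, the disks $P_{i,h}$ are the tangent planes at infinity of the ends of the blow-up limit and the $m_{i,h}$ are their spinning numbers; the bound~\eqref{estimu} follows from $C^{1,\alpha}$-closeness to this limit. For the topological estimates in item~\ref{it4}, one combines the index--genus--ends inequalities for orientable finite-index minimal surfaces in $\R^3$ (Chodosh--Maximo, Li, Ros) with their non-orientable analogues, applied to the blow-up limit of $\Delta_i$; the geodesic-curvature estimate~\eqref{2.3a} follows by integrating~\eqref{estimu} along $\partial\Delta_i$, and~\eqref{2.4} follows from Gauss--Bonnet together with the strict inequality $\int_{\Delta_i}K<0$. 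Items~\ref{it6} and~\ref{it8} follow from the topological control in~\ref{it4} and a multiplicity-weighted disk-area estimate on the multi-graph annuli, respectively, while~\ref{itF} uses the uniform bound $|A_{\wt M}|<A_1$ and a standard monotonicity argument. The main obstacle is the calibration step: one must ensure the blow-up limit is a single connected finite-index minimal surface rather than a singular lamination, and that the scale $r_F(i)$ can be chosen uniformly in $[\delta_1,\delta/2]$ across all of $\Lambda$; this requires the Ekeland near-maximum choice of $p_n$ together with a careful analysis of the convergence of the rescaled $F_n(\Delta_i^{(n)})$ up to its boundary annular shell.
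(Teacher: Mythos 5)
Your overall strategy---blow-up at concentration points, identify the limit as a finite-total-curvature minimal surface in $\R^3$, and read off the geometry from the ends of that limit---is the right starting point and matches the paper's. But the proposal skips over the central difficulty that drives most of the paper, and this gap is not a technical refinement: without addressing it, the argument does not close.

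The gap is in your calibration step. After rescaling by $|A_{M_n}|(p_n)$ you do get a subsequential limit $f_1\colon \S_1\la\R^3$ of finite total curvature and index at most $I$, and hence the multi-graph structure on annular shells of radius $O(|A_{M_n}|(p_n)^{-1})$. However, this structure is only guaranteed at scales comparable to the blow-up scale, and the supremum $r_n$ of extrinsic radii up to which the multi-graph description persists may tend to $0$. This is not controlled by an Ekeland-type choice of $p_n$: the function $h_n$ in equation~\eqref{eq:defhn} (the Ekeland trick) ensures only that the \emph{first} blow-up limit is non-flat; it does not prevent other concentration points in $M_n$ at intermediate scales $|A_{M_n}|(p_n)^{-1}\ll r_n\ll 1$. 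When $I=1$ the paper shows (Proposition~\ref{ass3.5}) that $r_n$ is bounded below by a uniform positive constant, because any failure would produce an unstable blow-down limit contradicting index $1$. For general $I$ this fails (Remark~\ref{rem2.15}), and one is forced to blow up again by $1/r_n$, obtaining a \emph{second} limit $f_2$ which may be disconnected, may have flat components, and is typically \emph{branched}: the ends of $f_1$ collapse to branch points of $f_2$ at the larger scale. This nested cascade of scales is precisely what the hierarchy $\cH(\Delta)$ of Sections~\ref{sec2.7}--\ref{sec9.1} records, and it is the key mechanism you have not accounted for. Your remark that ``one must ensure the blow-up limit is a single connected finite-index minimal surface rather than a singular lamination'' flags the right concern, but the resolution is not to prove the limit is a single surface---it generally is not---it is to control the whole multi-scale structure.

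A second, consequent gap is in your derivation of item~\ref{it4}. You propose to get the estimates on $m(i)$, $e(i)$, $g(\Delta_i)$ by applying the Chodosh--Maximo index--genus--ends inequality~\eqref{eq:CMindex1} ``to the blow-up limit of $\Delta_i$''. But $\Delta_i$ at the original scale is not a rescaled approximation of a single finite-total-curvature minimal surface: its topology and index accumulate contributions from all levels of the hierarchy, and the individual level limits may be branched, disconnected, and flat, where~\eqref{eq:CMindex1} gives nothing useful on its own. The paper has to prove Theorem~\ref{thm9.6}, a hierarchy-weighted generalization of~\eqref{eq:CM1} with the correction term $C(\cH)$, and then Corollaries~\ref{corol9.12} and~\ref{corol7.30}, to obtain the estimates in item~\ref{it4}. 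This is a substantial piece of combinatorial and topological bookkeeping, not a direct application of the single-surface inequality. Similarly, the index additivity you invoke for item~\ref{it2} requires the disjointness of the $\Delta_i$, and the disjointness bound in item~\ref{it1}(c) depends on the uniform scale $[\de_1,\de/2]$ whose very existence is the issue. To close the argument you would need either Proposition~\ref{ass3.5'} (uniform outer scale after finitely many ascensions) or an equivalent device, together with the induction on $I$ that the paper uses to justify the second-scale analysis (Lemmas~\ref{lemma7.10} and~\ref{lemma7.11} and Proposition~\ref{prop7.12}).
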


The proof of the Structure Theorem~\ref{mainStructure} is carried out
in Section~\ref{sec3}, and it will be done by induction on the index
bound $I$. In the case $I=0$ , Theorem~\ref{mainStructure}
is obtained by using curvature estimates for stable $H$-surfaces,
and the arguments in this special case
generalize to the case where for a given $I\in \N$, there exists a uniform
curvature estimate for the immersions in $\L=\L(I, H_0,\ve_0,A_0,K_0)$,
%$H$-surfaces in $X$ with $H\in [0,H_0]$ of
%index at most $I$,
see Section~\ref{subsecbdedA}.
A non-trivial step in the proof of Theorem~\ref{mainStructure}
involves an analysis of the local pictures on different scales for
a sequence of complete $H_n$-immersions $F_n\colon M_n\la X_n$ with
$H_n\in [0,H_0]$ and $\mbox{Index}({F_n})\leq I$,
such that $\{ \sup |A_{M_n}|\}_n$ is unbounded
(these local pictures are limits of the $F_n$ after blowing up on
certain scales).
Although non-trivial, this analysis is simpler in the case $I=1$
because in this case there is only one scale for the local pictures of
$F_n$; this case is studied in Section~\ref{sec2.6}.
The analysis of these local pictures
in the general case is carried out in Section~\ref{sec2.7},
and it is based on the lower bounds obtained by Chodosh
and Maximo~\cite{ChMa2} and Karpukhin~\cite{Kar1} for the
index of a possibly branched, complete
immersed minimal surface $\S$ in $\rth$ with finite total
curvature, in terms of its genus, total branching order,
and the number of its ends counted with multiplicity.
After coming back to the original scale, these complexity estimates will
give upper bounds for the total geodesic curvature of the boundary of the
portion $\wt{M}$ of $M$ defined in item~\ref{it7} of
Theorem~\ref{mainStructure}, as well as to give lower bounds in
item~\ref{it6} on the genus of $\wt{M}$ in terms of $I$ and the genus of
$M$ when $M$ is orientable. These geometric and topological bounds are
obtained in Sections~\ref{sec9.1} and~\ref{sec2.9}.
What this analysis demonstrates is
that there is an organized hierarchy-type structure in the geometry of a
complete, immersed $H$-surface $F\colon M\la X$ near points of large,
almost-maximal norm of the second fundamental form of the immersion,
from which the title of the paper is derived;
this hierarchy structure of $F$ around such special points
is described explicitly  in Section~\ref{sec9.1} and plays
an essential role in the proofs of our main results.

A key step in the proof of Theorem~\ref{mainStructure} is to obtain
curvature estimates for a large portion of the $H$-surface
$(F\colon M\la X)\in \Lambda$ in that theorem.
These curvature estimates are obtained in Section~\ref{sec5.2}
and they are based on related curvature estimates for stable
$H$-surfaces developed in Appendix A
(Section~\ref{sec:curvatureEst}).

Observe that \eqref{Area} is a lower bound for the area of an
$H$-surface in a Riemannian 3-manifold $X$, described
in terms of an upper bound for its absolute mean curvature function $|H|$,
a lower bound of the injectivity radius of $X$ and an upper bound of the
sectional curvature of $X$. This area estimate is proven in
Section~\ref{sec2.9} and follows from a more general area estimate
and an intrinsic monotonicity of volume formula
for $n$-dimensional submanifolds with bounded length of their mean
curvature vectors in $m$-dimensional Riemannian manifolds $X$
that have a lower bound for their injectivity radius and
an upper bound for the sectional curvature of $X$. Both of these
auxiliary results are proven in our paper~\cite{mpe20},
and we include their statements (without proofs) in this paper
for the sake of completeness, see Propositions~\ref{lemma8.4}
and~\ref{yau} in Appendix~(B) (Section~\ref{sec:summary}).
In Proposition~\ref{propos5.5} of the same appendix we state explicit
scale invariant weak chord-arc estimates for finitely branched minimal
surfaces of finite total curvature in $\rth$ in terms
of the index and total branching (also proven in~\cite{mpe20}); these
chord-arc estimates are applied in the proof of item~\ref{it1}
in Theorem~\ref{mainStructure}.

An important theoretical consequence of Structure Theorem~\ref{mainStructure}
is the existence of compactness results for $H$-surfaces of
bounded index in $X$. More specifically, in Section~\ref{sectionCompactness}
we state and prove some compactness
results for sequences of complete immersions with
constant mean curvature in 
$\Lambda=\Lambda (I,H_0,\ve_0,A_0,K_0)$, as described  in
Theorem~\ref{mainStructure}, in the particular case
that the immersions are defined on connected surfaces without boundary,
the ambient space $X$ is independent
of the element in the sequence and the image of each
immersion in the sequence intersects a fixed compact subdomain of $X$.
In this case, the limit object that we encounter
(after passing to a subsequence) is a complete, possibly finitely branched
immersion of constant mean curvature at most $H_0$ and index at most $I$.

In regards to the just mentioned
compactness results in Section~\ref{sectionCompactness},
it is worth mentioning the related paper~\cite{bst1} by Bourni, Sharp and Tinaglia,
where they give weak compactness results for a
sequence of embedded CMC hypersurfaces
in a compact Riemannian  manifold of dimension $m$ with $3\leq m\leq 7$, provided that their
areas and Morse indices are bounded. As they remark in~\cite{bst1}, their results
were motivated by the derivation of the genus-dependent 
area bounds  for triply periodic CMC surfaces
properly embedded in $\rth$  by Meeks and Tinaglia in \cite{mt12}; also the
results in~\cite{bst1} and in our present paper are motivated
other recent works \cite{AiHo1,abcs1,bst1,bush1,ChKeMa1,max1,sat1}, which
together help to describe
the geometry of finite index CMC surfaces $M$
embedded in closed Riemannian 3-manifolds
and  relationships between index, area and genus of such an $M$.

In \cite{mpe19} we give applications of
Theorem~\ref{mainStructure} to understand global properties of
immersed $H$-surfaces $M\la X$ of fixed finite index $I$,
especially results related to the area and diameter of such an $M$
when it is compact without boundary; in particular,
we deduce that the area of such an $M$ (resp. the diameter)
grows at least linearly (resp. logarithmically) with the genus.

{\em Acknowledgments}:
We thank Otis Chodosh for explaining to us his work~\cite{ChMa2}
with Davi Maximo on lower bounds for the index of a complete branched
minimal surface in $\rth$ of finite total curvature,
in terms of its genus and number of ends counted with multiplicity,
and how the analysis by Karpukhin~\cite{Kar1} can be
used to allow finitely many branch points.

\section{Index of $1$-sided $H$-immersions, harmonic coordinates and multi-valued graphs}
\label{Sec:harm}

In Theorem~\ref{mainStructure} we referred to the index of 1-sided
minimal immersions, harmonic coordinates and
finitely valued multi-graphs. We will devote this section to give some
details about these notions.
\begin{definition} \label{DefIndexNO}
{\rm
Given a $1$-sided minimal codimension-one immersion $F\colon M\la X$ in a
Riemannian manifold $X$, let $\wt{M}\to M$ be the two-sided cover of
$M$ and let $\tau \colon \wt{M}\to \wt{M}$ be the associated deck
transformation of order 2. Denote by $\wt{\Delta}$,
$|\wt{A}|^2$ the Laplacian and squared norm of the second fundamental
form of $\wt{M}$, and let $N\colon \wt{M}\to TX$
be a unitary normal vector  field.  The index of $F$ is defined as the
number of negative eigenvalues of the elliptic, self-adjoint operator
$\wt{\Delta} +|\wt{A}|^2+\mbox{Ric}(N,N)$ defined over the space of
compactly supported smooth functions $\phi \colon \wt{M}\to \R$
such that $\phi \circ \tau =-\phi $.
}
\end{definition}

\begin{definition} \label{defharm}
{\rm
Given a (smooth) Riemannian manifold $X$, a local chart $(x_1,\ldots x_n)$
defined on an open set $U$ of $X$
is called {\it harmonic} if $\Delta x_i=0$ for all $i=1,\ldots n$.
		
Following Definition 5 in~\cite{hehe}, we make the next definition.
Given $Q>1$ and $\alpha \in (0,1)$, we define the
$C^{1,\alpha }$-{\it harmonic radius} at a point $x_0\in X$
as the largest number $r=r(Q,\a)(x_0)$ so that in the geodesic ball
$B_X(x_0,r)$ of center $x_0$ and radius $r$,
there is a harmonic coordinate chart
such that the metric tensor $g$ of $X$ is $C^{1,\a}$-controlled in these
coordinates. Namely, if $g_{ij}$, $i,j=1,\ldots ,n$, are
the components of $g$ in these coordinates, then
\begin{enumerate}
\item $Q^{-1}\de _{ij}\leq g_{ij}\leq Q\, \de_{ij}$ as bilinear forms, and
\item ${\displaystyle \sum_{\be =1}^3 r \sup _{y}
|\frac{\partial g_{ij}}{\partial x_{\be}}(y)|
+\sum _{\be =1}^3r^{1+\a}\sup _{y\neq z}\frac{\left|
\frac{\partial g_{ij}}{\partial x_{\be}}(y)-
\frac{\partial g_{ij}}{\partial x_{\be}}(z)\right| }{d_X(y,z)^{\a}}}
\leq Q-1$.
\end{enumerate}
The $C^{1,\a}$-harmonic radius $r(Q,\a )(X)$ of $X$ is now defined by
\[
r(Q,\a)(X)=\inf_{x_0\in X}r(Q,\a)(x_0).
\]
If the absolute sectional curvature of $X$ is bounded by some
constant $K_0>0$ and Inj$(X)\geq \ve_0>0$, then Theorem~6 in~\cite{hehe}
implies that given $Q>1$ and $\a \in (0,1)$,
there exists $C=C(Q,\a ,\ve_0,K_0)$ (observe that $C$ does not depend
on $X$) such that $r(Q,\a)(X)\geq C$.
	}
\end{definition}

\begin{definition} \label{DefMulti}
{\rm Let $f\colon\S\la \R^3$ be an immersed annulus,  $P$
a plane passing through the origin and
$\Pi\colon \rth \to P$ the orthogonal projection.
Given $m\in \N$, let $\sigma_m\colon P_m \to P^*=P\setminus \{\vec{0}\}$
be the $m$-sheeted covering space of $P^*$.
We say that $\S$ is an $m$-{\it valued graph}
over $P$ if $\vec{0}\not \in (\Pi\circ f)(\S)$,
the induced map $(\Pi\circ f)_*\colon H_1(\S)=\Z \to H_1(P^*)=\Z$
satisfies $|(\Pi\circ f)_*(1)|=m$ and $\Pi\circ f\colon \S \to P^*$
has a smooth injective lift $\wt{f} \colon  \S \to P_m$
through $\sigma_m$; in this case, we say that $\Sigma $
has {\it multiplicity $m$} as a multi-graph.

Given $Q>1$ and $\a \in (0,1)$, let $X$ be a Riemannian 3-manifold
and $(x_1,x_2,x_3)$ be a harmonic chart for $X$ defined on $B_X(x_0,r)$,
$x_0\in X$, $r>0$, where the metric tensor $g$ of $X$ is
$C^{1,\a}$-controlled in the sense of Definition~\ref{defharm}.
Let $P\subset B_X(x_0,r)$ be the image by this harmonic chart of the
intersection of a plane in $\R^3$ passing through the origin with the
domain of the chart. In this setting, the notion of $m$-valued graph
over $P$ generalizes naturally to an immersed annulus
\[
f\colon\S\la B_X(x_0,r),
\]
where the projection $\Pi$ refers to the harmonic coordinates.
If $f\colon\S\la B_X(x_0,r)$ is an $m$-valued graph over $P$
and $u$ is the corresponding graphing function that
expresses $f(\Sigma)$, we can consider the gradient $\nabla u$ with
respect to the metric on $P$ induced by the ambient metric of $X$.
Both $u$ and $|\nabla u|$ depend on the choice of harmonic coordinates
around $x_0$ (and they also depend on $Q$),
but if $\frac{|u(x)|}{|x|}+|\nabla u|(x)\leq \tau $ for some
$\tau \in (0,\pi /10]$ and $Q>1$ sufficiently close to $1$, then
$\frac{|u(x)|}{|x|}+|\nabla u|(x)<2\tau$ for any other choice of harmonic
chart around $x_0$ with this restriction of $Q$.
}
\end{definition}

\section{Finitely branched minimal surfaces in $\R^3$ of finite index}
\label{secMinBr}
In the process of finding local pictures of $H$-immersions as
in  the proof of Theorem~\ref{mainStructure}, we will find complete, non-flat,
finitely branched minimal surfaces in $\R^3$. We will devote this
section to obtain some properties of these surfaces which will be useful
in the sequel.

\begin{definition}
{\rm
Let $\Sigma$ be a smooth surface endowed with a conformal class of
metrics. We say that a harmonic map $f\colon \Sigma\to \R^3$
is a (possibly non-orientable) branched minimal surface if it
is a conformal immersion outside of a locally finite
set of points $\cB_{\Sigma}\subset \Sigma$,
where $f$ fails to be an immersion.
Points in $\cB_{\Sigma}$ are called branch points of $f$.
It is well-known (see e.g. Micallef and White~\cite[Theorem~1.4]{miwh1})
that given $p\in \cB_{\Sigma}$, there exist a conformal coordinate
$(\ov{\D},z)$ for $\Sigma$ centered at $p$ (here $\ov{\D}$
is the closed unit disk in the plane),
a diffeomorphism $u$ of $\ov{\D}$ and a rotation
$\phi$ of $\R^3$ such that $\phi \circ f\circ u$ has the form
\[
z\mapsto (z^q,x(z))\in \C\times \R\sim \R^3
\]
for $z$ near $0$, where $q\in \N$, $q\geq 2$, $x$
is of class $C^2$, and $x(z)=o(|z|^q)$.
The branching order $B(p)\in \N$
is defined to be $q-1$. The total branching
order of $f$ is
\[
B(\Sigma):=\sum_{p\in \cB_{\Sigma}}B(p).
\]
}
\end{definition}

The next result is a generalization of the well-known
Jorge-Meeks formula~\cite{jm1} to the case of a possibly branched
and non-orientable complete minimal surface $\S\la \R^3$
of finite total curvature and finite branching order.
It is well-known that each of the (finitely many) ends of $\S$ is
a multi-graph of finite multiplicity over the exterior
of a disk in the plane passing through the
origin and orthogonal to the extended
value of the unoriented Gauss map of $\S$.
We will use the language {\it the total spinning of} $\S$ to describe the
sum of these multiplicities; for instance,
the classical Henneberg  and Enneper surfaces each have one end and
total spinning equal to three.
\begin{proposition}
\label{lem6.3}
Let $\S\la \R^3$ be a complete, finitely connected and finitely
branched minimal surface with finite
total curvature, $e$ ends with total spinning $S$, and
total branching order $B(\Sigma)$. Then:
\begin{equation}
\label{lem6.3a}
\frac{1}{2\pi }\int_{\S}K+S-B(\Sigma)=\chi (\ov{\S})-e=\chi (\S),
\end{equation}
where $K\colon \S\setminus \mathcal{B}_{\S}\to (-\infty,0]$
is the Gaussian curvature function and $\ov{\S}$
denotes the conformal\footnote{Observe that
$\S$ admits an atlas whose changes of coordinates are
conformal or anti-conformal.}
compactification of $\S$. Furthermore, if $G\colon
\ov{\S}\to \Pe^2$ denotes the extended unoriented Gauss map
of $\S$, then the degree of $G$ satisfies
\begin{equation}
\label{lem6.3b}
\deg(G)=\frac{1}{2\pi }\int_{\S}K\equiv \chi (\ov{\S})	\mod 2.	
\end{equation}
In particular,
\begin{equation} \label{lem6.3c}
S-B(\Sigma)\equiv e \mod 2.
\end{equation}
\end{proposition}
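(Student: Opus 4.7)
My plan is to establish \eqref{lem6.3a} via a branched version of Gauss--Bonnet applied to a compact exhaustion of $\S$, to prove the first equality in \eqref{lem6.3b} by passing to the orientation double cover of $\S$ and using the area-form pull-back by the lifted Gauss map, and to obtain the parity congruences in \eqref{lem6.3b} and \eqref{lem6.3c} by a mod-$2$ Stiefel--Whitney comparison of the two natural $2$-plane bundles over $\ov\S$.

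For \eqref{lem6.3a}, fix $R>0$ so large that $\S_R:=f^{-1}(\ov{B}_{\R^3}(0,R))$ is a compact subsurface containing all branch points in its interior; then $\S_R$ has $e$ boundary components and $\chi(\S_R)=\chi(\ov\S)-e$. The Micallef--White local model identifies the induced metric at each $p\in\cB_\S$ with a conical singularity of cone angle $2\pi(1+B(p))$, so the cone-metric Gauss--Bonnet theorem gives
\begin{equation*}
\int_{\S_R}K\,dA-2\pi B(\S)+\int_{\partial\S_R}\kappa_g\,ds=2\pi(\chi(\ov\S)-e),
\end{equation*}
where the total cone defect is $\sum_{p\in\cB_\S}(2\pi-2\pi(1+B(p)))=-2\pi B(\S)$. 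The standard asymptotic analysis of ends of complete minimal surfaces of finite total curvature shows that each end $E_j$ is, outside a compact part containing all branch points, asymptotic to an $m_j$-valued graph over a plane orthogonal to the extended value of the Gauss map at the puncture; the boundary component of $\S_R$ in $E_j$ is therefore close to $m_j$ copies of a large planar circle traversed with the orientation induced by the inward-pointing conormal, and a direct computation gives $\int_{\partial\S_R\cap E_j}\kappa_g\,ds\to 2\pi m_j$ as $R\to\infty$. Summing over the ends and passing to the limit yields \eqref{lem6.3a}.

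For the first equality in \eqref{lem6.3b}, pass to the orientation double cover $\widehat\S\to\S$, which is $2:1$ and isometric; the Gauss map lifts on $\widehat\S$ to an oriented map $\widehat N\colon\widehat\S\to\esf^2$ satisfying $\widehat N^*\omega_{\esf^2}=K\,dA$ pointwise where both sides are defined, so
\[
\deg(\widehat N)=\frac{1}{4\pi}\int_{\widehat\S}K\,dA=\frac{1}{2\pi}\int_\S K\,dA.
\]
Since $\widehat N$ factors through $G$ and the quotient map $\esf^2\to\Pe^2$, counting preimages at a regular value of $G$ gives $\deg(\widehat N)=\deg(G)$, which proves the formula. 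For the parity statement, the differential $df$ gives a bundle isomorphism $T\S\to G^*(T\Pe^2)$ on the smooth immersed part of $\S$; the monodromy of this iso around a branch point of order $B(p)$ or around a puncture of multiplicity $m_j$ is a rotation by $2\pi B(p)$ or $2\pi(m_j-1)$ respectively, trivial as an element of $SO(2)$, so the two bundles are isomorphic over all of $\ov\S$. Hence $w_2(T\ov\S)=G^*w_2(T\Pe^2)$ in $H^2(\ov\S;\Z/2)$, and evaluating both sides on the fundamental mod-$2$ class using $w_2(T\ov\S)\equiv\chi(\ov\S)\pmod 2$ together with $w_2(T\Pe^2)=\mu_{\Pe^2}$ (the top mod-$2$ class, since $\chi(\Pe^2)=1$) yields $\deg(G)\equiv\chi(\ov\S)\pmod 2$. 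Reducing \eqref{lem6.3a} mod $2$ and substituting this congruence then produces \eqref{lem6.3c}.

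The most delicate step is the asymptotic boundary computation $\int_{\partial\S_R\cap E_j}\kappa_g\,ds\to 2\pi m_j$ at each end: verifying this in the branched and possibly non-orientable setting requires showing that each end is, outside a compact part of $\ov\S$ containing all branch points, annular and parametrized as an $m_j$-valued graph over a plane with curvature decay controlled by finite total curvature. This is standard in the orientable unbranched case and extends here because the branch points are finite in number and confined to a compact subset of $\S$, so that the Gauss map extends smoothly across each puncture and the multiplicity $m_j$ can be read off unambiguously.
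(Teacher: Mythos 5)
Your derivation of \eqref{lem6.3a} is sound and, modulo presentation, is the same as the paper's: both apply Gauss--Bonnet to a compact truncation and take a limit, with the branch points contributing $-2\pi B(\Sigma)$ and each end contributing $2\pi m_j$ to the boundary term. Your extraction of \eqref{lem6.3c} from \eqref{lem6.3a} and \eqref{lem6.3b} is also the same as the paper's. The genuine divergence is in the proof of the parity congruence $\deg(G)\equiv\chi(\ov\Sigma)\pmod 2$, where the paper simply cites Theorem~1 of~\cite{me7} when $\deg(G)\neq 0$ and handles $\deg(G)=0$ by observing that $\ov\Sigma$ is then a branched cover of $\esf^2$, hence orientable of even Euler characteristic.

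Your Stiefel--Whitney route to that parity has a gap. The claim that $df$ induces a bundle isomorphism between $T\ov\Sigma$ and $G^*$(the tautological plane bundle over $\Pe^2$, which you are calling $T\Pe^2$) over all of $\ov\Sigma$ is false: near a branch point of order $B(p)$, in a conformal coordinate $z$ centered at $p$, the differential $df$ vanishes to order $B(p)$, and near a puncture of end-multiplicity $m_j$ it has a pole of order $m_j-1$. What you compute and call the ``monodromy being trivial as an element of $SO(2)$'' is the endpoint of a path in $SO(2)$, but extending a bundle isomorphism across the deleted center requires the clutching loop on a small circle around that center to be \emph{nullhomotopic} in the structure group, i.e.\ have winding number zero in $\pi_1(SO(2))\cong\Z$. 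That winding number is precisely $B(p)$ (resp.\ $m_j-1$), so the isomorphism does not extend whenever $B(p)\neq 0$ or $m_j>1$, and the two bundles are in general not isomorphic over $\ov\Sigma$. Consequently you cannot conclude $w_2(T\ov\Sigma)=G^*w_2$ from the bundle map; the actual defect between the two $w_2$'s is $B(\Sigma)+\sum_j(m_j-1)=B(\Sigma)+S-e$ modulo $2$, and plugging that into \eqref{lem6.3a} produces a tautology rather than \eqref{lem6.3c}. Some external input is needed (the paper's citation to~\cite{me7}, or an independent argument when $\deg(G)\neq 0$), and your proposal does not supply one. The first identity in \eqref{lem6.3b}, via the orientation double cover, is fine modulo orientation/sign bookkeeping.
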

\begin{proof}
To prove each of the statements in
the above proposition  it suffices to consider the special case that
$\S$ is connected, which we will assume holds for the remainder
of the proof.
	
We first prove~\eqref{lem6.3b}. Note that the
total curvature $\int_{\S}K$ equals $2\pi
\deg(G)$. First consider the case that $\deg (G)\neq 0$.
By Theorem~1 in~\cite{me7}, $\deg (G)\equiv \chi
(\ov{\S}) \mod 2$, which proves that \eqref{lem6.3b} holds in this case.
If the degree of the Gauss map is zero, then the image
of the branched immersion is a flat plane, and we can view
$\ov{\S}$ as a	connected, finitely branched covering of
the sphere. Hence, $\ov{\S}$ is orientable with even Euler characteristic.
Thus, \eqref{lem6.3b} holds in all cases.
	
Using Gauss-Bonnet in the compact portion
of $\S$ obtained by removing pairwise disjoint disks around
its ends (viewed as points in $\ov{\S}$)
and the branch points of $\S$, and taking the radii
of the removed disks going to zero, we obtain
equation~\eqref{lem6.3a}. Taking classes mod 2
in~\eqref{lem6.3a} and using (\ref{lem6.3b}), we obtain (\ref{lem6.3c}).
\end{proof}

We next recall a fundamental lower bound for the index
$I(f)$ of a connected, complete, possibly finitely branched
minimal surface $f\colon \S \la \R^3$ with  finite total
curvature, which is due to Chodosh and Maximo~\cite{ChMa2}
and to Karpukhin~\cite{Kar1}:
\begin{equation}
\label{eq:CMindex1}
3I(f)\geq \left\{ \begin{array}{ll}
{\displaystyle 2g(\S)+2\sum_{j=1}^{e}(d_j+1)-2B-5}
& \mbox{if $\S$ is orientable,}
\\
{\displaystyle g(\wt{\S})+2\sum_{j=1}^{e}(d_j+1)-2B-4}
& \mbox{if $\S$ is non-orientable,}
\end{array}\right.
\end{equation}
where $g(\Sigma)$ is the genus of $\S$ if $\S $ is
orientable (resp. $g(\wt{\S})$ is the genus of
the orientable cover $\wt{\S}$ of $\S$ if $\S $ is not
orientable), $e$ and $B$ are respectively
the number of ends and the total branching order of $\Sigma$,
and for each end $E_j$ of $\S$, $d_j$ is the multiplicity
of $E_j$ as a multi-graph over the limiting tangent plane of $E_j$.

Inequality \eqref{eq:CMindex1} has not been explicitly stated
in the literature, so an explanation is in order.
Ros~\cite{ros9} proved that $3I(f)\geq 2g(\S)$
using harmonic square integrable $1$-forms on $\S$ for a minimal immersion
$f\colon \S \la \R^3$ with  finite total curvature,
in order to produce test functions for the index operator of $f$.
Chodosh and Maximo~\cite[Theorem 1]{ChMa2}
improved Ros' technique with an enlarged space of harmonic $1$-forms which
admit certain singularities at the ends of $\S$ that take care of
the spinning (multiplicity) of each end of such an immersion $f$,
obtaining a simplified version of~\eqref{eq:CMindex1}
without the term $-2B$. Finally,
Karpukhin~\cite[Proposition 2.3 and Remark 2.4]{Kar1}
included the study of branch points although he made use
of the original space of $L^2(\S)$
harmonic $1$-forms considered by Ros. Formula~\eqref{eq:CMindex1}
is the combined inequality that one can deduce
from~\cite{ChMa2} and~\cite{Kar1}.

\begin{remark}\label{rem3.3}
{\rm
\begin{enumerate}
\item If $\Sigma$ is orientable and the index of $f$ is even,
then all summands in \eqref{eq:CMindex1} except for the $-5$
in the RHS are even. Therefore, the inequality still holds
after adding $1$ to the RHS of~\eqref{eq:CMindex1}.

\item Inequality~\eqref{eq:CMindex1} can be expressed in
a unified way regardless of the orientability character of $\Sigma$,
if we use the Euler characteristic.
Recall that if $\S$ is orientable, then its Euler characteristic is
$\chi=\chi(\S)=2-2g(\S)-e$, while if $\S$ is non-orientable,
the Euler characteristic of its orientable cover is
$\chi(\wt{\S})=2-2g(\wt{\S})-2e$ where $e$ is the number of
ends of $\S$, and so, $\chi=\chi(\S)=1-g(\S)-e$.
Thus, \eqref{eq:CMindex1} reduces to
\begin{equation}
\label{eq:CM1}
3I(f)\geq -\chi +2S+e-2B-3,
\end{equation}
where $S=\sum_{j=1}^{r}d_j$ is the total spinning of
the ends of $f$ (sometimes we will refer to $S$ as the
{\it total spinning} of $f$).
\end{enumerate}
}
\end{remark}

\begin{lemma}
\label{lema3.6}
Let $f\colon \Sigma \la \R^3$ be a complete, connected,
non-flat, finitely branched minimal surface with branch point set
$\cB_{\Sigma}\subset \Sigma$. Then:
\begin{enumerate}
\item If $f$ is stable,  then $\Sigma $ is non-orientable and
$f(\cB_\S)$ contains more than 1 point.

\item If $\S$  non-orientable
with $f(\cB_{\S})$ consisting of at most one point in $\rth$,
then $I(f)\geq 2$;
in particular, if $\S$ has exactly one branch point, then $I(f)\geq 2$.
\end{enumerate}
\end{lemma}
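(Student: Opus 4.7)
The plan is to prove Part~2 first and then deduce Part~1 from Part~2 combined with the classical theorem of Fischer-Colbrie--Schoen (and independently do Carmo--Peng) that every complete, orientable, stable minimal surface in $\R^3$ is a plane, extended to the finitely branched setting.

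For Part~2, suppose $\Sigma$ is non-orientable, $f$ is non-flat and $|f(\mathcal{B}_\Sigma)| \leq 1$. The first step is to combine the Chodosh--Maximo--Karpukhin inequality \eqref{eq:CM1}, namely $3I(f) \geq -\chi + 2S + e - 2B - 3$, with the Jorge--Meeks formula \eqref{lem6.3a} (rewritten as $S - B = \chi + \deg(G)$) and the non-orientable identity $\chi(\Sigma) = 1 - g(\widetilde{\Sigma}) - e$, in order to produce the convenient intermediate bound
\begin{equation*}
3I(f) \geq 2\deg(G) - g(\widetilde{\Sigma}) - 2.
\end{equation*}
Next I would invoke Meeks' classification~\cite{me7} of complete non-orientable minimal surfaces in $\R^3$ of small total curvature: every such non-flat, finitely branched minimal surface satisfies $\deg(G) \geq 3$, with equality attained only by Henneberg's surface, whose two branch points have distinct images in $\R^3$. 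The hypothesis $|f(\mathcal{B}_\Sigma)| \leq 1$ therefore excludes Henneberg and forces $\deg(G) \geq 4$. A case analysis on $g(\widetilde{\Sigma})$ combined with the parity constraint \eqref{lem6.3c}, which gives $\deg(G) \equiv 1 - g(\widetilde{\Sigma}) \pmod{2}$, refines this bound in each topological case (e.g., $g(\widetilde{\Sigma})=0$ forces $\deg(G) \geq 5$, and $g(\widetilde{\Sigma})=1$ forces $\deg(G) \geq 4$), and in each one yields $3I(f) \geq 6$, i.e., $I(f) \geq 2$.

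For Part~1, assume $f$ is stable. If $\Sigma$ were orientable, the extension of the Fischer-Colbrie--Schoen--do Carmo--Peng theorem to the finitely branched orientable setting (via a standard capacity/cutoff argument around the isolated branch points, using that the Jacobi operator is naturally defined on the smooth complement) would force $f$ to be a plane, contradicting non-flatness; hence $\Sigma$ is non-orientable. Next, if $|f(\mathcal{B}_\Sigma)| \leq 1$ held, Part~2 would give $I(f) \geq 2 > 0$, contradicting stability; hence $|f(\mathcal{B}_\Sigma)|$ contains at least two points.

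The step I expect to be the main obstacle is the case analysis for Part~2 when $g(\widetilde{\Sigma}) \geq 3$: in these higher-genus regimes, the CMK inequality combined only with $\deg(G) \geq 3$ produces $I(f) \geq 1$ but not $I(f) \geq 2$. Closing the gap likely requires either finer Meeks-type classification results for complete non-orientable branched minimal surfaces of total curvature below $-6\pi$ (to bound $\deg(G)$ from below in terms of $g(\widetilde{\Sigma})$), or an additional structural argument exploiting that the constraint $|f(\mathcal{B}_\Sigma)| \leq 1$ severely restricts the configuration of branch sheets meeting at the unique image branch point in $\R^3$.
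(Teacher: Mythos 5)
Your proposal has a genuine gap that you yourself flag, and that gap is not closable along the route you propose. Moreover, the intermediate ingredient it leans on is misremembered. Meeks' theorem that a complete non-orientable minimal surface in $\R^3$ of finite total curvature satisfies $\int_\Sigma K \leq -6\pi$ (equivalently $\deg(G)\geq 3$) is a statement about \emph{unbranched} surfaces. Once you allow branch points, the bound fails: plugging Henneberg's surface (one end of spinning $3$, two branch points, $\chi=0$) into the Jorge--Meeks relation \eqref{lem6.3a} gives $\frac{1}{2\pi}\int K = \chi - S + B = 0-3+2 = -1$, so Henneberg is a branched non-orientable example with $|\deg(G)|=1$, not $3$. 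Hence ``exclude Henneberg, therefore $\deg(G)\geq 4$'' has no basis in the branched setting, and the subsequent case analysis does not even get started. Separately, even with $\deg(G)\geq 3$ the bound $3I(f)\geq 2\deg(G)-g(\widetilde\Sigma)-2$ decays linearly in genus, so as you note, the argument collapses for $g(\widetilde\Sigma)\geq 3$; you would need a Meeks-type lower bound on $\deg(G)$ growing with genus, which does not exist in the generality needed here.

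The paper takes a quite different and more economical route that does not invoke any classification. For Part~1, the orientable case is ruled out not by extending Fischer-Colbrie--Schoen to the branched setting but by a Gauss-map covering argument: the extended Gauss map $g\colon\ov{\S}\to\esf^2$ is a branched covering, $\Delta+2$ on $\esf^2$ minus a few small disks has negative first Dirichlet eigenvalue, and pulling back via $g$ produces a compact unstable domain inside $\S\setminus\cB_\S$, contradicting stability. This handles branch points automatically without any capacity argument. The non-orientable stable case is settled by Ros to get $\cB_\S\neq\varnothing$, and then the hypothesis $|f(\cB_\S)|=1$ is killed by a density/monotonicity argument at the single image point: the area density there is at least $B(\S)+l$ (where $l=|\cB_\S|$), monotonicity forces total spinning $S\geq B(\S)+l+1$, and feeding this into \eqref{eq:CMindex1} yields $3I(f)\geq 2l>0$, contradicting stability. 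For Part~2, the unbranched case is handled by directly quoting Chodosh--Maximo's Theorem~1.8; in the branched case, if $I(f)=1$ the estimate \eqref{countingI1} pins down $l=e=1$ and $S=B(\S)+2$, so $S-B(\S)$ is even while $e$ is odd, contradicting the parity relation \eqref{lem6.3c}; and $I(f)=0$ is excluded by Part~1. Both the monotonicity step and the parity step are exactly the extra structural input you were missing, and they replace the classification-dependent case analysis entirely.
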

\begin{proof}
Assume that $f\colon \Sigma \la \R^3$ is stable.
Also, suppose for the moment that $\Sigma $ is orientable. Let $g\colon
\ov{\S}\to \esf^2$ be the Gauss map extended to the
conformal compactification $\ov{\S}=\S \cup \mathcal{E}$ of
$\S$ obtained after adding the set $\mathcal{E}$ of its ends.
Let $\mathcal{C}\subset \ov{\S}$ be the set of branch points of $g$.
Let $\Omega(\varepsilon) \subset \esf^2$ be the complement of
the union of a pairwise disjoint collection of open
$\varepsilon$-disks around the points in the finite set
$g(\mathcal{E}\cup \mathcal{B}_\S\cup \mathcal{C})$.
For $\varepsilon >0$ sufficiently small, the
Schr\"{o}dinger operator $\Delta +2$ has negative first
Dirichlet eigenvalue on $\bf \Omega(\varepsilon)$, where $\Delta$ is
the spherical Laplacian. Since $g|_{g^{-1}(\Omega (\ve))}
\colon g^{-1}(\Omega (\ve))\to \Omega (\ve)$ is a finite
covering, then each component of $g^{-1}(\Omega (\ve))$
is a smooth, compact unstable domain. This contradicts that
$f$ is stable, which proves that $\S$ is non-orientable.
	
Since $\Sigma $ is non-orientable and $f$ is stable, then the main result
in Ros~\cite{ros9} implies that $\mathcal{B}_\S\neq \varnothing$.
To finish the proof of item~1, suppose that $f(\mathcal{B}_\S)$ is a
single point in	$\R^3$ (say the origin) and we will find a
contradiction. The area density of $\Sigma$ at the origin is at
least $B(\S)+l$, where $B(\S)$ is the total branching
order of $f$ and $l$ is the
cardinality of $\mathcal{B}_\S$. Using the monotonicity
formula for minimal surfaces, the total spinning
$S$ of the ends of $f$ is at least $B(\S)+l+1$. Using
(\ref{eq:CMindex1}), since $g(\wt{\S})\geq 0$ and $e\geq 1$,
we have
\begin{align}
3I(f)&\geq g(\wt{\S})+2\sum_{j=1}^{e}(d_j+1)-2B(\S)-4
\nonumber
\\
&\geq 2S+2e-2B(\S)-4
\geq 2S-2B(\S)-2\geq 2l>0,
\label{countingI1}
\end{align}
which contradicts that $\S$ is stable and proves item~1 of
the lemma.
	
To prove item~2, assume that $\S$ is non-orientable and
$f(\cB_\S)$ contains  at most
one  point. If  $f$ is unbranched, then, by Theorem~1.8
in~\cite{ChMa2}, the index of $f$ is at least 2. So assume
that $\mathcal{B}_\S \neq \varnothing$.  If $I(f)=1$, then the calculation
in~\eqref{countingI1} implies $l=e=1$ and the
total spinning $S$ of
the ends of $f$ is $B(\S)+l+1=B(\S)+2$. But this
implies that $S-B(\S)$ is even and $e$ is odd, which
contradicts the last statement of Proposition~\ref{lem6.3}.
Hence, by item~1 of the lemma,  $I(f)\geq 2$.
\end{proof}

\section{Almost flat annular $H$-multi-graphs of bounded multiplicity}

For the next lemma we will need the following notation. For $0<R_1<R_2$, we let
\[
\A (R_1,R_2)=
\{ x\in  \R^3\ | \  R_1\leq |x|\leq R_2\} .
\]
Observe that the statement of the
next Lemma~\ref{lemma5.3} is invariant under homotheties
centered at the origin.
\begin{lemma}
\label{lemma5.3}
Given $\tau \in (0,\pi /10]$ and $L_0>0$, there exists $\a_1\in (0,\tau]$
such that the following property holds. Take $\a \in (0,\a_1]$,
$0<R_1\leq R_2/2$, and a compact immersed annulus
$\Sigma \subset \A (R_1,R_2)$ with $\partial \Sigma \subset
\partial \A (R_1,R_2)$, satisfying the following conditions:
\begin{enumerate}[(B1)]
\item $\Sigma $ makes an angle  greater than or equal to
$\frac{\pi }{2}-\a$ with every sphere $\esf^2(r)$ of radius
$r\in [R_1,R_2]$ centered at the origin.

\item Given $R\in [R_1,R_2/2]$, the image of $\S\cap \A (R,2R)$ through
the Gauss map of $\Sigma$ is contained in the closed spherical
neighborhood of radius $\a$ centered at some point
$v(R)\in \esf^2(1)$.

\item  $\mbox{\rm Length}(\S\cap \esf^2(R_1))<L_0R_1$.
\end{enumerate}
Then there exist $m\in \N$, $m\leq \frac{L_0+1}{2\pi }$, such that for
any $R\in [R_1,R_2/2]$, $\Sigma \cap \A (R,2R)$ consists of
an $m$-valued graph with respect to its projection to the plane
$v(R)^{\perp}$ orthogonal to $v(R)$, of a function $u$ that satisfies
\[
\frac{|u(x)|}{|x|}+|\nabla u|(x)<\frac{\tau}{2}
\]
at every point $x$ in its domain of definition.
Furthermore, for each $R\in [R_1,R_2]$ the following properties hold:
\begin{enumerate}[(C1)]
\item $|\mbox{\rm Length}(\S\cap \esf^2(R))-2\pi mR|<f_1(\a) R$,
where $f_1=f_1(\a)\in (0,\tau]$ is a function that tends to zero
as $\a\to 0$.

\item The intrinsic distance between the two boundary components
of $\S\cap \A (R_1,R)$ is at most $\sqrt{1+\tau^2/4}\ (R-R_1)$.

\item $|\mbox{\rm Area}(\S\cap \A(R_1,R))-\pi m(R^2-R_1^2)|<
f_2(\a) (R-R_1)$, where $f_2=f_2(\a)\in (0,\tau]$
is a function that tends to zero as $\a\to 0$.
\end{enumerate}
\end{lemma}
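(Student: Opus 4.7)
My plan is to use (B2) to represent $\Sigma$ as an almost-flat multi-graph on every dyadic annulus $\mathcal{A}(R,2R)$, and (B1) together with the annular topology to ensure the multiplicity $m$ is the same across all scales; the length bound (B3) then forces $m\leq (L_0+1)/(2\pi)$, and the quantitative conclusions (C1)--(C3) follow from perturbative comparisons with $m$ copies of the flat annulus $\{R_1\leq |x|\leq R_2\}\cap v(R)^\perp$.

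In more detail, I would first choose $\alpha_1\in(0,\tau]$ small enough so that, on each fixed dyadic piece $\Sigma\cap \mathcal{A}(R,2R)$, hypothesis (B2) forces every unit normal to lie in a spherical $\alpha$-cap around $v(R)\in \mathbb{S}^2(1)$. This means the orthogonal projection $\Pi_R\colon\Sigma\cap \mathcal{A}(R,2R)\to v(R)^\perp$ is everywhere a local diffeomorphism, and any local graph $u$ over $v(R)^\perp$ satisfies $|\nabla u|\leq \tan(2\alpha)$. Next I would use (B1), which says $|x|$ has no interior critical points on $\Sigma$ (its gradient has norm $\geq \cos\alpha$ along $\Sigma$), to conclude that each level $\Sigma\cap \mathbb{S}^2(R)$ is a single smooth embedded closed curve, that $\Sigma\cap \mathcal{A}(R,2R)$ is an annular region, and that the radial function foliates $\Sigma$ by smooth circles. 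Because this annular region projects submersively to $v(R)^\perp\setminus\{0\}$ (the image avoids the origin by (B1)+(B2) for $\alpha$ small), the projection is a finite cover of some winding number $m(R)\in\mathbb{N}$; hence $\Sigma\cap \mathcal{A}(R,2R)$ is an $m(R)$-valued graph in the sense of Definition~\ref{DefMulti}.

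The constancy of $m=m(R)$ across the full range $[R_1,R_2/2]$ comes from the fact that $\Sigma$ is connected (an annulus) so the winding number of $\Pi_R(\Sigma\cap \mathbb{S}^2(R))$ about the origin cannot jump as $R$ varies; in overlapping dyadic scales one has $\|v(R)-v(2R)\|\leq 2\alpha$, so the two projections differ by a near-identity rotation and both yield the same $m$. Evaluating at $R=R_1$, the curve $\Sigma\cap \mathbb{S}^2(R_1)$ is, up to an error $o(\alpha)R_1$, a union of $m$ round circles of radius close to $R_1$, so its length is at least $2\pi m R_1-o(\alpha)R_1$; comparing with (B3) gives $2\pi m < L_0+1$ for $\alpha_1$ small, i.e. $m\leq (L_0+1)/(2\pi)$. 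The pointwise bound $\tfrac{|u(x)|}{|x|}+|\nabla u|(x)<\tau/2$ is then obtained by combining $|\nabla u|\leq \tan(2\alpha)$ on each dyadic piece with the fact that on the boundary sphere of each piece the graph already sits at bounded tilt relative to $v(R)^\perp$, integrated along the annulus to control height.

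Finally, for the estimates (C1)--(C3) I would compare $\Sigma\cap \mathcal{A}(R,2R)$ directly with the flat model: since $\Sigma\cap \mathbb{S}^2(R)$ is an $m$-fold near-circle in the $\alpha$-spherical cap around $v(R)^\perp$, standard first-variation formulas give $|\mathrm{Length}(\Sigma\cap \mathbb{S}^2(R))-2\pi m R|\leq f_1(\alpha)R$ with $f_1(\alpha)\to 0$; (B1) ensures $|\nabla^\Sigma |x||\in(\cos\alpha,1]$, hence the intrinsic distance between the two boundary circles of $\Sigma\cap \mathcal{A}(R_1,R)$ is at most $\sec(\alpha)(R-R_1)\leq \sqrt{1+\tau^2/4}\,(R-R_1)$ for $\alpha_1$ small; and the coarea formula applied to $|x|$ on $\Sigma$, together with (C1), yields $|\mathrm{Area}(\Sigma\cap \mathcal{A}(R_1,R))-\pi m (R^2-R_1^2)|\leq f_2(\alpha)(R-R_1)$. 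The main obstacle, in my view, will be the uniform pointwise estimate on $u$ and $\nabla u$ across \emph{all} scales simultaneously: the natural per-scale estimate gives $|\nabla u|\leq \tan(2\alpha)$ relative to the scale-dependent plane $v(R)^\perp$, and one must carefully track how $v(R)$ rotates with $R$ to ensure that the single inequality $\tfrac{|u(x)|}{|x|}+|\nabla u|(x)<\tau/2$ holds globally on each annular piece, rather than just up to a scale-dependent rotation of the reference plane.
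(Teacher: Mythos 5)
Your proposal follows essentially the same route as the paper's proof: use (B2) to realize $\Sigma\cap\A(R,2R)$ as an almost-flat multi-graph over $v(R)^\perp$ with small $|\nabla u|$, use (B1) to make $|u|/|x|$ small and to bound the tilt of intrinsic paths relative to $\partial_r$ (the paper argues via planar slices $\Pi_x$ and polar coordinates, you via $|\nabla^\Sigma|x||\geq\cos\alpha$, but these are equivalent), use (B3) on $\esf^2(R_1)$ to pin down $m\leq(L_0+1)/(2\pi)$, iterate over dyadic scales, and finish (C3) via the coarea formula. The one thing worth flagging is that the "main obstacle" you identify at the end is not actually an obstacle: the statement of the lemma only asserts the graph estimate \emph{per dyadic annulus} over the $R$-dependent plane $v(R)^\perp$, not a uniform estimate over a single fixed plane, so no global tracking of the rotation of $v(R)$ is required — each scale is handled independently, exactly as the paper's inductive iteration does.
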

\begin{proof}
The first step in the proof consists of showing that
for $\tau \in (0,\pi/10]$ and $L_0>0$ given,
items (C1) and (C2) hold in the range $R\in [R_1,2R_1]$
for some choice of $m\in \N$, $m\leq \frac{L_0+1}{2\pi }$
depending on a compact immersed annulus $\Sigma$ satisfying
(B1), (B2), (B3) provided that $0<\a \leq \a_1$ and $\a_1$ is
sufficiently small.

Observe that if $0<\a \leq \a_1<\pi /4$, condition (B2) above for $R=R_1$
implies that $\S\cap \A (R_1,2R_1)$ is a multi-graph with respect
to its projection to the plane $v(R_1)^{\perp}$.  We call
$u$ to the related graphing function, and $m\in \N$ to
its multiplicity.
Taking $\a_1$ sufficiently small, (B2) guarantees that $|\nabla u|$
can be made arbitrarily small. By condition (B1),
the almost orthogonality of $\S$ with spheres
$\esf^2(R)$ with $R\in [R_1,2R_1]$
implies that if $\a_1$ is sufficiently small, we have that
$\frac{|u(x)|}{|x|}$ can also be made arbitrarily small. In particular,
we have that $\frac{|u(x)|}{|x|}+|\nabla u|(x)<\frac{\tau}{2}$ in
$\S \cap \A (R_1,2R_1)$ if $\a_1$ is sufficiently small
in terms of $\tau$.
Similar arguments show that the length of $\S\cap \esf^2(R_1)$ differs
from $2\pi mR_1$ by a function of $\a$ that tends to zero as $\a \to 0$
(in particular, $2\pi m\leq L_0+1$ provided that $\a_1$ is sufficiently
small). Thus (C1) holds in $[R_1,2R_1]$
for some function $f_1(\a)$ that tends to zero as $\a \to 0$.

Regarding the validity of (C2) in the range $[R_1,2R_1]$,
given $R\in [R_1,2R_1]$ and given a point $x\in \S\cap \esf^2(R_1)$,
let $\Pi_x\subset \R^3$ be the plane
passing through the origin that contains
both $v(R_1)$ and $x$; without loss of generality,  assume
$\Pi_x$ is the $(x_1,x_3)$-plane and $v(R_1)=(0,0,1)$.
Let $\G$ be the component of
$\S\cap \A(R_1,R)\cap \Pi_x$ that passes through $x$ and note that
$\G$ is a smooth embedded arc that can be parameterized
using polar coordinates in $\Pi_x$ by $\G(r)=(r,\theta(r))$,
$r\in [R_1,R]$.
Next assume that $\a_1$ is chosen less than or equal to $\arcsin(\tau/2)$
and we will prove that (C2) holds.
Property (B1) implies that the angle between $\G'(r)$
and the radial outward pointing unit vector field $\partial_r$ is at most
$\a_1$, which implies $|\G'(r)|\leq \sqrt{1+\sin^2(\a_1)}\leq
\sqrt{1+\tau^2/4}$.  Therefore, (C2) holds in $[R_1,2R_1]$.

The second step in the proof consists of demonstrating that (C1) and (C2)
hold for every $R\in [R_1,R_2]$. To see this, it suffices to
iterate a finite number of times the above arguments
replacing $R_1$ by $2R_1$, $4R_1,\ldots , 2^kR_1$,
where $k\in \N$ is the first positive integer such
that $2^kR_1>R_2/2$. Then we conclude that
(C1) and (C2) hold in $[R_1,R_2/2]$, and by iterating
once again replacing $R_1$ by $R_2/2$ we get
that (C1) and (C2) hold in $[R_1,R_2]$.

Finally, (C3) holds for every $R\in [R_1,R_2]$ by (C1) and the co-area
formula.
\end{proof}

\begin{remark}
\label{rem5.4}
{\rm
Since the statement of Lemma~\ref{lemma5.3} is invariant under
re-scalings of the ambient metric, we conclude that
last lemma holds if we replace the ambient space $\R^3$ by a
sufficiently small closed geodesic ball
$\ov{B}_{X}(x,R_2)$ centered at any point $x\in X$ of radius
$R_2\in (0,\ve_0/2)$
(using harmonic coordinates, see Definition~\ref{defharm}
and recall that $\ve_0>0$ is a lower bound for Inj$(X)$)
in the Riemannian 3-manifold $X$, with the following changes:
\begin{enumerate}[(D1)]
\item We replace the notion of Gauss map in hypothesis (B2)
of Lemma~\ref{lemma5.3} by parallel translation
of the unit normal vector to $\Sigma$
at a point $q\in \S\cap \ov{B}_{X}(x,R_2)$ along the corresponding
radial geodesic arc joining the point $x$ to $q$.
\item We replace the upper bound in conclusion (C2) of
Lemma~\ref{lemma5.3}
by $\sqrt{1+\tau^2/3}$ times the extrinsic distance in $X$ between
the two boundary components of $ [\ov{B}_{X}(x,R_2)
\setminus B_{X}(x,R_1)]$ (here $0<R_1\leq R_2/2$).
\end{enumerate}
}
\end{remark}

\begin{definition}
\label{def2.5}
{\rm
Fix $\tau \in (0,\pi /10]$. Let $\de_2\in (0,\ve_0]$  be such
that Remark~\ref{rem5.4} holds for any choice of extrinsic
radii $R_1,R_2$ with $0<R_1\leq R_2/2<R_2\leq \de_2$
in  $X$. Fix such $R_1,R_2$, choose  $\tau_1\in (0,\tau]$,
and let $m\in \N$ be an  integer to be fixed later. Given
$x\in X$, we consider the collection
\[
{\mathcal G}(x;R_1,R_2,\tau_1,m)
\]
of multi-graphical (immersed) $H$-annuli
$G\subset \ov{B}_X(x,R_2)\setminus B_X(x,R_1)$  (here $H\in [0,H_0]$)
with multiplicity $m(G)\leq m$, such that $G$ is ``almost flat''
in terms of $\tau_1$, in the sense
that $G$ satisfies the following properties:
\begin{enumerate}[(E1)]
\item $G$ is an immersed $H$-annulus in $X$, whose boundary
$\partial G\subset \partial B_X(x,R_1)\cup \partial B_X(x,R_2)$ consists
of two closed curves, one on each ambient geodesic sphere, and $G$
is the graph over its projection  to a
``planar'' disk $P=\varphi_x(\D_{2R_2})$ (this map $\varphi_x$
gives harmonic coordinates
around $x$), where $\D_{2R_2}\subset T_xX$ is a planar disk of
radius $2R_2$ centered at the origin in $T_xX$,
of a function $u$ defined on a domain $\Omega$ of the
$m(G)$-sheeted cover of the annulus $\D_{2R_2}\setminus\{ 0\} $.
\item Given $y$ in $P$, denote by $|y|$ the distance to the
point $x$ in the ambient metric of $X$.
Then, the graphing function that defines $G$ satisfies
\[
\frac{|u(y)|}{|y|}+|\nabla u|(y)\leq \tau_1 \quad \mbox{in $\Omega $}.
\]
\end{enumerate}
}
\end{definition}

\begin{lemma}
\label{ass4.4'}
%Given $A_2>0$,
In the situation of Definition~\ref{def2.5},
there exist $\de_3\in (0,\de_2]$ and $\tau_1\in (0,\tau]$
such that for every $r\in (0,\de_3]$ and
$G\in \mathcal{G}(x;r/2,r,\tau_1,m)$,
the geodesic curvature function of
$G\cap B_X(x,\frac{3}{4}r)$ along its intersection with
$\partial B_X(x,\frac{3}{4}r)$
is everywhere positive and its integral, the total geodesic curvature
$\kappa (G)$, satisfies
\begin{equation}
\label{eq:aa}
\left| \kappa (G)-2\pi m(G)\right| \leq \frac{\tau}{m}.
\end{equation}
Furthermore, every such graph $G$ is stable.
\end{lemma}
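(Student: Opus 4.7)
The plan is to argue by a blow-up contradiction. Suppose the lemma fails: there exist sequences $r_n\downarrow 0$, $\tau_{1,n}\downarrow 0$, Riemannian $3$-manifolds $X_n$ with $\Inj(X_n)\geq \ve_0$ and absolute sectional curvature bounded by $K_0$, points $x_n\in X_n$, constants $H_n\in[0,H_0]$, and annular multi-graphs $G_n\in \cG(x_n;r_n/2,r_n,\tau_{1,n},m)$ of constant mean curvature $H_n$, each failing at least one of the three conclusions. After passing to a subsequence I may assume the multiplicity $m(G_n)=m_0\in\{1,\ldots,m\}$ is constant in $n$ and that the same one of the three conclusions fails for every $n$.

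Next, I blow up by the factor $1/r_n$. Pulling back the rescaled metric $\wt g_n=r_n^{-2}g_n$ by the harmonic chart $\varphi_{x_n}$ from Definition~\ref{defharm} and using the uniform lower bound on the $C^{1,\a}$-harmonic radius established in Hebey--Herzlich~\cite{hehe}, I identify the ball of radius $1$ around $x_n$ in the rescaled metric with the standard Euclidean unit ball endowed with a Riemannian metric that converges in $C^{1,\a}$ to the flat one as $r_n\to 0$. In this picture the rescaled surface $\wt G_n$ is an annular multi-graph of multiplicity $m_0$ over the planar annulus $\{1/2\leq|y|\leq 1\}$, whose graphing function $\wt u_n$ satisfies $|\wt u_n(y)|/|y|+|\nabla \wt u_n(y)|\leq \tau_{1,n}\to 0$ and whose mean curvature equals $\wt H_n=r_n H_n\to 0$. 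Iterated interior Schauder estimates for the quasilinear elliptic $H$-surface equation then upgrade the $C^1$-smallness of $\wt u_n$ to $C^k_{\rm loc}$-smallness on any compactly contained sub-annulus for every $k\in \N$, so that along a subsequence $\wt G_n$ converges smoothly on compact subsets of the open annulus to the flat $m_0$-valued planar annular graph $\wt G_\infty$.

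I now read the three conclusions off the limit. The intersection of $\wt G_\infty$ with the Euclidean sphere of radius $3/4$ consists of $m_0$ copies of the planar circle of radius $3/4$, with pointwise geodesic curvature $4/3>0$ and integrated total geodesic curvature $2\pi m_0$. By the smooth convergence, for all sufficiently large $n$ the geodesic curvature of $\wt G_n$ along its intersection with the rescaled geodesic sphere of radius $3/4$ is strictly positive and its integral lies within $\tau/m$ of $2\pi m_0$. Since total geodesic curvature is scale-invariant and $m(G_n)=m_0$, this rules out the failure of either positivity or of \eqref{eq:aa}. For the stability statement, on $\wt G_\infty$ the Jacobi operator reduces to the flat Laplacian on an $m_0$-fold annular cover, whose first Dirichlet eigenvalue $\lambda_1$ is a strictly positive constant; combined with the uniform decay $|\wt A_{\wt G_n}|^2+\Ric_{\wt g_n}(\wt N_n,\wt N_n)\to 0$ and the smooth convergence, the first Dirichlet eigenvalue of the Jacobi operator on $\wt G_n$ converges to $\lambda_1$ and is strictly positive for all large $n$. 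As the sign of this eigenvalue is unchanged under rescaling, $G_n$ is stable, contradicting the third hypothesized failure.

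The main technical obstacle is justifying the smooth convergence $\wt G_n\to \wt G_\infty$, since both the geodesic-curvature computation and the spectral convergence require genuine $C^2$-control on $\wt u_n$, whereas the definition of $\cG(x;R_1,R_2,\tau_1,m)$ only provides $C^1$-control. This is resolved by the observation that once $|\nabla \wt u_n|$ is uniformly small, the $H$-surface equation is uniformly quasilinear elliptic in coefficients of bounded $C^{1,\a}$ norm inherited from the harmonic chart, so standard Schauder theory yields the required interior $C^{k,\a}$ bounds and smooth subsequential convergence to $\wt u_\infty\equiv 0$ follows.
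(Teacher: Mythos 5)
Your proof is correct in its essentials and follows the same blow-up-by-$1/r_n$ contradiction structure as the paper, but the technical engine you use for obtaining smooth subsequential convergence is genuinely different. The paper first asserts stability of $G_n^*=\frac{1}{r_n}G_n$ for $n$ large and then invokes the curvature estimate for stable CMC surfaces (Theorem~\ref{stableestim1s} in Appendix A) to bound $|A_{G_n^*}|$ on the interior sub-annulus $A_n^*(\frac{5}{8},\frac{7}{8})$, which combined with $\tau_n\to 0$ forces smooth convergence to a flat multi-graph $G^*$. You instead bypass the stable curvature estimate entirely and derive the same smooth convergence directly from interior Schauder estimates for the quasilinear CMC graph equation, using the $C^1$-smallness supplied by condition (E2); you then read off all three conclusions (positivity of geodesic curvature, the estimate \eqref{eq:aa}, and stability) from the limit. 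Your route is more elementary and self-contained, while the paper's is more aligned with the machinery it develops elsewhere; both are legitimate, and the geodesic curvature part is handled essentially identically once smooth convergence is in hand.

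Two points deserve scrutiny. First, the uniform $C^{1,\alpha}$-control on the ambient metric from Definition~\ref{defharm} makes the coefficients of the graph equation only $C^{0,\alpha}$, so Schauder delivers uniform $C^{2,\alpha}$ bounds on $\wt u_n$, not $C^k_{\rm loc}$-smallness for every $k$ as you state; this is harmless because $C^{2,\alpha}$ is exactly what is needed to control $|A|$, the geodesic curvature, and the Jacobi potential. Second, and more substantively, your stability argument invokes uniform decay of $|\wt A_{\wt G_n}|^2+\Ric_{\wt g_n}(\wt N_n,\wt N_n)$ and spectral convergence of the Jacobi operator \emph{on $\wt G_n$}, but interior Schauder estimates only control the potential on compact sub-annuli of $\{1/2<|y|<1\}$, not up to $\partial G_n$. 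As stated, this controls the first Dirichlet eigenvalue of compactly contained sub-pieces but does not by itself rule out a concentration of negative Rayleigh quotient near $\partial G_n$, where you have only $C^1$ (not $C^2$) a priori control. This is the one place where a step would need to be filled in (e.g.\ by a boundary regularity argument, or by an auxiliary positive almost-Jacobi function of the form $\langle N,v\rangle$ as in the classical stability argument for graphs). It is worth noting, however, that the paper's own treatment of this point is at least as terse: it states ``For $n$ sufficiently large, $G_n^*$ is stable (and $G_n$ as well)'' without elaboration, so you are not at a disadvantage relative to the published argument, but if you rely on spectral convergence you owe the reader the boundary step.
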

\begin{proof}
Suppose that $G_n\in \mathcal{G}(x;r_n/2,r_n,\tau_n,m)$ has
$(r_n,\tau_n)\to (0,0)$.
Since $\tau_n\to 0$, the image of the ``Gauss map'' of $G_n$
(in the sense of item~(D1) in Remark~\ref{rem5.4})
is arbitrarily small. After re-scaling the ambient metric
on $X$ by $1/r_n$, we find related multi-graphs
$G_n^*=\frac{1}{r_n}G_n$ with constant mean curvature
(which is arbitrarily small if $n$ is taken sufficiently large).
For $n$ sufficiently
large, $G_n^*$ is stable (and $G_n$ as well).
This implies that there exist $\de'_3\in (0,\de_2]$
and $\tau'_1\in (0,\tau]$ such that for every
$r\in (0,\de'_3]$ and $G\in \mathcal{G}(x;r/2,r,\tau'_1,m)$,
$G$ is stable.

From this point on, we will additionally assume that $r_n\in (0,\de'_3]$,
$\tau_n\in (0,\tau'_1]$
while (\ref{eq:aa}) fails to hold for each $n$.
Curvature estimates for stable constant mean curvature surfaces then
imply that there exists $C>0$ (independent
 of $n$)
such that the norm of the second fundamental form of the intersection
of $G_n^*$ with
$A_n^*(\frac{5}{8},\frac{7}{8})$ is less than $C$ for all $n$, where
\[
{\textstyle
A_n^*(\frac{5}{8},\frac{7}{8}):=\frac{1}{r_n}[\ov{B}_X(x,\frac{7}{8}r_n))
\setminus B_X(x,\frac{5}{8}r_n)].}
\]
Since $\tau_n\to 0$, we conclude that the
$G_n^*\cap A_n^*(\frac{5}{8},\frac{7}{8})$
converge as $n\to \infty $ to a flat multi-graph $G^*$ in $\R^3$
over the annulus of inner radius $5/8$ and outer radius $7/8$
(and the convergence $G_n^*\to G^*$ is smooth in the interior of $G^*$),
with some multiplicity $m^*$
at most $m$ (thus the multiplicity $m(G_n)$ of $G_n$ equals $m^*$ for $n$
large enough). Clearly, the total geodesic
curvature of $G^*$ along its intersection with the
sphere $\partial \B (3/4)$ is $2\pi m^*$. Since the
convergence of the $G_n^*$ to $G^*$ is smooth in Int$(G^*)$, we have
that $\kappa (G_n)=\kappa (G_n^*)$ converges as $n\to \infty $ to
$2\pi m^*$,
which equals $2\pi m(G_n)$ for $n$ large enough. Since $\frac{\tau}{m}>0$,
then (\ref{eq:aa}) holds for $n$ large enough, which is contrary to our
hypothesis, and so, the lemma is proved.
\end{proof}

\begin{definition}
\label{def3.6}
{\rm
Fix $L_0>0$ and $m\in \N$, $m\leq \frac{L_0+1}{2\pi }$.
Let $\a_1=\a_1(L_0)\in (0,\tau]$
be the value given by Lemma~\ref{lemma5.3} (recall that
$\tau \in (0,\pi /10]$ is fixed).
Choose $\de_3\in (0,\de_2]$, $\tau_1\in (0,\a_1]$ given by
Lemma~\ref{ass4.4'} such that (\ref{eq:aa}) holds for
every $G\in \mathcal{G}(x;\de_3/2,\de_3,\tau_1,m)$.
}
\end{definition}
Observe that both $\de_3$ and $\tau_1$ depend on the values of $L_0$
and $m$. We will describe later how to choose $L_0,m$ that give rise to 
$\de_3,\tau_1$ by Lemma~\ref{ass4.4'}, in order  to define the values of $\de _1,\de $
that appear in Theorem~\ref{mainStructure}.

\section{The proof of  Structure Theorem~\ref{mainStructure}}
\label{sec3}

Consider numbers $\ve_0>0$, $K_0,H_0,
A_0\in [0,\infty)$, $I\in \N\cup \{0\}$ and
$\tau \in (0,\pi /10]$ and let $\L=\L(I, H_0,\ve_0,A_0,K_0)$ be the
space of CMC immersions given in Definition~\ref{def:L}.

\subsection{The case of uniformly bounded second fundamental form
and the proof of item~(E) of Theorem~\ref{mainStructure}
in the general case}
\label{subsecbdedA}

Suppose that the norms of the second fundamental forms of all
immersions $F\in \Lambda $ are bounded by
a constant $A_1$ independent of $F$ (clearly one can assume
$A_1\geq A_0$).
In this case, Theorem~\ref{mainStructure} holds with the choices  $k=0$
(there are no radii $r_F(i)$ or components $\Delta _i$),
$2\de _1=\de =\de _3$ (this $\de_3$ is given by
Definition~\ref{def3.6} for $(L_0,m)=(2\pi+1,1)$),
and $M=\widetilde{M}$, because:
\begin{enumerate}[(F1)]
\item Items 1, 2, (A), (B) and (D)
of Theorem~\ref{mainStructure} are vacuous.
\item Item 3 of Theorem~\ref{mainStructure} holds by assumption and
item (C) reduces to $g(M)=g(\widetilde{M})$.
\item  We next prove that item~(E) of
Theorem~\ref{mainStructure} holds without the assumption
that the norms of the second fundamental forms of all
immersions $F\in \Lambda $ are bounded by a constant $A_1$
independent of $F$; this will complete the proof of item~(E)
in the general case. In order to
find the constant $C=C(\ve_0,K_0,H_0)>0$
that satisfies item~(E), we distinguish two cases.

\begin{enumerate}[(F3.A)]
\item First suppose that $\partial M\neq \varnothing$.
By item~(A2) in the definition of $\Lambda$,
there exists a point $p_0\in \Int(M)$ such that
$B_M(p_0,\ve_0)$ is contained in the interior of $M$. By
inequality~(\ref{yaulemma1}) in Proposition~\ref{yau},
\begin{equation}
\label{eq:boundary1}
\mathrm{Area}(M)\geq \mathrm{Area}(B_M(p_0,\ve_0))\geq C_A\ve_0,
\end{equation}
where the constants $r_2=r_2(\ve_0,K_0,H_0)>0$,
$C_A=\min \{ \ve_0,\frac{r_2^2}{\ve_0}\} >0$ are given by
Proposition~\ref{yau}.
Given any $y\in M$ such that $d_M(y,\partial M)\geq \ve_0$,
then  (\ref{yaulemma2}) in Proposition~\ref{yau} gives
\begin{equation}
\label{eq:boundary}
\mathrm{Area}(M)\geq \mathrm{Area}(B_M(y,d_M(y,\partial M)))\geq
C_A\, d_M(y,\partial M).
\end{equation}
Define $C_0=\min \{ C_A\ve_0,C_A\}>0$, which
only depends $\ve_0,K_0,H_0$ but not on $I$. We claim that
\begin{equation}
\label{eq:boundary2}
\mbox{\rm Area}(M)\geq C_0\max\{1, \mbox{Radius}(M)\}
\end{equation}
(which proves item~\eqref{itF} of Theorem~\ref{mainStructure}
in this case (F3.A)):
if $\mbox{Radius}(M)\leq 1$,
then our claim follows from~\eqref{eq:boundary1}.
If $\mbox{Radius}(M)>1$, then
our claim follows from~\eqref{eq:boundary} since
$\mbox{Radius}(M)=\sup \{ d_M(y,\partial M) \mid  d_M(y,\partial M)
\geq \ve_0\}$.

\item  Next assume that $\partial M=\varnothing$.  Since the
sectional curvature of $X$ is bounded from above by $K_0$,
then the Ricci curvature of $X$ is bounded from above by
$2K_0$.  It follows that there exists
$\ve_1=\ve_1(K_0,H_0)>0$ such that for any point $x\in X$,
the geodesic spheres of radius at most $\ve_1$ are embedded
with mean curvature greater than  $H_0$. By the
mean curvature comparison principle, for any point $p\in M$,
there is a least one other point $q\in M$
such that the extrinsic distance
$d_X(F(p),F(q))>\ve_1$, and hence,
the intrinsic distance $d_M(p,q)> \ve_1$.
Define
\[
C_A^1=\min \left\{ \ve_1,\frac{r_2^2}{\ve_1}\right\}>0,
\]
where $r_2=r_2(\ve_0,K_0,H_0)>0$ is given by Proposition~\ref{yau},
and let
\[
C_1=\min \{ C_A^1\ve_1,C_A^1\}.
\]
Observe that $C_A^1,C_1$ depend only on $\ve_0,K_0,H_0$ but
not on $I$. We claim that
\begin{equation}
\label{withboundary}
\mbox{\rm Area}(M)\geq C_1\max\{1,
\mbox{\rm Diameter}(M)\}
\end{equation}
(which proves item~\ref{itF} of Theorem~\ref{mainStructure}
in this case (F3.B)).

To prove that \eqref{withboundary} holds, first note that if
$\mbox{\rm Diameter}(M)=\infty$,
then $M$ is non-compact and it has infinite area by
Corollary~\ref{corol11.6}.

Assume now that $ \mbox{\rm Diameter}(M)<\infty $. Since
$M$ is compact, the Hopf-Rinow theorem ensures that there exist points
$p,q\in M$ such that $\mbox{Diameter}(M)=d_M(p,q)$.
Notice that for $n\in \N$ such that $ \mbox{\rm Diameter}(M) >\frac1n $,
the triangle inequality implies
\[
{\textstyle
\mbox{\rm Diameter}(M) -\frac1n = \mbox{\rm Radius}
(M\setminus B_M(q,\frac1n)),}
\]
and so,
\begin{equation}
\mbox{\rm Diameter}(M)=\lim_{n\to\infty}\mbox{\rm Radius}
(M\setminus B_M(q,\textstyle{\frac1n})).
\label{7.5}
\end{equation}
By our choice of $\ve_1$ and for $n$ sufficiently large,
the point $p\in M\setminus B_M(q,\frac1n)$ is at distance at least
$\ve_1$ from $\partial(M\setminus B_M(q,\frac1n))$, and so,
in this case the restriction of the immersion $F\colon M\la X$ to
$M\setminus B_M(q,\frac1n)$ satisfies the hypotheses of
Proposition~\ref{yau}. Therefore, by Proposition~\ref{yau}
and \eqref{eq:boundary2} with $C_0$ replaced by $C_1$,
\begin{equation}
\mbox{\rm Area}(M\setminus B_M(q,\textstyle{\frac1n}))\geq
C_1\max\{ 1, \mbox{\rm Radius}
(M\setminus B_M(q,\textstyle{\frac1n}))\},
\label{7.6}
\end{equation}
hence
\begin{eqnarray}
\mbox{\rm Area}(M)&=&\lim_{n\to \infty}\mbox{\rm Area}
(M\setminus B_M(q,\textstyle{\frac1n}))\nonumber
\\
&\stackrel{\eqref{7.6}}{\geq} &\lim_{n\to\infty}
C_1\max\{1, \mbox{\rm Radius}(M\setminus B_M(q,\textstyle{\frac1n}))\}
\nonumber
\\
&\stackrel{\eqref{7.5}}{=}&C_1\max\{1,\mbox{\rm Diameter}(M)\},\nonumber
\end{eqnarray}
which proves  \eqref{withboundary} holds.
\end{enumerate}
From (F3.A) and (F3.B) we deduce that
item~(E) of Theorem~\ref{mainStructure} holds for the value
$C=\min\{C_0,C_1\}$, regardless of whether or not the 
norms of the second fundamental forms of all
immersions $F\in \Lambda $ are bounded.
\end{enumerate}

In the sequel we will assume that there is no uniform bound
for the norms of the second fundamental forms of surfaces in $\Lambda $.

\subsection{Stable pieces of $H$-surfaces in $\L$ and their
curvature estimate}
\label{sec5.2}
By Theorem~\ref{stableestim1s} and with the notation there,
there exists a universal constant $C_s>0$ such that given
a stable $H$-immersion $F\colon M\la  X$,
\begin{equation}
\label{eqstablecurvestim2}
|A_{M}|(p)\leq \frac{C_s}{\min \{ \ve_0,d_{M}
(p,\partial M),\frac{\pi}{2\sqrt{K_0}}\} },\quad \forall p\in M.
\end{equation}

Define $ \wh{C}_s\colon (0,\ve_0]\to (0,\infty )$ by
\begin{equation}
\label{defC}
\wh{C}_s(\ve)=1+\max\{ A_0,\frac{2C_s}{\min \{ \ve ,
\frac{\pi }{\sqrt{K_0}}\} } \} ,\quad \ve\in (0,\ve_0] .
\end{equation}
It follows that if $F\colon M\la  X$ lies in
$\L=\L(I, H_0,\ve_0,A_0,K_0)$ and $p\in M$
satisfies $|A_M|(p)>\wh{C}_s(\ve)$, then $p\in  U(\partial
M,\ve_0 ,\infty)$ and the intrinsic ball
centered at $p$ of radius $\ve/2$ is unstable.

\begin{lemma}
\label{ass4.4}
Let $F\colon M\la X$ be an element in $\Lambda $
and $\ve \in (0,\ve_0]$ such that $\sup |A_M|
>\wh{C}_s(\ve)$. Then, there exists a finite subset
$\{ q_1,\ldots ,q_k\} \subset U(\partial M,\ve_0 ,\infty) $
with $1\leq k=k(F)\leq I$, such that
\begin{enumerate}
\item $|A_M|$ achieves its maximum in $M$ at $q_1$, and for
$i=2,\ldots ,k$, $|A_M|$ achieves its maximum in
$M\setminus [B_M(q_1,\ve)\cup \ldots \cup
B_M(q_{i-1},\ve)]$ at $q_i$.
\item For each $i=1,\ldots ,k$, $|A_M|(q_i)>\wh{C}_s(\ve)$,
and so, the pairwise disjoint intrinsic
balls $B_M(q_i,\ve/2)$ are unstable.
\item $|A_M|\leq \wh{C}_s(\ve)$ in $M\setminus
[B_M(q_1,\ve)\cup \ldots \cup B_M(q_{k},\ve)]$,
and so, $|A_M|$ is bounded on $M$.
\end{enumerate}
\end{lemma}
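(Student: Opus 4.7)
The plan is to iteratively select points $q_i$ at which $|A_M|$ attains the maximum over the complement of previously-removed $\ve$-balls, using the curvature estimate \eqref{eqstablecurvestim2} to certify that each such ball is unstable, and the finite-index hypothesis to force the process to terminate after at most $I$ steps.

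First I would record the two key consequences of the definition \eqref{defC} of $\wh{C}_s(\ve)$: if $p\in M$ satisfies $|A_M|(p)>\wh{C}_s(\ve)$, then $p$ must lie at distance greater than $\ve_0$ from $\partial M$ (else (A4) gives $|A_M|(p)\leq A_0\leq \wh{C}_s(\ve)-1$), and $F|_{B_M(p,\ve/2)}$ cannot be stable, since otherwise \eqref{eqstablecurvestim2} with $d_{B_M(p,\ve/2)}(p,\partial B_M(p,\ve/2))=\ve/2$ and $\ve\leq \ve_0$ would force $|A_M|(p)\leq 2C_s/\min\{\ve,\pi/\sqrt{K_0}\}\leq \wh{C}_s(\ve)-1$. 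This delivers the unstable-ball part of item~2 once the $q_i$ are chosen and also places them automatically in $U(\partial M,\ve_0,\infty)$.

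Next I would show that $\mu:=\sup_M|A_M|$ is finite and attained. Since $M$ is complete, Hopf--Rinow implies $|A_M|$ is bounded on every closed intrinsic ball; hence if $\mu=\infty$, a greedy extraction among points with $|A_M|\to\infty$ produces an infinite sequence with pairwise intrinsic distances $\geq \ve$, contradicting $\mbox{Index}(F)\leq I$ via infinitely many pairwise disjoint unstable balls. Once $\mu<\infty$, the same dichotomy forces every maximizing sequence to sub-converge in $M$ (otherwise one again extracts an $\ve$-separated subsequence with $|A_M|>\wh{C}_s(\ve)$, the same contradiction), and the limit point is designated $q_1$.

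I would then iterate: with $M_i:=M\setminus\bigcup_{j\leq i}B_M(q_j,\ve)$, if $\sup_{M_i}|A_M|>\wh{C}_s(\ve)$ the same attainment argument applied to the closed subset $M_i\subset M$ produces $q_{i+1}\in M_i$ realizing this supremum, and every such new unstable ball $B_M(q_{i+1},\ve/2)$ is automatically disjoint from the previously chosen ones because $d_M(q_{i+1},q_j)\geq \ve$ for $j\leq i$. The finite-index hypothesis forces termination at some $k\leq I$, and the stopping condition is precisely item~3; together with $|A_M|(q_1)=\mu$, this bounds $|A_M|$ on all of $M$. The subtle step, and main obstacle, is the attainment of the supremum at each stage: there the finite-index hypothesis is used most delicately, by translating non-attainment into an $\ve$-separated concentration of $|A_M|$ that violates $\mbox{Index}(F)\leq I$.
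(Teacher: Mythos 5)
Your proposal is correct and follows essentially the same route as the paper: link large $|A_M|$ to instability of small balls via the stable curvature estimate, iterate over complements of $\ve$-balls, and let the index bound terminate the process. The one (minor) difference is in how attainment of the maxima is handled: you argue directly that a non-subconvergent maximizing sequence would yield arbitrarily many $\ve$-separated unstable balls, whereas the paper first runs the greedy loop with points $q'_1,\ldots,q'_{k'}$ that merely satisfy $|A_M|(q'_i)>\wh{C}_s(\ve)$ (not necessarily maxima), deduces that $|A_M|\le\wh{C}_s(\ve)$ off the compact set $\bigcup_i\ov{B}_M(q'_i,\ve)$, and only then re-selects the $q_i$ as true maxima on nested compact sets — a slightly cleaner two-pass version of the same idea.
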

\begin{proof}
Since $\sup |A_M|>\wh{C}_s(\ve)$, we can find $q'_1\in M$
such that $|A_M|(q_1')>\wh{C}_s(\ve)$. In particular, the
intrinsic disk $B_M(q_1',\ve/2)$ is unstable. We now
distinguish two possibilities: if $|A_M|\leq \wh{C}_s(\ve)$
on $M\setminus B_M(q_1',\ve)$, then $|A_M|$ is globally
bounded on $M$. Otherwise, there exists
$q'_2\in M\setminus B_M(q_1',\ve)$ such that $|A_M|(q_2')
>\wh{C}_s(\ve)$. In particular, $B_M(q_2',\ve/2)$ is
unstable. Observe that $B_M(q_1',\ve/2)$, $B_M(q'_2,\ve/2)$
are disjoint. Again we discuss two possibilities depending
on whether or not $|A_M|\leq \wh{C}_s(\ve) $ on $M\setminus
[B_M(q_1',\ve)\cup B_M(q_2',\ve)]$. In the first case,
$|A_M|$ is bounded on $M$; in the second case, we repeat
the argument of finding a point $q_3'\in M\setminus
[B_M(q_1',\ve)\cup B_M(q_2',\ve)]$ such that
$|A_M|(q_3')>\wh{C}_s(\ve)$, $B_M(q_3',\ve/2)$ is unstable
and the collection $\{B_M(q_i',\ve/2)\ | \ i=1,2,3\}$ is
pairwise disjoint. Since the index of $F$ is at most $I$,
we cannot repeat this process of finding pairwise disjoint
unstable domains more than $I$ times, say that we can do it
$k'\leq I$ times. Therefore, we conclude that $|A_M|\leq
\wh{C}_s(\ve)$ in $M\setminus [B_M(q'_1,\ve)\cup \ldots
\cup B_M(q'_{k'},\ve)]$; in particular $|A_M|$ is bounded
on $M$. We next replace $q'_1$ by a maximum $q_1$ of
$|A_M|$ in $M$ (which occurs in the compact set
$\ov{B}_M(q'_1,\ve)\cup \ldots \cup
\ov{B}_M(q'_{k'},\ve)$), $q'_2$ by a maximum $q_2$ of
$|A_M|$ in $W_1=[\ov{B}_M(q'_1,\ve)\cup \ldots \cup
\ov{B}_M(q'_{k'},\ve)]\setminus B_M(q_1,\ve)$
(if  $|A_M|$ restricted to $W_1$ is greater than
$\wh{C}_s(\ve)$), and repeat the process to obtain a finite
set of points $\{q_1,\ldots, q_k\}$. Observe that the
number $k$ of these points cannot be greater than $I$. Now
the lemma holds.
\end{proof}

\subsection{Strategy of the proof of Theorem~\ref{mainStructure}}
\label{sec3.4}
Given $t\geq \wh{C}_s(\ve_0)$, let $\Lambda _t$
be the subset of $\Lambda $ consisting of those immersions
$F\colon M\la  X$ such that $\sup \{ |A_M|(p)
\ | \ p\in M\} >t$. Similar arguments as those in
Section~\ref{subsecbdedA} show that
Theorem~\ref{mainStructure} holds for immersions in
$\Lambda \setminus \Lambda _t$, with the choices $k=0$,
$A_1=t$, $2\de _1=\de=\de_3$  given
by Definition~\ref{def3.6} for $(L_0,m)=(2\pi+1,1)$
and $M=\widetilde{M}$. So the theorem will be proven if we
show that it holds for immersions in $\Lambda _t$ for some
large $t\geq \wh{C}_s(\ve_0)$.

Observe that if $I=0$, then $\wh{C}_s(\ve_0)$ is a uniform
bound for the norm of the second fundamental forms of
surfaces in $\L$,  and the theorem holds in this case.

The strategy to prove the theorem consists of proving the
following two steps:
\begin{enumerate}[{\bf Step 1.}]
\item Items~1, 2, 3 of the theorem hold. This will be
proven by induction on $I$, by analyzing local pictures of a
sequence of immersions $\{ F_n\colon M_n\la X_n\}_n\subset \Lambda$ whose
second fundamental forms blow up as $n\to \infty $. We will do this
in Sections~\ref{sec2.6} and~\ref{sec2.7}.

\item If items~1, 2, 3 of the theorem hold, then items
(A)-\ldots -(D) also hold for a possibly larger choice of $A_1$
(recall that we proved item~(E) of Theorem~\ref{mainStructure}
in item~(F3)
of Section~\ref{subsecbdedA}). For this part,
we will verify that the induction argument in Step 1 can be carried
out so that items (A)-\ldots -(D) hold for $F_n$ with $n$ large enough.
This step will be done in Section~\ref{sec2.9}, which in turn needs
some results in Section~\ref{sec9.1}.
\end{enumerate}

Our next goal is to complete Step 1.
Although not strictly needed in the induction process, we
 first explain the arguments needed to prove the case $I=1$ since
they will help clarify why items~1, 2, 3 of the theorem hold for $I+1$
provided that they hold for $I$.

\subsection{Proof of items 1, 2, 3 of
Theorem~\ref{mainStructure} for $I=1$}
\label{sec2.6}

Assume $I=1$. By previous arguments, we can assume that
for each $n>\wh{C}_s(\ve_0)$
there exists an $H_n$-immersion
$F_n\colon M_n\la X_n$ in $\L $ such that $\sup
|A_{M_n}|>n$ with $H_n\in [0,H_0]$.
We will next describe the local picture of any such
sequence $\{ F_n\}_n$ around points of concentrated norm of
their second fundamental forms.
As $I=1$, Lemma~\ref{ass4.4} gives that for each
$n>\wh{C}_s(\ve_0)$, there is a point $p_1(n)\in
U(\partial M_n,\ve_0 ,\infty)$ where $|A_{M_n}|$ achieves
its maximum and $|A_{M_n}|\leq \wh{C}_s(\ve_0)$
in $M_n\setminus B_{M_n}(p_1(n),\ve_0)$.

\subsubsection{Local pictures around points where $|A_M|>t$,
for $t$ sufficiently large}
\label{subseclocpic}

Next we will adapt some arguments in~\cite{mpr20} to this
immersed setting. Given $n>\wh{C}_s(\ve_0)$,
observe that the (unique) maximum of the function
$h_n\colon \ov{B}_{M_n}(p_1(n),\ve_0)\to [0,\infty)$ given
by
\begin{equation}
\label{eq:defhn}
h_n=|A_{M_n}|\, d_{M_n}(\cdot ,\partial B_{M_n}
(p_1(n),\ve_0))
\end{equation}
is attained at $p_1(n)$. Define
$\l_n=|A_{M_n}|(p_1(n))$. Following the arguments at the
beginning of the proof of Theorem~1 in~\cite{mpr20}, we have that:
\begin{enumerate}[(G1)]
\item $\l_n$ tends to infinity as $n\to \infty $.
\item For $r>0$ fixed, the sequence of extrinsic balls
$\{ \l_nB_{X_n}(F_n(p_1(n)),r/\l_n)\}_n$ converges $C^{1,\a}$,
$\a\in (0,1)$, as $n\to \infty $ to the open ball $\B(r)$
of radius $r$ centered at the origin $\vec{0}$ in $\R^3$
with its usual metric, where we have used harmonic
coordinates in $X_n$ centered at $p_1(n)$ and identified
$p_1(n)$ with $\vec{0}$.
\item The intrinsic balls $\l_nB_{M_n}(p_1(n),r/\l_n)$
can be considered to be a sequence of pointed immersions
with constant mean curvature $H_n/\l_n$ (observe that
$H_n/\l_n$ is arbitrarily small for $n$ sufficiently large)
and non-empty topological boundary.
\item For $n$ large, the immersed surface
$\l_nB_{M_n}(p_1(n),r/\l_n)$ passes through $\vec{0}$ with
norm of its second fundamental form equal to $1$ at this
point. Furthermore, the norms of the second fundamental
forms of $\l_nB_{M_n}(p_1(n),r/\l_n)$ are everywhere less
than or equal to 1.
\item After extracting a subsequence, the
$\l_nB_{M_n}(p_1(n),r/\l_n)$ converge $C^{1,\a}$ as mappings
to a relatively compact pointed  minimal immersion
$f_r\colon \Sigma(r)\la \B(r)$  that passes
through $\vec{0}$, with bounded Gaussian curvature and
index at most $1$,  $|A_{\S (r)}|(\vec{0})=1$ and
$|A_{\S (r)}|\leq 1$ on $\Sigma (r)$.
\item Defining $\Sigma=\bigcup _{r\geq 1}\Sigma (r)$ and
$f\colon \Sigma \la \R^3$ by
$f|_{\Sigma(r)}=f_r$, we produce a complete pointed
minimal immersion with index at most $1$, $\vec{0}\in
\Sigma$,  $|A_{\S}|(\vec{0})=1$ and $|A_{\S}|\leq 1$ on
$\Sigma $.
\end{enumerate}

Since $f$ is not flat at the origin, then the index of $f$
is 1. In this setting, L\'opez and Ros~\cite{lor2} proved
that if $\Sigma$ is orientable, then $f$ is either a
catenoid or an Enneper minimal surface. Theorem~1.8 in
Chodosh and Maximo~\cite{ChMa2} gives that $\Sigma$ must be
orientable.

We next show that items 1, 2, 3 of
Theorem~\ref{mainStructure} hold in this case $I=1$ with
the choice $k=1$. Observe that the multiplicity of the end
of the Enneper surface is $m=3$, and the total multiplicity
of the ends of a catenoid is $2$. This motivates the choice
of $L_0$ in the next paragraph. We next explain how to
choose the constants $A_1$ and $\de_1,\de $ that appear in
the main statement of Theorem~\ref{mainStructure}.

Let $\a_1=\a_1(\tau)\in (0,\tau]$ be the constant given by
Lemma~\ref{lemma5.3} for $L_0=6\pi +1$; observe that the
length of the intersection of a catenoid or an Enneper
minimal surface with a sphere $\esf^2(R)$ of sufficiently
large radius $R$ is less than $L_0R$.

We can also pick  a smallest $R>0$ (only depending on
$\tau$) so that the following properties hold:
\begin{enumerate}[(H0)]
\item The index of $f(\Sigma)\cap \B(R/3)$ is $1$.
\end{enumerate}
\begin{enumerate}[(H1)]
\item $f(\Sigma)\setminus \B(R/3)$ consists of one or two
multi-graphs over its projection to a plane
$\Pi\subset \R^3$ that passes though $\vec{0}$; here $\Pi$
is the limit tangent plane at infinity for $f$.

\item The image through the Gauss map of $f$ of
each component $C_j$ of $f(\S)\setminus \B(R/3)$ is contained in
the spherical neighborhood of radius $\a_1/2$ centered at a
point $v\in \esf^2(1)$ perpendicular to $\Pi$ (thus,
$C_j$  satisfies condition (B2) of
Lemma~\ref{lemma5.3} with $R_1=R/3$ and $\a=\a_1/2$).

\item $f(\Sigma)$ makes an angle greater than $\frac{\pi }
{2}-\frac{\a_1}{2}$ with every sphere $\esf^2(r)$ of radius
$r\geq R/3$ centered at the origin (so, $C_j$
satisfies condition (B1) of Lemma~\ref{lemma5.3} with
$R_1=R/3$ and $\a=\a_1/2$).

\item The length of each component of the intersection of
$f(\Sigma)$ with any sphere $\esf^2(r)$ centered at the
origin and radius $r\geq R/3$ is less than
$(L_0-\frac{1}{2})r$ (hence each component of
$f(\S)\setminus \B(R/3)$ satisfies condition~(B3) of
Lemma~\ref{lemma5.3} with $R_1=R/3$).
\end{enumerate}

Applying the estimate
\eqref{eq:lemma5.53} in Proposition~\ref{propos5.5}
with $I=1$ and $B=0$, we deduce:
\begin{enumerate}[(H5)]
\item By item~2 of Proposition~\ref{propos5.5},
the intrinsic distance in the pullback metric by $f$
from $\vec{0}\in \Sigma $ to any point in the boundary of
$f^{-1}(\ov{\B}(R/2))$ is at
most $\wh{C}\frac{R}{2}$, where $\wh{C}$ is defined there.
Observe that~\eqref{eq:lemma5.50}
is not enough to estimate this intrinsic distance, since it only
gives that the intrinsic distance in
the pullback metric by $f$ from $\vec{0}\in \Sigma $ to
the boundary of $f^{-1}(\ov{\B}(R/2))$
is at most $\wh{L}\frac{R}{2}=\frac{\sqrt{3}}{2}R$.
\end{enumerate}

\begin{definition}
\label{defDR}
{\rm
Given $r\in [R/2,4R]$, we denote by ${\Delta}_n(r)\subset M_n$
the connected component of
$(\l _nF_n)^{-1}(\l_n\ov{B}_{X_n}(F_n(p_1(n)),\frac{r}{\l_n}))$
that contains $p_1(n)$.
}
\end{definition}
Properties (H0)-\ldots -(H5) and the convergence in (G5)-(G6)
imply that for $\l_n$ large (in particular, for $n$
sufficiently large), the immersion $\l_nF_n$ satisfies:
\begin{enumerate}[({I}0)]
\item The index of $(\l_nF_n)|_{{\Delta}_n(R/2)}$ equals $1$.
\end{enumerate}
\begin{enumerate}[({I}1)]
\item $(\l_nF_n)({\Delta}_n(4R)\setminus {\Delta}_n(R/2))$
consists of one or two multi-graphs over their projections
to $\Pi$. We let $\wt{G}_n$ denote any of these
multi-graphs inside $(\l_nF_n)({\Delta}_n(4R)\setminus
{\Delta}_n(R/2))$.

\item The image of $\wt{G}_n$ through the ``Gauss map'' of
$\l_nF_n$ (defined through ambient parallel translation,
see Remark~\ref{rem5.4}) is contained in the spherical
neighborhood of radius $\a_1$ centered at $v$ (here we have
identified $\R^3$ with the tangent space to $\l_nX_n$ at
$F_n(p_1(n))$).

\item $\wt{G}_n$ makes an angle greater than $\frac{\pi }
{2}-\a_1$ with every geodesic sphere $\wt{S}(r)$ in $\l_nX_n$
centered at $F_n(p_1(n))$ of radius $r\in [R/2,4R]$.

\item  $\mbox{\rm Length}[ \wt{G}_n\cap \wt{S}(R/2)]<L_0R/2$.

\item The intrinsic distance in the pullback metric by
$\l_nF_n$ on $M_n$, from $p_1(n)$ to any point in the boundary of
$\Delta_n(R/2)$ is at most  $(\wh{C}/2+1)R$.
\end{enumerate}

Back in the original scale, observe that
$\Delta_n(r)\subset F_n^{-1}(\ov{B}_X(F_n(p_1(n)),
\frac{r}{\l_n}))$ for any $r\in [R/2,4R]$,
and the following properties hold for $n$ sufficiently large:
\begin{enumerate}[(J0)]
\item The index of $F_n|_{\Delta_n(R/2)}$ equals $1$.
\end{enumerate}
\begin{enumerate}[(J1)]
\item $F_n(\Delta_n(4R)\setminus \Delta_n(R/2))$ is a union
of one or two multi-graphs over their projections to $\Pi$.
We let $G_n$ denote any of these multi-graphs.

\item The image of $G_n$ through the ``Gauss map'' of $F_n$
is contained in the spherical neighborhood of radius $\a_1$
centered at $v$.

\item $G_n$ makes an angle greater than $\frac{\pi}{2}
-\a_1$ with every geodesic sphere $S(r)$ in $X_n$ centered at
$F_n(p_1(n))$ of radius $r\in
[\frac{R}{2\l_n},\frac{4R}{\l_n}]$.

\item  $\mbox{\rm Length}[ G_n\cap S(\frac{R}{2\l_n})]
<L_0\frac{R}{2\l_n}$.

\item The intrinsic distance in the pullback metric by
	$F_n$ on $M_n$, from $p_1(n)$ to any point in  the boundary of
$\Delta_n(R/2)$ is at most $\frac{1}{\l_n}(\wh{C}/2+1)R$.
\end{enumerate}

Therefore, given $r\in [\frac{R}{2\l _n},\frac{2R}{\l_n}]$,
$G_n\cap [\ov{B}_X(F_n(p_1(n)),2r)\setminus
B_X(F_n(p_1(n)),r)]$
satisfies the hypotheses (B1), (B2), (B3) of
Lemma~\ref{lemma5.3} with the choices $L_0=6\pi +1$, inner
radius $r$, outer radius $2r$ and $\a =\a_1$. Our next step
will be demonstrating that the outer radius for which the
hypotheses of Lemma~\ref{lemma5.3} hold for $F_n$, is
bounded from below by some positive constant, independent
of the sequence.

\subsubsection{Local pictures have a uniform size}\label{sec5.4.2}
\begin{proposition}
\label{ass3.5}
There exists $\de_4\in (0,\de_3]$ (this $\de_3\in (0,\de_2]$
was given in Definition~\ref{def3.6} for the choices
$L_0=6\pi+1$ and $m=3$) such that the hypotheses of Lemma~\ref{lemma5.3}
hold for annular enlargements of the multi-graphs $G_n$
between the geodesic spheres in $X$ centered at $F_n(p_1(n))$
of radii $R_1=\frac{R}{2\l_n}$ and $R_2=\de_4$, and
with the choice $\a =\tau_1$ for hypotheses (B1), (B2)
(this $\tau_1\in (0,\a_1]$ was also introduced in
Definition~\ref{def3.6}).
\end{proposition}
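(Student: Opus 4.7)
The plan is to argue by contradiction. Suppose no such $\de_4\in(0,\de_3]$ exists. Then, after passing to a subsequence, there exist radii $r_n^\ast\in[R/(2\l_n),\de_3]$ with $r_n^\ast\to 0$ such that the hypotheses (B1), (B2), (B3) of Lemma~\ref{lemma5.3} (in the Riemannian reformulation of Remark~\ref{rem5.4}) hold for $G_n$ with $R_1=R/(2\l_n)$, $R_2=r_n^\ast$, $L_0=6\pi+1$ and $\a=\tau_1$, yet fail on every strictly larger outer radius. (If $\liminf_n r_n^\ast>0$ we are done by taking $\de_4=\min\{\de_3,\liminf_n r_n^\ast\}$.)

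The next step is to rescale. Set $\widehat F_n=(1/r_n^\ast)F_n$ and dilate the ambient metric of $X_n$ by $1/r_n^\ast$. Then $\widehat F_n$ has constant mean curvature $r_n^\ast H_n\to 0$, the rescaled pieces $\widehat G_n:=(1/r_n^\ast)G_n$ lie in the annular region about $F_n(p_1(n))$ of inner radius $\rho_n:=R/(2\l_n r_n^\ast)\in(0,1/2]$ and outer radius $1$, and they inherit (B1), (B2) with $\a=\tau_1$ together with (B3) at their inner boundary. By Lemma~\ref{ass4.4'} each $\widehat G_n$ is stable, so Theorem~\ref{stableestim1s} provides a uniform bound on $|A_{\widehat G_n}|$ on compact subsets of the open annulus $\cA(\rho_\infty,1)$, where $\rho_\infty\in[0,1/2]$ is a subsequential limit of $\rho_n$.

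I would then extract a smooth limit. The rescaled ambient metrics converge in $C^{1,\a}$ to the flat metric on $\R^3$ (Definition~\ref{defharm}), and combining the uniform curvature estimates with the bound $|u_n|/|x|+|\nabla u_n|\leq\tau_1$ yields, after a further subsequence, a smooth limit $\widehat G_\infty$ which is a stable minimal multi-graph of some multiplicity $m\leq 3$ over a planar annulus in $\R^3$. Since $\widehat G_\infty$ satisfies the hypotheses of Lemma~\ref{lemma5.3} with $\a=\tau_1\leq\a_1$, the conclusion of that lemma applied to $\widehat G_\infty$ gives the strict bound $|u_\infty|/|x|+|\nabla u_\infty|<\tau/2$ on its graphing function; this translates, for $\tau_1$ and $\a_1$ chosen small enough in Definition~\ref{def3.6}, into (B1) and (B2) holding with definite slack relative to $\tau_1$ on the closed annulus. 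By interior regularity for stable minimal multi-graphs, $\widehat G_\infty$ extends smoothly across radius $1$ with the same strict inequalities.

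The contradiction now follows from smooth convergence: for $n$ sufficiently large, $\widehat G_n$ extends past radius $1$ while still satisfying (B1), (B2) with $\a=\tau_1$, and (B3) is unchanged since it only involves the inner radius. Rescaling back, $G_n$ satisfies the Lemma~\ref{lemma5.3} hypotheses with outer radius strictly greater than $r_n^\ast$, contradicting the maximality of $r_n^\ast$. The main obstacle will be the compactness step, where one must ensure that the multi-graph structure and multiplicity $m\leq 3$ pass to the limit without sheet collision or loss of regularity; this is precisely what is made possible by the uniform gradient bound $|\nabla u_n|\leq\tau_1$ together with the stable curvature estimates, and it is the reason for the specific choice $L_0=6\pi+1$ (forcing $m\leq 3$) fixed in Definition~\ref{def3.6}.
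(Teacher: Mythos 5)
Your proposal correctly identifies the overall strategy (contradiction by rescaling at the scale $r_n^*$), but the core of the argument has two serious flaws and misses the key lemma the paper relies on.

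First, you never resolve the dichotomy on the rescaled inner radius $\rho_n = R/(2\l_n r_n^*)$. If $\l_n r_n^*$ stays bounded (i.e. $\rho_n\not\to 0$), then $\frac{1}{r_n^*}F_n$ is a rescaling on the scale of the second fundamental form and, after passing to a subsequence, converges to the catenoid or Enneper surface; in that case the contradiction is immediate because the ends of these surfaces satisfy (B1)--(B3) for \emph{every} outer radius, violating the maximality of $r_n^*$. If instead $\l_n r_n^*\to\infty$, then the limit is complete away from a puncture at the origin, which is the case requiring the main work. Your argument silently assumes you are in the second situation but does not separate these cases, and the needed auxiliary fact in the second case --- that $\frac{1}{r_n^*}F_n$ has index zero away from the origin, and hence locally uniformly bounded curvature on $\R^3\setminus\{\vec 0\}$ by the stable curvature estimate --- is not established (it needs property (J0): that the index $1$ of $F_n$ concentrates in $\Delta_n(R/2)$, combined with $\l_n r_n^*\to\infty$).

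Second, and more fundamentally, your mechanism for the final contradiction is wrong. You claim that applying Lemma~\ref{lemma5.3} to the limit $\widehat{G}_\infty$ gives ``(B1) and (B2) with definite slack relative to $\tau_1$'', and then invoke ``interior regularity'' to extend past radius $1$. But the conclusion of Lemma~\ref{lemma5.3} is $|u|/|x|+|\nabla u|<\tau/2$, which is a \emph{weaker} bound than the input $\a=\tau_1\le\a_1\le\tau$; it does not improve (B1) or (B2). Moreover, the maximality of $r_n^*$ means precisely that as $n\to\infty$ one of (B1) or (B2) becomes tight at radius $1$: either the angle with $\esf^2(1)$ equals $\frac\pi2-\tau_1$ at some point, or the Gauss image contains two points at spherical distance $\tau_1$. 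Hence the limiting annulus $A$ in $\cA(\frac12,1)$ is \emph{forced} to inherit this tight condition and cannot lie in a plane through the origin --- there is no slack, and nothing that would let you extend with room to spare. The paper's contradiction goes the other way: the complete stable minimal limit $\wt{D}$ in $\R^3\setminus\{\vec 0\}$ must extend smoothly to a flat plane by a removable-singularity theorem for stable minimal surfaces (Lemma 3.3 of Meeks--P\'erez--Ros \cite{mpr10} in the two-sided case, and item~1 of Lemma~\ref{lema3.6} in the one-sided case), which contradicts the non-planarity of $A\subset\wt{D}$. That removability result, not interior regularity, is what closes the argument, and it is absent from your proposal.
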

\begin{proof}
Define $r_n$ as the supremum of the extrinsic radii $r\geq
4R/\l_n$ such that annular enlargements of the $G_n$ satisfy
conditions (B1), (B2), (B3) of Lemma~\ref{lemma5.3} for the
choices  $L_0=6\pi +1$, inner radius $R_1=\frac{R}{2\l_n}$,
outer radius $R_2=r$ and $\a=\a_1$. We will prove the
proposition by contradiction, so suppose $r_n\to 0$ as
$n\to \infty $.

Rescale $F_n$ by expanding the ambient metric of $X_n$ by the
factor $1/r_n$ centered at $F_n(p_1(n))$ and denote
the resulting sequence of rescaled immersions by
$\frac{1}{r_n}F_n\colon M_n\la \frac{1}{r_n}X_n$.
Our goal is to understand the limit of (a subsequence of)
$\{ \frac{1}{r_n}F_n\}_n$.

Notice that $4R\leq \l_nr_n$ must go to infinity as $n\to
\infty $: otherwise, $\frac{1}{r_n}F_n$ is rescaled from
$F_n$ on the scale of the second fundamental form,
and in that case we have proved that the subsequential limit of
the $\frac{1}{r_n}F_n$ is a catenoid or an Enneper minimal
surface, each of whose ends satisfies Lemma~\ref{lemma5.3}
for every outer radius--- see properties (H1),(H2),(H3)
above ---, contradicting the definition as a supremum of
$r_n$.

As $\l_nr_n\to \infty $, then property (J0) implies that
$\frac{1}{r_n}F_n$ has index zero away from the origin for
$n$ large; more precisely, the following property holds:

\begin{enumerate}[$(\diamondsuit)$]
\item For any $s>0$ and for every $n\in \N$ sufficiently
large (depending only on $s$), the portion of
$\frac{1}{r_n}F_n(M_n)$ outside of the intrinsic ball
of radius $s$ centered at $F_n(p_1(n))$ is stable.
\end{enumerate}
By curvature estimates for stable $H$-surfaces, we deduce
that the sequence $\{ \frac{1}{r_n}F_n\} _n$ has locally
bounded second fundamental form in $\R^3\setminus\{ \vec{0}\} $.

Applying Lemma~\ref{lemma5.3} (see also Remark~\ref{rem5.4})
to $\frac{1}{r_n}F_n$ with
$\a=\tau_1$, we conclude that for $n$ large,
the image of $\frac{1}{r_n}F_n$ contains an immersed
annulus $\Omega _n(\frac{1}{2},1)$ in the annular region
$\A(\frac{1}{2},1)=\{ x\in  \R^3\ | \  \frac{1}{2}\leq
|x|\leq 1\} $, and $\Omega _n(\frac{1}{2},1)$ is an
$m'$-valued graph with respect to its projection to a plane
$v_1^{\perp}$ passing through the origin (the multiplicity
$m'$ of this graph does not depend on $n$ after passing to a
subsequence; in fact $m'=1$ or $3$; similarly, the plane
$v_1^{\perp}$ is independent of $n$).
Observe that by definition of $r_n$, either
$\Omega _n(\frac{1}{2},1)$ makes an angle of
$\frac{\pi}{2}-\tau_1$ with $\esf^2(1)$
at some point of $\Omega _n(\frac{1}{2},1)\cap \esf^2(1)$,
or the Gauss map image of $\Omega _n(\frac{1}{2},1)$
contains two points at spherical distance $\tau_1$ apart.

After passing to a subsequence, $\Omega _n
(\frac{1}{2},1)$ converges smoothly as $n\to \infty $ to an
immersed minimal annulus $A$ in $\A(\frac{1}{2},1)$ which is
a multi-graph of multiplicity $m'$ with respect to
$v_1^{\perp}$ and either $A$ makes an angle of
$\frac{\pi}{2}-\tau_1$ with $\esf^2(1)$ or the Gauss map
image of $A$ contains two points at spherical distance
$\tau_1$ apart. In particular, $A$ cannot be contained
in a plane passing through the origin.

Repeating the same reasoning in $\A(2^{-k},2^{-k+1})$ for
every $k\in \N$ and using a diagonal argument, we conclude
that (a subsequence of) the $\frac{1}{r_n}F_n$ converges
smoothly in $\A(0,1)=\B(1)\setminus\{ \vec{0}\} $ to an immersed
minimal punctured disk $D^*$ that has $\vec{0}$ in its
closure, such that $A\subset D^*$. As $\{ \frac{1}{r_n}F_n
\} _n$ has locally bounded second fundamental form in
$\R^3\setminus\{ \vec{0}\} $, then (a subsequence of) the
$\frac{1}{r_n}F_n$ converge smoothly to a minimal immersion
$\wt{D}$ in $\R^3\setminus\{ \vec{0}\} $ such that $D^*\subset
\wt{D}$, and $\wt{D}$ is complete away from $\vec{0}$, in
the sense that divergent arcs in $\wt{D}$ either have
infinite length or diverge to $\vec{0}$. Clearly $\wt{D}$
has $\vec{0}$ in its closure. Since $\frac{1}{r_n}F_n$ is
stable away from the origin, then $\wt{D}$ is stable. In
this setting and when $\wt{D}$ is two-sided,
$\wt{D}$ extends smoothly to a plane passing
through $\vec{0}$ (by Lemma~3.3 in
Meeks-P\'erez-Ros~\cite{mpr10}, see also
Colding-Minicozzi~\cite{cm25}). This contradicts the fact that $A$
cannot be  contained in a plane passing through the origin.
In the case that $\wt{D}$ is one-sided, then we can view $\wt{D}$ as a
branched stable minimal immersion with branch locus at the origin (with
finite branching order); in this setting,
item~1 of Lemma~\ref{lema3.6} gives a contradiction.
These contradictions finish the proof of
Proposition~\ref{ass3.5}.
\end{proof}

\begin{definition}\label{def5.4}
Consider the $\de_4\in (0,\de_3]$ given by
Proposition~\ref{ass3.5}. Then we define
\[
\de:=\de_4/2,\quad \de_1=\de/2.
\]
We will show that this is a valid choice for the $\de_1,
\de $ appearing in Theorem~\ref{mainStructure} in the case
$I=1$.
\end{definition}

We finish this section by showing how to deduce items 1, 2, 3
of Theorem~\ref{mainStructure} in this
case of $I=1$ (this is part of Step~1 in our strategy
of proof of Theorem~\ref{mainStructure} explained in
Section~\ref{sec3.4}).
We first explain how to choose the value of $A_1\in
[A_0,\infty)$ that appears in the main statement of the
theorem. In Section~\ref{sec3.4} we saw that it suffices to prove
items 1, 2, 3 of Theorem~\ref{mainStructure} for immersions
in $\Lambda _{t}$ for some large $t> \wh{C}_s(\de_1/2)$.
Choose $t>\wh{C}_s(\de_1/2)$
sufficiently large so that:

\begin{enumerate}[(K1)]
\item  $\frac{R}{t} (\wh{C}/2+1) \leq \frac{\de_1}{10}$)
(recall that $R$ was defined just before items (H0)-\ldots -(H5)
only depending on $\tau$ and $\wh{C}$ was given in item~2
of Proposition~\ref{propos5.5} as a function of $I,B$, which in this
case where $I=1,B=0$ give $\wh{C}=4\sqrt{3}+\frac{11}{2}\pi$,
see also item~(H5)).
\item For every $(F\colon M\la  X)\in
\Lambda _{t}$, Lemma~\ref{ass4.4} applied to $F$ for
$\ve=\ve_0$ implies that there exists a point $p_1\in
U(\partial M,\ve_0 ,\infty)$ such that
\[
|A_{M}|(p_1)=\max \{ |A_{M}|(p)\ | \ p\in M\} ,
\]
and if $t$ is sufficiently large, then
the description in items (J0)-\ldots -(J5) holds for $F$
with $p_1(n),\l_n$ replaced by $p_1$, $|A_{M}|(p_1)$ respectively.
\end{enumerate}

Define $A_1=t$. Next we will prove items 1, 2, 3 of
Theorem~\ref{mainStructure} for immersions in
$\Lambda _{t}$. Given $(F\colon M\la X)\in \Lambda _{t}$, define
$r_F(1)$ to be $\de_1$, and $\Delta_1$ to be the component of
$F^{-1}(\ov{B}_X(F(p_1),r_F(1))$ that contains $p_1$.
Let  $S_F(\frac{R}{2t})$ denote the extrinsic geodesic
sphere in $X$ centered at $F(p_1)$ with radius
$\frac{R}{2t}$. Let $q$ be a point in $\partial \Delta _1$.
Then,
\[
\begin{array}{rclr}
d_M(p_1,q)&\leq &
{\displaystyle \max_{x\in \partial \Delta_1\cap
F^{-1}(S_F(\frac{R}{2t}))}}d_M( p_1,x)+d_M(\Delta_1\cap
F^{-1}(S_F(\frac{R}{2t})),q) &
\\
\rule[-.3\baselineskip]{0pt}{.5cm}
& \leq &
\frac{R}{t} (\wh{C}/2+1)
+d_M(\Delta_1\cap F^{-1}(S_F(\frac{R}{2t})),q). 
& \hspace{-1cm}\mbox{(By (J5),
as $|A_{M}|(p_1)\geq {t}$)}
\end{array}
\]

By properties (J2), (J3) and (J4) and by
Proposition~\ref{ass3.5}, we can apply Lemma~\ref{lemma5.3}
to each of the annular portions of $\Delta _1$ with the
choices $R_1=\frac{R}{2t}$ and $R_2=r_F(1)$ (observe that
$\frac{R}{2t}\leq \frac{R}{t} (\wh{C}/2+1)
\leq \frac{\de_1}{10}<\frac{r_F(1)}{2}$).
Using item (C2) of Lemma~\ref{lemma5.3} (see also item~(D2) of
Remark~\ref{rem5.4})
in the second term of the last RHS, we get
\[
\begin{array}{rclr}
d_M(p_1,q)&\leq &
\frac{R}{t} (\wh{C}/2+1)+\sqrt{1+\frac{\tau^2}{3}}\,
(r_F(1)-\frac{R}{2t}) &
\\
\rule[-.3\baselineskip]{0pt}{.5cm}
&<& \frac{\de_1}{10}+\sqrt{1+\frac{\tau^2}{3}}\, r_F(1) &
\qquad \mbox{(By (K1))}
\\
\rule[-.3\baselineskip]{0pt}{.5cm}
&=&\left( \frac{1}{10}+\sqrt{1+\frac{\tau^2}{3}}\right) r_F(1). &
\end{array}
\]
Since $\tau\leq \pi/10$, then $d_M(p_1,q)<\frac{5}{4}r_F(1)$.
This proves item 1(a) of Theorem~\ref{mainStructure}.

Item 1(b) follows from the definition of $\Delta _1$.
Observe that item 1(c) is vacuous because $k=1$.
Item 1(d) holds because $F\in \Lambda _{t}$ and $A_1=t$.
Item 1(e) follows from (J0) (see also (K2)),
which finishes the proof of item 1 of
Theorem~\ref{mainStructure}. Item 2 follows from
Lemma~\ref{lemma5.3}.

Next we show item 3. Given $q\in \wt{M}=M-\Int(\Delta_1)$,
let $\g \subset M$ be an arc joining $p_1$ with $q$. Let
$\g_1\subset \g$ be the smallest subarc of $\g$ that joins
$p_1$ with some point $q_1\in \partial \Delta_1$. By
definition of $\Delta_1$, $F(q_1)$ is at extrinsic
distance $r_F(1)$ from $F(p_1)$, and thus, length$(\g)\geq
\mbox{length}(\g_1)\geq r_F(1)=\de_1$ for every arc $\g$
joining $p_1$ with $q$. Therefore, $d_M(p_1,q)\geq \de_1$.
As $q$ is any point in $\wt{M}$, we conclude that
$\wt{M}\subset M-B_M(p_1, \de_1)$.
Hence, item~3 of Theorem~\ref{mainStructure}
will be proved if we check that $|A_M|\leq A_1$ in
$M-B_M(p_1, \de_1)$.
Applying item~3 of Lemma~\ref{ass4.4} to
$\ve =\de_1$ (this is possible since
$\de_1\leq \ve_0$ and $\sup |A_M|\geq t>
\wh{C}_s(\de_1/2)\geq \wh{C}_s(\de_1)$),
we conclude that $|A_M|\leq \wh{C}_s(\de_1)$ in
$M-B_M(p_1,\de_1)$. Since $\wh{C}_s$ is non-increasing, we
have $\wh{C}_s(\de_1)\leq \wh{C}_s(\de_1/2)< t=A_1$, so
item~3 of Theorem~\ref{mainStructure}  holds.

Thus, items 1, 2, 3 of Theorem~\ref{mainStructure} hold in
this case $I=1$.

\subsection{Proofs of items 1, 2, 3 of
Theorem~\ref{mainStructure} for $I=I_0+1$}
\label{sec2.7}

Assume that items 1, 2, 3 of Theorem~\ref{mainStructure}
hold for $I=I_0$ and we will prove the same items hold for
$I=I_0+1$.

By the arguments in the first paragraph of
Section~\ref{sec3.4}, we can assume that for each
$n>\wh{C}_s(\ve_0)$
there exists an $H_n$-immersion $F_n\colon M_n\la X_n$ in
$\L (I_0+1,H_0,\ve_0,A_0,K_0)$ such that
$\sup |A_{M_n}|>n$. By Lemma~\ref{ass4.4}, for each
$n>\wh{C}_s(\ve_0)$
there exists a finite set
\[
\{ p_1(n),\ldots ,p_{m(n)}(n)\} \subset U(\partial M_n,\ve_0 ,
\infty),
\]
$m(n)\leq I_0+1$, such that
\begin{enumerate}[(L1)]
\item $|A_{M_n}|$ achieves its maximum in $M_n$ at $p_1(n)$
and for $i=2,\ldots , m(n)$, $|A_{M_n}|$ achieves its
maximum in $M_n\setminus [B_{M_n}(p_1(n),\ve_0)
\cup \ldots \cup B_{M_n}(p_{i-1}(n),\ve_0)]$ at $p_i(n)$.
\item For each $i=1,\ldots ,m(n)$,
$|A_{M_n}|(p_i(n))>\wh{C}_s(\ve_0)$,
and so, the pairwise disjoint intrinsic balls
$B_{M_n}(p_i(n),\ve_0/2)$ are unstable.
\item $|A_{M_n}|\leq \wh{C}_s(\ve_0)$
in $M_n\setminus
[B_{M_n}(p_1(n),\ve_0)\cup \ldots \cup
B_{M_n}(p_{m(n)}(n),\ve_0)]$.
\end{enumerate}

\subsubsection{Local pictures around points where $|A_M|>t$,
for $t$ sufficiently large}
\label{subseclocpic1}

Given $n>\wh{C}_s(\ve_0)$,
consider the function
$h_n\colon \ov{B}_{M_n}(p_1(n),\ve_0)\to [0,\infty)$ given
by (\ref{eq:defhn}). As in the case $I=1$, the
maximum of $h_n$ occurs at $p_1(n)$. Let
$\l_n=|A_{M_n}|(p_1(n))$. Then, properties (G1)-\ldots -(G6)
hold with the only change in (G5) (resp. in (G6)) that $f_r$
(resp. $f$) has index at most $I_0+1$.
In the sequel, will use the same notation as in (G1)-\ldots
-(G6).

Unlike what we had in the case $I=1$, we do not dispose of a
classification result for the possible limit minimal
immersion $f$ in this current setting. Still, we can
estimate some aspects of its geometry; observe that $f$ has
finite total curvature, since it has finite index
(see Fischer-Colbrie~\cite{fi1} for the orientable case, and see
the last paragraph of the proof of Theorem~17 in Ros~\cite{ros9}
for the non-orientable case).
Therefore, $f$ is proper,
the domain $\Sigma$ of $f$ has finite genus and finitely
many ends, each of which is mapped by $f$ to a multi-graph
over the exterior of a disk in a plane of $\R^3$ passing through the origin, 
with finite multiplicity.
We will denote by $e\geq 1$
the number of ends of $f$, and $d_1,\ldots, d_e\geq 1$
the multiplicities of these ends. Hence,
$\sum_{j=1}^{e}d_j$ is the total spinning
of the ends. Also, $g(\Sigma), I(f)$ will stand for the
genus of $\Sigma$ and the index of $f$, respectively.

\begin{claim}[Lower bound for the total spinning plus the
number of the ends of $f$]
\label{claim2.12}
\begin{equation}
\label{eq:lbse}
\sum_{j=1}^{e}(d_j+1)\geq 4.
\end{equation}
\end{claim}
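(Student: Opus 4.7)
My plan is to reduce the claim to a simple case analysis on the number of ends $e$, using the parity relation from Proposition~\ref{lem6.3} together with the classical monotonicity formula for minimal surfaces in $\R^3$.

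First, if $e\geq 2$, then since each multiplicity $d_j\geq 1$, one has
\[
\sum_{j=1}^{e}(d_j+1)=S+e\geq e+e=2e\geq 4,
\]
and the claim is immediate. Thus the entire difficulty is concentrated in the case $e=1$, where $\sum_{j}(d_j+1)=d_1+1$ and I must show $d_1\geq 3$.

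For the case $e=1$, I would first apply the parity relation \eqref{lem6.3c} of Proposition~\ref{lem6.3}. Since $f$ is obtained as the $C^{1,\alpha}$ limit of the rescaled immersions $\lambda_n F_n$ (which are themselves immersions) and the second fundamental form of the limit is bounded by $1$, the map $f$ is an immersion with $B(\Sigma)=0$. With $S=d_1$ and $e=1$, the congruence $S-B(\Sigma)\equiv e \pmod 2$ yields that $d_1$ is an odd positive integer. It therefore suffices to prove $d_1\geq 2$, since combined with parity this forces $d_1\geq 3$.

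To prove $d_1\geq 2$, I would invoke the classical monotonicity formula for minimal surfaces in $\R^3$, together with the non-flatness of $f$ encoded in $|A_{\Sigma}|(\vec{0})=1$. Because $f$ has finite total curvature, it is proper and conformally a compact (possibly non-orientable) surface with one puncture, asymptotic at infinity to a $d_1$-valued multi-graph over a plane through the origin; hence the area density ratio
\[
\Theta_f(R):=\frac{\mathrm{Area}\bigl(f(\Sigma)\cap B(\vec{0},R)\bigr)}{\pi R^2}
\]
(where area is counted with multiplicity) is well-defined for every $R>0$, is non-decreasing in $R$, satisfies $\lim_{R\to 0^+}\Theta_f(R)\geq 1$ because $f$ passes through $\vec{0}$, and satisfies $\lim_{R\to\infty}\Theta_f(R)=S=d_1$. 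If $d_1=1$, monotonicity would force $\Theta_f\equiv 1$, so $f(\Sigma)$ would be a flat plane through $\vec{0}$, contradicting $|A_{\Sigma}|(\vec{0})=1$. Therefore $d_1\geq 2$, and parity upgrades this to $d_1\geq 3$.

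The main obstacle I anticipate is justifying the asymptotic density identity $\lim_{R\to\infty}\Theta_f(R)=S$ uniformly for both the orientable and non-orientable cases: one needs to ensure that the notion of total spinning used to define $S$ in the non-orientable setting really coincides with the asymptotic area density of the (immersed) image of $f$. This reduces to the standard description of the ends given by Jorge-Meeks in the orientable case and follows in the non-orientable case by lifting to the orientable double cover; the rest of the argument is routine bookkeeping rather than a genuine conceptual difficulty.
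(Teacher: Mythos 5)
Your proof is correct, but it takes a slightly different route from the paper's. The paper splits on whether all ends of $f$ are embedded. In the embedded case it observes that every $d_j=1$ and that a one-ended, embedded, finite-total-curvature end would force $f$ to be a plane, whence $e\geq 2$; in the non-embedded case it asserts that the monotonicity formula together with non-flatness gives $S=\sum d_j\geq 3$. You instead split on $e\geq 2$ versus $e=1$: the first case is immediate from $d_j\geq 1$, and in the second case you combine the parity congruence \eqref{lem6.3c} of Proposition~\ref{lem6.3} (valid since $f$ is an unbranched limit immersion, so $B(\Sigma)=0$) with the much weaker monotonicity fact $d_1\geq 2$ (density $1$ at $\vec 0$ versus density $d_1$ at infinity forces a plane when $d_1=1$), then upgrade $d_1\geq 2$ to $d_1\geq 3$ by oddness. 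The two routes are comparable in length, but yours is somewhat more transparent: the paper's claim that monotonicity alone yields area growth of at least \emph{three} planes requires either a self-intersection/tangent-cone argument or an implicit use of the same parity; making the parity step explicit, as you do, explains why the bound jumps from $2$ to $3$ and only demands the elementary half of the monotonicity argument. The small technical caveat you raise at the end (matching the notion of spinning with the asymptotic density in the non-orientable case) is exactly the right thing to worry about, and is settled as you suggest by passing to the orientation double cover.
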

\begin{proof}
If all the ends of $f$ are embedded, then
$e\geq 2$ (as $f$ is not flat) and $d_j=1$
for each $j=1, \ldots ,e$. Thus,
$\sum_{j=1}^{e}(d_j+1)=2e
\geq 4$.

If $f$ has at least one non-embedded end,
then the monotonicity formula for minimal surfaces implies
that the area growth of $f$ at infinity is at least that of
three planes (again because $f$ is not flat). Therefore, in this case,
$\sum_{j=1}^{r}d_j\geq 3$ and the claim follows.
\end{proof}

\begin{claim}[Upper bound for the genus of $\Sigma$]
\mbox{}\newline
If $\Sigma $ is orientable, then $2g(\Sigma)\leq 3I(f)-3$.
If $\Sigma $ is non-orientable, then $g(\widetilde{\Sigma})\leq 3I(f)-4$,
where $g(\wt{\S})$ is the genus of the orientable cover $\wt{\S}$ of $\S$.
\end{claim}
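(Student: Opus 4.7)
The plan is to combine the Chodosh–Maximo–Karpukhin index inequality \eqref{eq:CMindex1} with the lower bound on the total end multiplicities provided by Claim~\ref{claim2.12}. The first observation is that since $f$ arises as the smooth limit described in (G5)–(G6), with $|A_\Sigma|\leq 1$ everywhere and $|A_\Sigma|(\vec{0})=1$, the limit immersion $f\colon \Sigma \la \R^3$ is unbranched; hence $B(\Sigma)=0$. Also, $f$ has finite index $I(f)\leq I_0+1$, and by the discussion following Claim~\ref{claim2.12} it has finite total curvature, $e\geq 1$ ends and is not flat.

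First I would handle the orientable case. Plugging $B=0$ into the first line of \eqref{eq:CMindex1} gives
\[
3I(f)\geq 2g(\Sigma)+2\sum_{j=1}^{e}(d_j+1)-5.
\]
Applying Claim~\ref{claim2.12}, $\sum_{j=1}^{e}(d_j+1)\geq 4$, and hence
\[
3I(f)\geq 2g(\Sigma)+8-5=2g(\Sigma)+3,
\]
which is the desired bound $2g(\Sigma)\leq 3I(f)-3$.

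For the non-orientable case, plugging $B=0$ into the second line of \eqref{eq:CMindex1} yields
\[
3I(f)\geq g(\widetilde{\Sigma})+2\sum_{j=1}^{e}(d_j+1)-4,
\]
and again invoking Claim~\ref{claim2.12} gives
\[
3I(f)\geq g(\widetilde{\Sigma})+8-4=g(\widetilde{\Sigma})+4,
\]
i.e.\ $g(\widetilde{\Sigma})\leq 3I(f)-4$. There is essentially no obstacle here: the whole content lies in having Claim~\ref{claim2.12} available and in observing that the blow-up limit is unbranched so that $B=0$ in \eqref{eq:CMindex1}. The only minor point to be careful about is that the non-orientable form of \eqref{eq:CMindex1} is the one involving $g(\widetilde{\Sigma})$ rather than the (non-orientable) genus of $\Sigma$, which matches exactly the quantity appearing in the statement.
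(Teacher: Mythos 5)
Your proof is correct and follows exactly the same route as the paper: substitute $B(\Sigma)=0$ into \eqref{eq:CMindex1} and then apply the lower bound $\sum_{j=1}^e(d_j+1)\geq 4$ from Claim~\ref{claim2.12}. The arithmetic in both the orientable and non-orientable cases checks out, and the observation that $f$ is unbranched (being a smooth blow-up limit with $|A_\Sigma|\leq 1$) is precisely what the paper uses to set $B=0$.
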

\begin{proof}
This follows directly from equations (\ref{eq:CMindex1})
and (\ref{eq:lbse}),
after observing that the total branching order $B(\Sigma)$ of $f$ is zero.
\end{proof}

\begin{claim}[Upper bound for the total spinning of $f$]
\label{eqts}
\[
2\sum_{j=1}^e d_j\leq \left\{ \begin{array}{ll}
3I(f)+3	& \mbox{if $\S$ is orientable,}
\\
3I(f)+2	& \mbox{if $\S$ is non-orientable.}
\end{array}\right.
\]
\end{claim}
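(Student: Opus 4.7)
The plan is to read the bound off directly from the Chodosh–Maximo–Karpukhin inequality \eqref{eq:CMindex1}, using that the limit immersion $f$ has no branch points. Indeed, since $f$ is obtained as a smooth limit on the scale of the second fundamental form from the sequence in (G1)--(G6), with $|A_\Sigma|\leq 1$ everywhere, $f$ is an unbranched minimal immersion, so $B(\Sigma)=0$. Moreover, because $f$ has finite total curvature and is non-flat, it is proper with at least one end, i.e.\ $e\geq 1$, and the genus terms $g(\Sigma)$, $g(\widetilde{\Sigma})$ are non-negative.

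In the orientable case, plugging $B(\Sigma)=0$ into the first branch of \eqref{eq:CMindex1} gives
\[
3I(f)\;\geq\;2g(\Sigma)+2\sum_{j=1}^{e}d_j+2e-5,
\]
so that
\[
2\sum_{j=1}^{e}d_j\;\leq\;3I(f)-2g(\Sigma)-2e+5.
\]
Using $g(\Sigma)\geq 0$ and $e\geq 1$ to discard the negative terms yields $2\sum_{j=1}^e d_j\leq 3I(f)+3$, which is the desired orientable estimate. In the non-orientable case, the second branch of \eqref{eq:CMindex1} with $B(\Sigma)=0$ gives
\[
3I(f)\;\geq\;g(\widetilde{\Sigma})+2\sum_{j=1}^{e}d_j+2e-4,
\]
hence
\[
2\sum_{j=1}^{e}d_j\;\leq\;3I(f)-g(\widetilde{\Sigma})-2e+4\;\leq\;3I(f)+2,
\]
again by $g(\widetilde{\Sigma})\geq 0$ and $e\geq 1$. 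This finishes the proof.

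There is essentially no obstacle here: the only subtle points are (i) justifying $B(\Sigma)=0$, which is immediate from the bound $|A_\Sigma|\leq 1$ carried over from (G4)--(G6), and (ii) the assertion $e\geq 1$, which is the same non-flatness observation already used in Claim~\ref{claim2.12}. Everything else is bookkeeping in \eqref{eq:CMindex1}. Note that no parity refinement from Remark~\ref{rem3.3} is needed, since the stated bounds are slightly weaker than what a careful parity analysis would give.
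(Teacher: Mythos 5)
Your proof is correct and matches the paper's argument: the paper also substitutes $B(\Sigma)=0$ into \eqref{eq:CMindex1} and discards the nonnegative terms $g\geq 0$, $e\geq 1$. One small phrasing caveat: $B(\Sigma)=0$ holds because the blow-up limit $f$ in (G5)--(G6) is by construction a genuine (unbranched) immersion, rather than strictly because $|A_\Sigma|\leq 1$; but this is the same underlying fact and does not affect the argument.
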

\begin{proof}
This follows directly from 	(\ref{eq:CMindex1}) since $e\geq 1$
and $g(\S)\geq 0$ if $\S$ is orientable (resp. $g(\wt{\S})\geq 0$
if $\S$ is non-orientable).
\end{proof}

Recall that we have fixed $\tau \in (0,\pi/10]$.
Let $\a_1=\a_1(\tau)\in (0,\tau]$ be the constant given by
Lemma~\ref{lemma5.3} for $L_0=3\pi (I_0+2)+1$; observe that
the total length $L^f({r})$ of the intersection of $f(\Sigma)$
with a sphere $\esf^2({r})$ of sufficiently large radius ${r}$
is less than $L_0{r}$; this follows since for ${r}$ large,
\begin{equation}
\frac{L^f({r})}{r}\sim 2\pi \sum_{j=1}^{e}d_j
\stackrel{\mbox{\footnotesize{(Claim \ref{eqts})}}}{\leq}
\pi [3I(f)+3]\leq\pi [3(I_0+1)+3].
\label{spinf}
\end{equation}

We can also pick  a smallest $R>0$ (only depending on $\tau$)
so that the following properties hold
(compare with properties (H0)-\ldots-(H5) above):

\begin{enumerate}[(H0)']
\item The index of $f(\Sigma)\cap \B(R/3)$ is $I(f)$.
\end{enumerate}
\begin{enumerate}[(H1)']
\item $f(\Sigma)\setminus \B(R/3)$ consists of
$e$ multi-graphs over their
projections to planes $\Pi_j\subset \R^3$ passing though
$\vec{0}$, $j=1,\ldots ,e$.
\item The image through the Gauss map of $f$ of
each component $C_j$ of $f(\S)\setminus \B(R/3)$ is contained
in the spherical
neighborhood of radius $\a_1/2$ centered at a point $v_j\in \esf^2(1)$
perpendicular to $\Pi_j$ (thus, $C_j$ satisfies item (B2)  of
Lemma~\ref{lemma5.3} with $R_1=R/3$ and $\a=\a_1/2$).

\item $f(\Sigma)$ makes an angle greater than
$\frac{\pi }{2}-\frac{\a_1}{2}$
with every sphere $\esf^2({r})$ of radius ${r}\geq R/3$ centered
at the origin (so, $C_j$  satisfies item (B1)
of Lemma~\ref{lemma5.3} with $R_1=R/3$ and $\a=\a_1/2$).

\item The total length of the intersection of $f(\Sigma)$ with any sphere
$\esf^2({r})$ centered at the origin and radius ${r}\geq R/3$ is less than
$(L_0-\frac{1}{2}){r}$ (hence $C_j$ satisfies item (B3)
of Lemma~\ref{lemma5.3} with $R_1=R/3$).
\end{enumerate}

Applying the last sentence in item~2 of 
Proposition~\ref{propos5.5} with $I=I_0+1$ and $B=0$, we deduce that:

\begin{enumerate}[(H5)']
\item The intrinsic distance in the pullback metric by $f$
from $\vec{0}\in \Sigma $ to any point in the boundary of
$f^{-1}(\ov{\B}(R/2))$ is at most $a(I_0)R$, where
\begin{equation}
\label{eq:a(I0)}
a(I_0)=\frac{\wh{C}(I_0+1,0)}{2}=\sqrt{6} (3 I_0+1)
\sqrt{I_0+2}+\frac{\pi}{4}(6 I_0+11).
\end{equation}
\end{enumerate}

Given $r\in [\frac{R}{2},4R]$, let ${\Delta}_n(r)$ be the
domain inside $M_n$ given by Definition~\ref{defDR}, related
to the $f,R$ above. Properties (H0)'-\ldots -(H5)' imply
that for $\l_n$ large, the immersion $\l_nF_n$ satisfies
(compare with properties (I0)-\ldots-(I5) above):
\begin{enumerate}[({I}0)']
\item The index of $(\l_nF_n)|_{\Delta_n(R/2)}$ equals
$I(f)$.
\end{enumerate}
\begin{enumerate}[({I}1)']
\item $(\l_nF_n)(\Delta_n(4R)\setminus \Delta_n(R/2))$ can
be considered to be a union of $e$
multi-graphs over their projections to the $\Pi_j$,
$j=1,\ldots ,e$.
We denote these multi-graphs by $\wt{G}_n(1),\ldots, \wt{G}_n(e)$.

\item For $j=1,\ldots ,e$, the image of $\wt{G}_n(j)$ through the
``Gauss map'' of
$\l_nF_n$ (defined through ambient parallel translation, see
Remark~\ref{rem5.4}) is contained in the spherical
neighborhood of radius $\a_1$ centered at $v_j$
(here we have identified $\R^3$ with the tangent space to
$\l_nX$ at $F_n(p_1(n))$).

\item $\wt{G}_n(j)$ makes an angle greater than $\frac{\pi }
{2}-\a_1$ with every geodesic sphere $\wt{S}({r})$ in $\l_nX_n$
centered at $F_n(p_1(n))$ of radius ${r}\in [R/2,4R]$.

\item  $\mbox{\rm Length}[ \wt{G}_n(j)\cap \wt{S}(R/2)]
<L_0R/2$.

\item The intrinsic distance in the pullback metric by
$\l_nF_n$ on $M_n$, from $p_1(n)$ to any point of the  boundary of
$\Delta_n(R/2)$ is at most $[a(I_0)+1]R$.
\end{enumerate}

Back in the original scale, we have that $\Delta_n({r})\subset F_n^{-1}
(\ov{B}_{X_n}(F_n(p_1(n)),{r}/\l_n))$ for all ${r}\in [\frac{R}{2},4R]$
and the following properties hold  for $n$ sufficiently large:

\begin{enumerate}[(J0)']
\item The index of $F_n|_{\Delta_n(R/2)}$ equals $I(f)$.
\end{enumerate}
\begin{enumerate}[(J1)']
\item $F_n(\Delta_n(4R)\setminus \Delta_n(R/2))$ is a union
of $e$ multi-graphs over their projections
to the $\Pi_j$, $j=1,\ldots ,e$.
We call $G_n(1),\ldots, G_n(e)$ to these multi-graphs.

\item For $j=1,\ldots,e$, the image of $G_n(j)$ through the
``Gauss map'' of $F_n$ is contained in the spherical neighborhood of
radius $\a_1$ centered at $v_j$.

\item $G_n(j)$ makes an angle greater than $\frac{\pi}{2}-\a_1$
with every geodesic sphere $S({r})$ in
$X_n$ centered at $F_n(p_1(n))$ of radius
${r}\in  [\frac{R}{2\l_n},\frac{4R}{\l_n}]$.

\item  $\mbox{\rm Length}[ G_n(j)\cap S(\frac{R}{2\l_n})]
<L_0\frac{R}{2\l_n}$.

\item The intrinsic distance in the pullback metric by $F_n$ on $M_n$,
from $p_1(n)$ to any point of the boundary of $\Delta_n(R/2)$ is at most
$\frac{R}{\l_n}[a(I_0)+1]$.
\end{enumerate}

Therefore, given ${r}\in [\frac{R}{2\l _n},\frac{2R}{\l_n}]$, $G_n(j)\cap
[\ov{B}_{X_n}(F_n(p_1(n)),2{r})\setminus B_{X_n}(F_n(p_1(n)),{r})]$
satisfies the hypotheses (B1), (B2), (B3) of Lemma~\ref{lemma5.3}
with the choices $L_0=3\pi (I_0+2)+1$, inner extrinsic radius ${r}$,
outer extrinsic radius $2{r}$ and $\a =\a_1$.

\subsubsection{How to proceed if the (first) local pictures fail to have a
uniform size}

\begin{definition}
\label{defrn}
{\rm
Define ${r}_n$ as the supremum of the extrinsic radii
${r}\geq 4R/\l_n$ such that for all $j=1,\ldots,e$,
annular enlargements $\wh{G}_n(j)$ of the $G_n(j)$
satisfy conditions (B1), (B2), (B3) of Lemma~\ref{lemma5.3}
for the choices $L_0=3\pi (I_0+2)+1$, inner extrinsic radius $R_1=
\frac{R}{2\l _n}$, outer extrinsic radius $R_2={r}_n$ and $\a=\a_1$.
}
\end{definition}
\begin{figure}[h]
\begin{center}
\includegraphics[width=11cm]{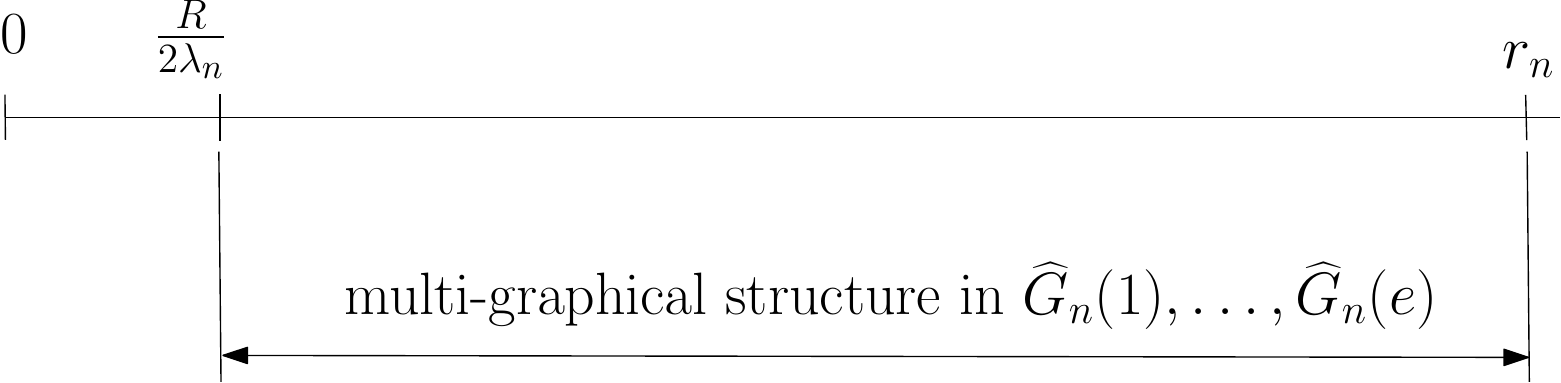}
\caption{Schematic representation of the extrinsic geometry of the
immersion $(F_n\colon M_n\la X_n)\in \Lambda=\L(I_0+1,H_0,\ve_0,A_0,K_0)$
around a point $p_1(n)$ where the maximum of $|A_{M_n}|$ in $M_n$ is
achieved. Here, $\l_n=|A_{M_n}|(p_1(n))$ tends
to infinity and $\l_nF_n$ converges as $n\to \infty$
to the complete minimal immersion $f\colon \S\la \R^3$ with finite total
curvature. Horizontal distances in the figure represent
extrinsic distances in $X_n$ measured from $F_n(p_1(n))$.
For $n$ large enough and
in the range of extrinsic radii between $\frac{R}{2\l_n}$
and $r_n\geq 4R/\l_n$, $F_n$ consists of $e$ multi-graphical pieces
$\wh{G}_n(1),\ldots ,\wh{G}_n(e)$, where $e$ is the
number of ends of $f$.}
\label{fig4a1}
\end{center}
\end{figure}

\begin{remark}
\label{rem2.15}
{\rm
\begin{enumerate}
\item Unlike what happened in the case $I=1$ (Section~\ref{sec2.6}),
we can no longer insure that
the outer extrinsic radius $r_n$ is bounded from below
by some positive constant independent of $n$
(i.e. Proposition~\ref{ass3.5} does not necessarily hold in our setting).
The reason for this difference is that in our current situation,
the estimate $I(f)\leq I_0+1$
is not necessarily an equality (as it was when $I=1$), and thus, with the
notation in the proof of Proposition~\ref{ass3.5}, we cannot
insure that if $r_n\to 0$ as $n\to \infty $, then
$\frac{1}{r_n}F_n$ has index zero away from the origin for $n$ large.

\item If $r_n\to 0$ as $n\to \infty $ and
$\frac{1}{r_n}F_n$ has index zero away from the origin for
$n$ large in the sense that property $(\diamondsuit)$ above
holds, then the arguments in the proof of Proposition~\ref{ass3.5} lead to a contradiction.
Hence we conclude that one of the two following excluding
possibilities holds:
\begin{enumerate}[(2.A)]
\item $\{r_n\}_n$ is bounded away from zero, with this lower bound
being independent of the sequence $\{ F_n\}_n\subset \Lambda$.
In this case, Proposition~\ref{ass3.5} holds (now
$\de_3\in (0,\de_2]$ is given by Definition~\ref{def3.6} for the choices
$L_0=3\pi(I_0+2)+1$
and $m$ being 1 plus the integer part of $\frac{1}{2}[3(I_0+1)+3]$, see
equation~\eqref{spinf} which estimates the total spinning of $f$ by
above; also see Proposition~\ref{ass3.5'} below).
In this case, we can apply Proposition~\ref{propos7.17} below to
conclude the proof of items 1, 2, 3 of Theorem~\ref{mainStructure}.

\item There exists some sequence $\{ F_n\}_n\subset \Lambda$ 
(with associated base points $p_1(n)$) such that
$r_n\to 0$ and $\frac{1}{r_n}F_n$ fails to have index
zero away from the origin for $n$ large, in the sense that
the property $(\diamondsuit)$ above fails.
\end{enumerate}
\end{enumerate}
}
\end{remark}

Assume that we are in case (2.B) above.
Roughly speaking, we will show that the immersions $\frac{1}{r_n}F_n$
converge as $n\to \infty$ to a possibly finitely branched,
complete minimal immersion $f_2\colon \Sigma_2\la \R^3$
away from finitely many points where curvature blows-up.
Furthermore, $\Sigma_2$ is finitely connected and its Morse index is
at most $(I_0+1)-I(f_1)\leq I_0$.
This compactness result is delicate and we will divide its proof into the
following two steps.
\begin{enumerate}[(M1)]
\item Describe the behavior of the immersions $\frac{1}{r_n}F_n$ 
near the origin as $n\to \infty $. We will
do this in Lemmas~\ref{disks} and~\ref{lemma7.11}.

\item Analyze the {\it global} convergence of the $\frac{1}{r_n}F_n$
(after passing to a subsequence) to a complete, finitely branched minimal
immersion $f_2\colon \S_2\la \R^3$ with finite total curvature. We will
do this in Proposition~\ref{prop7.12}.
\end{enumerate}
The proof of the next lemma follows easily from the behavior of the blow-down limit
of any of the $e$ ends of the complete minimal immersion
$f=f_1\colon \Sigma \la  \R^3$ defined just after items (L1)-(L3).

\begin{lemma} \label{disks}
Relabel as $e_1=e$ the number of ends of $f_1$. Suppose
$r_n\to 0$ as $n\to \infty $.  Then, after choosing a subsequence,
each of the finite number of  extended and scaled multi-graphs
$(\frac{1}{r_n}F_n)|_{\wh{G}_n(j)}$, considered to be a
mapping on an open annulus, converges as $n\to \infty $ to a
conformal minimal immersion of a punctured disk
\[
f_{2,j}\colon \D^*=\{z\in \C\mid 0<|z|<1\}\la\R^3
\]
(here $j\in
\{ 1,\ldots ,e_1\}$ refers to the j-th end of $f_1$) with
$f_{2,j}(\D^*)\subset \B(1)\setminus
\{ \vec{0}\}$. Furthermore, for each such $j$,
\begin{enumerate}
\item $f_{2,j}$ extends analytically to a possibly branched minimal disk
$\ov{f}_{2,j}\colon \D=\D^*\cup \{ 0\}\la \rth$
with $\ov{f}_{2,j}(0)=\vec{0}$;

\item The branching order of $\ov{f}_{2,j}$ at $0$ is one less than the
multiplicity of the associated sequence of multi-graphs
$(\frac{1}{r_n}F_n)|_{\wh{G}_n(j)}$. Such
multiplicity (which is independent of $n$ large) coincides with the
spinning of associated j-th end of $f_1\colon \Sigma \la \R^3$.
\end{enumerate}
\end{lemma}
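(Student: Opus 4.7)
The plan is to implement the sketch dictated by the almost-flat multi-graph structure of the $\wh{G}_n(j)$ in the twice-rescaled ambient space $\frac{1}{r_n}X_n$. Under this second rescaling, the harmonic charts centered at $F_n(p_1(n))$ converge in $C^{1,\a}$ to Euclidean charts on $\B(1)$. By Definition~\ref{defrn}, $(\frac{1}{r_n}F_n)|_{\wh{G}_n(j)}$ is a multi-graph of some multiplicity $d_j\in\N$ over its projection to a $2$-plane $\Pi_j\subset\R^3$, supported in the annulus of inner extrinsic radius $\rho_n:=R/(2\lambda_n r_n)$ and outer extrinsic radius $1$, with graphing function $u_n$ satisfying $|u_n(x)|/|x|+|\nabla u_n|(x)\leq \a_1$ and the prescribed mean curvature equation with mean curvature $H_n r_n$. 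The same blow-up argument used at the beginning of the proof of Proposition~\ref{ass3.5} forces $\lambda_n r_n\to\infty$ in case~(2.B), so $\rho_n\to 0$ and $H_n r_n\to 0$; moreover, for $n$ large, the multiplicity $d_j$ stabilizes to the spinning of the $j$-th end of $f_1$, since the convergence $\lambda_n F_n\to f_1$ on the $\lambda_n$-scale already fixes it.

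First I would extract a $C^\infty_{\text{loc}}$ subsequential limit of the multi-graphs. Lifting $u_n$ to a scalar function on (a compact exhaustion of) the $d_j$-sheeted cover of $\Pi_j\setminus\{\vec{0}\}$, the uniform $C^1$-bound together with $H_n r_n\to 0$ allow Schauder estimates for the quasilinear graph equation to yield uniform $C^{k,\a}$-bounds on each compact subset. A diagonal argument then produces a smooth subsequential limit $u_\infty$ solving the minimal surface equation on the full $d_j$-sheeted cover of $\Pi_j\setminus\{\vec{0}\}$ and still satisfying $|u_\infty(x)|/|x|+|\nabla u_\infty|(x)\leq \a_1$. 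Composing the $d_j$-fold conformal covering $\D^*\to\Pi_j\setminus\{\vec{0}\}$, $z\mapsto z^{d_j}$, with the graph of $u_\infty$ yields the desired conformal minimal immersion $f_{2,j}\colon\D^*\la\R^3$, with $f_{2,j}(\D^*)\subset\B(1)\setminus\{\vec{0}\}$ by construction.

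For item~(1), I would extend $f_{2,j}$ across $0$ by a removable-singularity argument at the level of the conformal parametrization. Each Euclidean coordinate of the harmonic conformal immersion $f_{2,j}$ is a bounded harmonic function on $\D^*$, so it extends to a harmonic function on $\D$ by the classical removable-singularity theorem for bounded harmonic functions on the punctured disk. The conformality relations $\langle\partial_z f_{2,j},\partial_z f_{2,j}\rangle=0$ are inherited by the extension, so $\ov{f}_{2,j}\colon\D\to\R^3$ is a conformal harmonic map, that is, a possibly branched minimal disk in the sense of Section~\ref{secMinBr}, with $\ov{f}_{2,j}(0)=\vec{0}$ since $\rho_n\to 0$ forces $|u_n|\to 0$ on vanishing scales. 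For item~(2), I would invoke the Micallef--White normal form (Theorem~1.4 of~\cite{miwh1}, already quoted in Section~\ref{secMinBr}): there is a conformal coordinate near $0$ in which $\ov{f}_{2,j}$ reads $z\mapsto(z^q,x(z))$ with $x(z)=o(|z|^q)$, and by construction the $\Pi_j$-projection of $\ov{f}_{2,j}$ has topological degree $d_j$ on every punctured neighborhood of $0$, which forces $q=d_j$; hence the branching order is $d_j-1$, as required.

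The main obstacle, in my view, is not any single step in isolation but rather cleanly juggling the two scales simultaneously: one must verify that in case~(2.B) the inner radius $\rho_n$ really tends to $0$ (so the second limit is defined on a genuine punctured disk with $\vec{0}$ as an interior singularity to be removed, rather than on a domain bounded away from $\vec{0}$), and that the multiplicity $d_j$ of the multi-graph is preserved across the second limit, which in turn rests on the intermediate information coming from the first blow-up that $d_j$ equals the spinning of the $j$-th end of $f_1$.
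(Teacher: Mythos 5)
Your proof is correct and carries out, in detail, precisely the argument the paper indicates with its one-sentence pointer to the blow-down behavior of the ends of $f_1$: rescale by $1/r_n$, note $\lambda_n r_n\to\infty$ (so the inner radius of the annular multi-graph shrinks to $0$ while the multiplicity $d_j$ is locked in by the first blow-up), extract a $C^\infty_{\mathrm{loc}}$ limit minimal multi-graph via Schauder, pass to a conformal parametrization on $\D^*$, remove the singularity at $0$ using boundedness of the harmonic coordinate functions, and read off the branching order from the Micallef--White normal form. The only place I would tighten the wording is the passage from ``graph over the $d_j$-sheeted cover'' to ``conformal minimal immersion of $\D^*$'': the graph parametrization is merely quasi-conformal, so one should note that the almost-flat graph bound forces the induced conformal structure to be that of a punctured disk (infinite modulus is a quasi-conformal invariant), and then reparametrize conformally before invoking harmonicity of the coordinate functions in the removable-singularity step.
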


Let $\cD=\{D_1,\ldots,D_{e_1}\}$ be the set of
parameter domains of the associated branched minimal
disks $\{\ov{f}_{2,1},\ldots,\ov{f}_{2,e_1}\}$ given by item~1
of Lemma~\ref{disks},
and consider the map $F_\infty\colon \cup\cD\la \B(1)$ be defined by
\[
F_{\infty }|_{D_i}=\ov{f}_{2,i},\quad i=1,\ldots ,e_1.
\]
Observe that $\cup\cD$ (disjoint union) can be considered to be a
smooth surface.
Let $\cS(0)\subset \cup\cD$ be the finite set of centers of the disks
$D_i$, $i=1,\ldots ,e_1$. Consider the quotient space $\wh{\cD}$ of
$\cup \cD$ where each of the elements in $\cS(0)$
identifies to one point that we denote by $\wh{0}\in \wh{\cD}$,
and every other point of $\cup \cD$ only identifies with itself. Let
\[
\pi \colon \cup \cD \to \wh{\cD}
\]
be the related quotient map, that is, $\pi |_{\cS(0)}$ is the
constant map equal to $\wh{0}$,
and the restriction of $\pi $ to $(\cup \cD)\setminus \cS(0)$
is injective. After endowing $\wh{\cD}$ with the quotient topology,
$\wh{\cD}$ is a path-connected topological space and
\begin{equation}
\wh{\cS}(0):=\pi(\cS(0))=\{ \wh{0}\}.
\label{S(0)}
\end{equation}
Furthermore, $\wh{\cD}\setminus \wh{\cS}(0)$ is a smooth immersed
surface. In what follows, we will at times consider
the induced well-defined continuous map
$F_\infty\colon \wh{\cD}\la  \B(1) $, which we denote the same way.

The next statement can be viewed as an direct consequence of
Lemma~\ref{disks}.
\begin{lemma}
\label{lemma7.11}
In the above situation, the following properties hold:
\begin{enumerate}[1.]
\item $F_\infty$ restricted to $F_\infty^{-1}(\B(1)\setminus
\B(\frac12))$ consists of $e_1$ multi-graphs.
\item The sequence of immersions $\frac{1}{r_n}F_n$ restricted to the
component $\Delta_{2,n}\subset M_n$ of
\[
{\textstyle (\frac{1}{r_n}F_n)^{-1}(B_{\frac{1}{r_n}X_n}(
\vec{0},1/2))}
\]
that contains $p_1(n)$, converges as
$n\to \infty$ to $F_{\infty}$,  where we consider
$F_\infty\colon \wh{\cD}\la  \B(1) $ to be defined on the quotient space
$\wh{\cD}$.
\item The convergence in item~2 is smooth away from $\cS(0)$, or from
$\wh{\cS}(0)$ when we consider $F_\infty$ to be defined on $\wh{\cD}$.
\end{enumerate}
\end{lemma}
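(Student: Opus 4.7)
The plan is to assemble each of the three items of Lemma~\ref{lemma7.11} directly from the conclusions already established in Lemma~\ref{disks}, together with a careful bookkeeping of how the component $\Delta_{2,n}\subset M_n$ decomposes into an ``inner'' region near $p_1(n)$ and the $e_1$ extended annular multi-graphs $\wh{G}_n(j)$ produced in Definition~\ref{defrn}.

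For item~1, I would note that by Definition~\ref{defrn} and the very construction of the $\wh{G}_n(j)$, each rescaled annular piece $(\frac{1}{r_n}F_n)(\wh{G}_n(j))$ lies in $\ov{\B}(1)\setminus \B(\frac{R}{2\lambda_n r_n})$ and projects as a multi-graph onto its limit plane $\Pi_j$ for every $n$. Lemma~\ref{disks} identifies its limit with a conformal minimal immersion of the punctured disk $\D^*$ into $\B(1)\setminus\{\vec 0\}$ that extends analytically across the puncture to $\ov{f}_{2,j}$; in particular, restricting $F_\infty$ to $F_\infty^{-1}(\B(1)\setminus \B(1/2))$ selects one annular neighborhood of the boundary of each $D_i\in \cD$, and by Lemma~\ref{disks} each of these is a multi-graph over $\Pi_i$. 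Since the $D_i$ are disjoint components of $\cup \cD$, there are precisely $e_1$ such multi-graphs.

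For item~2, the key point is that for $n$ large the component $\Delta_{2,n}$ is exhausted by the union of the $e_1$ annular pieces $\wh{G}_n(j)$ together with a connected ``core'' region consisting of points whose $F_n$-image lies in $\ov{B}_{X_n}(F_n(p_1(n)),R/(2\lambda_n))$; this decomposition is built into Sections~\ref{subseclocpic}--\ref{subseclocpic1} by construction of $r_n$ and $\Delta_n(R/2)$. After rescaling by $1/r_n$, the core is sent into the shrinking ball $\B(R/(2\lambda_n r_n))$, which collapses to $\{\vec 0\}$ since $\lambda_n r_n\to\infty$. On the other hand, each annular piece $\wh{G}_n(j)$ converges by Lemma~\ref{disks} to the punctured disk $f_{2,j}(\D^*)$, and by item~1 of Lemma~\ref{disks} these extend across the puncture to $\ov{f}_{2,j}$ with $\ov{f}_{2,j}(0)=\vec 0$. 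Thus precomposing with the quotient map $\pi\colon \cup \cD\to \wh{\cD}$, which collapses $\cS(0)$ to the single point $\wh{0}=\pi(\cS(0))$, yields a well-defined continuous map $F_\infty\colon \wh{\cD}\to \ov{\B}(1)$ with $F_\infty(\wh{0})=\vec 0$, and the rescaled immersions $\frac{1}{r_n}F_n|_{\Delta_{2,n}}$ converge to $F_\infty$ in the natural sense of convergence to a quotient space.

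For item~3, smoothness of the convergence on compact subsets of $\wh{\cD}\setminus \wh{\cS}(0)$ is equivalent to smoothness on compact subsets of each $D_i\setminus\{0\}\subset \cup\cD$. These correspond to compact subsets of the annular pieces $\wh{G}_n(j)$ bounded away from the ``inner'' extrinsic radius, where properties (B1)--(B3) from Lemma~\ref{lemma5.3} (transferred to $X_n$ through Remark~\ref{rem5.4}) give uniform second fundamental form bounds for $\frac{1}{r_n}F_n$; standard $C^{1,\alpha}$ regularity and elliptic bootstrapping for CMC immersions then upgrade the $C^{1,\alpha}$ convergence obtained in Lemma~\ref{disks} to smooth convergence on such compacta. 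The main conceptual obstacle in this argument is item~2, specifically ensuring that the core region is connected and that $\Delta_{2,n}$ does not contain any additional components beyond the core and the $e_1$ extended annular pieces; both are contained in the construction but must be invoked explicitly, using that $\Delta_{2,n}$ is by definition the component containing $p_1(n)$ and that Lemma~\ref{ass4.4} separates the local pictures at different $p_i(n)$ by definite intrinsic distance.
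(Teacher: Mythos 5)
Your argument is correct and follows essentially the same route as the paper, which states Lemma~\ref{lemma7.11} as a ``direct consequence'' of Lemma~\ref{disks} and the construction of $\wh{\cD}$, $\cS(0)$, $\wh{\cS}(0)$ preceding it, without writing out a separate proof. You have simply made explicit the implicit decomposition of $\Delta_{2,n}$ into a shrinking core plus $e_1$ annular multi-graphs, the collapse of the core image to $\vec{0}$ (justified by $\lambda_n r_n\to\infty$), and the standard curvature-estimate-and-bootstrap upgrade to smooth convergence off $\wh{\cS}(0)$.
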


Lemma~\ref{lemma7.11} describes the convergence of (a subsequence of) the
$\frac{1}{r_n}F_n$ in a neighborhood of $\wh{\cS}(0)$, to a family
$F_{\infty}\colon \wh{\cD}\la \B(1)$ of minimal disks branched
at the origin, and finishes step (M1) above.

Step (M2) needs two ingredients, which are Lemma~\ref{lemma7.10}
and Proposition~\ref{prop7.12} below. The first one relies on the
validity of Theorem~\ref{mainStructure} for $I=I_0$
(by the induction hypothesis), while
in Proposition~\ref{prop7.12} we will construct the complete,
finitely branched minimal
immersion $f_2\colon \Sigma \la \R^3$ of finite total Gaussian curvature,
which  is the limit of a subsequence
of  the $\frac{1}{r_n}F_n$ as a consequence of Lemma~\ref{lemma7.10}.

We remark that the surfaces $M_n$ and the associated
points $p_1(n)$ in the next theorem are not the same surfaces
and points that we have been using previously in this section
with this notation; so the reader should keep in mind
this abuse of notation when reading the next result.

\begin{lemma}
\label{lemma7.10}
Consider a sequence $(\wt{F}_n\colon M_n\la \wt{X}_n)\in
\Lambda(I_0,H_0,\ve_0,A_0,K_0)$
such that the following properties hold.
\begin{enumerate}[(N1)]
\item $\{ \max_{M_n}|A_{\wt{F}_n}|\}_n$ is not bounded from
above. In particular, after passing to a subsequence we can assume there
exists $p_1(n)\in M_n$ such that
\[
\max_{M_n}|A_{\wt{F}_n}|=|A_{\wt{F}_n}|(p_1(n))>\max\{ n,A_1\}
\quad \mbox{for every $n\in \N$,}
\]
where $A_1\in [A_0,\infty)$ is given in the statement of
Theorem~\ref{mainStructure} for $I=I_0$ (which can be applied by the
induction hypothesis); observe that the existence of $p_1(n)$ is
guaranteed by Lemma~\ref{ass4.4}.

\item In harmonic coordinates centered at $\widetilde{F}_n(p_1(n))$, hence
$\widetilde{F}_n(p_1(n))=\vec{0}$ for all $n\in \N$, the metrics on $\wt{X}_n$
converge uniformly in the $C^0$ norm to the flat metric
on $\R^3$, and the (constant) mean curvatures of the $\wt{F}_n$ converge
to zero as $n\to \infty$. 
\end{enumerate}

Let $\Delta_1(n)$ be the component of
$\wt{F}_n^{-1}(\ov{B}_{\wt{X}_n}(\wt{F}_n(p_1(n),r_{\wt{F}_n}(1)))$
described in item~\ref{it1} of Theorem~\ref{mainStructure} and let
$\Delta_1(n,\frac23)=\Delta_1(n)\cap
\wt{F}_n^{-1}(\ov{B}_{\wt{X}_n}(\wt{F}_n(p_1(n)),
\frac{2}{3}r_{\wt{F}_n}(1)))$.
Then, after replacing by a subsequence, the following hold:
\begin{enumerate}
\item $\{ r_{\wt{F}_n}(1)\}_{n\in \N}$ converges to a positive number
$r\in [\de_1,\frac{\de}{2}]$, where
$\de_1,\de\in (0,\ve_0/2]$ are given by Theorem~\ref{mainStructure}.

\item Let $b$ be the number of boundary components of $\Delta_1(n)$, which
is independent of $n$. Then, the   $b$ multi-graphs
\[
\wt{F}_n(\Delta_1(n))\cap[\ov{B}_{\wt{X}_n}(\wh{F}_n(p_1(n)),
r_{\wt{F}_n}(1))\setminus  B_{\wt{X}_n}(\wh{F}_n(p_1(n)),\textstyle{\frac{1}{2}}r_{\wt{F}_n}(1))]
\]
described in item~\ref{it3} of Theorem~\ref{mainStructure}
converge as $n\to \infty$ to $b$ minimal multi-graphs in
$\ov{\B}(\vec{0},r)\setminus  \B(\vec{0},r/2)$, each of which satisfies
the same estimate~\eqref{estimu} as the multi-graphs in the sequence that
converge to it.
		
\item There exists $J\in \N$, $J\leq I_0$, $\ve_1\in (0,r)$
and a finite set for each $n\in \N$
\[
\textstyle{Q(n)=\{ q_1(n)=p_1(n),q_2(n),\ldots,q_J(n)\}\subset B_{M_n}
\left( p_1(n),\frac{2}{3}r\right),}
\]
such that
\begin{enumerate}
\item $|A_{\wt{F}_n}|(q_i(n))>\max\{ n,A_1\}$, for all $i=1,\ldots, J$
and for each $n\in \N$ (compare to item 1.d of
Theorem~\ref{mainStructure}).

\item Given $i,j\in 1,\ldots ,J$ with
$i\neq j$, the intrinsic distance in $M_n$ between $q_i(n)$ and $q_j(n)$
is at least $\ve_1$ (compare to item 1.c of Theorem~\ref{mainStructure}).

\item Given $s\in \N$, $\{ |A_{\wt{F}_n}|\}_n$ is uniformly bounded in
$B_{M_n}(p_1(n),\frac{2}{3}r)\setminus
\cup_{i=1}^JB_{M_n}(q_i(n),\frac{\ve_1}{3s})$
(compare to item~\ref{it7} of Theorem~\ref{mainStructure}).

\item There exist (not necessarily distinct) points $x_1=\vec{0},x_2,
\ldots ,x_J\in \B(\vec{0},\frac{2}{3}r)$ (this is the ball in $\R^3$ with
its flat metric) such that when viewed in harmonic coordinates
in $\wt{X}_n$ centered at $p_1(n)$,
the points $\wt{F}_n(q_i(n))$ converge as $n\to \infty$
to $x_i$, for each $i=1,\ldots ,J$.
\end{enumerate}

\item For $s\in \N$ large and fixed, and for each $i\in \{1,\ldots, J\}$,
there exist $\de_i(s),\de_i(1,s),r_i(n,s)$
with $0<\de_i(1,s)\leq r_i(n,s)\leq
\de_i(s)/2<\de_i(s)<\frac{2\ve_1}{3s}$,
such that the following hold.
Let $A_i(n,s)$  be the component of
$\wt{F}_n^{-1}(B_{\wt{X}_n}(\wt{F}_n(q_i(n)),r_i(n,s)))$
that contains $q_i(n)$. Then, there exists $s_0\in \N$  such that for each
integer $s\geq s_0$, there exists $N(s)\in \N$ so that for $n\geq N(s)$:
\begin{enumerate}
\item The positive numbers $r_i(n,s)$ converge as $n\to \infty$ to
some $r_i(s)\in [\de_i(1,s),\de_i(s)/2]$.

\item $A_i(n,s)$ is  compact with smooth non-empty boundary and
$$\wt{F}_n(\partial A_i(n,s))\subset \partial
B_{\wt{X}_n}(\wt{F}_n(q_i(n)),r_i(n,s))$$ (compare to item~1.b of
Theorem~\ref{mainStructure}).

\item The number $\wt{e}_i\in \N$ of boundary components of $A_i(n,s)$
is independent of $n,s$ and the restriction of $\wt{F}_n$ to an annular
neighborhood of each boundary component of $A_i(n,s)$ is a multi-graph
of positive integer multiplicity $m_{h,i}$ independent of $n,s$
(here $h\in \{ 1,\ldots ,\wt{e}_i\}$), whose related graphing
function $u=u_{n,s}$ satisfies inequality~\eqref{estimu}
for $n,s$ sufficiently large, where $x$ expresses harmonic coordinates in
$B_{\wt{X}_n}(\wt{F}_n(q_i(n)),\frac{\ve_1}{2})$
(compare to item~\ref{it3} of Theorem~\ref{mainStructure});
the union of these annular neighborhoods
of $\partial A_i(n,s)$ can be taken to
be $ A_i(n,s)\setminus  \wt{F}_n^{-1}(B_{\wt{X}_n}(\wt{F}_n(q_i(n)),
\textstyle{\frac{r_i(n,s)}{2}}))$.

\item The $\wt{F}_n$ restricted to $\Delta_1(n,\frac23)\setminus
\cup_{i=1}^JA_i(n,s)$ converge smoothly as $n\to \infty $ to a minimal
immersion
\[
F_{\infty,s}\colon M_s \la \textstyle{\B(\vec{0},\frac{2}{3}r)}
\]
of a compact surface $M_s$ with boundary, and
$F_{\infty,s}(M_s)\cap [\ov{\B}(\vec{0},\frac{2}{3}r)\setminus
\B(\vec{0},\frac{1}{2}r)]$ consists of the intersection of the limiting
multi-graphs appearing in item~2 with
$\ov{\B}(\vec{0},\frac{2}{3}r)\setminus \B(\vec{0},\frac{1}{2}r)$.
			
\item The boundary $\partial M_s$ decomposes into $J+1$ collections of curves
\[
\{ \a_1,\ldots ,\a_b\},\quad \{ \be_{1,i}(s),\ldots ,\be_{\wt{e}_i,i}(s)
\} _{i=1,\ldots,J}
\]
(recall that $b$ is the number of boundary components of $\Delta_1(n)$)
where $F_{\infty,s}(\a_h)\subset \partial\B(\vec{0},\frac23r)$ for each
$h=1,\ldots ,b$, and $F_{\infty,s}(\be_{l,i}(s))\subset
\partial\B(x_i,r_i(s))$ for some $i=1,\ldots ,J$ and
for every $l=1,\ldots ,\wt{e}_i$.
\end{enumerate}

\item There exists an infinite
strictly increasing sequence
\[
\mathfrak{S}=\{s_1,s_2,\ldots,s_j,\ldots\}\subset \N
\]
such that for each $j\in \N$, for $n$ sufficiently large depending on $j$,
\[
A_i(n,s_{j+1})\subset \Int(A_i(n,s_{j}))\; \mbox{\rm and } \;
\Ind(A_i(n,s_{j}))=\Ind(A_i(n,s_{1})).
\]
In particular, for each $j\in \N$ and $n$ sufficiently large depending
on $j$, $A_i(n,s_{1})\setminus A_i(n,s_{j})$ is stable.

\item For each $s_j\in \mathfrak{S}$ defined in item~5,
$M_{s_{j+1}}\subset M_{s_j}$ and
$F_{\infty,s_{j+1}}|_{M_s}=F_{\infty,s_j}$.
Then $M_{\infty}=\cup_{s_j\in \mathfrak{S}}M_{s_j}$ is a compact
Riemann surface with boundary,
punctured in $e:=\sum_{i=1}^J\wt{e}_i$  points
$\{ P_{1,i},\ldots ,P_{\wt{e}_i,i}\} _{i=1,\ldots,J}$,
and the immersion $F_{\infty}\colon M_{\infty}\la \R^3$ given by
$F_{\infty}|_{M_s}=F_{\infty,s}$ extends to a finitely branched minimal
immersion
\[
\ov{F}_{\infty}\colon M_{\infty}\cup \{ P_{1,i},\ldots ,P_{\wt{e}_i,i}
\} _{i=1,\ldots,J}\la \textstyle{\B(\vec{0},\frac{2}{3}r)},
\]
such that $\ov{F}_{\infty}(\{ P_{1,i},\ldots ,P_{\wt{e}_i,i}\})=\{x_i\}$,
$i=1,\ldots ,J$, and the branch points of $\ov{F}_{\infty}$ are contained
in the set $\{ P_{1,i},\ldots ,P_{\wt{e}_i,i}\ | \ i=1,\ldots,J\}$.

\item For $i\in \{ 1,\ldots ,J\}$ fixed and $\ve >0$ sufficiently small
and fixed, the branching contribution $B_i\in \N\cup \{ 0\}$ to
$\ov{F}_{\infty}$ from $\{ P_{1,i},\ldots ,P_{\wt{e}_i,i}\}$ is
$B_i=S_i-\wt{e}_i$, where
\begin{equation}
S_i=\sum_{h=1}^{\wt{e}_i}m_{h,i}
\label{7.13b}
\end{equation}
is the total spinning of the  boundary curves  of $\wt{F}_n$
restricted to the component
$\Delta(i,n,\ve )$ of $\wt{F}_n^{-1}(B_{\wt{X}_n}(\wt{F}_n(q_i(n)),\ve ))$
containing $q_i(n)$ (for $n$ sufficiently large, $S_i$ is independent
of $n$). Furthermore,
\begin{equation}
S_i\leq 3I(\Delta(i,n,\ve ));
\label{7.13a}
\end{equation}
(here $I(\Delta(i,n,\ve ))$ is the index
of $\Delta(i,n,\ve )$) and so, the total branching of
$\ov{F}_\infty$ is at most $\sum_{i=1}^J(S_i-\wt{e}_i)\leq 3I_0-J$.
\end{enumerate}
\end{lemma}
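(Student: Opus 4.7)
The plan is to apply the induction hypothesis—Theorem~\ref{mainStructure} with $I=I_0$—to each $\wt F_n$, extract subsequences to stabilize all discrete topological and numerical data, and then use a blow-up analysis at each concentration point together with the Chodosh-Maximo-Karpukhin inequality~\eqref{eq:CM1} to bound the branching of the resulting limit immersion. Since condition~(N1) ensures $\sup|A_{\wt F_n}|>A_1$, the theorem applied to $\wt F_n$ produces a collection $\cP_{\wt F_n}=\{q_1(n),\ldots,q_{k(n)}(n)\}$ with $k(n)\leq I_0$ and $q_1(n)=p_1(n)$, radii $r_{\wt F_n}(i)\in[\de_1,\de/2]$, compact domains $\Delta_i(n)$ with multi-graphical boundary collars in harmonic coordinates, and the uniform bound $|A_{\wt F_n}|<A_1$ off $\bigcup_i\Int\Delta_i(n)$. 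Passing to a subsequence makes $k(n)\equiv J\leq I_0$ constant; Bolzano-Weierstrass on $[\de_1,\de/2]$ yields item~1 with $r=\lim_n r_{\wt F_n}(1)$. Stabilization of the discrete data combined with Arzela-Ascoli applied to the graphing functions, under the uniform $C^{1,\alpha}$ harmonic chart control of~\cite{hehe}, yields item~2. Items~3(a)-(c) transcribe items~1(c)-(d) and~\ref{it7} of Theorem~\ref{mainStructure} with $\ve_1=\tfrac{3}{10}\de_1$, and item~3(d) is one more extraction on the converging points $\wt F_n(q_i(n))$.

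For item~4, fix $i$. Since $q_i(n)$ realizes the maximum of $|A_{\wt F_n}|$ on $M_n\setminus\bigcup_{j<i}B_{M_n}(q_j(n),\ve_1)$ and $|A_{\wt F_n}|\leq|A_{\wt F_n}|(q_i(n))$ on $B_{M_n}(q_i(n),\ve_1/2)$, rescaling $\wt F_n$ at $\wt F_n(q_i(n))$ by $\lambda_i(n):=|A_{\wt F_n}|(q_i(n))\to\infty$ yields the same local-picture analysis as in Section~\ref{subseclocpic1}: the blow-up limit $f^{(i)}\colon\Sigma^{(i)}\la\R^3$ is a complete, non-flat minimal immersion of finite total curvature with index at most $I_0$, unbranched because $|A|\leq 1$ pointwise on the blow-up. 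Following Definition~\ref{defrn} applied at the $i$-th scale, one locates an outer extrinsic radius $r_i(n,s)\in[\de_i(1,s),\de_i(s)/2]\subset(0,\tfrac{2\ve_1}{3s})$ past which $\wt F_n|_{A_i(n,s)}$ splits into $\wt e_i$ multi-graph collars of multiplicities $m_{h,i}$ satisfying~\eqref{estimu}; the integers $\wt e_i$ (the number of ends of $f^{(i)}$) and $m_{h,i}$ (their spinnings) are eventually independent of $n$ and $s$, for $n\geq N(s)$ and $s\geq s_0$, proving items~4(a)-(c). The bound $|A_{\wt F_n}|<A_1$ on $\Delta_1(n,\tfrac{2}{3})\setminus\bigcup_i A_i(n,s)$, together with $H_n\to 0$ from~(N2), gives smooth convergence of $\wt F_n$ there to a minimal immersion $F_{\infty,s}$, which is item~4(d); item~4(e) is the boundary decomposition.

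For item~5, $\Ind(A_i(n,s))$ is monotone non-increasing in $s$ (as $A_i(n,s)$ shrinks) and takes integer values bounded by $I_0$, so for each fixed $n$ it stabilizes for $s$ large; a diagonal extraction over $s$ produces the infinite sequence $\mathfrak{S}=\{s_j\}$ along which $\Ind(A_i(n,s_j))$ is eventually (in $n$) constant, and the annular shell $A_i(n,s_1)\setminus A_i(n,s_j)$ carries zero index and is therefore stable. For item~6, compatibility $F_{\infty,s_{j+1}}|_{M_{s_j}}=F_{\infty,s_j}$ lets us assemble $M_\infty=\bigcup_j M_{s_j}$ with its minimal immersion $F_\infty$ into $\B(\vec 0,\tfrac{2}{3}r)\setminus\{x_1,\ldots,x_J\}$, punctured at $e=\sum_i\wt e_i$ points $\{P_{l,i}\}$ corresponding to the boundary circles of $A_i(n,s_j)$ collapsing to $x_i$ as $j\to\infty$. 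Near each puncture $P_{l,i}$, Lemma~\ref{disks} applied at the $i$-th scale provides the analytic extension of the corresponding multi-graph collar to a branched minimal disk with branching order $m_{l,i}-1$; stability of the annular shell, passing to the limit by lower semicontinuity of index under smooth convergence (combined, when the limit is one-sided, with item~1 of Lemma~\ref{lema3.6} to rule out interior branch points), ensures these extensions glue to $F_\infty$ to yield the finitely branched minimal immersion $\ov F_\infty$ with branch locus in $\{P_{l,i}\}$.

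For item~7, Lemma~\ref{disks} identifies the branching order of $\ov F_\infty$ at $P_{l,i}$ as $m_{l,i}-1$, whence the total branching at $x_i$ is $B_i=\sum_l(m_{l,i}-1)=S_i-\wt e_i$. To obtain~\eqref{7.13a}, apply inequality~\eqref{eq:CM1} to the unbranched limit $f^{(i)}$: with $\wt e_i$ ends, total spinning $S_i$, Euler characteristic $\chi^{(i)}\leq 2-\wt e_i$ (orientable or not), zero branching, and index $I(f^{(i)})\leq I(\Delta(i,n,\ve))$ for $n$ large and $\ve$ small (by lower semicontinuity), one gets $3I(f^{(i)})\geq -\chi^{(i)}+2S_i+\wt e_i-3\geq 2S_i+2\wt e_i-5\geq 2S_i-3$. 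Since $f^{(i)}$ is non-flat with finite total curvature, $I(f^{(i)})\geq 1$, hence $S_i\leq\tfrac{3I(f^{(i)})+3}{2}\leq 3I(f^{(i)})\leq 3I(\Delta(i,n,\ve))$. Summing and using $\sum_i I(\Delta(i,n,\ve))\leq I_0$ from item~(A) of Theorem~\ref{mainStructure} at level $I_0$ gives the total-branching bound $\sum_i(S_i-\wt e_i)\leq 3I_0-J$. The main obstacle is the joint control, in items~4-6, of the scales $r_i(n,s)$ at all $J$ concentration points simultaneously, together with the stability-based extension of $F_\infty$ across each puncture to a branched minimal immersion; this requires carefully interleaving several diagonal subsequence extractions and combining Lemma~\ref{disks}'s analytic extension of each multi-graph collar with the stability of the annular shells $A_i(n,s_1)\setminus A_i(n,s_j)$ from item~5, in a scheme that parallels the one-scale analysis of Section~\ref{sec2.6} but is now iterated at each of the $J$ concentration points.
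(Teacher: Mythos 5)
Your proof of items 1, 2, 3, 5, and 6 follows the paper's line closely and is essentially sound, but there is a genuine gap in your treatment of item~4, and it propagates to item~7.

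For item~4 you rescale $\wt F_n$ at each $q_i(n)$ by $\lambda_i(n)=|A_{\wt F_n}|(q_i(n))$ to obtain a first-stage blow-up limit $f^{(i)}\colon\Sigma^{(i)}\la\R^3$, and then assert that ``following Definition~\ref{defrn} applied at the $i$-th scale'' one locates an outer extrinsic radius $r_i(n,s)\in[\de_i(1,s),\de_i(s)/2]$ beyond which the multi-graph structure holds, with $\wt e_i$ identified as the number of ends of $f^{(i)}$ and $m_{h,i}$ as their spinnings. This is exactly the configuration that Remark~\ref{rem2.15}(2.B) warns against: a single blow-up at the scale of $|A|$ does not, by itself, yield a uniform lower bound on the outer radius at which the multi-graph collar persists. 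If the concentration at $q_i(n)$ is multi-stage, the radius $r_n$ defined via Definition~\ref{defrn} can tend to zero as $n\to\infty$, one must iterate the ascension process of Section~\ref{sec7.5.3}, and the number of boundary components and multiplicities of $A_i(n,s)$ are then governed by the \emph{final}-stage limit $f^{(i)}_{s_0(i)}$, not by $f^{(i)}$. Your identification of $\wt e_i$ with the number of ends of the \emph{first}-stage limit is therefore incorrect whenever $s_0(i)>1$. The paper avoids all of this by a different and cleaner move: it records that $\wt F_n$ has uniformly bounded second fundamental form on the annular shells $\ov B_{M_n}(q_i(n),\frac{\ve_1}{3s})\setminus B_{M_n}(q_i(n),\frac{\ve_1}{4s})$ (by the already-proved item~3(c)), whence the restricted immersions $\wh F_{n,s}$ on $\bigcup_i\ov B_{M_n}(q_i(n),\frac{\ve_1}{3s})$ lie in $\L(I_0,H_0,\ve_1/12s,A_2(s),K_0)$, and then invokes the induction hypothesis --- Theorem~\ref{mainStructure} at level $I_0$ --- applied to $\wh F_{n,s}$. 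That single application packages the entire (possibly multi-stage) ascension and delivers the $\Delta$-pieces $A_i(n,s)$, the radii $r_i(n,s)\in[\de_i(1,s),\de_i(s)/2]$ with constants coming from the theorem's $\de_1,\de$ at level $I_0$, and the multi-graph collars with the bound $m_{h,i}\leq 3\,\mbox{Index}(A_i(n,s))\leq 3I_0$, all in one stroke. Your argument has to reconstruct this from scratch and, as written, only covers the one-stage case.

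The same issue undermines your item~7: you apply~\eqref{eq:CM1} to the first-stage limit $f^{(i)}$ and conclude $S_i\leq 3I(f^{(i)})\leq 3I(\Delta(i,n,\ve))$. But $S_i$ is the total spinning of the boundary collars of $\Delta(i,n,\ve)$ at the \emph{fixed} scale $\ve$, which equals the total spinning of the ends of the final-stage limit, not of $f^{(i)}$. When the hierarchy at $q_i(n)$ is non-trivial these quantities differ, and the bound you need is exactly item~B of Theorem~\ref{mainStructure} at level $I_0$, which the paper invokes directly and which rests on the full hierarchy analysis of Section~\ref{sec9.1}. To repair your proof you would either have to run the full ascension iteration at each $q_i(n)$ --- duplicating a large part of the paper's machinery --- or, as the paper does, simply cite the induction hypothesis for $\wh F_{n,s}$.
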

\begin{proof}
		
Since $[\de_1,\frac{\de}{2}]$ is compact, after replacing by a
subsequence, the sequence $\{ r_{\wt{F}_n}(1)\}_n\subset
[\de_1,\frac{\de}{2}]$
given by Theorem~\ref{mainStructure} converges to a positive number $r\in
[\de_1,\frac{\de}{2}]$. The convergence stated in item~2 of the
multi-graphs
\[
\wt{F}_n(\Delta_1(n))\cap[\ov{B}_{\wt{X}_n}(\wh{F}_n(p_1(n)),
r_{\wt{F}_n}(1))\setminus  B_{\wt{X}_n}(\wh{F}_n(p_1(n)),
\textstyle{\frac{1}{2}}r_{\wt{F}_n}(1))]
\]
to minimal multi-graphs in $\ov{\B}(\vec{0},r)\setminus
\B(\vec{0},\frac{1}{2}r)$
is standard by curvature estimates for CMC graphs.
This gives items~1 and 2 of the lemma.
	
We next prove that item~3 holds. To find the finite set $Q(n)$, we will
proceed as follows. Suppose for the moment that, after
replacing by a subsequence, for each $s\in \N$,  $|A_{\wt{F}_n}|$ is
uniformly bounded in $B_{M_n}(p_1(n),\frac{2}{3}r)	\setminus
B_{M_n}(p_1(n),\frac{2}{3s}r)$.
In this case, the set $Q(n):=\{q_1(n)=p_1(n)\}$ is easily
seen to satisfy item~3 of the lemma with the choice $\ve_1=\frac{2}{3}r$.
Otherwise, after replacing by a subsequence, there exists an $s_1\in \N$
and a point $q_2(n)\in B_{M_n}(p_1(n),\frac{2}{3}r)\setminus
B_{M_n}(p_1(n),\frac{2}{3s_1}r)$ such that $|A_{\wt{F}_n}|(q_2(n))>
\max\{ n,10A_1\}$. If after replacing by a subsequence,
$\{ A_{\wt{F}_n}\}_n$ is uniformly bounded
in $B_{M_n}(p_1(n),\frac{2}{3}r)\setminus
\cup_{i=1}^2B_{M_n}(q_i(n),\frac{2}{3s}r)]$
for each $s\in \N$, then the set $Q(n):=\{q_1(n),q_2(n)\}$
satisfies item~3 of the lemma with
$\ve_1=\frac{1}{2}d_{M_n}(q_1(n),q_2(n))$,
since after replacing by another subsequence, $\wt{F}_n(q_2(n))$ converges
as $n\to \infty $ to some $x_2\in \B(\vec{0},\frac{2}{3}r)$
(note that $x_2$ might be $\vec{0}$).
Continuing inductively, we arrive at two sets of points
\[
Q(n)=\{q_1(n)=p_1(n),q_2(n),\ldots,q_J(n)\},
\quad \{x_1=\vec{0},x_2,\ldots,x_J\}\subset
\B (\vec{0},{\textstyle \frac{2}{3}r}),
\]
satisfying item~3 of the lemma with respect to
\[
\ve_1=\frac{1}{2}\min \{ d_{M_n}(q_i(n),q_j(n))\ | \ i,j=1,\ldots ,J,\
i\neq j\} .
\]
Here, $J\leq I_0$ because the index of $B_{M_n}(p_1(n),\frac{2}{3}r)$
is at most $I_0$. This finishes the	proof of item~3 of the lemma.

Regarding item 4, we make the following two observations:
\begin{enumerate}[(O1)]
\item For each $s\in \N$, there is a uniform upper 
bound $A_2(s)\geq A_0$ on the norm of the
second fundamental forms of the immersions $\wt{F}_n$ restricted to
\[
\bigcup_{i=1}^J\left[ \ov{B}_{M_n}(q_i(n),\textstyle{\frac{\ve_1}{3s}})
\setminus B_{M_n}(q_i(n),\textstyle{\frac{\ve_1}{4s}})\right].
\]
(this follows from the already proven item~3(c) of this lemma).

\item For each $n\in \N$, let $\wh{F}_{n,s}$ be the restriction
of $\wt{F}_n$ to
$\bigcup_{i=1}^J\ov{B}_{M_n}(q_i(n),\textstyle{\frac{\ve_1}{3s}})$.
Then, observation (O1) implies that for each $n\in \N$, $\wh{F}_{n,s}$
lies in the space $\L(I_0, H_0,\ve_1/12s,A_2(s),K_0)$.

\end{enumerate}
We next apply Theorem~\ref{mainStructure} to $\wh{F}_{n,s}\in
\L(I_0, H_0,\ve_1/12s,A_2(s),K_0)$
(which can be applied by the induction hypothesis), from where one has a
corresponding constant $\wh{A}_1(s)\geq A_1$ that replaces the previous
constant $A_1$ and where the choice of $\tau$ is the same as previously
considered. Assume that $n$ is chosen sufficiently large,
so that given $i=1,\ldots ,J$, the point $q_i(n)$ satisfies that the
maximum of the norm of the second fundamental form of
$\wh{F}_{n,s}$ in $\ov{B}_{M_n}(q_i(n),\frac{\ve_1}{3s})$ is achieved at
$q_i(n)$, with value greater than $10\wh{A}_1(s)$
(by item~\ref{it1}(d) of Theorem~\ref{mainStructure}). Another consequence
of Theorem~\ref{mainStructure} applied to $\wh{F}_{n,s}$ is that
for $n$ large and for each $i=1,\ldots ,J$, we have
associated positive numbers
\[
\de_i(s),\de_i(1,s),r_{\wh{F}_{n,s}}(i,s)
\]
with $\de_i(1,s),\de_i(s),r_{\wh{F}_{n,s}}(i,s)$) playing the respective
roles of the related numbers $\de_1,\de,r_F(i)$ in
Theorem~\ref{mainStructure}, where
\begin{equation}\label{5.16}
0<\de_i(1,s)\leq r_{\wh{F}_{n,s}}(i,s)
\leq \de_i(s)/2<\de_i(s)<\frac{2\ve_1}{3s} \quad \mbox{for all $n$.}
\end{equation}
We also have a $\Delta$-type domain
$\Delta_i(q_i(n), r_{\wh{F}_{n,s}}(i))$ defined by
item~\ref{it1} of Theorem~\ref{mainStructure}, that is, $\Delta_i(q_i(n),
r_{\wh{F}_{n,s}}(i))$ is the component of
$\wh{F}_{n,s}^{-1}(\ov{B}_{\wt{X}_n}
(\wh{F}_{n,s}(q_i(n)), r_{\wh{F}_{n,s}}(i)))$ containing $q_i(n)$, so that
the conclusions of Theorem~\ref{mainStructure} hold for these
$\de_i(s),\de_i(1,s),r_{\wh{F}_{n,s}}(i,s),
\Delta_i(q_i(n), r_{\wh{F}_{n,s}}(i))$.
In particular,
\begin{eqnarray}
\Delta_i(q_i(n), r_{\wh{F}_{n,s}}(i))&\subset &
B_{M_n}(q_i(n),\textstyle{\frac{\ve_1}{3s}}),\nonumber
\\
\wh{F}_{n,s}\left[ \partial \Delta_i(q_i(n), r_{\wh{F}_{n,s}}(i))\right]
&\subset &
\partial B_{\wt{X}_n}(\wh{F}_{n,s}(q_i(n)),
r_{\wh{F}_{n,s}}(i)).\label{5.17}
\end{eqnarray}

We next check that the domains and numbers
\[
A_i(n,s):=\Delta_i(q_i(n), r_{\wh{F}_{n,s}}(i)),\qquad
r_i(n,s):=r_{\wh{F}_{n,s}}(i)
\]
satisfy items 4(a)-\ldots -4(e) stated in the lemma.
Item 4(a) follows directly from~\eqref{5.16}, and item 4(b) follows
from~\eqref{5.17}.
Regarding item~4(c), since $A_i(n,s)$ is compact,
its boundary has a finite number
$\wt{e}_i(n,s)\in \N$ of components. By item~\ref{it3} of
Theorem~\ref{mainStructure},
each boundary component of $A_i(n,s)$ admits an annular neighborhood
which is a multi-graph of positive integer multiplicity
$m_{h,i}(n,s)\in \N$ (the index $h$
parameterizes the set of boundary components of $A_i(n,s)$).
The fact that both the
number of such boundary components and the multiplicities
$m_{h,i}(n,s)$ can be considered to be independent of $n,s$
(after passing to a subsequence in $n$)
follow from the fact the $m_{h,i}(n,s)$ are bounded independently of $n$,
which in turn can be deduced from the following inequality
(see item~\ref{it4}(a) of Theorem~\ref{mainStructure}):
\[
m_{h,i}(n,s)\leq 3\, \mbox{Index}(A_i(n,s))\leq 3\, I_0,
\quad \mbox{for all $n$.}
\]
Now, the rest of properties stated in item~4(c) of the lemma are direct consequences of
Theorem~\ref{mainStructure} applied to $\wh{F}_{n,s}$.

The convergence statement in item~4(d) follows from standard curvature
estimates for CMC immersions. The last sentence in item~4(d) follows from
uniqueness of the limit as $n\to \infty$ of the  $\wt{F}_n$ restricted to
$\Delta_1(n,\frac23)\setminus \cup_{i=1}^J
\Delta_i(q_i(n), r_{\wh{F}_n}(i))$
and from the already proven item~2 of this lemma. Item~4(e) holds by
construction, which finishes the proof of item 4 of the lemma.

Regarding item~5, choose $s_1'=1$ and $s_2'\in \N$, $s_2'>1$,
such that $\frac{\ve_1}{3s_2'}<\de_i(1,1)$. Assuming
$s_1',\ldots ,s_j'$ defined, choose  $s_{j+1}'\in \N$ such that
$s_{j+1}'>s_j'$ and $\frac{\ve_1}{3s_{j+1}'}<\de_i(s_j',1)$.
This inequality implies that
$A_i(n,s'_{j+1})\subset \Int(A_i(n,s'_{j}))$ and so,
$\Ind(A_i(n,s'_{j+1}))\leq \Ind(A_i(n,s'_{j}))$.
Since $\Ind(A_i(n,s'_{1}))$ is finite,
there exists $j_0\in \N$ such that
$\Ind(A_i(n,s'_{j}))=\Ind(A_i(n,s'_{j_0}))$ for all $j\geq j_0$.
Now label $s_j=s'_{j+j_0}$ for each $j\in \N$, and item~5 of the lemma
is proved.

By item 4(d), for each $j\in \N$ the restrictions of $\wt{F}_n$ to
$\Delta_1(n,\frac{2}{3})\setminus \cup_{i=1}^JA_i(n,s_j)$
converge smoothly as $n\to \infty $ to a minimal
immersion $F_{\infty,s_j}\colon M_{s_j} \la
\textstyle{\B(\vec{0},\frac{2}{3}r)}$
of a compact surface $M_{s_j}$ with boundary.
Since $A_i(n,s_{j+1})\subset \Int(A_i(n,s_{j}))$
then $M_{s_{j+1}}\subset M_{s_j}$ for each $j$,
and by uniqueness of the limit we have
$F_{\infty,s_{j+1}}|_{M_s}=F_{\infty,s_j}$ for each $j$.

By a standard diagonal argument in $n$ and $s_j$,
the map
\[
F_{\infty}\colon M_{\infty}=\bigcup_{s_j\in \mathfrak{S}}M_{s_j}\la
\textstyle{\B(\vec{0},\textstyle{\frac{2}{3}r})},
\]
given by $F_{\infty}|_{M_{s_j}}=F_{\infty,s_j}$ for each $j\in \N$,
is a minimal immersion with finite area,
defined on a surface $M_{\infty}$ of finite genus: the bound on the
genus of $M_{\infty}$ is the same bound as on the genus of the surface
$\Delta_1(n)$, which, by item~\ref{it4} of Theorem~\ref{mainStructure},
is at most $6I(\Delta_1(n))-8\leq 6I_0-8$
if the index $I(\Delta_1(n))\geq 2$; if $I(\Delta_1(n))=1$,
then the genus of $\Delta_1(n)$ is zero.
Observe that $M_{\infty}$ has at least $J$
annular ends, and the number $e$ of these ends of $M_{\infty}$
is finite (at most $3I_0-1$ by item B of Theorem~\ref{mainStructure}).
Furthermore, the image by $\wh{F}_{\infty}$ of these ends of $M_{\infty}$
is $\{x_1,\ldots,x_J\}\subset \B(\vec{0},\frac{2}{3}r)$.
By regularity results in \cite{gr1}, $M_{\infty}$ is a compact
Riemann surface with $b$ boundary components, punctured in
$e:=\sum_{i=1}^J\wt{e}_i$ points,
and we can denote the set of ends of $M_{\infty}$ by
\[
\{ P_{1,i},\ldots ,P_{\wt{e}_i,i}\} _{i=1,\ldots,J},
\]
in such a way that the immersion $F_{\infty}$ extends to a finitely
branched minimal immersion
\begin{equation}
\label{eq:Finfty}
\ov{F}_{\infty}\colon M\cup \{ P_{1,i},\ldots ,P_{\wt{e}_i,i}
\} _{i=1,\ldots,J}\la \textstyle{\B(\vec{0},\frac{2}{3}r)},
\end{equation}
such that $\ov{F}_{\infty}(\{ P_{1,i},\ldots ,P_{\wt{e}_i,i}\})=\{x_i\}$,
$i=1,\ldots ,J$, and by construction, the set of branch points
of $\ov{F}_{\infty}$ is contained in the set
$\{ P_{1,i},\ldots ,P_{\wt{e}_i,i}\ | \ i=1,\ldots,J\}$.
This proves item~6 of the lemma.

Finally, we prove item 7. Observe that the branching order
$B(P_{h,i})\in \N\cup \{0\}$ of
$\ov{F}_{\infty}$ at $P_{h,i}$ equals
\begin{equation}
B(P_{h,i})=m_{h,i}-1,
\label{7.14}
\end{equation}
where $m_{h,i}\in \N$ is the multiplicity defined in item~4(c) above.
By adding this in the
set $\{ P_{1,i},\ldots ,P_{\wt{e}_i,i}\}$,
we deduce that the branching contribution
$B_i\in \N\cup \{ 0\}$ to $\ov{F}_{\infty}$ from this set is
$B_i=S_i-\wt{e}_i$, where $S_i=\sum_{h=1}^{\wt{e}_i}m_{h,i}$,
and thus, \eqref{7.13b} is proved. Finally,
the estimate~\eqref{7.13a} for the total spinning of $\Delta(i,n,\ve)$
(for a sufficiently small $\ve>0$) follows from
item~B of Theorem~\ref{mainStructure}.
This finishes the proof of the lemma.
\end{proof}

We now come back to item (M2) above.
Using the notation in Lemma~\ref{lemma7.11},
suppose, after choosing a subsequence,
that the $\frac{1}{r_n}F_n$ restricted to
$\Delta_{2,n}$ converge to a family
\begin{equation}\label{5.21}
F_{\infty}\colon \wh{\cD}\la \B(1)
\end{equation}
of minimal disks branched at the origin as described in
Lemmas~\ref{disks} and~\ref{lemma7.11}.
Thus, the desired (global) limit $f_2\colon \S_2\la \R^3$ of the
$\frac{1}{r_n}F_n$ is already
constructed in a neighborhood of $\wh{\cS}(0)$ in $\wh{\mathcal{D}}$
(see equation~\eqref{S(0)}),
where a non-trivial part of the index of $\frac{1}{r_n}F_n$ is collapsing
(namely, this collapsing index is $I(f_1)>0$);
since the remaining index of
$\frac{1}{r_n}F_n$ is at most $(I_0+1)-I(f_1)\leq I_0$, we are
allowed to apply Lemma~\ref{lemma7.10} to $\frac{1}{r_n}F_n$. We next
make this paragraph and the previously alluded to global convergence
in item~(M2) rigorous.

\begin{proposition}
\label{prop7.12}
In the situation above, let $\wt{F}_n\colon M_n\la \wt{X}_n$ be
$\frac{1}{r_n}F_n\colon M_n\la
\frac{1}{r_n}X_n$. Then, after replacing by a subsequence, there exist
$R_0 \geq 10,\ve_2\in (0,\de_1]$ and a
collection of points $Q_2(n)=\{q_1(n)=p_1(n),q_2(n),\ldots ,q_J(n)\}
\subset  B_{M_n}(p_1(n),R_0)$, $J\leq I_0$, such that:

\begin{enumerate}
\item For any $R>R_0$, $\{ |A_{\wt{F}_n}|\} _n$ is 
uniformly bounded in
$B_{M_n}(p_1(n),R)\setminus B_{M_n}(p_1(n),R_0)$.

\item $d_{M_n}(q_i(n),q_j(n))\geq \ve_2$ \, for each $n\in \N$ and
$i\neq j\in \{1, 2,\ldots ,J\}$.

\item For each $i\in \{ 1, 2,\ldots ,J\}$ and
$m\in \N$ with $\frac1m<\ve_2$,  
\[
|A_{\wt{F}_n}|(q_i(n))>n=\max \{|A_{\wt{F}_n}|(x) : x\in B_{M_n}(q_i(n),1/m)\}
\]
and there exists $A_2(m)>1$  such that $|A_{\wt{F}_n}|<A_2(m)$ in
$B_{M_n}(p_1(n),R_0)\setminus \cup_{i=1}^JB_{M_n}(q_i(n),1/m)$.

\item There exist (not necessarily distinct) points $x_1=\vec{0},x_2,
\ldots ,x_J\in \B(\vec{0},R_0)$ (here $\B(\vec{0},R)$ denotes
the ball  centered the origin with radius $R>0$ in $\R^3$ with its flat
metric) such that when viewed in harmonic  coordinates in $\wt{X}_n$
centered at $\wt{F}_n(p_1(n))$, the points $\wt{F}_n(q_i(n))$ converge as
$n\to \infty $ to $x_i$, for each $i=1,2,\ldots ,J$.

\item For almost all $R>R_0$ and for $m$ sufficiently large, the
$\wt{F}_n$ restricted to
\[
{\textstyle \overline{B}_{M_n}(p_1(n),R)\setminus
\cup_{i=1}^JB_{M_n}(q_i(n),\frac{1}{m})} 
\]
converge smoothly as $n\to \infty $ to a minimal immersion
$F_{\infty,m,R}\colon M_{m,R} \la \ov{\B}(\vec{0},R)$
of a compact surface with boundary $M_{m,R}$. Furthermore,
$M_{m,R}\subset M_{m+1,R'}$ and
$F_{\infty,m+1,R'}|_{M_{m,R}}=F_{\infty,m,R}$
whenever $R'>R>R_0$.
\item Define
\[
\S_2^*:=\bigcup_{\stackrel{m\in \N}{R>R_0}}M_{m,R},\qquad
f_2^*\colon \S_2^*\la \R^3,\
f_2^*|_{M_{m,R}}=F_{\infty,m,R}.
\]
Then, $\S_2^*$ is a (possibly disconnected) open Riemann surface and
$f_2^*$ is a minimal immersion. Furthermore, the conformal completion
$\ov{\S}_2$ of $\S_2^*$ has the structure of a compact Riemann surface,
$\ov{\S}_2\setminus \S_2^*=\cS(f_2)\cup \mathcal{E}_2$ is a finite set,
and $f_2^*\colon \S_2^*\la \R^3$ extends through $\cS(f_2)$
to a finitely branched, complete minimal immersion
$f_2\colon \S_2=\S_2^*\cup \cS(f_2)\la \R^3$
with finite total curvature, where
\begin{enumerate}[(a)]
\item $\cS(f_2)$ is the disjoint union of the finite set
$\cS(\vec{0})=\{ P_{1,1},\ldots P_{e_1,1}\}
\subset f_2^{-1}(\{ x_1=\vec{0}\})$
that appears in Lemma~\ref{lemma7.11}, together with the closely related finite sets
\[
\cS(x_i)=\{ P_{1,i},\ldots ,P_{b_i,i}\}\subset f_2^{-1}(\{x_i\}),
\quad i=2,\ldots J.
\]
Furthermore, the set of branch points
of $f_2$ is contained in $\cS(f_2)$
and its branch locus (image) is contained 
in $\{x_1=\vec{0},x_2,\ldots ,x_J\}\subset \B(R_0)$.
\item The set of ends of $f_2$ is $\cE_2=\{ E_1,\ldots ,E_{e_2}\}$.
\item The map $F_{\infty}$ given in~\eqref{5.21} coincides with $f_2$ in a
neighborhood of $\cS(x_1=\vec{0})$ in $\S_2$.
\end{enumerate}

\item The total
branching order $B(f_2)$ of $f_2$ can be estimated from above as follows:
\begin{equation}
B(f_2)\leq 3[I_0+1-\Ind(f_1)]-J\leq 3I_0-1.
\label{7.21}
\end{equation}
\item The following properties hold for some $R> 3R_0$:
\begin{enumerate}
\item The index of $f_2^{-1}( \B(R/3))$ is $I(f_2)$ (compare to property
(H0)' above).
\item $f_2(\Sigma_2)\setminus \B(R/3)$ consists of $e_2$ multi-graphs
over their projections to planes $\Pi_j\subset \R^3$ passing though
$\vec{0}$, $j=1,\ldots ,e_2$ (compare to property (H1)').
Furthermore,  each of these end representatives contains no non-trivial
geodesic arcs with boundary points in the boundary
of  $\S_2\setminus f_2^{-1}(\B(R/3))$.

\item The image through the Gauss map of $f_2$ of each component $C_j$ of
$f_2(\S_2)\setminus \B(R/3)$ is contained in the spherical
neighborhood of radius $\a_1/2$ centered at a point $v_j\in \esf^2(1)$
perpendicular to $\Pi_j$, where $\a_1=\a_1(\tau)\in (0,\tau]$ is the
constant given by Lemma~\ref{lemma5.3} for $L_0=3\pi (I_0+2)+1$
(thus, $C_j$ satisfies item (B2)  of Lemma~\ref{lemma5.3}
with $R_1=R/3$ and $\a=\a_1/2$, compare to item (H2)').

\item $f_2(\Sigma_2)$ makes an angle greater than
$\frac{\pi }{2}-\frac{\a_1}{2}$
with every sphere $\esf^2({r})$ of radius $r\geq R/3$ centered at the
origin (so, $C_j$  satisfies item (B1) of Lemma~\ref{lemma5.3} with
$R_1=R/3$ and $\a=\a_1/2$, compare
to (H3)').

\item The total length of the intersection of $f_2(\Sigma_2)$ with
any sphere $\esf^2({r})$ centered at the origin and radius $r\geq R/3$
is less than $(L_0-\frac{1}{2})r$ (hence $C_j$ satisfies item (B3) of
Lemma~\ref{lemma5.3} with $R_1=R/3$,
compare to (H4)').

\item  For all $n\in \N$, the component
$\Delta_{2,n}(R/3)$ of $\wt{F}_n^{-1}(B_{\wt{X}_n}(\wt{F}_n(p_1(n)),R/3))$ that
contains $p_1(n)$ has index at least $I(f_1)+I(f_2)+(J-1)$ and
if $J=1$, then $I(f_2)>0$. In particular,
$I(\Delta_{2,n}(R/3))>I(f_1)$.
\end{enumerate}
\end{enumerate}
\end{proposition}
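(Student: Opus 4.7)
The plan is to reduce the construction of $f_2$ to repeated application of Lemma~\ref{lemma7.10} (i.e.\ of the induction hypothesis, Theorem~\ref{mainStructure} for $I=I_0$), exploiting the fact that the branched limit $F_\infty\colon\wh{\cD}\to\B(1)$ from Lemmas~\ref{disks} and~\ref{lemma7.11} already absorbs $I(f_1)\ge 1$ units of the total Morse index of each $\wt{F}_n=\tfrac{1}{r_n}F_n$. Since $\Ind(\wt F_n)\le I_0+1$, the part of $\wt F_n$ that lives away from $p_1(n)$ carries index at most $(I_0+1)-I(f_1)\le I_0$, placing it inside the scope of the induction hypothesis.

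I would first establish items~1--5 via an inductive selection of the points $q_1(n)=p_1(n), q_2(n),\ldots,q_J(n)$ in the spirit of Lemma~\ref{ass4.4}: whenever $\{|A_{\wt F_n}|\}$ fails to be uniformly bounded on a set of the form $B_{M_n}(p_1(n),R)\setminus\bigcup_{i\le j} B_{M_n}(q_i(n),s)$ for some fixed $R$ and every $s>0$, one locates a new concentration point $q_{j+1}(n)$ and invokes Lemma~\ref{lemma7.10} at $q_{j+1}(n)$ to extract a further branched limit absorbing at least one extra unit of index. Because the remaining index budget past $f_1$ is at most $I_0$, this process terminates at some $J\le I_0$. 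A uniform $R_0\ge 10$ bounding all $q_i(n)$ exists because, after passing to a subsequence, each $\wt F_n(q_i(n))$ converges to some $x_i\in\R^3$, and finitely many limit points fit inside a single ball $\B(\vec 0,R_0)$. Items~2--4 then follow at once from the construction and from the curvature bounds of Lemma~\ref{lemma7.10}, while item~5 is a standard consequence of item~4(d) of Lemma~\ref{lemma7.10} together with a diagonal extraction.

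Next, $f_2^*\colon\Sigma_2^*\to\R^3$ is built by gluing the minimal immersions $F_{\infty,m,R}$ via the compatibility $F_{\infty,m+1,R'}|_{M_{m,R}}=F_{\infty,m,R}$; the conformal completion across the finite puncture set $\cS(f_2)=\bigsqcup_{i=1}^J\cS(x_i)$ is then obtained by identifying each puncture with the branched minimal disk limit provided by Lemmas~\ref{disks} and~\ref{lemma7.11} applied at $q_i(n)$, and invoking a Gulliver--Osserman--Royden-type removable singularity result to extend $f_2^*$ analytically across $\cS(f_2)$ with branching orders $B(P_{h,i})=m_{h,i}-1$ read off from item~7 of Lemma~\ref{lemma7.10}. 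Completeness of $f_2$ follows because $r_n\to 0$ forces divergent arcs in $\Sigma_2$ to have infinite ambient length; finite total curvature then follows from $I(f_2)\le I_0+1-I(f_1)<\infty$ by Fischer-Colbrie~\cite{fi1} (and its non-orientable analogue already used in Section~\ref{sec2.7}), so $\ov\Sigma_2$ is a compact Riemann surface minus the finite end set $\cE_2$, yielding item~6. The branching estimate $B(f_2)=\sum_{i=1}^J(S_i-\wt e_i)\le 3[I_0+1-I(f_1)]-J\le 3I_0-1$ of~\eqref{7.21} then follows from $I(f_1)\ge 1$, $J\ge 1$, and the bound $S_i\le 3\,I(\Delta(i,n,\ve))$ from~\eqref{7.13a}.

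Finally, for item~8 I would pick $R>3R_0$ large enough that (a) the index of $f_2^{-1}(\B(R/3))$ equals $I(f_2)$; (b) each end $E_j$ of $f_2$ is realized outside $\B(R/3)$ as a multi-graph over a plane $\Pi_j$ whose Gauss image lies in a spherical $\a_1/2$-neighborhood of $v_j\perp\Pi_j$; (c) $f_2(\Sigma_2)$ is almost orthogonal to every $\esf^2(r)$ for $r\ge R/3$; and (d) the length bound $L^{f_2}(r)/r<L_0-\tfrac12$ holds, via the Jorge--Meeks formula (Proposition~\ref{lem6.3}) and the total-spinning bound~\eqref{spinf} adapted to $f_2$. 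These are standard consequences of finite total curvature and settle items~8(a)--8(e). The delicate part is item~8(f), which I would prove by constructing $I(f_1)+I(f_2)+(J-1)$ linearly independent, compactly supported, pairwise almost-orthogonal test functions for the Jacobi operator on $\Delta_{2,n}(R/3)$ from three mutually disjoint regions: the inner region where $\wt F_n$ is modeled on $F_\infty$ (contributing $I(f_1)$), the intermediate annulus where $\wt F_n\to f_2$ smoothly (contributing $I(f_2)$), and small neighborhoods of each of the remaining concentration points $q_2(n),\ldots,q_J(n)$ (each contributing at least one, by item~2 of Lemma~\ref{lema3.6} and the unstable-disk selection in Lemma~\ref{ass4.4}); in the exceptional case $J=1$ the inequality $I(f_2)>0$ is forced because $I(f_2)=0$ would yield property~$(\diamondsuit)$ of Proposition~\ref{ass3.5}, contradicting the standing assumption of case~(2.B) of Remark~\ref{rem2.15}. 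The main obstacle is precisely item~8(f): one must partition the eigenfunctions of the Jacobi operator across three different spatial scales---the scale $1/(\l_n r_n)$ of the original picture, the scale $1$ of the rescaled picture producing $f_2$, and the intermediate bubble scales around each $q_i(n)$---and verify that the test functions so produced are approximately orthogonal, so their span is genuinely $(I(f_1)+I(f_2)+(J-1))$-dimensional.
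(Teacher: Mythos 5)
Your proposal follows essentially the same route as the paper's proof: reduce to the induction hypothesis via Lemma~\ref{lemma7.10} applied to the portion of $\wt F_n$ away from $p_1(n)$, select the concentration points $q_i(n)$ iteratively as in Lemma~\ref{ass4.4}, build $f_2^*$ by diagonal extraction and glue limits via the compatibility $F_{\infty,m+1,R'}|_{M_{m,R}}=F_{\infty,m,R}$, remove the singularities at $\cS(f_2)$ by a Gulliver--Osserman--Royden-type argument, get finite total curvature from finite index (Fischer-Colbrie for the two-sided case, Ros's argument for the one-sided case), and read off the branching estimate~\eqref{7.21} from item~7 of Lemma~\ref{lemma7.10}. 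Your treatment of the $J=1$ subcase of item~8(f) is phrased slightly differently from the paper's (you argue by contradiction from property~$(\diamondsuit)$ rather than directly applying non-flatness near $\partial\B(1)$ plus item~1 of Lemma~\ref{lema3.6}), but the two are substantively equivalent.

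One remark on where you flag the ``main obstacle'' in item~8(f): the concern about verifying approximate orthogonality of test functions living at three different scales is misplaced. The paper's argument does not need multi-scale almost-orthogonal test functions; it uses the elementary fact that the Morse index of a compact surface with boundary is superadditive over \emph{pairwise disjoint} subdomains, since test functions with disjoint supports are \emph{exactly} orthogonal in $L^2$. The domains
$B_{M_n}(p_1(n),\tfrac1m)$, $B_{M_n}(q_i(n),\tfrac1m)$ for $i=2,\ldots,J$, and
$\Delta^*_{2,n}(R/3)=\Delta_{2,n}(R/3)\setminus[B_{M_n}(p_1(n),\tfrac1m)\cup\bigcup_{i\ge2}B_{M_n}(q_i(n),\tfrac1m)]$
are pairwise disjoint subsets of $\Delta_{2,n}(R/3)$, with indices equal (for $n$ large and $m$ large and fixed) to $I(f_1)$, at least $1$ each, and $I(f_2)$ respectively, and these add directly to give the lower bound $I(\Delta_{2,n}(R/3))\ge I(f_1)+I(f_2)+(J-1)$. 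Recognizing that the concentration regions are genuinely disjoint at scale $1/m$ removes the difficulty you anticipated.
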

\begin{proof}
Recall the notation and statement of Lemma~\ref{lemma7.11}.
By assumption, the $\wt{F}_n$ restricted  to $\Delta_{2,n}$ converge to
$F_{\infty}$ given by equation~\eqref{5.21}. Since the restriction of
$F_{\infty}$ to $F_{\infty}^{-1}(\B(1)\setminus \B(\frac{1}{2}))$ consists
of $e_1$ multi-graphs (here $e_1$ is the number of ends of $f_1$),
then $\wt{F}_n(\Delta_{2,n})$ is graphical in the region
$B_{\wt{X}_n}(\wt{F}(p_1(n)),1)\setminus B_{\wt{X}_n}(
\wt{F}(p_1(n)),\frac12)$, and thus, the surfaces
\[
{\textstyle M'_n=M_n\setminus [\Delta_{2,n}\cap
\wt{F}_n^{-1}(B_{\wt{X}_n}(\wt{F}(p_1(n)),\frac12)]}
\]
have uniform curvature estimates  in a fixed sized $\ve'_0$-neighborhood
of its boundary (for some $\ve_0'\in (0,\ve_0]$).
Let $F'_n\colon M_n'\la \wt{X}_n$ be the restriction of $\wt{F}_n$ to
$M_n'$ and for all $n\in \N$, we can consider $F_n'$ to be an element
in a fixed related space $\Lambda '$ except that
the index of the immersions in
\[
\Lambda '=\L(I_0,H_0,\ve_0',A_0,K_0)
\]
is at most $I_0$.
By induction, we can suppose that Theorem~\ref{mainStructure} holds for
the subspace $\Lambda'$.

The construction of the finite set $\{q_2(n),\ldots ,q_J(n)\}\subset
B_{M_n}(p_1(n),R_0)$, $J\leq I_0$, appearing in the statement of
the proposition, follows exactly the same arguments used to prove the
existence of the related set $Q(n)$ given in item~3 of
Lemma~\ref{lemma7.10}.
Similarly, items 2, 3 and 4 of the proposition can be deduced
from the same reasoning as items~3(b), 3(c) and 3(d)  of
Lemma~\ref{lemma7.10} respectively; in particular, we use the
number $\de_1\in (0,\ve_0/2]$ defined in item~1 of Lemma~\ref{lemma7.10}
in order to find $\ve_2\in (0,\de_1]$ satisfying item~2 of the
proposition. We leave the details to the reader.

The existence of the number $R_0\geq 10$ and item 1 of
the proposition follows from the fact that the number $J$ of sequences
\[
\{ q_1(n)=p_1(n)\}_n,\ldots ,\{ q_J(n)\}_n
\]
around which the second fundamental form of $\wt{F}_n$ fails to be
bounded, is finite (at most $I_0+1$, by Lemma~\ref{ass4.4}).

Items 5 and 6 of the proposition also follow with small modifications from
the proof of items~4(d) and 6 of Lemma~\ref{lemma7.10},
where one also uses the fact that a complete minimal surface in $\rth$
with compact boundary and finite index has finite total curvature
(see \cite{fi1} for this result when the surface is orientable
and see the last paragraph of the proof of
Theorem~17 in Ros~\cite{ros9} for the non-orientable case).
The proof of item~7 of the proposition
follows from the same arguments that proved item~7 of
Lemma~\ref{lemma7.10},
observe that the index of $f_2$ is at most $(I_0+1)-\mbox{Index}(f_1)$.

The proof of items~8(a) and of the first statement of 8(b) are clear
after taking $R>0$ sufficiently large, since $f_2$ has finite total
curvature. The second statement of 8(b) follows from the fact that
for $R>0$ sufficiently large, the collection of ends
$f_2^{-1}(\R^3\setminus \B(R/3)) $ of $f_2$ is
foliated by the simple closed curves
in $\{f_2^{-1}(\partial \B(R')) \mid R'\geq R/3)\}$, each of which has
positive geodesic curvature.
The proof of parts (c) -- (e) of item~8 also follow from previous
considerations (compare to items (H2)'--(H4)').

To finish the proof of the proposition, we check that item~8(f) holds.
First suppose that $J=1$.  In this case, the sequence
$\{ \frac{1}{r_n}F_n\}_n$
converges smoothly (up to a subsequence) to $f_2$ in a neighborhood
of $\partial\B(1)$. This implies, by construction of $r_n$
(see Definition~\ref{defrn}), that $f_2$ is not flat in any neighborhood
of $\partial\B(1)$. In particular, $f_2$ is not flat and the
image of its branch locus is the origin. Then, by item~1 of
Lemma~\ref{lema3.6}, $f_2$ has positive index.

Regardless of the value of $J$, and
by the already proven item 8(a) of this proposition,
the index of $f_2^{-1}( \B(R/3))$ is $I(f_2)$.
Since the index of a compact minimal surface with boundary
remains the same after removing a sufficiently small neighborhood of
a finite subset of its interior, we deduce that for
$m$ sufficiently large, the index of
\begin{equation}
f_2^{-1}( \B(R/3))\setminus [\B(\vec{0},\textstyle{\frac{1}{m}})
\cup (\cup_{i=2}^J\B(x_i,\textstyle{\frac{1}{m}}))]
\label{f2Rm}
\end{equation}
is also equal to  $I(f_2)$. Let $\Delta_{2,n}(R/3)$ be the component
of $\wt{F}_n^{-1}(B_{\wt{X}_n}(\vec{0},R/3))$ that contains $p_1(n)$.
By the convergence in item~5 of the proposition,
for $m\in \N$ sufficiently large,
the index of
\[
\Delta_{2,n}^*(R/3):=\Delta_{2,n}(R/3)\setminus
[B_{M_n}(p_1(n),\textstyle{\frac{1}{m}})
\cup (\cup_{i=2}^JB_{M_n}(q_i(n),\textstyle{\frac{1}{m}}))]
\]
is equal to the index of the surface in~\eqref{f2Rm}.
Observe that for $n$ sufficiently large and $m$ large and fixed,
that index of $B_{M_n}(p_1(n),\frac{1}{m})$ is equal to the
index $I(f_1)$ of $f_1$, and each of the balls in the pairwise disjoint
collection
\[
\{ B_{M_n}(p_1(n),\textstyle{\frac{1}{m}}),
B_{M_n}(q_2(n),\textstyle{\frac{1}{m}})),
\ldots,B_{M_n}(q_J(n),\textstyle{\frac{1}{m}}))  \}
\]
is unstable. Then, if we denote by $I(S)$ the Morse index of
a surface $S$, we get (after replacing by a subsequence)
\begin{eqnarray}
I(\Delta_{2,n}(R/3))
&\geq &I(\Delta_{2,n}^*(R/3))+
I(B_{M_n}(p_1(n),\textstyle{\frac{1}{m}}))+
\displaystyle{\sum_{i=2}^J}I(B_{M_n}(q_i(n),\textstyle{\frac{1}{m}}))
\nonumber
\\
&=&I(f_2)+I(f_1)+
\displaystyle{\sum_{i=2}^J}I(B_{M_n}(q_i(n),\textstyle{\frac{1}{m}})).
\label{7.23}
\end{eqnarray}
If $J=1$, then the last sum is empty and~\eqref{7.23} gives
$I(\Delta_{2,n}(R/3))\geq I(f_2)+I(f_1)>I(f_1)$ as desired. Finally,
if $J\geq 2$, then we estimate each
$I(B_{M_n}(q_i(n),\textstyle{\frac{1}{m}}))\geq 1$ and so,
\eqref{7.23} gives $I(\Delta_{2,n}(R/3))\geq I(f_2)+I(f_1)+(J-1)$.
This completes the proof.
\end{proof}

\begin{lemma} \label{lemma7.13}
With the notation of Proposition~\ref{prop7.12} consider the partition of
$\cS(f_2)\subset \S_2$ by the subsets
\[
%\cS(f_2,1)=\cS(q_1(n)=p_1(n)), \quad 
\cS(f_2,i)=\cS(x_i), \ i=1,\ldots ,J
\]
introduced in item~6(a) of that proposition.
Define the quotient space $\wh{\S}_2$ of $\S_2$ where each of the
elements in $\cS(f_2,i)$ identifies to one point that we denote by
$\wh{\cS}(f_2,i)\in \wh{\S}_2$, $i=1,\ldots ,J$
and every other point of $\S_2$ only identifies with itself. Let
\[
\pi \colon \S_2 \to \wh{\S}_2
\]
be the related quotient map, that is, $\pi |_{\cS(f_2,i)}$ is the
constant map equal to $\wh{\cS}(f_2,i)$,
and the restriction of $\pi $ to $\S_2\setminus \cS(f_2)$
is injective. After endowing $\wh{\S}_2$ with the quotient topology,
the following hold:
\ben
\item $\wh{\S}_2$ is a path-connected topological space and
\[
\wh{\cS}(f_2):=\pi(\cS(f_2))
\]
consists of $J$ elements in $\wh{\S}_2$.

\item  $\wh{\S}_2\setminus \wh{\cS}(f_2)$ is a smooth Riemannian surface
that induces a  metric space structure $d_{\wh{\S}_2}$ on $\wh{\S}_2$.

\item The restriction of $f_2$ to $\S_2\setminus \cS(f_2)$,
considered to be a subset of $\wh{\S}_2$,
extends to a continuous mapping $\wh{f}_2\colon \wh{\S}_2 \to \rth$.

\item Let $p=\wh{\cS}(f_2,1)$ (so $\wh{f}_2(p)=\vec{0}$). Given $q\in
\wh{f}_2^{-1}(\B(R))$ a point different from $p$, where $R>0$ was defined
in item~8 of Proposition~\ref{prop7.12},
there is an injective  continuous path
$\a_{p,q}\colon [0,1] \to \wh{\S}_2$ of least length joining $p$ to $q$
satisfying:
\ben
\item $\wh{f}_2\circ \a_{p,q}$ is a piecewise smooth curve in $\rth$
with image in the ball $\ov{\B}(R)$.

\item $\a_{p,q}([0,1])\setminus \wh{\cS}(f_2)$ consists of $j_1(q)\leq J$
smooth geodesic arcs in $\wh{\S}_2\setminus \wh{\cS}(f_2)$,
each of which has length less than $\wh{C}R$,
where $\wh{C}=\wh{C}(I_0,B)>0$ is defined in item~2 of
Proposition~\ref{propos5.5} and $B$ is the total branching order
of $f_2$ (recall that $B\leq 3I_0-1$ by~\eqref{7.21}).

\item In particular, as $j_1(q)\leq J\leq I_0$, then (compare to
item~(H5)' above)
\begin{equation}
d_{\wh{\S}_2}(p,q)< I_0\wh{C} R.
\end{equation}
\een
\een
\end{lemma}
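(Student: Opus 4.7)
The plan is to dispose of items 1--3 via elementary quotient-topology considerations, and to prove item 4 by combining a compactness argument with the chord-arc bound of Proposition~\ref{propos5.5}.

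For item 1, path-connectedness of $\wh{\S}_2$ follows because each $\Delta_{2,n}(R)\subset M_n$ is connected, and although $\S_2^*=\cup_{m,R}M_{m,R}$ may split into several components (the balls $B_{M_n}(q_i(n),1/m)$ may disconnect the $M_{m,R}$), the limits of the boundary circles near each $q_i(n)$ are precisely the $\wt{e}_i$ punctures in $\cS(x_i)\subset \cS(f_2)$; identifying each $\cS(x_i)$ to a single point $\wh{\cS}(f_2,i)$ reglues these components into a connected quotient. That $\wh{\cS}(f_2)$ has exactly $J$ elements is immediate from the construction. Item 2 is clear: $\wh{\S}_2\setminus \wh{\cS}(f_2)$ is identified with $\S_2^*$, which carries the pullback Riemannian metric under $f_2^*$, and the induced metric $d_{\wh{\S}_2}$ is defined as the infimum of lengths of piecewise smooth paths, where paths are allowed to cross the points of $\wh{\cS}(f_2)$. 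Item 3 follows by extending $f_2^*$ to $\wh{f}_2$ with $\wh{f}_2(\wh{\cS}(f_2,i)):=x_i$; continuity at each $\wh{\cS}(f_2,i)$ holds by the quotient topology and the equality $f_2(\cS(x_i))=\{x_i\}$ from item 6(a) of Proposition~\ref{prop7.12}.

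For item 4, fix $q\in \wh{f}_2^{-1}(\B(R))\setminus\{p\}$ and consider the preimage $\wh{f}_2^{-1}(\ov{\B}(R))$. Since $f_2$ has finite total curvature and $R$ is chosen as in item 8 of Proposition~\ref{prop7.12} so that the ends of $f_2$ are nicely graphical outside $\B(R/3)$, this preimage is compact. Path-connectedness of $\wh{f}_2^{-1}(\ov{\B}(R))$ follows from the argument used for item 1 applied to this set. A standard minimizing argument (parameterizing by arc length and applying Arzel\`{a}--Ascoli) then yields an injective continuous minimizer $\alpha_{p,q}\colon[0,1]\to \wh{f}_2^{-1}(\ov{\B}(R))$ from $p$ to $q$. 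Setting $T=\alpha_{p,q}^{-1}(\wh{\cS}(f_2))$, injectivity forces $|T|\leq J$, and the complement $[0,1]\setminus T$ decomposes into $j_1(q)\leq J$ open subintervals. Each such subinterval lifts to a minimizing, hence smooth geodesic, arc in $\S_2^*$ endowed with the pullback metric, which establishes item 4(a) and the first sentence of 4(b).

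For the length estimate, apply Proposition~\ref{propos5.5} to $f_2\colon \S_2\la \R^3$: by item 7 of Proposition~\ref{prop7.12} and the fact that index collapses away from $p_1(n)$, $f_2$ has index at most $I_0+1-I(f_1)\leq I_0$ and total branching at most $3I_0-1$ by~\eqref{7.21}. Item 2 of Proposition~\ref{propos5.5} then supplies a chord-arc constant $\wh{C}=\wh{C}(I_0,3I_0-1)$ such that any point in a component of $f_2^{-1}(\ov{\B}(R))$ meeting $\cS(x_i)$ lies at intrinsic distance at most $\wh{C}R$ from the nearest branch point in that component. Since each geodesic segment of $\alpha_{p,q}$ has endpoints in $\cS(f_2)\cup\{q\}\subset \wh{f}_2^{-1}(\ov{\B}(R))$, its length is at most $\wh{C}R$, and summing over the $j_1(q)\leq J\leq I_0$ segments gives
\[
d_{\wh{\S}_2}(p,q)=\mathrm{length}(\alpha_{p,q})\leq j_1(q)\,\wh{C}R\leq J\,\wh{C}R\leq I_0\wh{C}R,
\]
with strictness coming either from $j_1(q)<I_0$ or, when $j_1(q)=J=I_0$, from the fact that the terminal segment ends at $q$ with $\wh{f}_2(q)\in\B(R)$ open, forcing its length to be strictly less than $\wh{C}R$. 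The main obstacle is verifying that the chord-arc bound of Proposition~\ref{propos5.5}, originally phrased as a bound from a distinguished base point to the boundary of a preimage ball, transfers cleanly to bound each geodesic segment of $\alpha_{p,q}$ joining two branch points within $f_2^{-1}(\ov{\B}(R))$; this will require applying the proposition in a slightly generalized form, working component by component of $f_2^{-1}(\ov{\B}(R))$ and recentring around each branch point encountered by $\alpha_{p,q}$.
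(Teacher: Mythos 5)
Your proof is correct and follows essentially the same route as the paper's: items 1--3 by quotient-topology considerations, and item 4 by producing a length-minimizing injective path, noting that intermediate transition points must lie in $\wh{\cS}(f_2)$ (so $j_1(q)\leq J$), and bounding each smooth arc via item~2 of Proposition~\ref{propos5.5}.

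One remark: the ``main obstacle'' you flag at the end is not actually an obstacle. You describe item~2 of Proposition~\ref{propos5.5} as ``originally phrased as a bound from a distinguished base point to the boundary of a preimage ball,'' but that is item~1 of the proposition (the estimate $d_{\Omega_R}(p,\partial\Omega_R)<\wh{L}R$). Item~2, which is the one being invoked, already states that for \emph{any} pair of points $p,q$ in $\Omega_R$ one has $d_{\Omega_{2R}}(p,q)<\wh{C}R$, with the strict inequality built in. Applied component-by-component to $f_2$ (using the index bound $I(f_2)\leq I_0$ from item~8(f) of Proposition~\ref{prop7.12} and the branching bound $B\leq 3I_0-1$ from~\eqref{7.21}), this directly bounds each geodesic arc of $\alpha_{p,q}$, so no generalization or recentering is needed and the concluding length estimate in item~4(c) follows immediately — including the strictness, which you derive somewhat more laboriously than necessary.
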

\begin{proof}
The path-connectedness of $\wh{\S}_2$ follows immediately
from the fact that for all $R>R_0$ (this $R_0$ is defined
in Proposition~\ref{prop7.12}),
$B_{M_n}(p_1(n)),R)$  is path-connected with $\cS(f_2)\subset
B_{M_n}(p_1(n)),R_0)$ and because the projection of a continuous path
in $\S_2$ to $\wh{\S}_2$ is a continuous path.
This proves that item~1 holds;
the proof of items~2 and 3 follow from the definition of the quotient
space $\wh{\S}_2$ and the fact that the
composition of continuous mappings is continuous.

The existence of the embedded minimizing geodesic $\a_{p,q}$ joining $p$
to $q$ is standard, where $\a_{p,q}\setminus \wh{\cS}(f_2)$ %\wh{\S}_2$
consists of a finite number $j_1(q)\leq J$ of open
geodesic arcs that have least-length joining their endpoints;
the reason that there are at most $J$ such arcs in $\a_{p,q}$
follows from the fact that if there is more
than one such geodesic arc in $\a_{p,q}$, then each such arc contains a
point of $ \wh{\cS}(f_2)\setminus \{p\}$.
%and, for completeness sake, we next describe its construction.
%minimize length among all piecewise smooth curves in $\wh{\S}_2\setminus \wh{\cS}(f_2)$
%starting at $p$ and ending at any point in $\wh{\cS}(f_2)\setminus \{ p\}$, then minimize
%length among all piecewise smooth curves in $\wh{\S_2}\setminus \wh{\cS}(f_2)$
%starting at $q$ and ending at any point in $\wh{\cS}(f_2)\setminus \{ q\}$, and finally
%minimize length among all piecewise smooth curves in $\wh{\S}_2\setminus \wh{\cS}(f_2)$
%starting and ending at different points in $\wh{\cS}(f_2)\setminus \{ p,q\}$); so
%fix such an $\a_{p,q}$.
Clearly $f_2\circ\a_{p,q}$ is a piecewise smooth curve in $\rth$
and its image is contained in $\ov{\B}(R)$ by the second statement in
item 8(b) of Proposition~\ref{prop7.12}, which completes
the proof of item~4(a).

Since $\a_{p,q}$ is injective, length-minimizing
and only fails to be smooth at points in $\wh{S}(f_2)$, then
$\a_{p,q}([0,1])\setminus \wh{\cS}(f_2)$ consists of $j_1(q)\leq J$
smooth geodesic arcs in $\wh{\S}_2\setminus \wh{\cS}(f_2)$.
Item~4(b) follows directly  from item~2 of
Proposition~\ref{propos5.5} (note that $I(f_2)\leq I_0$ by item~8(f) of
Proposition~\ref{prop7.12} since $I(f_1)>0$)).
As $j_1(q)\leq J$ and $J\leq I_0$ by Proposition~\ref{prop7.12},
then item~4(c) is proved.
\end{proof}

\subsubsection{Finding an $s_0$-th local picture with a uniform size}
\label{sec7.5.3}
Recall that in Definition~\ref{defrn} we introduced $r_n$ in terms of
$\l_{1,n}:=\l_n$ and a certain $R>0$
given in terms of the limit immersion $f_1$ so that hypotheses (B1),
(B2), (B3) of Lemma~\ref{lemma5.3}
hold for annular portions of the $F_n$ with the choices
$L_0=3\pi(I_0+2)+ 1$. We now proceed
in a similar manner replacing $f_1$ by $f_2$ and $F_n$ by
$\wt{F}_n=\frac{1}{r_n}F_n$. Properties~8(b)-8(e)
of Proposition~\ref{prop7.12} for $f_2$ are similar to properties
(H1)'-(H4)' for $f_1$. Recall that these
properties (H1)'-(H4)' produce related properties (I1)'-(I4)' for
$\l_nF_n$ and $n\in \N$ large, in particular,
we found $e_1$ multi-graphical annuli $\wt{G}_n(1),\ldots,\wt{G}_n(e_1)$
in $(\l_nF_n)(\Delta_n(4R)\setminus \Delta_n(R/2))$,
see property (I1)'. We now call $\l_{2,n}=\frac{1}{r_n}$
for each $n\in \N$, which tends
to $\infty$ as $n\to \infty$ by (2.B) in Remark~\ref{rem2.15}.
Reasoning analogously as we did with the first limit $f_1$,
properties~8(b)-8(e) of Proposition~\ref{prop7.12}
produce corresponding properties (I1)'-(I4)' for $\l_{2,n}F_n$ and
$n\in \N$ large, in particular, we find $e_2$ multi-graphical annuli
$\wt{G}_{2,n}(1),\ldots,\wt{G}_{2,n}(e_2)$ in
$(\l_{2,n}F_n)(\Delta_n(4R)\setminus \Delta_n(R/2))$ (this $R>0$
is now introduced in item~8 of Proposition~\ref{prop7.12}).
\begin{definition}
{\rm
Define $r_{2,n}$ as the supremum of the extrinsic radii $r\geq
4R/\l_{2,n}$ such that annular enlargements $\wh{G}_{2,n}(j)$ of the
$\wt{G}_{2,n}(j)$ satisfying conditions (B1),(B2),(B3)
of Lemma~\ref{lemma5.3} for the choices $L_0=3\pi (I_0+2)+1$,
inner extrinsic radius $R_1=\frac{R}{2\l_{2,n}}$,
outer extrinsic radius $R_2=r_{2,n}$ and angle $\a=\a_1$.
}
\end{definition}

As we did in Remark~\ref{rem2.15}, we next discuss  whether or not
$r_{2,n}$ tends to zero as $n\to \infty$. If $\{ r_{2,n}\}_n$ is bounded
away from zero with this bound independent of the sequence
$\{ F_n\}_n\subset \Lambda$, then Proposition~\ref{ass3.5'}
below holds with $s_0=2$. Otherwise, we repeat the process in steps (M1)
and (M2) above for the sequence $\frac{1}{r_{2,n}}F_n$ and find a
complete, finitely branched minimal immersion $f_3\colon \S_3\la \R^3$
with finite total curvature which is a limit of (a subsequence of) the
$\l_{3,n}F_n$, where $\l_{3,n}=\frac{1}{r_{2,n}}$ for each $n\in \N$.
This process of finding scales $\{ \l_{s,n}\}_n$ and limits $f_s$
($s=1,2,\ldots $) must stop after a finite
number $s_0$ of times ($s_0\leq I_0+1$), because each time we apply the
process we find $\Delta$-type components
in (a subsequence of) $\{ F_n\}_n$ with strictly larger index
by item~8(f) of Proposition~\ref{prop7.12}, but the index
of each $F_n$ is at most $I_0+1$. This implies that $r_{s_0,n}$ is
bounded away from zero, with the lower bound being independent of the
sequence $\{F_n\}_n\subset \Lambda$. In this setting, the discussion in
item~2(A) of Remark~\ref{rem2.15} implies that Proposition~\ref{ass3.5}
holds for the scale of $f_{s_0}\colon \Sigma_{s_0}\la \R^3$.
More precisely:

\begin{proposition}
\label{ass3.5'}
There exists $\de_4\in (0,\de_3]$ (this $\de_3\in (0,\de_2]$ was given in
Definition~\ref{def3.6} for the choices $m=3(I_0+1)+3$
%---see (\ref{eqts})---
and $L_0=3\pi (I_0+2)+1$) such that the hypotheses of
Lemma~\ref{lemma5.3} hold for annular
enlargements $\wh{G}_{s_0,n}(j)$ of the multi-graphs $\wt{G}_{s_0,n}(j)$
(here $j=1,\ldots ,e_{s_0}$ with $e_{s_0}$ being the number of ends of
$f_{s_0}$) between the geodesic spheres in $X$
centered at $F_n(p_1(n))$ of extrinsic inner radius
$\frac{R_{s_0}}{2\l_{s_0}(n)}$
and extrinsic outer radius $\de_4$, and with the choice $\a =\tau_1$
for hypotheses (B1), (B2)
(this $\tau_1\in (0,\a_1]$ was also introduced in
Definition~\ref{def3.6}).
\end{proposition}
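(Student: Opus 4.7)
The plan is to reduce the statement to the argument already carried out for Proposition~\ref{ass3.5} in the case $I=1$, by verifying that at the $s_0$-th scale the obstruction described in item~2(B) of Remark~\ref{rem2.15} cannot occur. I would argue by contradiction: suppose that, along a subsequence, $r_{s_0,n}\to 0$, and rescale by $\l_{s_0+1,n}=\frac{1}{r_{s_0,n}}$, obtaining a sequence $\wh{F}_n=\l_{s_0+1,n}F_n\colon M_n\la \l_{s_0+1,n}X_n$ with arbitrarily small mean curvatures.

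The first (and main) step is to show that the stability property $(\diamondsuit)$ of Section~\ref{subseclocpic} holds for $\{\wh{F}_n\}_n$: for every $s>0$ and every sufficiently large $n$, the portion of $\wh{F}_n(M_n)$ outside the intrinsic ball of radius $s$ around $p_1(n)$ is stable. This is where the definition of $s_0$ as the terminal step in the hierarchy enters in an essential way. If $(\diamondsuit)$ failed, one could repeat the construction in Lemmas~\ref{disks}, \ref{lemma7.11} and Proposition~\ref{prop7.12} with $\frac{1}{r_n}F_n$ replaced by $\wh{F}_n$, and produce a further complete, finitely branched minimal limit $f_{s_0+1}\colon \S_{s_0+1}\la \R^3$ and a new collection of $\D$-type components $\Delta_{s_0+1,n}(R/3)\subset M_n$ whose indices, by item~8(f) of Proposition~\ref{prop7.12}, strictly exceed those of the previous step. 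Iterating this via item~8(f) would force $\sum_{s=1}^{s_0+1}I(f_s)\geq s_0+1>I_0+1$, which is incompatible with $\mbox{Index}(F_n)\leq I_0+1$. Hence $(\diamondsuit)$ holds at the $s_0$-th scale.

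Once $(\diamondsuit)$ is established, the remainder of the proof transcribes that of Proposition~\ref{ass3.5}. Curvature estimates for stable $H$-surfaces (Theorem~\ref{stableestim1s}) imply that $\{\wh{F}_n\}_n$ has locally bounded second fundamental form on $\R^3\setminus\{\vec{0}\}$, so Lemma~\ref{lemma5.3} (in its Riemannian version from Remark~\ref{rem5.4}), applied in each annulus $\A(2^{-k},2^{-k+1})$ with the choices $L_0=3\pi(I_0+2)+1$ and $\a=\tau_1$, produces on a subsequence a smooth multi-graph limit of fixed multiplicity $m'\leq m$. By the very definition of $r_{s_0,n}$ as a supremum, the corresponding limit piece in $\A(\tfrac12,1)$ either makes an angle of $\tfrac{\pi}{2}-\tau_1$ with $\esf^2(1)$ somewhere or has Gauss-map image of spherical diameter at least $\tau_1$; in particular, it cannot be planar.

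A standard diagonal extraction then yields a complete-away-from-$\vec{0}$ stable minimal immersion $\wt{D}\subset \R^3\setminus\{\vec{0}\}$ with $\vec{0}$ in its closure. If $\wt{D}$ is two-sided, Lemma~3.3 of Meeks-P\'erez-Ros~\cite{mpr10} forces $\wt{D}$ to extend through $\vec{0}$ as a flat plane, contradicting the non-planarity derived above. If $\wt{D}$ is one-sided, it descends to a finitely branched non-orientable stable minimal immersion whose branch-image set lies in $\{\vec{0}\}$, directly contradicting item~1 of Lemma~\ref{lema3.6}. Either way we reach a contradiction, proving that $r_{s_0,n}$ is bounded below away from zero by a constant depending only on $(I_0,H_0,\ve_0,A_0,K_0,\tau)$, and we may take $\de_4$ to be this lower bound (truncated so that $\de_4\leq \de_3$, with $\de_3$ from Definition~\ref{def3.6} for the stated $(L_0,m)$). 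The main obstacle is the first step — carefully formalizing the index-exhaustion argument that yields $(\diamondsuit)$ — since once that is in place, the remainder is essentially the one-scale argument of Proposition~\ref{ass3.5}.
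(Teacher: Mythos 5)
Your overall approach is the one the paper uses: show that at the terminal scale $s_0$, if $r_{s_0,n}\to 0$, then the stability property $(\diamondsuit)$ must hold, after which the one-scale argument of Proposition~\ref{ass3.5} (curvature estimates for stable $H$-surfaces, diagonal extraction to a stable minimal $\wt{D}\subset \R^3\setminus\{\vec{0}\}$ that cannot be planar, contradiction via~\cite{mpr10} or item~1 of Lemma~\ref{lema3.6}) carries over verbatim. The paper compresses this into the sentence preceding Proposition~\ref{ass3.5'}: the process of constructing scales $\l_{s,n}$ and limits $f_s$ must stop after $s_0\leq I_0+1$ steps because item~8(f) of Proposition~\ref{prop7.12} forces the indices of the nested $\Delta$-pieces to increase strictly at each step, and termination is equivalent to $\{r_{s_0,n}\}_n$ being bounded below.

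There is, however, an imprecision in your justification of $(\diamondsuit)$ that you should repair. You claim that one further iteration would force $\sum_{s=1}^{s_0+1}I(f_s)\geq s_0+1>I_0+1$; neither piece of this is right. First, $I(f_s)$ need not be at least $1$ for $s\geq 2$ (the intermediate limits $f_s$ may have flat or stable components, see item~(S1)), so the sum need not grow; what is actually monotone is the index of the $\Delta$-pieces, since item~8(f) of Proposition~\ref{prop7.12} gives $I(\Delta_{s+1,n})>I(\Delta_{s,n})$. Second, even using the correct monotone quantity, one additional iteration only shows $I(\Delta_{s_0+1,n})\geq s_0+1$, which does not exceed $I_0+1$ unless $s_0$ already equals $I_0+1$. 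So the contradiction cannot be read off directly from the index bound. The correct logic is: the strictly increasing index forces the iteration to \emph{terminate} after at most $I_0+1$ steps, and $s_0$ is by definition the terminal stage; at the terminal stage the hypotheses of Proposition~\ref{prop7.12} (namely $r_{s_0,n}\to 0$ and the failure of $(\diamondsuit)$) cannot both hold, for otherwise one could construct $f_{s_0+1}$, contradicting terminality; hence if you assume $r_{s_0,n}\to 0$, then $(\diamondsuit)$ holds. Once you make this adjustment — contradiction against terminality of $s_0$, with the index bound serving only to establish that terminality occurs — the rest of your argument matches the paper's proof exactly.
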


With Proposition~\ref{ass3.5'} at hand, we define
\begin{equation}
\label{de,de1}
\de:=\de_4/2,\quad \de_1=\de/2,
\end{equation}
(here $\de_4\in (0,\de_3]$ is given by Proposition~\ref{ass3.5'}).
We are now ready to achieve the main goal of Section~\ref{sec2.7}.
\begin{proposition}
	\label{propos7.17}
Items 1, 2, 3 of Theorem~\ref{mainStructure} hold in the case $I=I_0+1$
for immersions in $\Lambda _{t}$, for some
$t >\wh{C}_s(\de_1/2)$ sufficiently large.
\end{proposition}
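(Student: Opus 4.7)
The plan is to mirror the argument of Section~\ref{sec2.6} (the case $I=1$), substituting the $s_0$-step hierarchy of limit pictures $f_1,f_2,\ldots,f_{s_0}$ built in Sections~\ref{subseclocpic1}--\ref{sec7.5.3} for the single Enneper/catenoid limit used there, and then completing the proof by induction on the index. We will choose $A_1:=t$ so large that for every $F\colon M\la X$ in $\L_t$: Lemma~\ref{ass4.4} furnishes a global maximum $p_1$ of $|A_M|$ in $U(\partial M,\ve_0,\infty)$; the hierarchy of Section~\ref{sec7.5.3} terminates after $s_0\leq I_0+1$ rescalings with $\la_{s_0,n}$ so large that Proposition~\ref{ass3.5'} applies at the outer radius $\de_4$; and the multi-scale analog of condition~(K1) holds, namely the cumulative intrinsic distance from $p_1$ to the innermost sphere $\partial B_X(F(p_1),R_{s_0}/(2\la_{s_0,n}))$ is at most $\de_1/10$.

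We set $r_F(1):=\de_1$ and let $\Delta_1$ be the component of $F^{-1}(\ov{B}_X(F(p_1),\de_1))$ containing $p_1$. Items~1(b) and item~2 for $i=1$ follow directly from Proposition~\ref{ass3.5'} together with Lemma~\ref{lemma5.3}. For item~1(a) we bound $d_M(p_1,q)$ for any $q\in\partial\Delta_1$ as the sum of (i) the intrinsic distance from $p_1$ to the innermost sphere, bounded by $\de_1/10$ by our choice of $t$, and (ii) the intrinsic distance along the transition multi-graphs, bounded by $\sqrt{1+\tau^2/3}\,(\de_1-R_{s_0}/(2\la_{s_0,n}))$ by item~(C2) of Lemma~\ref{lemma5.3}; for $\tau\leq\pi/10$ this is strictly less than $\tfrac{5}{4}\de_1$, exactly as in the last display of Section~\ref{sec2.6}. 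Items~1(d) and 1(e) follow from $|A_M|(p_1)>A_1$ and from item~8(f) of Proposition~\ref{prop7.12}, which ensures that each rescaling stage strictly increases the Morse index concentrated in the $\Delta$-type domain. To produce the remaining concentration points we apply the inductive hypothesis to $F|_{M_1}$, where $M_1:=M\setminus\mathrm{Int}(\Delta_1)$: this restriction has Morse index at most $(I_0+1)-\Ind(\Delta_1)\leq I_0$ and its boundary curvature is uniformly controlled by Lemma~\ref{lemma5.3}, yielding an element of $\L(I_0,H_0,\ve_0,A_0',K_0)$ for some $A_0'$ depending only on $\tau,\de_1$. Induction then produces $k-1\leq I_0$ further points $p_2,\ldots,p_k$ with radii $r_F(2),\ldots,r_F(k)$ satisfying items~1(a)--1(e) for $i\geq 2$; the ratios $r_F(i-1)>4\,r_F(i)$ are secured by rescaling the constants $\de_1,\de$ of~\eqref{de,de1} by a factor at most $4^{-(I_0+1)}$ when passing from index $I_0$ to $I_0+1$, and the pairwise disjointness in item~1(c) uses that $\Delta_j\subset B_M(p_j,\tfrac{5}{4}r_F(j))\subset M_1$ lies outside $B_M(p_1,\tfrac{7}{5}r_F(1))$ for $j\geq 2$. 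Finally, item~3 of Theorem~\ref{mainStructure} follows from item~3 of Lemma~\ref{ass4.4} applied with $\ve=\de_1$, exactly as in the last paragraph of Section~\ref{sec2.6}.

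The principal obstacle is the cumulative intrinsic distance bound described in the first paragraph: one must control $d_M(p_1,\partial\Delta_1)$ uniformly by $\tfrac{5}{4}\de_1$ despite the appearance of $s_0\leq I_0+1$ genuinely different scales $\la_{1,n}\ll\la_{2,n}\ll\cdots\ll\la_{s_0,n}$ and of branch points in each intermediate limit $f_j$. This uses both the Chodosh--Maximo--Karpukhin inequality~\eqref{eq:CMindex1} (to bound the total branching $B(f_j)$ uniformly in terms of $I_0$, as in~\eqref{7.21}) and the weak chord-arc estimate of Proposition~\ref{propos5.5} (applied through Lemma~\ref{lemma7.13}(4c) at each scale), yielding a per-scale contribution of order $R_j/\la_{j,n}$ with constants depending only on $I_0$; the sum over $j$ is made less than $\de_1/10$ by choosing $t$ large enough. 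Once this estimate is in place, the remainder of the proof is a routine adaptation of the bookkeeping used in the $I=1$ case.
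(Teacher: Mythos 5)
Your overall strategy matches the paper's: build the $s_0$-stage hierarchy of local pictures around $p_1$, use Proposition~\ref{ass3.5'} to get the outer radius $\de_4$, define $\Delta_1$ as the component of $F^{-1}(\ov{B}_X(F(p_1),r_F(1)))$ containing $p_1$, prove item~1(a) for $\Delta_1$ via (J5)$''$/(K1)$'$ plus (C2) of Lemma~\ref{lemma5.3}, and then reduce the index by applying the inductive hypothesis to $F|_{M\setminus\Delta_1}$. Two points deserve flagging, both of which are essential in the paper and are either glossed over or stated incorrectly in your sketch. First, when you place $F|_{M\setminus\Delta_1}$ in a class $\L(I_0,\ldots)$ you write the third parameter as $\ve_0$, but the paper deliberately replaces $\ve_0$ by $\ve_0'=\ve_1=\de_4-\tfrac{R_{s_0}}{2t}$: once $\partial\Delta_1$ becomes a new boundary component, condition (A4) (curvature bounded in an $\ve_0'$-neighborhood of the boundary) only holds in the thin multi-graphical annulus of width $\ve_1$ where Lemma~\ref{lemma5.3} controls $|A_M|$; with the original $\ve_0$ the neighborhood would reach beyond this annulus into regions with uncontrolled curvature, and the hypothesis of the inductive step would fail. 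Second, you treat the disjointness $B_M(p_1,\tfrac{7}{5}r_F(1))\cap B_M(p_j,\tfrac{7}{5}r_F(j))=\varnothing$ for $j\geq 2$ as an immediate consequence of $\Delta_j\subset M\setminus\Int(\Delta_1)$, but that inclusion alone does not keep the intrinsic balls apart; the paper proves it by a contradiction argument that combines $p_j\in U(\partial(M\setminus\Delta_1),\ve_0',\infty)$ with the (C2)/(D2) distance control in the multi-graph annulus, and the numerical contradiction hinges precisely on $\ve_0'$ being close to $\de_4$ while $\tfrac{7}{5}r_F(1)=\tfrac{7}{20}\de_4$. So the $\ve_0\to\ve_0'$ substitution you omitted is not cosmetic: it is what makes the disjointness argument close. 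With those two corrections, your proposal reproduces the paper's proof.
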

\begin{proof}
The idea is to adapt appropriately the arguments at the end of
Section~\ref{sec5.4.2} (after Definition~\ref{def5.4}).
Pick a smallest $R_{s_0}>0$ so that items (H0)'-\ldots -(H4)' hold
with $f$ replaced by $f_{s_0}$ and with the same value
$L_0=3\pi (I_0+2)+1$
(also see items~8(b)-\ldots -8(e) of Proposition~\ref{prop7.12}
for the particular case $s_0=2$).
In particular, item (H5)' can be also adapted to $f_{s_0}$ after applying
the estimate \eqref{eq:lemma5.53} in Proposition~\ref{propos5.5} with
$I=I_0+1$ and $B=B(f_{s_0})$ (this is the total branching order of
$f_{s_0}$, which satisfies $B(f_{s_0})\leq 3I_0-1$ by~\eqref{7.21});
equivalently, we can adapt item~4(c) of Lemma~\ref{lemma7.13}
to $f_{s_0}$ and conclude the following estimate:
\begin{enumerate}[(H5)'']
\item Given $R\geq R_{s_0}$, the intrinsic distance in the pullback
metric by $f_{s_0}$ from $\vec{0}\in \Sigma_{s_0}$ to any point in the
boundary of $f_{s_0}^{-1}(\ov{\B}(R))$ is at most $a(I_0)R$,
where $a(I_0)>0$ can be bounded from above depending only on $I_0$;
in fact,
\[
a(I_0)\leq I_0\, \wh{C}(I_0,B(f_{s_0})),
\]
where $\wh{C}$ is defined in item~2 of Proposition~\ref{propos5.5}.
\end{enumerate}	

Define ${\Delta}_{s_0,n}(R_{s_0})\subset M_n$ as the component of
\[
(\l _{s_0,n}F_n)^{-1}\left( \l_{s_0,n}\ov{B}_X( F_n(p_1(n)),
{\textstyle \frac{R_{s_0}}{\l_{s_0,n}}})\right)
\]
that contains $p_1(n)$. Reasoning as when we deduced (I5)'
from (H5)' and (J5)' from (H5)', we have the
following adaptation of (J5)' to this setting:
\begin{enumerate}[(J5)'']
\item The intrinsic distance in the pullback metric by $F_n$ on $M_n$,
from $p_1(n)$ to the boundary of $\Delta_{s_0,n}(R_{s_0}/2)$ is at most
$\frac{R}{\l_{s_0,n}}[a(I_0)+1]$ (here, $a(I_0)$ is introduced in (H5)''
above).
\end{enumerate}

Take $t$ large enough such that
\begin{enumerate}[(K1)']
\item $\frac{R_{s_0}}{t} [a(I_0)+1] \leq \frac{\de_1}{10}$.
\item The description in items (J1)'-\ldots -(J5)' holds for $F_n$ with
$e=e_{s_0}$ being the number of ends of $f_{s_0}$ and
$L_0=3\pi (I_0+2)+1$.
\end{enumerate}

Define $A_1:=t$ and $r_F(1):=\de_1$.

Given $(F\colon M\la X)\in \Lambda _{t}$,
take a point $p_1\in U(\partial M,\ve_0,\infty )$
where the maximum of $|A_M|$ in $M$ is achieved. Define
$\Delta_1$ to be the component of
$F^{-1}(\ov{B}_X(F(p_1),r_F(1))$ that contains $p_1$, see
Figure~\ref{fig4}.
\begin{figure}[h]
\begin{center}
\includegraphics[width=11cm]{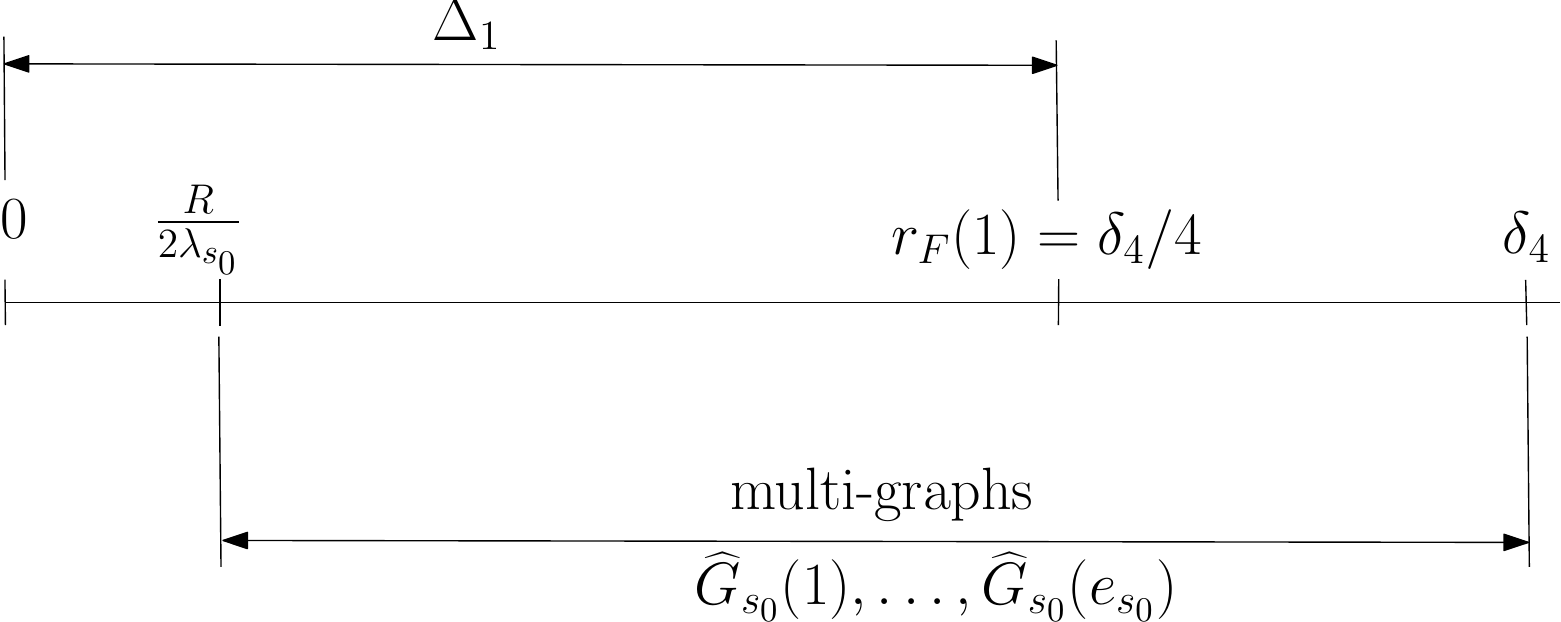}
\caption{Schematic (non-proportional) representation of the extrinsic
geometry of an immersion $(F\colon M\la X)\in \Lambda_{t}$
around a point $p_1$ where the maximum of $|A_M|$ in $M$ is achieved.
Here, $\l_{s_0}>0$ is a large number ($\l_{s_0}\leq \max |A_M|$)
that is the scale of the local picture $f_{s_0}$ of $F$ around $p_1$
that appears in Proposition~\ref{ass3.5'}. Horizontal distances
in the figure represent extrinsic distances in $X$ measured from $F(p_1)$;
for example, $\Delta _1$ has its boundary at extrinsic distance $r_F(1)$
from $F(p_1)$. In the range of extrinsic radii between
$\frac{R}{2\l_{s_0}}$ and $\de_4$
($\de_4$ is fixed and given by Proposition~\ref{ass3.5'}),
$F$ consists of $e_{s_0}$ multi-graphical annuli $\wh{G}_{s_0}(1),
\ldots ,\wh{G}_{s_0}(e_{s_0})$, where $e_{s_0}$ is the number of ends of
$f_{s_0}$. A similar representation holds around relative maxima
$p_{j+1}$ of $|A_M|$ in	$M\setminus (\Delta_1\cup \ldots \cup
\Delta _j)$.}
\label{fig4}
\end{center}
\end{figure}

Next we prove item~\ref{it1}(a) of Theorem~\ref{mainStructure} in the
case $I=I_0+1$ for $\Delta _1$.
Let $q$ be any point in $\partial \Delta _1$. Then,
arguing similarly as in the case $I=1$, we have,
using  $S_F(\frac{R_{s_0}}{2t})$ to denote the extrinsic geodesic
sphere in $X$ centered at $F(p_1)$ with radius $\frac{R_{s_0}}{2t}$, that

\[
\begin{array}{rclr}
d_M(p_1,q)&\leq &
{\displaystyle \max_{x\in \partial \Delta_1\cap
F^{-1}(S_F(\frac{R_{s_0}}{2t}))}}d_M( p_1,x)+d_M(\Delta_1\cap
F^{-1}(S_F(\frac{R_{s_0}}{2t})),q) &
\\
\rule[-.3\baselineskip]{0pt}{.5cm}
& \leq &
\frac{1}{t}[a(I_0)+1]R_{s_0}+d_M(\Delta_1\cap
F^{-1}(S_F(\frac{R_{s_0}}{2t})),q)  & \mbox{(by (J5)'')}
\\
& \leq &
\frac{1}{t}[a(I_0)+1]R_{s_0}+\sqrt{1+\frac{\tau^2}{3}}(r_F(1)
-\frac{R_{s_0}}{2t})  & \mbox{(by Lemma~\ref{lemma5.3})}
\\
\rule[-.3\baselineskip]{0pt}{.5cm}
&\leq & \frac{\de_1}{10}+\sqrt{1+\frac{\tau^2}{3}}r_F(1)
& \mbox{(by (K1)')}
\\
\rule[-.3\baselineskip]{0pt}{.5cm}
&=&\left( \frac{1}{10}+\sqrt{1+\frac{\tau^2}{3}}\right) r_F(1) &
\\
& < & \frac{5}{4}r_F(1). & \mbox{(because  $\tau\leq \pi/10$)}
\end{array}
\]
%In $(\star)$ we have applied Lemma~\ref{lemma5.3} to each of the
%annular portions of $\Delta _1$ with the choices of extrinsic inner
%radius $\frac{R_{s_0}}{2t}$ and extrinsic outer radius $r_F(1)$,
%which is a valid choice by (J2)', (J3)', (J4)' and because
%$\frac{R_{s_0}}{2t}\leq \frac{R_{s_0}}{t}[a(I_0)+1]\leq \frac{\de_1}{10}<\frac{r_F(1)}{2}$;
%for this last inequality to hold, observe that $a(I_0)$ is an increasing
%function of $I_0$ and $a(0)=4\sqrt{3}+3\pi >4$.

This proves that item 1(a) of Theorem~\ref{mainStructure} holds in the
case $I=I_0+1$ for $\Delta _1$.
To find the remaining $\Delta _2,\ldots ,\Delta _k$ and the related
$r_F(2),\ldots ,r_F(k)$ that appear in the main statement of
Theorem~\ref{mainStructure},
we will apply the induction hypothesis
to the restriction of $F$ to $M\setminus \Delta _1$, as an element in a
collection
\begin{equation}
\Lambda' =\Lambda (X,I_0,H_0,%\kappa'_0,
\ve'_0,A'_0),
\label{L'}
\end{equation}
specified as in Definition~\ref{def:L}, for some choices of
%$\kappa'_0,
$\ve'_0,A'_0$ that we will explain later.

First observe that the restriction
of $F$ to $M\setminus \Delta _1$ is an $H$-immersion with
smooth boundary and index at most
\[
(I_0+1)-\sum_{j=1}^{s_0}I(f_j)\leq (I_0+1)-s_0\leq I_0,
\]
that is, condition (A2) in Definition~\ref{def:L} for $\Lambda'$
holds for the upper index bound $I_0$.

Next we will
explain how to choose the remaining
 parameters $\ve'_0,A'_0$ that determine $\Lambda'$
in order to apply the induction hypothesis to $F|_{M\setminus \Delta _1}$
as an element in $\Lambda'$.

By Proposition~\ref{ass3.5'}, we have:
\begin{enumerate}[(P1)]
	\item \label{P1} Let $\wt{\Delta}_1$ be the component of
	$F^{-1}(\ov{B}_X(F(p_1),\de_4))$ that contains $p_1$. Then,
	the intersection of $F(\wt{\Delta}_1)$ with the
	region of $X$ between the extrinsic spheres $\partial B_X(F(p_1),
	\frac{R_{s_0}}{2t})$ and $\partial B_X(F(p_1),\de_4)$, consists
	of $e_{s_0}$ multi-graphical annuli $\wh{G}_{s_0}(1),
	\ldots ,\wh{G}_{s_0}(e_{s_0})$.
\end{enumerate}
In particular, the intrinsic distance between the two boundary curves of
each $\wh{G}_{s_0}(h)$, $h\in \{ 1, \ldots, e_{s_0}\}$,
is greater than or equal to
the following positive number independent of $F$:
\begin{equation}
\label{ve1}
\ve_1:=\de_4-\frac{R_{s_0}}{2t}.
\end{equation}
Observe that taking $\de_4$ smaller if necessary (this does not
affect the validity of Proposition~\ref{ass3.5'}), we can
assume $\de _4\in (0,\ve_0]$. Now, define $\ve_0'=\ve_1$.

Property (P1) implies that the following %two properties
property hold:
\begin{enumerate}[(P2)]
\item The second fundamental form of $F$ is uniformly bounded
(independently of $(F\colon M\la X)\in \Lambda_{t}$)
in
\[
\Delta_1\cap F^{-1}\left( \ov{B_X}(F(p_1),\de)\setminus
B_X(F(p_1),\textstyle{\frac{R_{s_0}}{t}})\right)
\]
by a constant $A_1>0$ independent of $F$. Define
$A'_0=\max \{ A_0,A_1\} $.
\end{enumerate}

%\begin{enumerate}[(P3)]
%	\item The total absolute geodesic curvature of $F$
%	along $\partial \Delta_1$ is bounded from above by
%	some constant $\kappa_1>0$ independent of $F$.
%	Define $\kappa'_0=\max\{ \kappa_0,\kappa_1\} $.
%\end{enumerate}

With the above choices, it follows that the restriction of $F$ to
$M\setminus \Delta _1$ lies in the collection $\Lambda'$ introduced
in~\eqref{L'}. By induction hypothesis (with the same choice
of $\tau$, recall that we are proving items 1,2,3 of
Theorem~\ref{mainStructure} by induction on~$I$), we can find
$A_1'\in [A'_0,\infty )$,
$\de'_1,\de'\in (0,\ve_0]$ (independent of $F$) with $\de'_1\leq \de'/2$,
and a possibly empty finite collection of points
\begin{equation}
\label{eq:PFrest}
\cP_{F|_{M\setminus \Delta _1}}=\{p'_1,\ldots,p'_k\}\subset
U(\partial (M\setminus \Delta _1),\ve'_0 ,\infty )
\end{equation}
$k\leq I_0$, and related numbers
\begin{equation}
\label{eq:r_Forder}
r'_F(1)>4r'_F(2)>\ldots >4^{k-1}r'_F(k),
\end{equation}
with $\{ r'_F(1),\ldots ,r'_F(k)
\}\subset [\de'_1,\frac{\de'}{2}]$ and  satisfying
properties~1,   2, 3 of Theorem~\ref{mainStructure}.

Finally, define
\begin{eqnarray}
A_1&=&\max \{ t,A'_1\},\qquad \de =\min \{ \de_4/2, \de'\},\qquad \de_1
=\min \{ \de_4/4,\de'_1\}  \label{eq:2.14}
\\
\cP_F&=&\{p_1,p_2=p'_1,\ldots,p_{k+1}=p'_k\}\subset
U(\partial M,\ve'_0 ,\infty ), \label{eq:PF}
\\
r_F(1)&=&\de_4/4, \quad r_F(2)=r'_F(1),\ldots ,\ r_F(k+1)=r'_F(k).
\label{eq:defrF}
\end{eqnarray}
(here $\de_4$ is the number defined in  Proposition~\ref{ass3.5'},
and $t$ was defined just after item (J5)'';
observe that we do not lose generality by assuming that
$r_F(1)>4r'_F(1)$)).
Also notice that the points $p_1,\ldots ,p_{k+1}$ belong to
$U(\partial M,\ve_0,\infty)$ (compare to \eqref{eq:PF} and to the
statement of Theorem~\ref{mainStructure}): the reason for this is that
$|A_{F}|(p_j)|>A_0'\geq A_1$ for each $j=1,\ldots ,k+1$.

Now, its clear that
items 1, 2, 3 of Theorem~\ref{mainStructure} hold for $I=I_0+1$ with the
exception of the first statement of item~\ref{it1}(c) for $i=1$
and $j\in \{ 2,\ldots ,k+1\} $, that we prove next.
To conclude that $B_M(p_1,\frac{7}{5}r_F(1))\cap
B_M(p_j,\frac{7}{5}r_F(j))=\varnothing$,
first note that $\frac{7}{5}r_F(1)=\frac{7}{20}\de_4<\frac{1}{2}\de_4$,
hence it suffices to show that $B_M(p_j,\frac{7}{5}r_F(j))$
does not intersect $F^{-1}\left[ \ov{B}_X(F(p_1),
\de_4)\setminus B_X(F(p_1),\de_4/2)\right] $. Arguing by contradiction,
suppose that there exists a point $q\in B_M(p_j,\frac{7}{5}r_F(j))$
such that $F(q)\in \ov{B}_X(F(p_1),
\de_4)\setminus B_X(F(p_1),\de_4/2)$. Then,
\[
\begin{array}{rclr}
\ve'_0&\leq & d_M(p_j,\partial \Delta _1)& \mbox{(by
(\ref{eq:PFrest}) and (\ref{eq:PF}))}
\\
\rule[-.3\baselineskip]{0pt}{.5cm}
& \leq &d_M(p_j,q)+d_M(q,\partial \Delta _1) &
\\
\rule[-.3\baselineskip]{0pt}{.5cm}
& < &\frac{7}{5}r_F(j)+d_M(q,\partial \Delta _1) &
\mbox{(because  $q\in B_M(p_j,\frac{7}{5}r_F(j))$)}
\\
\rule[-.3\baselineskip]{0pt}{.5cm}
& \leq  &\frac{7}{5}r_F(j)+\sqrt{1+\tau^2/3}\, \de_4/4 &
\mbox{(by item (C2) of Lemma~\ref{lemma5.3}, see $(\star)$ below)}
\\
\rule[-.3\baselineskip]{0pt}{.5cm}
& <  &\frac{7}{20}r_F(1)+\sqrt{1+\tau^2/3}\, \de_4/4 &
\mbox{(by (\ref{eq:r_Forder}))}
\\
\rule[-.3\baselineskip]{0pt}{.5cm}
& =  &\frac{1}{4}\left( \frac{7}{20}+\sqrt{1+\tau^2/3}\right) \de_4, &
\mbox{(by (\ref{eq:defrF}))}
\end{array}
\]
where in $(\star)$ we have used that $F(\partial \Delta _1)
\subset \partial B_X(F(p_1),\de_4/4)$ and $F(q)\notin
B_X(F(p_1),\de_4/2)$. Hence it suffices to show that
the inequality
\[
\de_4-\frac{R_{s_0}}{2t}\stackrel{(\ref{ve1})}
{=}\ve_1=\ve'_0<\frac{1}{4}\left( \frac{7}{20}+\sqrt{1+\tau^2/3}\right)
\de_4
\]
leads to contradiction. Manipulating the last inequality, it is clearly
equivalent to
\[
\left[ 1-\frac{1}{4}\left( \frac{7}{20}+\sqrt{1+\tau^2/3}\right)
\right] \de_4<\frac{R_{s_0}}{2t}
\stackrel{\mbox{\footnotesize (K1)'}}{\leq }
\frac{\de_1}{10}\frac{5}{a(I_0)+1}
\stackrel{(\ref{de,de1})}{=}\frac{\de_4}{8}\frac{1}{a(I_0)+1},
\]
Therefore,
$B_M(p_1,\frac{7}{5}r_F(1))\cap B_M(p_j,\frac{7}{5}r_F(j))=\varnothing$
for $i=1$ and $j\in \{ 2,\ldots ,k+1\} $.
This completes the proof of Proposition~\ref{propos7.17}.
\end{proof}

Recall that the domains $\Delta_1=\Delta_1(n)\subset M_n$ 
are defined in the proof of Proposition~\ref{propos7.17} and  
each such domain is geometrically the component of $p_1=p_1(n)$ in the
preimage by $F=F_n$ of an extrinsic ball in $X=X_n$ 
centered at $F_n(p_1(n))$ of a small radius $r_F(1)=\de_1$ independent of $n$. 
For future referencing in the definition of ``the hierarchy structure of $\Delta_1$'' 
appearing in the next section, we make the following definition. 

\begin{definition}  \label{def:singSets}
Suppose that the number of
ascending levels $s_0\in \N$ in the construction of $\Delta_1(n)$  
satisfies $s_0>1$. In this case, for each $i\in \{2,\ldots,s_0\}$,
we define the related sets:
\ben
\item $Q_2(n)\subset M_n$ (defined in  Proposition~\ref{prop7.12}), which satisfy:
\ben
\item $Q_2(n)$ contains   $p_1(n)$ and its finite  cardinality is
independent of $n$ 
and at most $I$.
\item The norms of the second fundamental 
forms of the immersions $\frac{1}{r_n}F_n\colon M_n\la\frac{1}{r_n}X_n$ 
have local maxima at points in $Q_2(n)$ that are blowing-up 
as $n\to \infty$.
\item The  points in $Q_2(n)$ stay at a uniform distance at most $R_{0,2}$ 
(this is the constant $R_0$ appearing in the main statement of Proposition~\ref{prop7.12})
from the points $p_1(n)$ in the metric of $M_n$ induced by 
$\frac{1}{r_n}F_n\colon M_n\la\frac{1}{r_n}X_n$, and these points stay at 
a uniform distance greater than some $\ve_{2,2}>0$ 
(called $\ve_2>0$ in Proposition~\ref{prop7.12})
from each other. 
\een
For $i\in \{3,\ldots,s_0\}$, 
$Q_i(n)\subset M_n$ are the similarly defined finite 
sets in $M_n$ with related positive numbers  $R_{0,i}$, 
$\ve_{2,i}$, with respect to rescalings of the immersions $F_n\colon M_n\la X_n$. 
Furthermore, for $i\neq i'\in \{2,\ldots,s_0\}$,   $Q_i(n) \cap Q_{i'}(n)=\{p_1(n)\}$,
and so, each of the sets $Q_i(n)$ contains the point $p_1(n)$.
\item The set $\cS_2\subset \S_2$ is defined in item~6(a) of Proposition~\ref{prop7.12}
(it was called $\cS(f_2)$ there).  
For $i\in \{3,\ldots,s_0\}$, the sets $\cS_i\subset \S_i$ are defined in the similar manner.
\een
\end{definition}

\subsection{Counting index, genus and total spinning for local
hierarchies}
\label{sec9.1}

In Section~\ref{sec2.7} we have explained a process of going ``up'' in
finding scales and limits with center $p_1(n)$, so that after
$s_0\leq I_0+1$ steps, we finish the ``ascending'' process and define the final
$\Delta$-piece containing $p_1(n)$
(called $\Delta_1$ in the proof of Proposition~\ref{propos7.17}).
Throughout this ascending process, we have found other points
occurring inside $\Delta_1$ where the
second fundamental form can blow up; we will refer to
these blow-up points as $q$-points in $\Delta_1$ (these $q$-points that lie in
the sets $Q_i(n)\subset M_n$ described in item~1 of Definition~\ref{def:singSets} 
and produce corresponding sets $\cS_i\subset \S_i$, $i=2\ldots,s_0$, described in
item~2  of Definition~\ref{def:singSets}). It is crucial to remark that
the compact piece $\Delta_1=\Delta_1(n)$ occurs in a sequence of
immersions $F_n\colon M_n\la X_n$, while its topological and geometric
structure also depends on the complete,
possibly branched minimal surfaces
which are limits obtained after blowing up $\Delta_1(n)$
around its $q$-points.

In order to understand the structure of the piece $\Delta_1$ (i.e.,
to prove the estimates in items B, C, D of Theorem~\ref{mainStructure}),
we must analyze how the related $\Delta$-pieces around these $q$-points
affect the geometry of $\Delta_1$. This analysis will be done
by going ``down  levels'' in $\Delta_1$: we will first analyze 
the $q$-points in $Q_{s_0}(n)$, i.e., those $q$-points 
occurring at the level of the limit
$f_{s_0}$ (this is the {\it top level} of the piece $\Delta_1$
in the language introduced in Section~\ref{sec5.6.1} below),
and subsequently go to lower levels which occur at every
$q$-point not being a minimal element in the sense of
Definition~\ref{def7.18} below.
The notion of {\it hierarchy} of $\Delta_1$ (Definition~\ref{def7.20})
will encompass all $q$-points at different levels and the related
$\Delta$-type pieces around them.
The way that this hierarchy affects some quantities
appearing in items  B, C, D of Theorem~\ref{mainStructure} (like index,
genus, number of boundary components, total spinning along the
boundary, etc) is encoded in Theorem~\ref{thm9.6} below,
which is an inequality that generalizes the Chodosh-Maximo
estimate~\eqref{eq:CM1} to the new framework of hierarchies.
Although it is premature at this point for the reader to fully understand
what is meant by a hierarchy, we suggest that the reader frequently
check his/her developing understanding of this concept
by referring to the schematic Figure~\ref{figh1} below,
which  represents a particular example
of a hierarchy; also see Example~\ref{ex5.18} and item~3 in
Example~\ref{example7.19} for further explanations of this example.

In the remainder of this section, $|X|$ will denote the number of elements of a
finite set $X$, and if $X$ is a topological space with finitely many
connected components, $\#_c(X)$ will denote the number of these
components.

\subsubsection{The hierarchy associated to a $\Delta$-type
piece}\label{sec5.6.1}
Let $\{ F_n\}_n$ be a sequence in the space
$\L=\L(I_0,H_0,\ve_0,A_0,K_0)$
with second fundamental form not uniformly bounded. Let
$\Delta=\Delta(n)$ be the
connected, compact surface that arises around an initial blow-up point
$p(n) \in M_n$ for $n$ large (this is a $\Delta$-piece, in the language
of the first two paragraphs of Section~\ref{sec9.1}).
Recall that the construction given
in Section~\ref{sec2.7} performs finitely many blow-ups centered at the
$p(n)$, giving rise to $s_0$ {\it stages}
$(f_i,{\mathcal S}_i,\{ \l_{i,n}\}_n)$, $i=1,\ldots ,s_0$
described in items (S1)-(S2) below.
\begin{enumerate}[(S1)]
\item $f_i\colon \Sigma_i\la \R^3$ is a (possibly finitely connected)
complete minimal surface in $\R^3$ with finite total curvature
that passes through the origin, possibly with a finite number of
branch points and possibly with non-orientable components.
$\Sigma_1$ is connected and $f_1\colon \Sigma_1\la \R^3$ is
unbranched and non-flat, but for $i=2,\ldots ,s_0$,
$f_i$ could have flat components with or without
branch points, in the sense that the image set of the related branched
immersion lies in a flat plane
(that could fail to pass through the origin).

\item ${\mathcal S}_i\subset \Sigma _i$ is a finite subset
(${\mathcal S}_1=\varnothing$) and $\{ \l_{i,n}\}_n\subset \R^+$ is
a sequence diverging to $\infty $ such that (see Section~\ref{sec7.5.3})
\begin{enumerate}[(a)]
\item $\{ \l_{i,n}F_n\}_n$ converges to $f_i$ %$\S_i$
in $\S_i\setminus {\mathcal S}_i$ as $n\to \infty$.

\item $\{ \l_{i,n}F_n\}_n$ fails to have bounded
second fundamental form around each point of $Q_i(n)$ (this is the set 
introduced in Definition~\ref{def:singSets}, which gives rise to $\cS_i$). 

\item $\l_{i,n}/\l_{i+1,n}\to \infty $ as $n\to
\infty$ for each $i=1,\ldots ,{s_0}-1$.
\end{enumerate}
Because of properties (S2.a), (S2.b), we will refer to
$\cS_i$ as the {\it singular set of convergence} of $\l_{i,n}F_n$
to $f_i$.
\end{enumerate}

\begin{example}\label{ex5.18}
{\rm
We will illustrate the above description with an example based on
Figure~\ref{figh1}. The blue circle around $\Delta_{q_{1,1}}$ represents a
compact $\Delta$-piece of $\l_{1,n}F_n$ based at the blow-up points $q_{1,1}(n)\in M_n$ 
which resembles arbitrarily well
(for $n$ large) the intersection of the first stage limit $f_1\colon \S_1
\la \R^3$ introduced in item (S1) with a ball of large radius centered at
the origin;  the ascending blue straight line segment connecting the
blue circle around $\Delta_{q_{1,1}}$ with the red circle around
$\Delta_{q_1}$ represents a component $W_1'$ of the second stage limit
$f_2\colon\S_2\la \R^3$
which contains at least one  point in $\cS_2$ obtained as a blow-down
limit (by scale $\frac{\l_{2,n}}{\l_{1,n}}\to 0$) of the $\Delta$-piece
$\Delta_{q_{1,1}}$ in $\l_{2,n}F_n$. In fact, each end of $f_1$ is a
multi-graph outside of a ball of some finite multiplicity $m_1\in \N$,
such an end produces a branch point for $f_2$ of multiplicity $m_1-1$, and
the number of leaves of $f_2$ passing through the image of such a branch point is
at least equal to the number of ends of $f_1$.
The red circle around  $\Delta_{q_1}$ represents a compact $\Delta$-piece
of $\l_{2,n}F_n$ which resembles arbitrarily well (for $n$ large) the
intersection of $f_2(\S_2)$ with a ball of large radius centered at the
origin; the ascending red straight line segment connecting the red circle
around $\Delta_{q_1}$ to the black circle around $\Delta$ represents a
component $W_1$ of the third stage limit $f_3\colon \S_3\la \R^3$
which contains a point in $\cS_3$ obtained as a blow-down limit
(by scale $\frac{\l_{3,n}}{\l_{2,n}}\to 0$) of the $\Delta$-piece
$\Delta_{q_1}$ inside $\l_{3,n}F_n$. Similarly as before,
each end of $f_2$ is a multi-graph outside of a ball
of some finite multiplicity $m_2\in \N$, this end produces a branch point
for $f_3$ of multiplicity $m_2-1$, and the number of leaves of $f_3$
passing through such a branch point is at least equal to the number
of ends of $f_2$. The black circle around  $\Delta$
represents the final compact $\Delta$-piece of $\l_{3,n}F_n$, i.e.,
Proposition~\ref{ass3.5'} holds with $s_0=3$ for this ''ascending''
linear subgraph starting at $\Delta_{q_{1,1}}$ and finishing at $\Delta$.
If we start ascending from $\Delta_{q_{1,2}}$ instead of from
$\Delta_{q_{1,1}}$, we will find again $s_0=3$ (although the stage
limits are different than before, since the rescaling
is centered at a different blow-up sequence in $M_n$)
but if we start ascending from $\Delta_{q_2}$
(resp. from $\Delta_{q_{3,1,1}}$) we will find $s_0=2$ (resp. $s_0=4$).
Both $W_1'$ and the T-shaped polygon $W_2'$ connecting the blue circles around
$\Delta_{q_{1,1}}, \Delta_{q_{1,2}}$ with the red circle around
$\Delta_{q_1}$ represent that $\S_2$ has two components,
each one with its own number of ends, and that each of these
ends possibly produce branch points in $\cS_3$ as explained above.
We will continue with explaining aspects of this Figure~\ref{figh1}
in Example~\ref{ex5.19}.
}
\end{example}

We now come back to the general description with the notation in (S1)-(S2)
and in Definition~\ref{def:singSets}.
The {\it hierarchy} $\cH(\Delta)$ of $\Delta $ decomposes
into finitely many {\it levels,} which are defined
recursively as follows, starting from what we will call the
{\it top level} of $\cH(\Delta)$. There exists a possibly disconnected
complete, branched minimal
immersion $f_{\rm T}\colon \S_{\rm T} \la \R^3$ (the subindex ${\rm T}$
stands for top; in the notation in (S1)-(S2), $f_{\rm T}=f_{s_0}$), such
that the convergence of portions of suitable expansions
$\l_{\rm T}(n)F_n=\l_{s_0,n}F_n$ of $F_n$ to $f_{\rm T}$
is smooth away from a finite singular set of convergence
$\mathcal{S}_{\rm T}\subset \S_{\rm T}$
($\mathcal{S}_{\rm T}$ could be empty), and the second
fundamental forms of $\l_{\rm T}(n)F_n$ %$\l_{\rm T}(n)M_n$
fail to be bounded around (extrinsically) each point $q\in \mathcal{S}_{\rm T}$; 
suppose that such a point $q$ corresponds to
a sequence $\{ q(n)\}_n$ with $q(n)\in Q_{\rm T}(n)\subset M_n$ for $n\in \N$ sufficiently large. 
This means that $\{ \l_{\rm T}(n)F_n(q(n))\}_n$
converges to $f_{\rm T}(q)$ (in harmonic coordinates 
of radius $R_{0,{\rm T}}$ centered at $F_n(p_1(n))$, where $R_{0,{\rm T}}$ is defined in item~1(c) 
of Definition~\ref{def:singSets}) and 
\[
\lim_{n\to \infty} \sup \left\{
|A_{\l_{\rm T}(n)F_n}|(x) \ : \
x\in B_{\l_{\rm T}(n)X_n}(F_n(q(n)),1/m)\right\}=\lim_{n\to \infty}|A_{\l_{\rm T}(n)F_n}|(q(n)) =\infty ,
\]
for each $m\in \N$ sufficiently large.
Moreover:
\begin{enumerate}[(T1)]
\item $f_{\rm T}$ is unbranched away from $\mathcal{S}_{\rm T}$.
\item The number of ends $e(\S_{\rm T})$ of $\S_{\rm T}$
(resp. the total spinning at infinity $S(f_{\rm T})$ of
$f_{\rm T}$)  equals the number of boundary components of $\Delta$
(resp. total spinning $S(\Delta)$ of $\Delta $ along $\partial \Delta$):
\begin{equation}\label{9.3}
e(\S_{\rm T})=\#_c(\partial \Delta):=e(\Delta),
\qquad S(f_{\rm T})=S(\Delta).
\end{equation}
\end{enumerate}
Let $\mathcal{W}_{\rm T}$ be the set of components of $\S_{\rm T}$.

We next make a similar quotient space of the abstract surface $\S_{\rm T}$
of this branched immersion $f_{\rm T}$ as the one in
Lemma~\ref{lemma7.13},
thereby defining a quotient space $\wh{\S}_{\rm T}$ of $\S_{\rm T}$,
a related quotient map $\pi\colon \S_{\rm T}\to \wh{\S}_{\rm T}$, and
a singular set 
\[
\wh{\cS}_{\rm T}=\pi (\cS_{\rm T})
\]
defined as in Lemma~\ref{lemma7.13}. Observe that $|\wh{\cS}_{\rm T}|=|Q_{\rm T}(n)|$
(which is independent of $n$). Given $q\in \wh{\cS}_{\rm T}$, let
$\cS_{\rm T}(q)=\pi^{-1}(q)\subset \cS_{\rm T})$. Thus, every point in 
$\cS_{\rm T}(q)$ identifies to the point $q$ in $\wh{\S}_{\rm T}$,
and every other point of $\S_{\rm T}$ only identifies with itself.
After endowing $\wh{\S}_{\rm T}$ with the quotient topology,
$\wh{\S}_{\rm T}$ becomes a path-connected metric space,
and $f_{\rm T}\colon \S_{\rm T}\to \R^3$ induces a well-defined
continuous map, denoted also by $f_{\rm T}\colon \wh{\S}_{\rm T}\to \R^3$
with a slight abuse of notation.
Observe that $\wh{\S}_{\rm T}\setminus \wh{\cS}_{\rm T}$ has the induced 
structure of a (smooth) Riemannian surface, and that $\wh{\S}_{\rm T} $ is a topological 
surface in a small neighborhood
of a given point $q\in\wh{\cS}_{\rm T}$ if and only if
$\cS_{\rm T}(q)$ consists of a single point. Also, the restriction of
$f_{\rm T}$ to $\wh{\S}_{\rm T}\setminus \wh{\cS}_{\rm T}$
is a minimal immersion with finite total curvature in $\R^3$, which is
complete away from its  limit point set $\wh{\cS}_{\rm T}$ in $\wh{\S}_T$.

\begin{example}\label{ex5.19}
{\rm
As announced in Example~\ref{ex5.18}, we continue to explain some aspects
in Figure~\ref{figh1}. The red component $W_1$ of $\S_3$ connects to the
red circles around $\Delta_{q_1},\Delta_{q_2}$,
meaning that $W_1$ contains at least two
distinct points in $\cS_3$ which lead to two distinct points $q_1,q_2
\in \wh{\cS}_3$. The blue component $W_2'$ of $\S_2$ connects to the red
circle around $\Delta_{q_1}$ and to the blue circles  around
$\Delta_{q_{1,1}},\Delta_{q_{1,2}}$, meaning that $W_2'$ contains at least
two distinct points in $\cS_2$ which produce distinct points $q_{1,1},
q_{1,2}\in \wh{\cS}_2$, in contrast to the blue component $W_1'$
of $\S_2$, whose points in $\cS_2$ only give rise to one point in
$\wh{\cS}_2$, namely $q_{1,1}$.
}
\end{example}

We now return to the general situation. Given $q\in \wh{\cS}_{\rm T}$,
for all $n$ sufficiently large we can find a related compact,
connected piece $\Delta_{q}=\Delta_{q}(n)\subset M_n$
satisfying item~8(f) of Proposition~\ref{prop7.12}
for $\wt{F}_n=\l_{\rm T}(n)F_n$.

The index of $\Delta_{q}$ is
strictly less than the index of $\Delta$  (this is clear in the case that
$\wh{\cS}_{\rm T}\setminus \{q\}\neq \varnothing$; in the case that
$ \wh{\cS}_{\rm T}= \{q\}$ we have that
$f_{\rm T}$ cannot be flat, since this corresponds to the case $J=1$
in item~8(f) of Proposition~\ref{prop7.12}), and so,
we can apply Lemma~\ref{lema3.6} to conclude that $f_{\rm T}$ is
not stable, which gives
Index$(\Delta)\geq \mbox{ Index}(\S_{\rm T})+\mbox{ Index}(\Delta_{q})
>\mbox{ Index}(\Delta_{q})$. 
For different points $q,q'\in \wh{\cS}_{\rm T}$,
the corresponding compact domains $\Delta_{q(n)},\Delta_{q'(n)}\subset M_n$
are disjoint.

Let
\[
\mathcal{V}_{\rm T}=
\mathcal{V}_{\rm T}(n)=\{ \Delta_{q}=\Delta_{q(n)}\subset M_n\ |
\ q\in \wh{\cS}_{\rm T}\},
\]
Given $q\in \wh{\cS}_{\rm T}$, let $\cW_{\rm T}(q)$ be the (finite) set of
components of $\S_{\rm T}$ such that each $W\in \cW_{\rm T}(q)$ contains
at least one point of $\cS_{\rm T}(q)=\pi^{-1}(q)$.
We can choose a finite collection
$\cD_{q}$ of sufficiently small (possibly branched) {\it stable} minimal
disks in $\S_{\rm T}$ centered at the points in $\cS_{\rm T}(q)$,
such that
\begin{enumerate}[(U4)]
\item For each component $W$ of $\S_{\rm T}$, it holds
$I(W)=I(W\setminus \cup_{q\in \wh{\cS}_T}\cD_{q})$.
\end{enumerate}
\begin{enumerate}[(U5)]
\item The set
\begin{equation}\label{9.4a}
\cV_{\rm T}^c:=\bigcup_{q\in \wh{\cS}_{\rm T}}\cD_{q}\subset \S_{\rm T}
\end{equation} is contained in the limit as $n\to \infty$ of
$\l_{\rm T}(n)\cV_{\rm T}(n)$.
\end{enumerate}

Let
\begin{equation}\label{9.4}
	\S_{\rm T}^c=\S_{\rm T}\setminus \cV_{\rm T}^c.
\end{equation}
Property (U4) implies that the index $I(\S_{\rm T})=I(\S_{\rm T}^c)$.
Note that the number of components $\#_c(\S_{\rm T})=\#_c(\S_{\rm T}^c)$,
since removing an interior disk from a connected surface does not
disconnect it.

\begin{definition}
\label{def7.18}
{\rm
If $\cS_{\rm T}=\varnothing$ in the situation above, then
$\S_{\rm T}$ consists of a single non-flat, connected, unbranched minimal
surface with finite total curvature. In this case, we say that the
hierarchy $\mathcal{H}=\mathcal{H}(\Delta)$ of $\Delta$ is {\it trivial}
(with no levels) and that $\Delta $ is a {\it minimal element}.

If $\cS_{\rm T}\neq \varnothing$, then we
define the {\it top level} of $\Delta=\Delta(n)$ (for $n$ large)
as the triple $(\bf \wh{\cS}_{\rm T},\mathcal{V}_{\rm T},
\mathcal{W}_{\rm T})$.
In this case, we can apply for each $q\in \wh{\cS}_{\rm T}$
the above description to the corresponding compact domain $\Delta_{q}$
(exchange $\Delta $ by $\Delta _q$), which produces the 
triple $(\wh{\cS}_{{\rm T}(q)},\mathcal{V}_{{\rm T}(q)},
\mathcal{W}_{{\rm T}(q)})$
associated to $\Delta_{q}$ with top level ${\rm T}(q)$.
As before, we have two cases:
\begin{itemize}
\item If $\wh{\cS}_{{\rm T}(q)}=\varnothing $ for a point
$q\in \wh{\cS}_{\rm T}$, then the hierarchy of $\Delta_{q}$ is trivial and
$\Delta_{q}$ is called a {\it minimal element}
(for instance, in Figure~\ref{figh1}, the minimal elements are
$\Delta_{q_2},\Delta_{q_{1,1}},\Delta_{q_{1,2}},\Delta_{q_{3,1,1}}$, which
have associated number of stages $s_0(q_2)=2$,
$s_0(q_{1,1})=s_0(q_{1,2})=3$, $s_0(q_{3,1,1})=4$;
observe that the number of stages is not defined for the $\Delta_q$-pieces
which are not minimal elements).

\item If $\wh{\cS}_{{\rm T}(q)}\neq \varnothing $,
we say that the corresponding top level $(\wh{\cS}_{{\rm T}(q)},
\mathcal{V}_{{\rm T}(q)},\mathcal{W}_{{\rm T}(q)})$ of $\Delta_{q}$ is a
{\it level of the hierarchy $\mathcal{H}(\Delta)$}
different from its top level, and proceed
recursively. Let us denote by $L\in \N\cup \{ 0\}$ the number of these
levels of $\mathcal{H}(\Delta)$;
see Figure~\ref{figh1} for the schematic representation of a hierarchy
$\mathcal{H}(\Delta)$ with four levels.
\end{itemize}
}
\end{definition}

\begin{remark}
\label{rem9.4}
{\rm
\begin{enumerate}
\item Observe that the notion of level only makes sense provided that
$\wh{\cS}_{\rm T}\neq \varnothing $.
\item This recursive process of assigning levels to $\Delta $
(not being a minimal element) is finite, since each $\Delta_{q}$ has
non-zero index, which can be realized by a compact unstable domain
in $M_n$ for $n$ large, and the related compact unstable
domains for different $q$-points  in the same level of $\Delta$
can be taken pairwise disjoint (recall that the index of $F_n$ was
assumed to be less than or equal to some bound $I_0$ independent of $n$).
\item This recursive process of assigning levels to $\Delta $
(not being a minimal element) is finite. In fact, it follows
from the arguments used to prove item~8(f) of Proposition~\ref{prop7.12} that the index increases
each time we add a level, and so $L+1\leq I(\Delta)$.  
\end{enumerate}
}	
\end{remark}

\begin{definition}
\label{def7.20}
{\rm We define the {\it singular set} $\wh{\cS}$ as the union
of all the singular sets $\wh{\cS}_{{\rm T}(q)}$ for singular points of
previously defined levels (including $\wh{\cS}_{\rm T}$).
Similarly, we let  $\cS$ be the union of all $\cS_{{\rm T}(q)}$
for singular points of previously defined levels. Let
$\mathcal{V}\subset M_n$ be the union of
$\{ \Delta\}$ together with all the
compact pieces $\Delta_{q}$ for singular points of levels of
$\cH(\Delta)$, and let $\mathcal{W}$ be the union of all the
components of related limit surfaces $\S_{{\rm T}(q)}$
for singular points of previously defined levels
(including $\S_{\rm T}$). We define the {\it hierarchy}
$\mathcal{H}(\Delta)$ of $\Delta=\Delta(n)$ (for $n$ large)
as the triple  $(\wh{\cS},\mathcal{V},\mathcal{W})$; and the number
$L\in \N\cup \{0\}$ associated to $\Delta$ (see
Definition~\ref{def7.18}) is called the {\it number of levels of
$\mathcal{H}(\Delta)$.}
If $\mathcal{H}(\Delta)$ is non-trivial, a compact domain
$\Delta_{q}\in \mathcal{V}$
(here $q\in \wh{\cS}$)	is called a {\it minimal element} of
$\mathcal{H}(\Delta)$ if the hierarchy associated to
$\Delta_{q}$ is trivial (recall that if $\mathcal{H}(\Delta)$ is trivial,
we called $\Delta $ itself a minimal element).
}
\end{definition}

\begin{example}
\label{example7.19}
{\rm
\begin{enumerate}
\item $\wh{\cS}=\varnothing$ if and only if
$\wh{\cS}_{\rm T}=\varnothing$ if and
only if the hierarchy of $\Delta$ is trivial. In this case,
$\cW=\cW_{\rm T}=\{ \S_{\rm T}\}$,
$\cV_{\rm T}=\varnothing$, $\cV=\{ \Delta\}$, $L=0$,
and $\Delta$ is a minimal element.

\item The simplest case of a non-trivial hierarchy $\cH(\Delta)$ is that
of having just one single singular point in its top level (i.e.,
$\wh{\cS}=\wh{\cS}_{\rm T}=\{ q\}$) and where $\Delta_{q}$
has one boundary curve. In this example,
$\cV_{\rm T}=\{ \Delta_{q}\}$, $\cV=\{\Delta,\Delta_{q}\}$,
$\cW_{\rm T}$ consists of a
single, non-flat  (non-flatness of this single component of $\cW_{\rm T}$
follows from the proof of item~8(f) of Proposition~\ref{prop7.12}),
connected, complete minimal surface $\S_{\rm T}$ with finite total
curvature and a unique branch point at $q$ with branching order at least
two, $\cW=\{ \S_1,\S_{\rm T}\}$ where
$\S_1$ is a non-flat, connected, complete minimal immersion
(no branch points) with
finite total curvature, the number of levels is $L=1$,
and $\Delta_{q}$ is a minimal element.

\item  See Figure~\ref{figh1} for an example of a hierarchy with four
levels. In this example, $\wh{\cS}_{\rm T}=\{ q_1,q_2,q_3\}$,
$\wh{\cS}_{{\rm T}(q_1)}=\{ q_{1,1},q_{1,2}\}$, $\wh{\cS}_{{\rm T}(q_3)}=\{ q_{3,1}\}$,
$\wh{\cS}_{{\rm T}(q_{3,1})}=\{ q_{3,1,1}\}$.
The minimal elements of this hierarchy are $\Delta_{q_2},\Delta_{q_{1,1}},
\Delta_{q_{1,2}},\Delta_{q_{3,1,1}}$. The surface $\S_{\rm T}$
has two (possibly) branched components $W_1,W_2$, and the
$\pi$-projection of the set of branch points of $W_1$  is contained in $\{q_1,q_2\}$, while
$\pi$-projection of the set of branch points of $W_2$ 
 is contained in $\{q_2,q_3\}$.
Observe that in this example $\Delta_{q_2}$ has at least two boundary components
(for $n$ large), one component  which corresponds to  the boundary
of a possibly branched minimal disk
in the limit branched minimal surface $W_1$ and another component which
corresponds to the boundary of a possibly branched minimal disk in the limit $W_2$.
\end{enumerate}
}
\end{example}
\begin{figure}[h]
\begin{center}
\includegraphics[width=8cm]{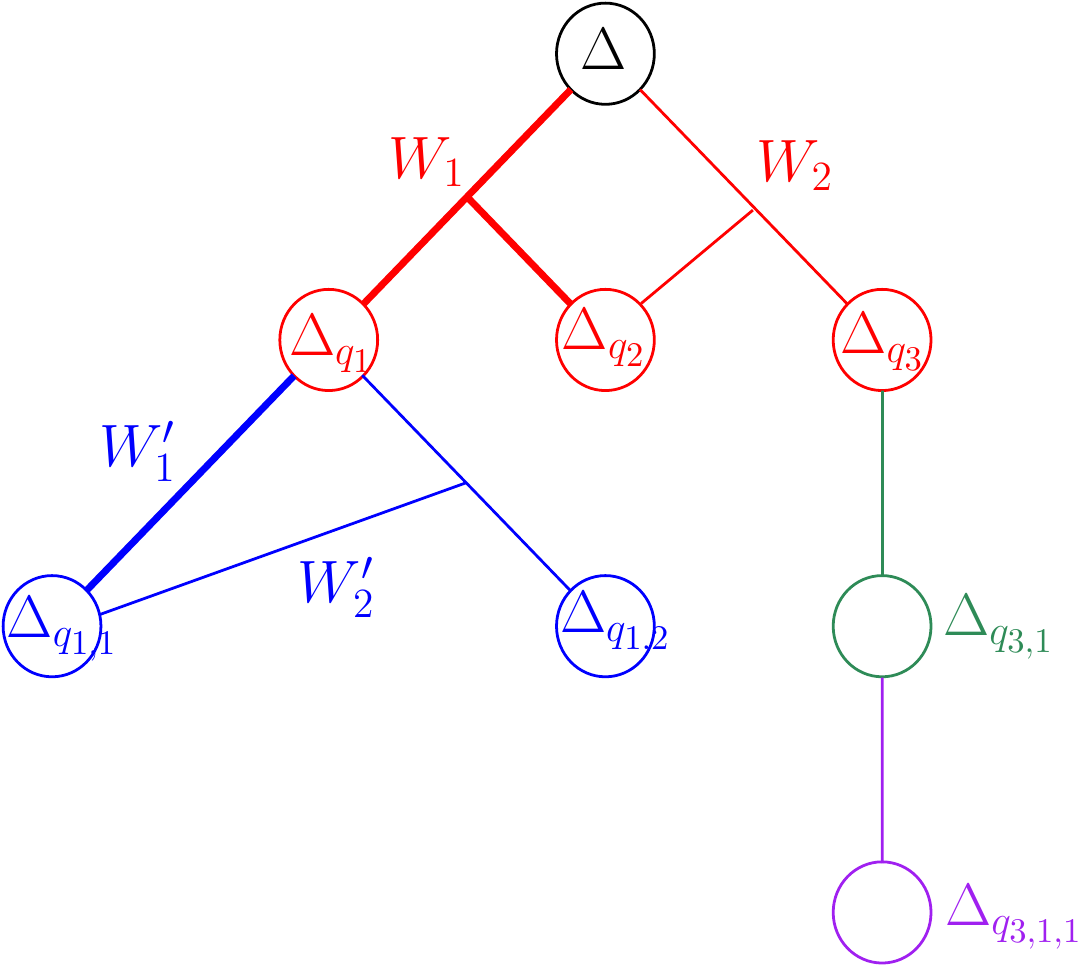}
\caption{Schematic representation of a hierarchy
$\mathcal{H}(\Delta)$ with four levels (top level in red, other levels in
blue, green and purple.}
\label{figh1}
\end{center}
\end{figure}

$\mathcal{V}$ can be equipped with the following partial order:
given $\Delta',\Delta''\in \mathcal{V}$, we denote
$\Delta'\preceq\Delta''$ if $\Delta'\subseteq\Delta''$.
Thus, $\Delta_{q}\preceq \Delta$ for every $q\in \wh{\cS}$, and
$\Delta_{q}\in \mathcal{V}$ is a minimal element of $\mathcal{H}(\Delta)$
precisely when $\Delta_{q}$ is minimal with respect to the partial order
$\preceq$.

The set $\mathcal{V}$ decomposes into
\begin{equation}
\mathcal{V}=\mathcal{V}^m\cup \mathcal{V}^{nm},
\label{mnm1}
\end{equation}
where $\mathcal{V}^m=\{ \Delta'\in \mathcal{V}\ | \
\Delta' \mbox{ is a minimal element}\} $
and $\mathcal{V}^{nm}=\mathcal{V}\setminus \mathcal{V}^{m}$. Note that
each non-minimal element $\Delta_{q}\in \mathcal{V}^{nm}$ with
$q\in \wh{\cS}$ creates a level of $\mathcal{H}(\Delta)$ below it with respect to $\preceq$
(namely, its top level $(\wh{\cS}_{{\rm T}(q)},
\mathcal{V}_{{\rm T}(q)},\mathcal{W}_{{\rm T}(q)})$).
Assuming that $\cH(\Delta)$ is non-trivial,
all the levels of $\mathcal{H}(\Delta)$ except for the
top one are created this way; hence
%\marginpar{\textcolor{blue}{Bill: we had $L=|\mathcal{V}^{nm}|+1$ here.}}
\[
L=|\mathcal{V}^{nm}| \quad \mbox{ if $\mathcal{H}(\Delta)$
is non-trivial.}
\]

Also, observe that $|\wh{\cS}|+1=|\mathcal{V}^{m}|+|\mathcal{V}^{nm}|$
regardless of whether or not $\Delta$ is a minimal element.
In particular, $|\wh{\cS}|\geq L$.

\begin{definition}
\label{def7.22}
{\rm
We define the {\it excess index} associated to the subset of
minimal elements of $\Delta$ as
\begin{equation}
I^*(\mathcal{H})=\sum_{\Delta'\in \mathcal{V}^m}(I(\Delta')-1)\in
\N\cup \{ 0\}.
\label{I*}
\end{equation}
}
\end{definition}

This abstract model of the hierarchy $\mathcal{H}(\Delta)$ produces a
``decomposition'' of the compact domain $\Delta=\Delta(n)\subset M_n$
for $n\in \N$ large into compact pieces (in the sense that each piece is a
compact surface with boundary inside $\Delta$, the union of the pieces is
$\Delta$ and the pieces only intersect along their boundaries): these
pieces correspond to the $\Delta_{q}$ with $q\in \wh{\cS}_{\rm T}$
(observe that $\Delta_{q}=\Delta_{q}(n)$ is contained in $M_n$),
together with a (finitely connected) compact surface
$W(n)\subset \Delta(n)$
which is  the closure of $\Delta \setminus
\left( \bigcup _{q\in \wh{\cS}_{\rm T}}\Delta_{q}\right) $.
Observe that after suitable rescaling by some $\l_{\rm T}(n)\in \R^+$
diverging to $\infty$, the $\l_{\rm T}(n)W(n)$ converge as  $n\to \infty $
to the components  of the surface $\S_{\rm T}^c$ defined in~\eqref{9.4}.

\begin{definition}
\label{def9.4}
{\rm
We define
\begin{equation}
\left.
\begin{array}{ll}
\cS=\bigcup_{q\in \wh{\cS}}\pi^{-1}(q)&
\\
\cW(\partial =1)&\mbox{set of components $W$ in $\cW$ such that
$|W\cap \cS|=1$,}
\\
\mathcal{W}^{f} &\mbox{set of flat components in $\cW$,}
\\
\mathcal{W}^{t}= \cW(\partial =1)\cap \cW^f
&\mbox{set of {\it trivial} components in $\cW$,}
\\
\mathcal{W}^{nt}=\mathcal{W}\setminus \mathcal{W}^{t} &
\mbox{set of non-trivial components in $\mathcal{W}$,}
\\
\mathcal{W}^{nt,f} &\mbox{set of non-trivial flat components in
$\mathcal{W}$,}
\\
\mathcal{W}^{nt,nf}=\mathcal{W}^{nt}\setminus \mathcal{W}^{nt,f}&
\mbox{set of non-trivial, non-flat components in  $\mathcal{W}$, and}
\\
\cW(\partial >1)=\cW\setminus \cW(\partial=1)&
\mbox{set of components $W$ in $\cW$ such that $|W\cap \cS|>1$}.
\end{array}\right\}\label{9.7c}
\end{equation}
}
\end{definition}

We will also need the following decomposition of $\mathcal{W}^{nt,nf}$:
\begin{equation}\label{9.8a}
\mathcal{W}^{nt,nf}=\mathcal{W}^{nt,nf}(\partial =1)
\cup \mathcal{W}^{nt,nf}(\partial >1),
\end{equation}
where $\mathcal{W}^{nt,nf}(\partial =1)=\cW^{nt,nf}\cap \cW(\partial=1)$
(resp. $\mathcal{W}^{nt,nf}(\partial >1)
=\cW^{nt,nf}\cap \cW(\partial>1)$). In turn, the following decomposition
of $\mathcal{W}^{nt,nf}(\partial>1)$ will be useful:
\begin{equation}\label{9.8b}
\mathcal{W}^{nt,nf}(\partial>1)=\mathcal{W}^{nt,nf,or}(\partial >1)
\cup \mathcal{W}^{nt,nf,no}(\partial >1),
\end{equation}
where the super-index {\it or} (orientable), {\it no} (non-orientable)
refers to the orientability character of each component.

In this paragraph we indicate how the notion of hierarchy arises
in the proof of Structure Theorem~\ref{mainStructure}.
Previously we used the  notion of ``ascension with
$s_0$ stages'' associated to a sequence of points $p_1(n)\in M_n$
with sufficiently large norm of its second fundamental form,
which created a compact piece $\Delta=\Delta_1$,
defined just after item (K2)'.
This is the first step in constructing the hierarchy $\cH(\Delta)$), and
in the previous sections we have proven the following partial result
related to Theorem~\ref{mainStructure}:
For  any $(F\colon M\la  X)\in \L=\L(I, H_0,\ve_0,A_0,K_0)$,
there exists %an $r_F\in [\de_1,\de]$ and
a (possibly empty) finite collection $\cP_F=\{p_1,\ldots,p_k\}\subset
U(\partial M,\ve_0 ,\infty )$ of points, $k\leq I$,
numbers $r_F(1),\ldots ,{r_F}(k)\in [\de_1,\frac{\de}{2}]$
with $r_F(1)>4r_F(2)>\ldots >4^{k-1}r_F(k)$ and
and related domains $\{\Delta_1,\ldots,\Delta_k\}$ satisfying
items 1, 2, 3, (A) and (E) of Theorem~\ref{mainStructure},
with respect to some
constant $A_1=A_1(\L)\in [A_0,\infty)$.
It remains to prove that   $A_1=A_1(\L)\in [A_0,\infty)$ can also be
chosen sufficiently large so that items (B), (C) and (D) of
Theorem~\ref{mainStructure} also hold for each $\Delta=\Delta_i$,
$i=1,\ldots, k$. Otherwise,
for some $i=1,\ldots, k$, at least one of the items (B), (C) and (D)
of the theorem fails to hold for $\Delta=\Delta_i$;
without loss of generality, assume that $\Delta=\Delta_1$.
In this case we may consider $F|_{\Delta}\colon \Delta\la X$ to be an
element in $\L'=\L(I, H_0,\de_1/3,A_1,K_0)$ (regarding the bound $A_1$
of the second fundamental form of $F|_{\Delta}$ in the
$\frac{\de_1}{3}$-neighborhood of its boundary,
see the two paragraphs just
after Definition~\ref{def5.4}). The failure of the Structure Theorem
to hold for $\Delta$, no matter how large one chooses $A_1$,
leads to a sequence
$(F_n\colon \Delta(p_1(n))\la  X_n)\in\L(I, H_0,\de_1/3,A_1,K_0)$,
where the norm of the second fundamental form of $F_n$
has a maximum value greater than $n$
at $p_1(n)\in \Delta$. By our previous arguments,
after replacing by a subsequence,
$(F_n\colon \Delta(p_1(n))\la  X_n)$ leads to the creation of a hierarchy
$\cH(\Delta)$ for $\Delta=\Delta(n)$.
It is this hierarchy that we are referring to in the statement of
Theorem~\ref{thm9.6} below.

The notion of hierarchy $\mathcal{H}(\Delta)$ has a good behavior with
respect to proving properties by induction on the number  $L$ of levels,
which will be the method of proof of Theorem~\ref{thm9.6} below.
Observe that the truncation of a hierarchy $\mathcal{H}(\Delta)$ with
$L\geq 1$ levels by simply deleting its top level is again a hierarchy,
with the only difference that the role of $\Delta$ is played
by the disjoint
union of the compact pieces $\Delta_{q}$ with $q\in \wh{\cS}_{\rm T}$.
To simplify the notation in the next statement, we will denote again by
$\Delta$ this disjoint union, and so we will  no longer assume that
$\Delta$ is connected;   by hierarchy of such a disconnected $\Delta$,
we mean the union of the hierarchies of the components of $\Delta$.

\begin{theorem}
\label{thm9.6}
Let $\Delta$ be as  described previously and be finitely connected.
Then, the index $I(\Delta)$ of $\Delta$ can be estimated from below by
\begin{equation} \label{GenCMf}
6\,I(\Delta)\geq -\chi(\Delta)+2S(\Delta)+e(\Delta) + C(\mathcal{H}),
\end{equation}
where $\chi(\Delta)$ is the Euler characteristic of $\Delta$,
$e(\Delta)=\#_c(\partial \Delta)$ the number of boundary components,
$S(\Delta)$ is the total spinning of $\Delta$ along its boundary, and
the ``correction term''$C(\mathcal{H})$ is the following non-negative
integer, which depends on the complexity of the hierarchy $\cH$ of
$\Delta$:
\begin{equation}
C(\mathcal{H})=3I^*(\mathcal{H})+|\wh{\cS}|-L+|\mathcal{W}^{nt,f}|
+2|\mathcal{W}^{nt,nf}(\partial=1)|+3|\mathcal{W}^{nt,nf,or}(\partial>1)|,
\label{correction}
\end{equation}
where $\wh{\cS}$ is the singular set of the hierarchy $\mathcal{H}$ and
$L\geq 0$ is the number of its levels.
Furthermore, if $\Delta$ is connected and has a trivial hierarchy,
then $I^*(\mathcal{H})=I(\Delta)-1$, $C(\mathcal{H})= 3I(\Delta)-3$,
and so, \eqref{GenCMf} reduces to the Chodosh-Maximo
estimate~\eqref{eq:CM1}.
\end{theorem}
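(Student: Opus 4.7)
The argument would proceed by induction on $L$, the number of levels of $\cH(\Delta)$. For the base case $L=0$, it suffices to treat connected $\Delta$ with trivial hierarchy: then $\S_{\rm T}$ is a single non-flat, unbranched, complete minimal surface of finite total curvature, the convergence $\l_{\rm T}(n)F_n\to f_{\rm T}$ gives $I(\Delta)=I(f_{\rm T})$, $\chi(\Delta)=\chi(\S_{\rm T})$, $S(\Delta)=S(f_{\rm T})$, $e(\Delta)=e(\S_{\rm T})$, $B(\S_{\rm T})=0$, while $I^*(\cH)=I(\Delta)-1$ and $C(\cH)=3I(\Delta)-3$; the unified Chodosh-Maximo bound $3I(f_{\rm T})\geq -\chi+2S+e-3$ from Remark~\ref{rem3.3}(2) is then equivalent, after adding $3I(\Delta)$ to both sides, to \eqref{GenCMf}.

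For the inductive step $L\geq 1$, I would peel off the top level by writing, for $n$ large, $\Delta=W(n)\cup \bigsqcup_{q\in \wh\cS_{\rm T}}\Delta_q$, where after the rescaling by $\l_{\rm T}(n)$ the piece $W(n)$ converges smoothly away from $\cS_{\rm T}$ to $\S_{\rm T}^c=\S_{\rm T}\setminus\bigcup_q\cD_q$, with the stable branched disks $\cD_q$ capping off the singular set. Using that each stable disk has $\chi=1$, that $W(n)$ meets each $\Delta_q$ along $e(\Delta_q)$ multi-graphical circles (item~\ref{it3} of Theorem~\ref{mainStructure}), the identity $\sum_q e(\Delta_q)=|\cS_{\rm T}|$, and the branching identity $B(\S_{\rm T})=\sum_q [S(\Delta_q)-e(\Delta_q)]$ from \eqref{7.13b}, one derives
\[
\chi(\Delta)=\chi(\S_{\rm T})-|\cS_{\rm T}|+\textstyle{\sum_q}\chi(\Delta_q),\qquad e(\Delta)=e(\S_{\rm T}),\qquad S(\Delta)=S(f_{\rm T}).
\]
The key index inequality $I(\Delta)\geq \sum_{W\in \cW_{\rm T}}I(W)+\sum_q I(\Delta_q)$ then follows by realizing the indices of the $\Delta_q$'s and of each component of $\S_{\rm T}^c$ (via the smooth convergence in item~5 of Proposition~\ref{prop7.12}) by pairwise-disjoint, compactly-supported test functions in $\Delta$.

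I would then combine (i) the inductive hypothesis $6I(\Delta_q)\geq -\chi(\Delta_q)+2S(\Delta_q)+e(\Delta_q)+C(\cH(\Delta_q))$ with (ii) twice the Chodosh-Maximo estimate $3I(W)\geq -\chi(W)+2S(W)+e(W)-2B(W)-3$ applied to each non-flat $W\in \cW_{\rm T}$, supplemented by $I(W)\geq 1$ for orientable non-flat $W$ and $I(W)\geq 2$ for non-orientable non-flat $W$ (Lemma~\ref{lema3.6}); flat trivial $W$ contribute nothing, while flat non-trivial $W$ still give $-\chi(W)+2S(W)+e(W)-2B(W)\leq 3$. Substituting the topological identities, the $-2B(\S_{\rm T})$ terms cancel the extra contributions from $\sum_q S(\Delta_q)$ and $\sum_q e(\Delta_q)$, leaving $-\chi(\Delta)+2S(\Delta)+e(\Delta)$ plus a correction that must be matched against $C(\cH)$ through the recursive splitting
\[
C(\cH)=\textstyle{\sum_q}C(\cH(\Delta_q))+(|\wh\cS_{\rm T}|-1)+|\cW^{nt,f}\cap\cW_{\rm T}|+2|\cW^{nt,nf}(\partial=1)\cap\cW_{\rm T}|+3|\cW^{nt,nf,or}(\partial>1)\cap\cW_{\rm T}|.
\]

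The principal obstacle is the case-by-case bookkeeping needed to match the coefficients $1,2,3$ in $C(\cH)$ against the exact surpluses produced by CM on each category of top-level component $W\in \cW_{\rm T}$: non-trivial flat $W$ give one unit of surplus directly from CM, orientable non-flat $W$ with $|W\cap \cS|=1$ give two units via $I(W)\geq 1$, orientable non-flat $W$ with $|W\cap \cS|>1$ give three units via the parity improvement of Remark~\ref{rem3.3}(1) combined with the lower bound on $I(W)$, and the non-orientable cases must match these coefficients through $I(W)\geq 2$. Isolating the term $|\wh\cS_{\rm T}|-1$ from these surpluses, and then verifying by recursion that they accumulate to exactly $|\wh\cS|-L$ (using $L=1+\sum_q L^{(q)}$ and $|\wh\cS|=|\wh\cS_{\rm T}|+\sum_q|\wh\cS^{(q)}|$) together with the correct $|\cW^{\ldots}|$ counts, is the most delicate step.
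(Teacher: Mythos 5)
Your overall strategy is exactly the paper's: induction on the number of levels $L$, with the base case being the unified Chodosh-Maximo estimate \eqref{eq:CM1}, and the inductive step peeling off the top level by decomposing $\Delta(n)$ into $W(n)\cup\bigsqcup_{q\in\wh{\cS}_{\rm T}}\Delta_q(n)$, applying the inductive hypothesis to each $\Delta_q$ (which, once you fold the trivial-hierarchy pieces into a $3I^*$ term, recovers the paper's split $\cV_{\rm T}=\cV_{\rm T}^m\cup\cV_{\rm T}^{nm}$) and the Chodosh-Maximo estimate with the mod-$2$ improvement of Remark~\ref{rem3.3}(1) to the components of $\S_{\rm T}$. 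Your topological identities $\chi(\Delta)=\chi(\S_{\rm T})-|\cS_{\rm T}|+\sum_q\chi(\Delta_q)$, $e(\Delta)=e(\S_{\rm T})$, $S(\Delta)=S(f_{\rm T})$, the branching identity $B(\S_{\rm T})=\sum_q[S(\Delta_q)-e(\Delta_q)]$, the index superadditivity $I(\Delta)\geq\sum_W I(W)+\sum_q I(\Delta_q)$, and the recursive splitting of $C(\cH)$ are all correct and match the paper's computation leading to its lines \eqref{eq:9.17}--\eqref{eq:9.19}.

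The genuine gap is that you flag the coefficient bookkeeping as ``the most delicate step'' but do not supply the argument that actually produces the term $|\wh{\cS}_{\rm T}|-1$. This is not mere bookkeeping. After the algebra you describe, one is left needing the estimate
\[
\sum_{W^c\in\cW_{\rm T}^c,\;\#_c(\partial W^c)>1}\bigl(2\#_c(\partial W^c)-3\bigr)\;\geq\;|\wh{\cS}_{\rm T}|-1,
\]
which is the paper's Lemma~\ref{lemma9.7}. Its proof is a combinatorial/connectivity argument on the quotient space $\wh{\S}_{\rm T}$: one must show that the components $W$ meeting more than one singular point can be ordered $W(q_1),W(q_2),\ldots,W(q_l)$ so that each successive $W(q_{j+1})$ shares a point of $\wh{\cS}_{\rm T}$ with the union of the previous ones (this uses path-connectedness of $\wh{\S}_{\rm T}$), and then the summation by parts in \eqref{9.29} yields the $|\wh{\cS}_{\rm T}|-1$. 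Without something equivalent, there is no way to extract a term proportional to the number of singular points from the top level, and the recursion $|\wh{\cS}|-L = \sum_q(|\wh{\cS}^{(q)}|-L^{(q)}) + (|\wh{\cS}_{\rm T}|-1)$ cannot be closed. A second, smaller inaccuracy: the parity improvement $+\#_c(\S^f_{\rm T})$ does not feed the orientable $\cW(\partial>1)$ coefficient as you suggest; rather, after the trivial components cancel it, it yields exactly the $|\cW^{nt,f}|$ term, while the $3|\cW^{nt,nf,or}(\partial>1)|$ term comes solely from $3I(W)\geq 3$ for orientable non-flat components, and the $2|\cW^{nt,nf}(\partial=1)|$ term comes from $2\cdot 1 - 3 + 3I(W)\geq 2$ (using Lemma~\ref{lema3.6}(2) when $W$ is non-orientable). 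Your plan is the right one, but as written it asserts the conclusion of the combinatorial lemma rather than proving it.
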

\begin{remark}
{\rm If $\Delta $ is orientable, the relation
$\chi(\Delta)=2\#_c(\Delta)-2g(\Delta)-e(\Delta)$
%(here $\#_c(\Delta)$ denotes the number of components of $\Delta$)
allows us to write \eqref{GenCMf} as
\begin{equation} \label{CM:5.43}
6\,I(\Delta)\geq 2g(\Delta)+2S(\Delta)+2e(\Delta)-2\#_c(\Delta)
+ C(\mathcal{H}).
\end{equation}
}
\end{remark}
\begin{proof}[Proof of Theorem~\ref{thm9.6}]
First observe that the functions
$I(\Delta),\chi(\Delta),S(\Delta),e(\Delta)$
are additive on components of $\Delta$. The same holds for
$C(\mathcal{H})$, with the understanding that adding components
of $\Delta$ also adds the number of levels as well as the other terms
appearing in~\eqref{correction}. Therefore,
\eqref{GenCMf} holds if it holds for
connected $\Delta$. The proof of~\eqref{GenCMf} will be carried out by
induction on the number $L\geq 0$ 	of levels of $\mathcal{H}(\Delta)$.

Suppose first that $\Delta$ is connected and its hierarchy $\mathcal{H}$
is trivial. In this case, $L=0$, and $|\wh{\cS}|=|\mathcal{W}^{nt,f}|
=|\mathcal{W}^{nt,nf}(\partial=1)|=
|\mathcal{W}^{nt,nf,or}(\partial>1)|=0$,	
hence $C(\mathcal{H})=3I^*(\mathcal{H})=3I(\Delta)-3$,
which reduces~\eqref{GenCMf} to~\eqref{eq:CM1}.  This argument also
proves the last statement in the theorem holds.

By the principle of mathematical induction, assume that $L>0$ is the
number of levels of $\Delta $ and that~\eqref{GenCMf} holds for (possibly
disconnected) $\Delta'$ if its hierarchy $\cH'$ has less than $L$ levels.
Without loss of generality, we will \underline{assume that $\Delta $
is connected}. Since $L>0$, then $\mathcal{H}(\Delta)$
is non-trivial, $\wh{\cS}_{\rm T}\neq
\varnothing$ and $\mathcal{V}_{\rm T}\neq \varnothing$.

By~\eqref{mnm1}, the set $\mathcal{V}_{\rm T}$ can be written as the
disjoint union
\begin{equation}
\mathcal{V}_{\rm T}=\mathcal{V}_{\rm T}^m\cup \mathcal{V}_{\rm T}^{nm},
\label{mnm2}
\end{equation}
where $\mathcal{V}_{\rm T}^m=\mathcal{V}_{\rm T}\cap \mathcal{V}^m$ and
$\mathcal{V}_{\rm T}^{nm}=\mathcal{V}_{\rm T}\cap \mathcal{V}^{nm}$.
	
In the first paragraph after Definition~\ref{def7.22} we explained that
for $n$ large, $\Delta=\Delta(n)$ can be decomposed into the compact
pieces $\Delta_{q}$
with $q\in \cS_{\rm T}$ and finitely many compact connected domains
$W(n)$ whose indices are independent of $n$ and satisfy
\[
I(W(n))=I(W),
\]
for some component $W\in \mathcal{W}\cap \S_{\rm T}$. This equality,
together with~\eqref{mnm2}, lead us to the inequality
\begin{equation}
I(\Delta)\geq I( \mathcal{V}_{\rm T}^{m})
+I( \mathcal{V}_{\rm T}^{nm})+I(\S_{\rm T}).
\label{9.7b}
\end{equation}
To estimate the first term in the RHS of~\eqref{9.7b}, we will
apply~\eqref{eq:CM1} to each of the components
$\Delta_{q}\in \mathcal{V}_{\rm T}^m$ (observe that
the total branching number $B$ in~\eqref{eq:CM1} vanishes
in our setting), so we get
\begin{eqnarray}
6I(\mathcal{V}_{\rm T}^m)&=&3I(\mathcal{V}_{\rm T}^m)
+3I(\mathcal{V}_{\rm T}^m)\nonumber
\\
&\stackrel{\eqref{eq:CM1}}{\geq}&-\chi(\mathcal{V}_{\rm T}^m)
+2S(\mathcal{V}_{\rm T}^m)+e(\mathcal{V}_{\rm T}^m)
-3\#_c(\mathcal{V}_{\rm T}^m)+3I(\mathcal{V}_{\rm T}^m)\nonumber
\\
&\stackrel{\eqref{I*}}{=}&-\chi(\mathcal{V}_{\rm T}^m)
+2S(\mathcal{V}_{\rm T}^m)+e(\mathcal{V}_{\rm T}^m)
+3I^*(\mathcal{V}_{\rm T}^m).
\label{X1a}
\end{eqnarray}
	
Since the number of levels of the hierarchy for each compact piece
$\Delta_{q}$ with $q\in \mathcal{V}_{\rm T}^{nm}$ is less than $L$,
we can estimate the second term in the RHS of~\eqref{9.7b} by the
induction hypothesis, hence
\begin{equation}
\label{X2a}
6I(\mathcal{V}_{\rm T}^{nm})\geq  -\chi(\mathcal{V}_{\rm T}^{nm})
+2S(\mathcal{V}_{\rm T}^{nm})+e(\mathcal{V}_{\rm T}^{nm})
+ C(\mathcal{V}_{\rm T}^{nm}),
\end{equation}
where $C(\mathcal{V}_{\rm T}^{nm})$ is the sum of the correction terms
$C(\mathcal{H}')$ with $\mathcal{H}'$ varying in the hierarchies
of all compact pieces $\Delta_{q}$ with
$q\in \mathcal{V}_{\rm T}^{nm}$.
	
To estimate the third term in the RHS of~\eqref{9.7b},
we will apply~\eqref{eq:CM1} to each of the components
of $\S_{\rm T}$, so we get
\begin{equation}  \label{STa}
3I(\S_{\rm T})\geq-\chi(\S_{\rm T})+2S(f_{\rm T})
+e(\S_{\rm T})-2B(\S_{\rm T})-3\#_c(\S_{\rm T})+\#_c(\S^{f}_{\rm T}),
\end{equation}
where $\#_c(\S^{f}_{\rm T})$ is the number of flat components of
$\S_{\rm T}$ (see item 1 of Remark~\ref{rem3.3}).
	
Thus,
\begin{eqnarray}
6I(\Delta)&\stackrel{\eqref{9.7b}}{\geq}& 6I( \mathcal{V}_{\rm T}^{m})
+6I( \mathcal{V}_{\rm T}^{nm})+3I(\S_{\rm T})+3I(\S_{\rm T})\nonumber
\\
&\stackrel{\eqref{X1a},\eqref{X2a},\eqref{STa}}{\geq}
&-\chi(\mathcal{V}_{\rm T}^m)
+2S(\mathcal{V}_{\rm T}^m)+e(\mathcal{V}_{\rm T}^m)
+3I^*(\mathcal{V}_{\rm T}^m)
\nonumber
\\
& & -\chi(\mathcal{V}_{\rm T}^{nm})
+2S(\mathcal{V}_{\rm T}^{nm})+e(\mathcal{V}_{\rm T}^{nm})
+ C(\mathcal{V}_{\rm T}^{nm})\nonumber
\\
& & -\chi(\S_{\rm T})+2S(f_{\rm T})+e(\S_{\rm T})
-2B(\S_{\rm T})-3\#_c(\S_{\rm T})
+\#_c(\S^{f}_{\rm T})+3I(\S_{\rm T}).\nonumber
\end{eqnarray}
Since $B(\S_{\rm T})=S(\mathcal{V}_{\rm T})-e(\mathcal{V}_{\rm T})
=[S(\mathcal{V}_{\rm T}^m)-e(\mathcal{V}_{\rm T}^m)]
+[S(\mathcal{V}_{\rm T}^{nm})-e(\mathcal{V}_{\rm T}^{nm})]$,
the RHS of the last expression can be written as
\begin{eqnarray}
& & -\chi(\mathcal{V}_{\rm T}^m)
+3e(\mathcal{V}_{\rm T}^m)+3I^*(\mathcal{V}_{\rm T}^m)\nonumber
\\
& & -\chi(\mathcal{V}_{\rm T}^{nm})+3e(\mathcal{V}_{\rm T}^{nm})
+ C(\mathcal{V}_{\rm T}^{nm})\nonumber
\\
& & -\chi(\S_{\rm T})+2S(f_{\rm T})
+e(\S_{\rm T})-3\#_c(\S_{\rm T})+\#_c(\S^{f}_{\rm T})
+3I(\S_{\rm T}).\nonumber
\end{eqnarray}
By using $\chi(\Delta)=\chi(\mathcal{V}_{\rm T}^m)
+\chi(\mathcal{V}_{\rm T}^{nm})
+\chi(\S_{\rm T})-e(\mathcal{V}_{\rm T}^m)-e(\mathcal{V}_{\rm T}^{nm})$,
$e(\mathcal{V}_{\rm T})=e(\mathcal{V}_{\rm T}^{m})
+e(\mathcal{V}_{\rm T}^{nm})$ and
\eqref{9.3}, we can rewrite the last displayed expression as
\begin{eqnarray}
& &-\chi(\Delta)+2S(\Delta)+e(\Delta) \label{eq:9.17}
\\
& &+2e(\mathcal{V}_{\rm T})-3\#_c(\S_{\rm T})
+3I(\S_{\rm T})+\#_c(\S^{f}_{\rm T}) \label{eq:9.18}
\\
& &+3I^*(\mathcal{V}_{\rm T}^m)+ C(\mathcal{V}_{\rm T}^{nm}).
\label{eq:9.19}
\end{eqnarray}
We next analyze the terms in~\eqref{eq:9.18}.

First note that $e(\mathcal{V}_{\rm T})=\#_c(\partial \S_{\rm T}^c)$,
where $\Sigma_{\rm T}^c$ is the surface defined in~\eqref{9.4}.
With this in mind, let $\mathcal{W}_{\rm T}^c$ denote
the set of components of
$\S_{\rm T}^c$  and we obtain the equality:
\begin{equation}\label{9.16a}
2\#_c(\partial\S_{\rm T}^c)-3\#_c(\S_{\rm T}^c)+3I(\S_{\rm T})
=\sum_{W^c\in \mathcal{W}_{\rm T}^c} (2\#_c(\partial W^c)-3 +3I(W^c)).
\end{equation}
We will analyze the sum in the RHS of~\eqref{9.16a} attending to the
following partition  of $\mathcal{W}_{\rm T}^c$ (compare to~\eqref{9.7c}
and~\eqref{9.8a}):
\begin{enumerate}[(Q1)]
\item $\mathcal{W}_{\rm T}^{c,t}$ is the subset of trivial components
in $\mathcal{W}_{\rm T}^c$.

\item $\mathcal{W}_{\rm T}^{c,nt}(\partial =1)$ is the subset of
components in $\mathcal{W}_{\rm T}^c$ that have one boundary curve
and are non-trivial. Equivalently, it is the
subset of components in $\mathcal{W}_{\rm T}^c$ that have one boundary
curve and are not flat.

\item $\mathcal{W}_{\rm T}^{c,nt,f}$ is the subset of components in
$\mathcal{W}_{\rm T}^c$ that have more than one boundary curve
and are flat.

\item $\mathcal{W}_{\rm T}^{c,nt,nf}(\partial >1)$ is the subset of
components  in $\mathcal{W}_{\rm T}^c$ having more than one boundary
curve and which are not flat.
\end{enumerate}
	
For case (Q1) we have the equality
\begin{eqnarray}
\sum_{W^c\in \mathcal{W}_{\rm T}^{c,t}} (2\#_c(\partial W^c)-3 +3I(W^c))
+\#_c(\S^{f}_{\rm T})
&=&\sum_{W^c\in \mathcal{W}_{\rm T}^{c,t}} (2-3 +0)
+\left| \mathcal{W}_{\rm T}^{c,t}\right|
+| \mathcal{W}_{\rm T}^{c,nt,f}|
\nonumber
\\
&\hspace{-2cm}=&
\hspace{-1.2cm}
-\left| \mathcal{W}_{\rm T}^{c,t}\right|
+\left| \mathcal{W}_{\rm T}^{c,t}\right|
+| \mathcal{W}_{\rm T}^{c,nt,f}|
=| \mathcal{W}_{\rm T}^{c,nt,f}|.\label{9.18a}
\end{eqnarray}
	
Regarding case (Q2), for elements $W^c\in
\mathcal{W}_{\rm T}^{c,nt}(\partial =1)$ we will estimate
$I(W^c)\geq 1$ (observe that this inequality holds even if $W^c$ is
non-orientable, by item~2 of Lemma~\ref{lema3.6}). Therefore,
\begin{equation}
\sum_{W^c\in \mathcal{W}_{\rm T}^{c,nt}(\partial =1)} \hspace{-.4cm}
(2\#_c(\partial W^c)-3 +3I(W^c))=
\sum_{W^c\in \mathcal{W}_{\rm T}^{c,nt}(\partial =1)}
\hspace{-.4cm}(2-3 +3I(W^c))
\geq 2\left| \mathcal{W}_{\rm T}^{c,nt}
(\partial =1)\right|.\label{9.19a}
\end{equation}
	
Cases (Q3) and (Q4) deal with the subset $\cW_{\rm T}^c(\partial >1)$ of
components in $\mathcal{W}_{\rm T}^c$
having more than one boundary curve. For those, we will show the
following estimate:

\begin{lemma} \label{lemma9.7}
In the situation above,
\begin{equation}  \label{9.17a}
\sum_{W^c\in \cW_{\rm T}^c(\partial >1)}(2\#_c(\partial W^c)-3)
\geq |\wh{\cS}_{\rm T}|-1.
\end{equation}
		
Let  $\cY^c$ denote the set of components
$W^c\in \cW_{\rm T}^c(\partial >1)$ which have boundary curves on at
least two different components of
$\cV_{\rm T}^c$ (defined in~\eqref{9.4a}). Then:
\begin{enumerate}
\item If $|\wh{\cS}_{\rm T}|=1$ and equality in~\eqref{9.17a} holds,
then $\cW_{\rm T}^c(\partial >1)=\varnothing$
(equivalently, $\cW_{\rm T}^c=\mathcal{W}_{\rm T}^{c,t}\cup
\mathcal{W}_{\rm T}^{c,nt}(\partial =1)$).

\item If $|\wh{\cS}_{\rm T}|>1$ and equality occurs in \eqref{9.17a},
then $\cY^c= \cW_{\rm T}^c(\partial >1)$, $W^c$ has exactly
two boundary components for each $W^c\in \cY^c$, and
$|\cY^c|=|\wh{\cS}_{\rm T}|-1$ (see Figure~\ref{figh2}).
\end{enumerate}	
\begin{figure}[h]
\begin{center}
\includegraphics[width=9cm]{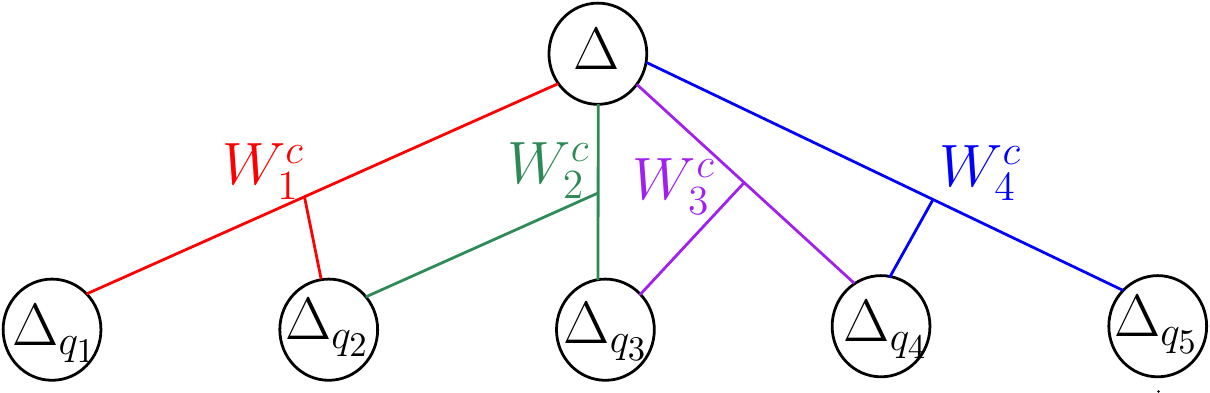}
\caption{Schematic representation of the top level of a hierarchy
	$\mathcal{H}(\Delta)$
where equality occurs in \eqref{9.17a}. Here,
$\wh{\cS}_{\rm T}=\{ q_i\ | \ i=1,\ldots ,5\} $,
$\cV_{\rm T}=\{ \Delta_{q_i}\ | \ i=1,\ldots ,5\}$,
$\cY^c=\cW_{\rm T}^c(\partial >1)=\{ W_1^c,W_2^c,W_3^c,W_4^c\}$,
$\Delta_{q_1},\Delta_{q_5}$
both have one boundary curve, while $\Delta_{q_i}$ ($i=2,3,4$)
have two boundary components each one.}
\label{figh2}
\end{center}
\end{figure}
\end{lemma}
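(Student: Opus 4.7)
The plan is to encode the combinatorics of the top level of the hierarchy in an auxiliary bipartite multigraph and extract (9.17a) from an Euler-characteristic count on a connected graph. Specifically, I would construct $G$ with vertex set $V(G)=\wh{\cS}_{\rm T}\sqcup\cW_{\rm T}^c$, placing one edge between $W^c\in\cW_{\rm T}^c$ and $q\in\wh{\cS}_{\rm T}$ for each boundary circle of $W^c$ that bounds a disk belonging to $\cD_q$. With this convention, $\deg(q)=|\cS_{\rm T}(q)|$, $\deg(W^c)=\#_c(\partial W^c)$, and $|E(G)|=\sum_{W^c\in\cW_{\rm T}^c}\#_c(\partial W^c)=|\cS_{\rm T}|$.

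The crucial input is that $G$ is connected. This should follow from the path-connectedness of $\wh{\S}_{\rm T}$ (which is the top-level analogue of the property verified in Lemma~\ref{lemma7.13}): any path between two smooth points of $\wh{\S}_{\rm T}$ lying in different components of $\cW_{\rm T}^c$ crosses $\wh{\cS}_{\rm T}$ only finitely many times and so lifts to an alternating walk through $W^c$-vertices and $q$-vertices of $G$. Once $G$ is known to be connected, one has the tree-type bound
\begin{equation*}
|\cS_{\rm T}| \;=\; |E(G)| \;\geq\; |V(G)|-1 \;=\; |\wh{\cS}_{\rm T}|+|\cW_{\rm T}^c|-1,
\end{equation*}
with equality precisely when $G$ is a tree. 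Splitting $\cW_{\rm T}^c=\cW_{\rm T}^c(\partial=1)\cup\cW_{\rm T}^c(\partial>1)$ on both sides and cancelling $|\cW_{\rm T}^c(\partial=1)|$ gives
\begin{equation*}
\sum_{W^c\in\cW_{\rm T}^c(\partial>1)} \bigl(\#_c(\partial W^c)-1\bigr) \;\geq\; |\wh{\cS}_{\rm T}|-1,
\end{equation*}
and since $2k-3\geq k-1$ whenever $k\geq 2$, inequality~\eqref{9.17a} follows.

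For the equality analysis I would observe that equality in~\eqref{9.17a} forces \emph{both} $\#_c(\partial W^c)=2$ for every $W^c\in\cW_{\rm T}^c(\partial>1)$ \emph{and} the tree equality $|E(G)|=|V(G)|-1$. When $|\wh{\cS}_{\rm T}|=1$ the right-hand side of~\eqref{9.17a} is $0$ while each summand is at least $2\cdot 2-3=1$, so the sum must be empty; hence $\cW_{\rm T}^c(\partial>1)=\varnothing$, which is item~1. When $|\wh{\cS}_{\rm T}|>1$, the tree condition rules out multi-edges, so the two boundary circles of each $W^c\in\cW_{\rm T}^c(\partial>1)$ lie in disks belonging to two \emph{different} $\cD_q$'s; by the definition of $\cY^c$ this says $\cW_{\rm T}^c(\partial>1)\subset\cY^c$, and the reverse inclusion is trivial, giving $\cY^c=\cW_{\rm T}^c(\partial>1)$. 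Finally, substituting $|\cS_{\rm T}|=|\cW_{\rm T}^c(\partial=1)|+2|\cY^c|$ into the tree equality $|\cS_{\rm T}|=|\wh{\cS}_{\rm T}|+|\cW_{\rm T}^c|-1$ yields $|\cY^c|=|\wh{\cS}_{\rm T}|-1$, completing item~2.

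The main obstacle is verifying that $G$ is connected and that the graph-theoretic notion of tree correctly captures the equality case (in particular that the absence of multi-edges is equivalent to $\cY^c=\cW_{\rm T}^c(\partial>1)$); the rest is elementary bookkeeping. A minor care point is to choose the endpoints of the connecting path in $\wh{\S}_{\rm T}$ away from $\wh{\cS}_{\rm T}$ so that the lift produces a well-defined walk in $G$ starting and ending at $W^c$-vertices, but this poses no real difficulty.
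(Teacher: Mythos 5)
Your argument is correct, and it follows a genuinely different route from the paper. You encode the top-level combinatorics in a bipartite incidence graph $G$ on vertex set $\wh{\cS}_{\rm T}\sqcup\cW_{\rm T}^c$ and deduce \eqref{9.17a} from the spanning-tree bound $|E(G)|\geq |V(G)|-1$ for a connected graph, whereas the paper proceeds by iteratively choosing points $q_1,\ldots,q_l\in\wh{\cS}_{\rm T}$ together with components $W(q_j)\in\cY$ and telescoping the quantities $|\wh{\cS}_{\rm T}(W(q_j))|$ through a chain of inclusion-exclusion estimates (displays \eqref{9.20}--\eqref{9.29}). Both arguments ultimately rest on the same two inputs — path-connectedness of $\wh{\S}_{\rm T}$, which is established just before Example~\ref{ex5.19}, and the trivial bound that the number of boundary circles of a component $W^c$ dominates the number of distinct $q\in\wh{\cS}_{\rm T}$ whose disks $\cD_q$ those circles bound (the paper's \eqref{9.19}, your $\deg_G(W^c)\geq\#\{\text{neighbours of }W^c\}$) — and both exploit the elementary identity $2k-3=(k-1)+(k-2)\geq k-1$ for $k\geq 2$. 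Your packaging is considerably tighter: the Euler-characteristic count replaces the paper's finite induction, and the equality analysis falls out in one stroke, since equality forces $G$ to be a tree (hence simple, hence no $W^c\in\cW_{\rm T}^c(\partial>1)$ has both its boundary circles on the same $\cD_q$, which is precisely $\cY^c=\cW_{\rm T}^c(\partial>1)$) together with $\#_c(\partial W^c)=2$ pointwise, after which the count $|\cY^c|=|\wh{\cS}_{\rm T}|-1$ is forced. One small point worth spelling out explicitly when you write this up: connectedness of $G$ presupposes that no $W^c$-vertex is isolated, i.e.\ that every component of $\S_{\rm T}^c$ has at least one boundary circle; this does follow from path-connectedness of $\wh{\S}_{\rm T}$ (a component $W$ of $\S_{\rm T}$ disjoint from $\cS_{\rm T}$ would survive as a separate path-component of $\wh{\S}_{\rm T}$), but it is implicit in your lifting argument and deserves a sentence.
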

\begin{proof}
Observe that the LHS of \eqref{9.17a} is the sum of a possibly empty set
of positive integers, where we declare this sum to be zero if this set of
positive integers is empty (equivalently,
if $\cW_{\rm T}^c(\partial >1)=\varnothing$).
Recall that $\wh{\cS}_{\rm T}\neq \varnothing$.
If $|\wh{\cS}_{\rm T}|=1$, then the RHS
of~\eqref{9.17a} is zero, hence the inequality~\eqref{9.17a}
holds in this case.
If moreover equality holds in~\eqref{9.17a},
then $\cW_{\rm T}^c(\partial >1)=\varnothing$ and so,
item~1 of the lemma holds. Hence it remains to prove
\eqref{9.17a} and item 2 of the lemma assuming that
$|\wh{\cS}_{\rm T}|>1$.
		
Let $\cY$ be the set of components $W$ of $ \S_{\rm T}$ such that
$\pi(W)$ contains at least  two  points in $\wh{\cS}_{\rm T}$.
Observe that $\cY\subset \mathcal{W}^{nt}\cap \W(\partial >1)$ and that
\[
W\in \cY \Leftrightarrow W\cap \S_{\rm T}^c\in \cY^c.
\]
Therefore,
\begin{equation}
\sum_{W^c\in \cW_{\rm T}^c(\partial >1)}(2\#_c(\partial W^c)-3)
\geq \sum_{W\in \cY}(2\#_c(\partial [W\cap \S_{\rm T}^c])-3).
\label{9.24}
\end{equation}

Since $\wh{\S}_{\rm T}$ is path-connected and
	we are assuming that $|\wh{\cS}_{\rm T}|>1$, then
for every pair of points
$q,q'\in \wh{\cS}_{\rm T}$, there exists an embedded
path $\g\colon [0,1]\to \wh{\S}_{\rm T}$ with $\g(0)=q, \g=q'$.
In particular, $\g$ contains an embedded open subarc with beginning point
$q$ and ending point $q''\in \wh{\cS}_{\rm T}\setminus\{ q\}$, such
that for some component $W(q)$ of $\cY$, we can view this open subarc as
being contained in the interior of $\pi(W(q))\setminus \wh{\cS}_{\rm T}$.
In particular, $q\in \pi (W(q))$. Since this holds for every
$q\in \wh{\cS}_{\rm T}$, then we deduce that
\[
\wh{\cS}_{\rm T}\subset \pi\left(\bigcup_{W\in \cY}W\right).
\]
Although $W(q)$ might be non-unique,
we will use the axiom of choice to assign a map
\[
q\in \wh{\cS}_{\rm T}\mapsto W(q)\in \cY \mbox{ such that }
q\in \pi (W(q)).
\]
For $q\in \wh{\cS}_{\rm T}$, let
\[
\wh{\cS}_{\rm T}(W(q))=\pi(W(q))\cap \wh{\cS}_{\rm T}.
\]
Thus, $|\wh{\cS}_{\rm T}(W(q))|\geq 2$ for each $q\in \wh{\cS}_{\rm T}$.

Notice that for each $q'\in \wh{\cS}_{\rm T}(W(q))$, $W(q)\cap
\S_{\rm T}^c$ contains at least one boundary curve in
$\partial \cD_{q'}$
(recall that $\cD_{q'}$ was defined right before~\eqref{9.4a}). Hence,
\begin{equation}\label{9.19}
\#_c(\partial [W(q)\cap \S_{\rm T}^c])\geq |\wh{\cS}_{\rm T}(W(q))|.
\end{equation}		
		
We will construct $l\in \N$ points
$q_1, q_2,\ldots, q_{l}\in \wh{\cS}_{\rm T}$, such that
$q_{i+1}\in \wh{\cS}_{\rm T}\setminus
[\cup_{j=1}^i\wh{\cS}_{\rm T}(W(q_j))]$
and $|\wh{\cS}_{\rm T}(W(q_1))\cup \ldots \cup \wh{\cS}_{\rm T}(W(q_l))|
=|\wh{\cS}_{\rm T}|$.
Choose an arbitrary $q_1\in \wh{\cS}_{\rm T}$ with a related
$W(q_1)\in \cY$. Since $|\wh{\cS}_{\rm T}(W(q_1))|\geq 2$,
\begin{eqnarray}
2\#_c(\partial [W(q_1)\cap \S_{\rm T}^c])-3
&\stackrel{\eqref{9.19}}{\geq}& 2|\wh{\cS}_{\rm T}(W(q_1))|-3
= (|\wh{\cS}_{\rm T}(W(q_1))|-1)+(|\wh{\cS}_{\rm T}(W(q_1))|-2)
\nonumber
\\
&\geq & |\wh{\cS}_{\rm T}(W(q_1))|-1.
\label{9.20}
\end{eqnarray}
		
If $|\wh{\cS}_{\rm T}(W(q_1))|=|\wh{\cS}_{\rm T}|$, then $l=1$ in our
construction of points and~\eqref{9.17a} follows from~\eqref{9.24}
and~\eqref{9.20}.
		
Suppose $|\wh{\cS}_{\rm T}(W(q_1))|<|\wh{\cS}_{\rm T}|$.
Since $\wh{\S}_{\rm T}$
is path-connected, there exists a shortest embedded arc $\a_1$ in
$\wh{\S}_{\rm T}$ from  $ \pi(W(q_1))$
to the finite set $\wh{\cS}_{\rm T}\setminus \wh{\cS}_{\rm T}(W(q_1))$
with one of its end points being  some $q_2\in \wh{\cS}_{\rm T}\setminus
\wh{\cS}_{\rm T}(W(q_1))$ and its other end point  in
$\wh{\cS}_{\rm T}(W(q_1))$.
In particular, $|\wh{\cS}_{\rm T}(W(q_1))\cap
\wh{\cS}_{\rm T}(W(q_2))|\geq 1$.
Note that
\begin{eqnarray}
\sum_{i=1}^2(2\#_c(\partial [W(q_i)\cap \S_{\rm T}^c])-3)
&\stackrel{\eqref{9.19}}{\geq}&
\sum_{i=1}^2(2|\wh{\cS}_{\rm T}(W(q_i))|-3)\nonumber
\\
&\hspace{-7.8cm}=&\hspace{-4.3cm}
\sum_{i=1}^2|\wh{\cS}_{\rm T}(W(q_i))|
+\sum_{i=1}^2(|\wh{\cS}_{\rm T}(W(q_i))|-3)
\nonumber
\\
&\hspace{-7.8cm}=&
\hspace{-4.3cm}|\wh{\cS}_{\rm T}(W(q_1))\cup \wh{\cS}_{\rm T}(W(q_2))|+
|\wh{\cS}_{\rm T}(W(q_1))\cap \wh{\cS}_{\rm T}(W(q_2))|
+\sum_{i=1}^2(|\wh{\cS}_{\rm T}(W(q_i))|-3)\nonumber
\\
&\hspace{-7.8cm}=&
\hspace{-4.3cm}
(|\wh{\cS}_{\rm T}(W(q_1))\cup \wh{\cS}_{\rm T}(W(q_2))|-1)
\nonumber
\\
& & \hspace{-4.2cm}
+(|\wh{\cS}_{\rm T}(W(q_1))\cap \wh{\cS}_{\rm T}(W(q_2))|-1)
+\sum_{i=1}^2(|\wh{\cS}_{\rm T}(W(q_i))|-2).
\label{9.27}
\end{eqnarray}
If $|\wh{\cS}_{\rm T}(W(q_1))\cup
\wh{\cS}_{\rm T}(W(q_2))|=|\wh{\cS}_{\rm T}|$, then
$l=2$ in our construction of points and~\eqref{9.17a} follows
from~\eqref{9.24} and~\eqref{9.27}.

If $|\wh{\cS}_{\rm T}(W(q_1))\cup
\wh{\cS}_{\rm T}(W(q_2))|<|\wh{\cS}_{\rm T}|$, then
there exists a shortest embedded arc $\a_2$ in $\wh{\S}_{\rm T}$ from
$\pi(W(q_1))\cup  \pi(W(q_2))$ to the finite set
$\wh{\cS}_{\rm T}\setminus [\wh{\cS}_{\rm T}(W(q_1))\cup
\wh{\cS}_{\rm T}(W(q_2))]$
with one of its end points being some  $q_3\in \wh{\cS}_{\rm T}\setminus
[\wh{\cS}_{\rm T}(W(q_1))\cup \wh{\cS}_{\rm T}(W(q_2))]$
and its other end point  in  $\wh{\cS}_{\rm T}(W(q_1))\cup
\wh{\cS}_{\rm T}(W(q_2))$.
In particular, $|[\wh{\cS}_{\rm T}(W(q_1))\cup
\wh{\cS}_{\rm T}(W(q_2))]\cap
\wh{\cS}_{\rm T}(W(q_3))|\geq 1$.
Note that
\begin{eqnarray}
\sum_{i=1}^3(2\#_c(\partial [W(q_i)\cap \S_{\rm T}^c])-3)
&\stackrel{\eqref{9.19}}{\geq}&
\sum_{i=1}^3(2|\wh{\cS}_{\rm T}(W(q_i))|-3)
\nonumber
\\
&\hspace{-7.8cm}=&\hspace{-4.3cm}
\sum_{i=1}^3|\wh{\cS}_{\rm T}(W(q_i))|
+\sum_{i=1}^3(|\wh{\cS}_{\rm T}(W(q_i))|-3)
\nonumber
\\
&\hspace{-7.8cm}=&
\hspace{-4.3cm}(|\cup_{i=1}^3\wh{\cS}_{\rm T}(W(q_i))|-1)
+(|[\wh{\cS}_{\rm T}(W(q_1))\cup \wh{\cS}_{\rm T}(W(q_2))]\cap
\wh{\cS}_{\rm T}(W(q_3))|-1)
\nonumber
\\
&\hspace{-7.8cm}+&
\hspace{-4.3cm}
(|\wh{\cS}_{\rm T}(W(q_1))\cap \wh{\cS}_{\rm T}(W(q_2)|-1))
+\sum_{i=1}^3(|\wh{\cS}_{\rm T}(W(q_i))|-2).\label{9.28}
\end{eqnarray}
If $|\cup_{i=1}^3\wh{\cS}_{\rm T}(W(q_i))|=|\wh{\cS}_{\rm T}|$,
then $l=3$ in our construction of points and~\eqref{9.17a}
follows from~\eqref{9.24} and~\eqref{9.28}.
		
If $|\cup_{i=1}^3\wh{\cS}_{\rm T}(W(q_i))|<|\wh{\cS}_{\rm T}|$,
then we repeat the above process finitely many times
(because $\wh{\cS}_{\rm T}$ is finite),
finding points $q_1,\ldots ,q_l\in \wh{\cS}_{\rm T}$ such that
$|(\cup_{i=1}^{j-1}\wh{\cS}_{\rm T}(W(q_i)))\cap
\wh{\cS}_{\rm T}(W(q_j)))|\geq 1$, for each $j=2,\ldots ,l$, and
$|\cup_{i=1}^l\wh{\cS}_{\rm T}(W(q_i))|=|\wh{\cS}_{\rm T}|$. Then,
\begin{eqnarray}
\sum_{i=1}^l(2\#_c(\partial [W(q_i)\cap \S_{\rm T}^c])-3)
&\stackrel{\eqref{9.19}}{\geq}&
\sum_{i=1}^l(2|\wh{\cS}_{\rm T}(W(q_i))|-3)
\nonumber
\\
&\hspace{-8.9cm}=&\hspace{-4.9cm}
\sum_{i=1}^l|\wh{\cS}_{\rm T}(W(q_i))|
+\sum_{i=1}^l(|\wh{\cS}_{\rm T}(W(q_i))|-3)
\nonumber
\\
&\hspace{-8.9cm}=&
\hspace{-4.9cm}
(|\cup_{i=1}^l\wh{\cS}_{\rm T}(W(q_i))|-1)
+\sum_{j=2}^l(|(\cup_{i=1}^{j-1}\wh{\cS}_{\rm T}(W(q_i)))\cap
\wh{\cS}_{\rm T}(W(q_j))|-1)
\nonumber
\\
&\hspace{-8.9cm}&
\hspace{-4.9cm}
 +\sum_{i=1}^l(|\wh{\cS}_{\rm T}(W(q_i))|-2).\label{9.29}
\end{eqnarray}
As $|\cup_{i=1}^l\wh{\cS}_{\rm T}(W(q_i))|=|\wh{\cS}_{\rm T}|$,
then~\eqref{9.17a} follows from~\eqref{9.24} and~\eqref{9.29}.

If equality in~\eqref{9.17a} occurs, then equality in~\eqref{9.24} implies
that $\cY^c=\cW_{\rm T}^c(\partial >1)$ or equivalently,
\[
\cY=\cW_{\rm T}(\partial >1)=\cW(\partial>1)\cap \S_{\rm T}:=
\mathcal{W}_{\rm T}\setminus [\mathcal{W}_{\rm T}^{t}\cup
\mathcal{W}_{\rm T}^{nt}(\partial =1)].
\]
Since the RHS of~\eqref{9.24} must be equal to the LHS of~\eqref{9.29},
we deduce that
\[
\cY=\{ W(q_i)\ | \ i=1,\ldots ,l\}
\]
and that equality holds in~\eqref{9.19} for each $i=1,\ldots,l$.
Since the third sum in the RHS of~\eqref{9.29} vanishes,
we conclude that $|\wh{\cS}_{\rm T}(W(q_i))|=2$ for each $i=1,\ldots,l$.
Finally, $|\cY^c|=|\cS_{\rm T}|-1$
because $2\#_c(\partial [W\cap \S_{\rm T}^c])-3=1$
for each $W\in \cY$. This completes the proof of Lemma~\ref{lemma9.7}.
\end{proof}
	
We continue proving Theorem~\ref{thm9.6}. We can estimate \eqref{eq:9.18}
as follows:
\begin{eqnarray}
2e(\mathcal{V}_{\rm T})-3\#_c(\S_{\rm T})
+3I(\S_{\rm T})+\#_c(\S^{f}_{\rm T})
&\stackrel{\eqref{9.16a}}{=}&\sum_{W^c\in \mathcal{W}_{\rm T}^c}
(2\#_c(\partial W^c)-3 +3I(W^c))+\#_c(\S^{f}_{\rm T})
\nonumber
\\
&\hspace{-8.5cm}\stackrel{\eqref{9.18a},\eqref{9.19a},\eqref{9.17a}}{\geq}
&\hspace{-4.5cm}| \mathcal{W}_{\rm T}^{c,nt,f}|+
2\left| \mathcal{W}_{\rm T}^{c,nt}
(\partial =1)\right| +|\wh{\cS}_{\rm T}|-1
+\sum_{W\in \cW_{\rm T}(\partial >1)}3I(W).
\label{9.30}
\end{eqnarray}

In order to bound from below the last sum in~\eqref{9.30}, note that
if $W\in \cW_{\rm T}(\partial >1)$, then either $W$ is flat (and then
$I(W)=0$), or $W$ is orientable and non-flat (in which case we estimate
$I(W)\geq 1$), or $W$ is non-orientable with $|W\cap \cS_{\rm T}|=1$
and $\#_c(\partial [W\cap \Sigma^c])>1$ (in which case we estimate
$I(W)\geq 2$ by item~2 of Lemma~\ref{lema3.6}),
or else $W$ is non-orientable
with $|W\cap \cS_{\rm T}|>1$ (in which case we estimate $I(W)\geq 0$).
Therefore, calling
\[	
\begin{array}{l}
\cW_{\rm T}^*=\{ W\in \cW_{\rm T} \ | \ W \mbox{ is non-orientable, }
|W\cap \cS_{\rm T}|=1, \ \#_c(\partial [W\cap \S_{\rm T}^c])>1\},
\\
\cW_{\rm T}^{nt,nf,or}(\partial>1)=\cW^{nt,nf,or}(\partial>1)\cap
\cW_{\rm T},
\end{array}
\]
we deduce that
\begin{equation}
\sum_{W\in \cW_{\rm T}(\partial >1)}3I(W)
\geq 6|\cW_{\rm T}^*|+3|\cW_{\rm T}^{nt,nf,or}(\partial>1)|.
\label{9.31a}
\end{equation}

Using that $|\cW_{\rm T}^*|\geq 0$, from~\eqref{9.30}
and~\eqref{9.31a} we get the following estimate from below
for~\eqref{eq:9.18}:
\begin{equation}
2e(\mathcal{V}_{\rm T})-3\#_c(\S_{\rm T})+3I(\S_{\rm T})
+\#_c(\S^{f}_{\rm T})
\geq (|\wh{\cS}_{\rm T}|-1)+| \mathcal{W}_{\rm T}^{c,nt,f}|+
2\left| \mathcal{W}_{\rm T}^{c,nt}(\partial =1)\right|
+3|\cW_{\rm T}^{nt,nf,or}(\partial>1)|.
\label{9.31}
\end{equation}
	
By the additivity in components of the correction term $C(\cH)$ defined
in~\eqref{correction}, we can write $C(\cH)$ as the sum of
$C(\cV_{\rm T}^{nm})$
plus the terms in \eqref{correction} that are added in the top level,
that is,
\begin{equation}
C(\cH)=C(\cV_{\rm T}^{nm})+\left[ 3I^*(\cV_{\rm T}^m)+
(|\wh{\cS}_{\rm T}|-1)+| \mathcal{W}_{\rm T}^{c,nt,f}|+
2\left| \mathcal{W}_{\rm T}^{c,nt}(\partial =1)\right|
+3|\cW_{\rm T}^{nt,nf,or}(\partial>1)|
\right].
\label{7.64}
\end{equation}
Thus, \eqref{9.31} and \eqref{7.64} give
\begin{equation}
2e(\mathcal{V}_{\rm T})-3\#_c(\S_{\rm T})+3I(\S_{\rm T})
+\#_c(\S^{f}_{\rm T})
\geq C(\cH)-C(\cV_{\rm T}^{nm})-3I^*(\cV_{\rm T}^m).
\label{7.65}
\end{equation}
By~\eqref{7.65}, the sum of \eqref{eq:9.18} and \eqref{eq:9.19}
is at least $C(\cH)$. Adding this last inequality with~\eqref{eq:9.17}
we obtain~\eqref{GenCMf}, as desired. This completes the
proof of Theorem~\ref{thm9.6}.
\end{proof}

\begin{definition}
{\rm
Observe that given $q\in \wh{\cS}$, the compact piece $\Delta_{q}$
has itself a hierarchy $(\wh{\cS}_q, \mathcal{V}_q,\mathcal{W}_q)$,
whose related sets are subsets of the corresponding ones for
the hierarchy of $\Delta$, i.e., $\wh{\cS}_q\subset \wh{\cS}$,
$\mathcal{V}_q\subset \mathcal{V}$,	and
$\mathcal{W}_q\subset \mathcal{W}$.
Clearly, the hierarchy of $\Delta_{q}$ has strictly less levels than the
hierarchy of $\Delta$.
We define $\mathcal{O}(\mathcal{H})\in \N\cup \{ 0\}$ to be the number of
levels in $\cH$ which consist of one $\Delta_{q}$ piece (equivalently, the
number of levels in $\cH$ whose singular set is unitary) if	$\cH$ is
non-trivial. If $\cH$ is trivial, we let $\mathcal{O}(\cH)=0$.
}
\end{definition}
For instance, the hierarchy given in item~2 of Example~\ref{example7.19}
has $\mathcal{O}(\mathcal{H})=1$, and the one in item~3 of
Example~\ref{example7.19}
(given by Figure~\ref{figh1}) has $\mathcal{O}(\mathcal{H})=2$.

\begin{corollary}
\label{corol9.12}
Let $\Delta,\cH$ be as in Theorem~\ref{thm9.6}, with $\cH$ non-trivial.
If $\Delta$ is  non-orientable, then
inequality~\eqref{GenCMf} holds after replacing $C(\cH)$ by the following
correction term:
\begin{equation} \label{9.34}
C^{no}(\mathcal{H})=:C(\cH)+6|\cW^*|\geq 3I^*(\mathcal{H})+
|\wh{\cS}|-L+2\mathcal{O}(\mathcal{H})\geq L,
\end{equation}
where $\cW^*$ is the set of components $W\in \cW$ which are non-orientable
with $|W\cap \cS|=1$ and $\#_c(\partial (W\setminus \cV^c))>1$; here
$\cV^c=\cup_{q\in \cS}\cD_q$ and $\cD_q$ was defined just
before~\eqref{9.4a}.
\end{corollary}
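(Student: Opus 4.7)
The proof of the corollary refines that of Theorem~\ref{thm9.6} by retaining the term $6|\cW_{\rm T}^*|$ that was discarded (via the trivial estimate $|\cW_{\rm T}^*|\ge 0$) in passing from~\eqref{9.31a} to~\eqref{9.31}. The plan is to keep this non-negative summand at every appearance of the top-level argument in the induction on the number $L$ of levels, and to track it through the additive decomposition~\eqref{7.64} of $C(\cH)$ as $C(\cV_{\rm T}^{nm})$ plus a top-level contribution. Because the induction hypothesis applied to the pieces $\Delta_q\in\cV_{\rm T}^{nm}$ furnishes the improved bound $C^{no}$ for those sub-hierarchies, and because the top-level contribution now inherits the extra $6|\cW_{\rm T}^*|$, one obtains~\eqref{GenCMf} with $C(\cH)$ replaced by $C^{no}(\cH)=C(\cH)+6|\cW^*|$, where $\cW^*=\bigcup_q \cW_{T(q)}^*$ ranges over all levels of $\cH$.

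To establish the middle inequality $C^{no}(\cH)\ge 3I^*(\cH)+|\wh{\cS}|-L+2\mathcal{O}(\cH)$, one cancels the common terms in the definition of $C(\cH)$ and is reduced to proving
\[
|\cW^{nt,f}|+2|\cW^{nt,nf}(\partial=1)|+3|\cW^{nt,nf,or}(\partial>1)|+6|\cW^*|\ge 2\,\mathcal{O}(\cH).
\]
The argument is level by level: each of the $\mathcal{O}(\cH)$ levels $\ell$ with singleton $\wh{\cS}_\ell=\{q\}$ must contribute at least $2$ to the left-hand side. By the analogue of item~8(f) of Proposition~\ref{prop7.12}, the index $I(f_\ell)$ is strictly positive, so $\S_\ell$ cannot consist entirely of (stable) flat components; hence some component $W\in\cW_\ell$ is non-flat. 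A case analysis on whether $W$ is orientable or non-orientable, and on whether $|W\cap\cS|$ equals or exceeds~$1$ — using Lemma~\ref{lema3.6}(2), applied via the fact that the image of the branch locus of $W$ is contained in the single point $f_\ell(q)$ — places $W$ into $\cW^{nt,nf}(\partial=1)$, $\cW^{nt,nf,or}(\partial>1)$, or $\cW^*$, each of which contributes at least $2$ to the relevant term.

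The final inequality $C^{no}(\cH)\ge L$ is then purely combinatorial. Writing $L=L_1+L_{>1}$ with $L_1=\mathcal{O}(\cH)$ the number of levels having a singleton singular set and $L_{>1}$ the number having $|\wh{\cS}_\ell|\ge 2$, one has $|\wh{\cS}|\ge L_1+2L_{>1}=L+L_{>1}$, whence
\[
|\wh{\cS}|-L+2\mathcal{O}(\cH)\ge L_{>1}+2L_1 = L+L_1\ge L,
\]
and the non-negativity $I^*(\cH)\ge 0$ completes the argument.

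The principal obstacle is the case analysis in the second step: when the unique non-flat component at a level with $|\wh{\cS}_\ell|=1$ is non-orientable with $|W\cap\cS|>1$, one must verify that the hypothesis that $\Delta$ is non-orientable guarantees that the non-orientable structure propagates so as to land in $\cW^*$ (via the condition $\#_c(\partial(W\setminus\cV^c))>1$ arising from the recursive choice of the stable disk collection $\cD_q$), rather than in the ``invisible'' class $\cW^{nt,nf,no}(\partial>1)$ which carries coefficient~$0$ in $C(\cH)$. A careful bookkeeping of the $\cD_q$-construction in~(U4)--(U5), together with Lemma~\ref{lema3.6}(2), is what makes the $6|\cW^*|$ refinement cover exactly these otherwise uncounted contributions without double-counting across levels.
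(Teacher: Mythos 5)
Your proof is correct and takes essentially the same route as the paper: retain the $6|\cW_{\rm T}^*|$ term that was discarded in passing from \eqref{9.31a} to \eqref{9.31}, carry it through the inductive derivation via the additive decomposition~\eqref{7.64}, and establish both additional inequalities in~\eqref{9.34} level by level, handling separately the levels with $|\wh{\cS}(\cH')|\geq 2$ (where $\mathcal{O}(\cH')=0$) and with $|\wh{\cS}(\cH')|=1$. Your explicit justification that each singleton-singular-set level contributes at least $2$ to the correction term — invoking item~8(f) of Proposition~\ref{prop7.12} to get a non-flat component and then classifying it into $\cW^{nt,nf}(\partial=1)$, $\cW^{nt,nf,or}(\partial>1)$, or $\cW^*$ — is a useful elaboration of a step the paper's proof of~\eqref{9.35} merely asserts.
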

\begin{proof}
In passing from  \eqref{9.31a} to \eqref{9.31} in the derivation of
the correction term $C(\cH)$ of \eqref{GenCMf}, we neglected to keep the
term $6|\cW^*_{\rm T}|$ of \eqref{9.31a}. If we include this term
(which can only be non-zero provided that $\Delta$ is
non-orientable), then previous
calculations in the derivation of $C(\cH)$ imply that
inequality~\eqref{GenCMf} holds after replacing $C(\cH)$ by
$C(\cH)+6|\cW^*|$.
	
Next we prove both inequalities in~\eqref{9.34}. Both inequalities are
additive in the levels of the hierarchy, so it suffices to prove that
each level $\cH'$ of $\cH$ satisfies
\begin{equation}
C^{no}(\mathcal{H}')\geq 3I^*(\mathcal{H}')
+|\wh{\cS}(\mathcal{H}')|-1+2\mathcal{O}(\mathcal{H}')\geq 1,
\label{9.35}
\end{equation}	
\newline
where $C^{no}(\mathcal{H}')$, $I^*(\mathcal{H}')$,
$|\wh{\cS}(\mathcal{H}')|$, $\mathcal{O}(\mathcal{H}')$ denote the
related numbers referred just to the level $\mathcal{H}'$, for instance
$\wh{\cS}(\mathcal{H}')\neq \varnothing$ is the singular set of
the level $\cH'$, $C^{no}(\cH')$ is given by
\begin{equation}
\label{9.36}
C^{no}(\mathcal{H}')=3I^*(\mathcal{H}')+|\wh{\cS}(\mathcal{H}')|
-1+|\cW_{\cH'}^{nt,f}|+2|\mathcal{W}_{\cH'}^{nt,nf}(\partial=1)|
+3|\mathcal{W}_{\cH'}^{nt,nf,or}(\partial>1)|
+6|\cW^*(\cH')|,
\end{equation}
and $\mathcal{O}(\mathcal{H}')$ takes the value $1$ if
$|\wh{\cS}(\mathcal{H}')|=1$
and $0$ if $|\wh{\cS}(\mathcal{H}')|\geq 2$.

We will prove that \eqref{9.35} holds by considering two mutually
exclusive cases.
\begin{enumerate}[(a)]
\item Suppose that $|\wh{\cS}(\mathcal{H}')|\geq 2$.
In this case, the second inequality in \eqref{9.35} clearly holds. Since
$\mathcal{O}(\mathcal{H}')=0$, then the first inequality also
holds.
\item Suppose now that $|\wh{\cS}(\mathcal{H}')|=1$. Thus,
$\mathcal{O}(\mathcal{H}')=1$ and at least one of the terms
$|\cW^*(\cH')|$, $|\mathcal{W}_{\cH'}^{nt,nf}(\partial=1)|$,
$|\mathcal{W}_{\cH'}^{nt,nf,or}(\partial>1)|$
is positive, which proves that the first inequality
in \eqref{9.35} holds. The second inequality also holds since
\[
3I^*(\mathcal{H}')+ |\wh{\cS}(\mathcal{H}')|-1
+2\mathcal{O}(\mathcal{H}')\geq 2\mathcal{O}(\mathcal{H}')=2.
\]
\end{enumerate}
Hence, \eqref{9.35} holds and the corollary is proved.
\end{proof}

In order to state and prove the orientable version of
Corollary~\ref{corol9.12} we will
need the following lemma (compare to Lemma~\ref{lemma9.7}).

\begin{lemma}
\label{lemma9.11}
Let $\Delta,\cH$ be as in Theorem~\ref{thm9.6}, with $\cH$ non-trivial.	
If $\Delta $ is orientable, then,
\begin{equation}
\label{9.37}
\sum_{W^c\in \cW_{\rm T}^c(\partial >1)}(2\#_c(\partial W^c)-3 +3I(W^c))
+| \mathcal{W}_{\rm T}^{c,nt,f}|
\geq 2(|\wh{\cS}_{\rm T}|-1).
\end{equation}
Let  $\cY^c$ be as defined in Lemma~\ref{lemma9.7}. Then:
\begin{enumerate}
\item If $|\wh{\cS}_{\rm T}|=1$ and equality in~\eqref{9.37} holds, then
$\cW_{\rm T}^c(\partial >1)=\varnothing$ (equivalently, $\cW_{\rm T}^c=
\mathcal{W}_{\rm T}^{c,t}\cup \mathcal{W}_{\rm T}^{c,nt}(\partial =1)$).

\item If $|\wh{\cS}_{\rm T}|>1$ and equality occurs in \eqref{9.37},
then $\cY^c=\cW_{\rm T}^c(\partial >1)$, $W^c$ has exactly
two boundary components for each $W^c\in \cY^c$, $|\cY^c|
=|\wh{\cS}_{\rm T}|-1$ and every component in $\cY^c$ is flat.
\end{enumerate}	
\end{lemma}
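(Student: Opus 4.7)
\textbf{Proof plan for Lemma~\ref{lemma9.11}.} The strategy is to mirror the proof of Lemma~\ref{lemma9.7} almost step for step, exploiting a single new input coming from orientability: since $\Delta$ is orientable and each component $W$ of $\S_{\rm T}$ arises as a smooth limit of rescalings of pieces of $\Delta$ (away from the discrete set $\cS_{\rm T}$), every $W\in\cW_{\rm T}$ is itself orientable. By item~1 of Lemma~\ref{lema3.6}, this forces $I(W)\geq 1$ whenever $W$ is non-flat. The extra ``$+3$'' per non-flat component $W\in\cY$ coming from the $3I(W^c)$ term in \eqref{9.37}, combined with the extra ``$+1$'' contributed by each flat $W\in\cY$ to $|\cW_{\rm T}^{c,nt,f}|$, is precisely what doubles the lower bound of Lemma~\ref{lemma9.7} from $|\wh{\cS}_{\rm T}|-1$ to $2(|\wh{\cS}_{\rm T}|-1)$.

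The case $|\wh{\cS}_{\rm T}|=1$ is immediate: the RHS is $0$ and each summand of the LHS is non-negative (in fact $2\#_c(\partial W^c)-3+3I(W^c)\geq 1$ for every $W^c\in\cW_{\rm T}^c(\partial>1)$). Equality therefore forces $\cW_{\rm T}^c(\partial>1)=\varnothing$, proving item~1. For $|\wh{\cS}_{\rm T}|\geq 2$, run verbatim the inductive construction of points $q_1,\dots,q_l\in\wh{\cS}_{\rm T}$ and components $W(q_i)\in\cY$ from Lemma~\ref{lemma9.7}. Writing $A_i=\wh{\cS}_{\rm T}(W(q_i))$, this construction ensures $|A_i|\geq 2$, $|A_i\cap(\cup_{j<i}A_j)|\geq 1$ for each $i\geq 2$, and $\cup_{i=1}^l A_i=\wh{\cS}_{\rm T}$, which by inclusion--exclusion yields $\sum_i|A_i|\geq |\wh{\cS}_{\rm T}|+(l-1)$.

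The core estimate now splits on the flatness of each $W(q_i)$. If $W(q_i)$ is non-flat, orientability plus Lemma~\ref{lema3.6}(1) gives $I(W(q_i))\geq 1$, so, using \eqref{9.19},
\[
2\#_c(\partial[W(q_i)\cap\S_{\rm T}^c])-3+3I(W(q_i))\;\geq\; 2|A_i|-3+3\;=\;2|A_i|.
\]
If $W(q_i)$ is flat, then $W(q_i)\cap\S_{\rm T}^c\in\cW_{\rm T}^{c,nt,f}$ (since $|A_i|\geq 2$ forces at least two boundary curves), contributing a ``$+1$'' to $|\cW_{\rm T}^{c,nt,f}|$; combining with $2\#_c(\partial[W(q_i)\cap\S_{\rm T}^c])-3\geq 2|A_i|-3$, the total contribution is at least $2|A_i|-2$. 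In both cases the contribution is at least $2|A_i|-2$, and the remaining components of $\cW_{\rm T}^c(\partial>1)$ and $\cW_{\rm T}^{c,nt,f}$ not accounted for can only add non-negative quantities. Summing,
\[
\mathrm{LHS\ of\ \eqref{9.37}}\;\geq\;\sum_{i=1}^l(2|A_i|-2)\;\geq\; 2(|\wh{\cS}_{\rm T}|+l-1)-2l\;=\;2(|\wh{\cS}_{\rm T}|-1),
\]
establishing the main inequality.

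For the equality case of item~2, every inequality used must collapse: (a) discarded terms from $\cW_{\rm T}^c(\partial>1)\setminus\{W(q_i)\cap\S_{\rm T}^c\}$, each $\geq 1$, must vanish, forcing $\cY^c=\cW_{\rm T}^c(\partial>1)$; (b) each $W(q_i)$ must be flat, otherwise its contribution exceeds $2|A_i|-2$ by $2$; (c) $\#_c(\partial[W(q_i)\cap\S_{\rm T}^c])=|A_i|$ and $|A_i|=2$ (by tightness in \eqref{9.29}, the third sum must vanish, giving $|A_i|=2$ for all $i$); (d) consequently $|\cY^c|=l=|\wh{\cS}_{\rm T}|-1$ and each $W^c\in\cY^c$ has exactly two boundary components. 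The main obstacle is the bookkeeping in step~(d): verifying that all slack terms in the iterated inclusion--exclusion estimate \eqref{9.29} collapse simultaneously to yield both $|A_i|=2$ and $|\cY^c|=|\wh{\cS}_{\rm T}|-1$, together with confirming that flatness of every $W\in\cY$ is forced (rather than merely permitted) by equality --- this is where orientability is essential, because without it, item~2 of Lemma~\ref{lema3.6} only guarantees $I(W)\geq 2$ for non-orientable $W$ with $|W\cap\cS_{\rm T}|=1$, a case irrelevant here since $W\in\cY$ has $|A_i|\geq 2$.
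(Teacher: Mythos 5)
Your proof of the main inequality \eqref{9.37} is correct and takes essentially the same route as the paper: you construct the same points $q_i$ and components $W(q_i)$ from the proof of Lemma~\ref{lemma9.7}, use orientability of $\Delta$ (hence of each non-flat $W\in\cW_{\rm T}$) to extract $I(W)\geq 1$ via Lemma~\ref{lema3.6}, and bound the per-component contribution by $2|A_i|-2$ in both the flat and non-flat cases. Your packaging via the inclusion--exclusion bound $\sum_i|A_i|\geq |\wh{\cS}_{\rm T}|+l-1$ is, if anything, slightly more streamlined than the paper's, which first restricts to $\cY^c$, argues that the minimum of the resulting expression is attained when every component is flat, and then invokes \eqref{9.17a} and the equality characterization of Lemma~\ref{lemma9.7}.

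There is, however, a gap in your equality analysis for item~2. Equality in your chain forces $\sum_i|A_i|=|\wh{\cS}_{\rm T}|+l-1$ together with the per-component equalities, but this only yields $|A_j\cap\bigcup_{i<j}A_i|=1$ for each $j\geq 2$; it does \emph{not} yield $|A_i|=2$. Concretely, if $l=1$ and $A_1=\wh{\cS}_{\rm T}$ with $|\wh{\cS}_{\rm T}|=3$, a single flat component of $\cY^c$ with three boundary curves contributes $(2\cdot 3-3)+1=4=2(|\wh{\cS}_{\rm T}|-1)$ to the left side of \eqref{9.37}, achieving equality while $|\cY^c|=1\neq |\wh{\cS}_{\rm T}|-1$ and the component has three, not two, boundary curves. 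Your appeal to ``tightness in \eqref{9.29}'' does not repair this: \eqref{9.29} proves $\sum(2|A_i|-3)\geq |\wh{\cS}_{\rm T}|-1$, with tightness forcing each $|A_i|=2$, but your chain uses the weaker target $\sum(2|A_i|-2)\geq 2(|\wh{\cS}_{\rm T}|-1)$, and the extra $l$ on the left can absorb positive terms $|A_i|-2$ without disturbing equality. Ruling out configurations with $|A_i|>2$ therefore needs additional input beyond what your chain of inequalities provides.
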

\begin{proof}
If $|\wh{\cS}_{\rm T}|=1$, then \eqref{9.37} clearly holds as well as
item 1, by the same reason as in the proof of Lemma~\ref{lemma9.7}.
Assume that $|\wh{\cS}_{\rm T}|>1$. Since
$\cY^c\subset \cW_{\rm T}^c(\partial >1)$,
\begin{equation}
\sum_{W^c\in \cW_{\rm T}^c(\partial >1)}
\hspace{-.5cm}(2\#_c(\partial W^c)-3
+3I(W^c))+| \mathcal{W}_{\rm T}^{c,nt,f}|
\geq \sum_{W^c\in \cY^c}(2\#_c(\partial W^c)-3 +3I(W^c))
+| \mathcal{W}_{\rm T}^{c,nt,f}\cap \cY^c|
\label{9.38}
\end{equation}
with equality if and only if $ \mathcal{W}_{\rm T}^{c,nt,f}\subset
\cY^c=\cW_{\rm T}^c(\partial >1)$.
	
Suppose that $W^c\in \cY^c$ has $l\geq 2$ boundary curves.  If $W^c$ is
non-flat, then it makes a contribution of at least $2l$ to the RHS of
\eqref{9.38} (note that $I(W^c)\geq 1$ because $W^c$ is orientable and
non-flat); on the other hand, if $W^c$ is flat then it makes
a contribution of at least $2l-2$ to the RHS of \eqref{9.38}. Thus,
the RHS of \eqref{9.38} takes on its smallest possible value precisely
when every component of $\cY^c$ is flat; in this case, we get the next
lower estimate for the RHS of \eqref{9.38} with equality if and only if
every component of $\cY^c$ is flat:
\begin{equation}
\label{9.39}
\sum_{W^c\in \cY^c}(2\#_c(\partial W^c)-3 +3I(W^c))
+| \mathcal{W}_{\rm T}^{c,nt,f}\cap \cY^c| \geq
\sum_{W^c\in \cY^c}(2\#_c(\partial W^c)-3)+\left| \cY^c\right|.
\end{equation}
Finally, a calculation similar to the one used to prove
Lemma~\ref{lemma9.7} demonstrates that the minimum value of the RHS
of~\eqref{9.39} occurs precisely when
$\cY^c$ satisfies the second statement in Lemma~\ref{lemma9.7}; in
particular, $|\cY^c|=|\wh{\cS}_{\rm T}|-1$ in this case.
Applying~\eqref{9.17a}, we have
\begin{equation}
\label{9.40}
\sum_{W^c\in \cY^c}(2\#_c(\partial W^c)-3)+\left| \cY^c\right|
\geq \left|\wh{\cS}_{\rm T}\right|-1+|\cY^c|,
\end{equation}
with equality if and only if $|\cY^c|=|\wh{\cS}_{\rm T}|-1$ by item~2 of
Lemma~\ref{lemma9.7}, in which case	the RHS of~\eqref{9.40} equals
$2(|\wh{\cS}_{\rm T}|-1)$. This completes the proof of~\eqref{9.37}.
Item 2 of Lemma~\ref{lemma9.11} concerning $\cY^c$ follows as well from
the above discussion.  Now the proof of Lemma~\ref{lemma9.11} is finished.
\end{proof}

\begin{corollary}
\label{corol7.30}
Let $\Delta,\cH$ be as in Theorem~\ref{thm9.6}.	If $\Delta$ is
orientable, then inequality~\eqref{GenCMf} holds after replacing
$C(\cH)$ by the following correction term:
\begin{equation}
C^{or}(\mathcal{H})=3I^*(\mathcal{H})+2(|\wh{\cS}|-L)
+2|\mathcal{W}^{nt,nf}(\partial=1)|+3|\mathcal{W}^{nt,nf,or}(\partial>1)|.
\label{9.42}
\end{equation}
Furthermore, the new correction term satisfies
\begin{equation}
C^{or}(\mathcal{H})\geq
3I^*(\mathcal{H})+2(|\wh{\cS}|-L)+2\mathcal{O}(\mathcal{H})\geq 2L.
\label{9.42a}
\end{equation}
\end{corollary}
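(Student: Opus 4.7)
The plan is to mirror the inductive argument in the proof of Theorem~\ref{thm9.6} and Corollary~\ref{corol9.12}, replacing Lemma~\ref{lemma9.7} with the sharper orientable estimate of Lemma~\ref{lemma9.11} at the key step. Since $\Delta$ is orientable, every $\Delta_q \in \cV$ is orientable (being an open subdomain of $\Delta$), and every component $W$ of any limit $\S_T$ in the hierarchy is orientable (as a smooth limit of orientable pieces of scaled immersions), so $\cW^* = \varnothing$ and $\cW^{nt,nf,no}(\partial>1) = \varnothing$. The base case $L=0$ reduces to inequality~\eqref{eq:CM1} in the orientable case, with $C^{or}(\cH) = 3I^*(\cH) = 3(I(\Delta)-1)$ matching the Chodosh--Maximo estimate.

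For the inductive step, I follow the derivation in the proof of Theorem~\ref{thm9.6}, obtaining a decomposition of $6I(\Delta)$ into three brackets as in \eqref{eq:9.17}--\eqref{eq:9.19}. The induction hypothesis applied to the non-minimal top-level pieces produces $C^{or}(\cV_T^{nm})$ in place of $C(\cV_T^{nm})$. For the middle bracket, I decompose $\cW_T^c$ into $\cW_T^{c,t}$, $\cW_T^{c,nt}(\partial=1)$, and $\cW_T^c(\partial>1)$, handling the first two exactly as in \eqref{9.18a}--\eqref{9.19a}. For the remaining terms $\sum_{W^c \in \cW_T^c(\partial>1)}(2\#_c(\partial W^c)-3+3I(W^c)) + |\cW_T^{c,nt,f}|$, I apply Lemma~\ref{lemma9.11} in place of the combination of Lemma~\ref{lemma9.7} and the flat-part contribution, yielding a top-level contribution of $2(|\wh{\cS}_T|-1)+2|\cW_T^{c,nt,nf}(\partial=1)|+3|\cW_T^{c,nt,nf,or}(\partial>1)|$ after separating the residual $3I(W^c)$ contributions (for $W^c \in \cW_T^{c,nt,nf,or}(\partial>1)$, the estimate $I(W^c) \geq 1$ from Fischer--Colbrie/Schoen contributes $+3$ beyond what is captured in Lemma~\ref{lemma9.11}'s bound). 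Summing with the minimal-elements contribution $3I^*(\cV_T^m)$ via the orientable form of~\eqref{eq:CM1}, and combining with the induction hypothesis, yields~\eqref{GenCMf} with $C^{or}(\cH)$ as in~\eqref{9.42}.

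For~\eqref{9.42a}, I verify the two inequalities level by level. The first inequality requires $2|\cW^{nt,nf}(\partial=1)|+3|\cW^{nt,nf,or}(\partial>1)| \geq 2\cO(\cH)$: for each of the $\cO(\cH)$ levels $\cH'$ with $|\wh{\cS}(\cH')|=1$, item~8(f) of Proposition~\ref{prop7.12} (the $J=1$ case) guarantees a non-flat component in the corresponding limit surface, which is automatically orientable (since $\Delta$ is) and therefore lies in $\cW_{\cH'}^{nt,nf}(\partial=1)$ or $\cW_{\cH'}^{nt,nf,or}(\partial>1)$, contributing at least $2$ per such level. The second inequality $C^{or}(\cH) \geq 2L$ then follows by summing per-level contributions: a level $\cH'$ with $|\wh{\cS}(\cH')| \geq 2$ contributes $2(|\wh{\cS}(\cH')|-1) \geq 2$ to $C^{or}(\cH)$ from the first term, while each one-singular-point level contributes at least $2$ through $2\cO(\cH)$ via the previous argument. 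The main technical obstacle is the precise bookkeeping in the inductive step required to simultaneously obtain the factor $2$ in front of $(|\wh{\cS}|-L)$ (via Lemma~\ref{lemma9.11}) and the coefficient $3$ in front of $|\cW^{nt,nf,or}(\partial>1)|$; this requires carefully separating which portion of the $3I(W^c)$ terms are consumed by Lemma~\ref{lemma9.11}'s counting argument and which remain as extra slack to be recorded in the correction term.
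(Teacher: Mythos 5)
Your proposal tracks the paper's own proof quite closely: you correctly identify that one substitutes Lemma~\ref{lemma9.11} for Lemma~\ref{lemma9.7} in the top-level estimate and then runs the same level-by-level accounting that proves Corollary~\ref{corol9.12}, and your observation that orientability of $\Delta$ forces every component of every $\S_{\rm T}$ in the hierarchy to be orientable (so $\cW^*=\varnothing$) is exactly what makes the bound $I(W^c)\geq 1$ available for the non-flat pieces. The per-level verification of~\eqref{9.42a} using the fact that a one-singular-point level forces a non-flat component (via item~8(f) of Proposition~\ref{prop7.12}) also matches the paper.

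However, the step where you ``separate the residual $3I(W^c)$ contributions'' to extract the summand $3|\cW^{nt,nf,or}(\partial>1)|$ in~\eqref{9.42} is not justified as stated, and this is the genuinely delicate point. Lemma~\ref{lemma9.11}'s inequality~\eqref{9.37} already has the $3I(W^c)$ terms inside the left-hand sum. If you track the slack in its proof, a non-flat orientable $W^c\in\cY^c$ with $I(W^c)=1$ contributes a surplus of exactly $3I(W^c)-1=2$ at the step~\eqref{9.39} (its contribution to the LHS is $2\#_c(\partial W^c)-3+3I(W^c)$ versus $2\#_c(\partial W^c)-2$ on the RHS), so you only recover a coefficient of $2$ on $|\cW^{nt,nf,or}(\partial>1)\cap\cY^c|$, not $3$, unless you additionally establish $I(W^c)\geq 2$ for such components — which is neither asserted nor obviously true for a branched genus-$0$ top-level surface with two branch points. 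The paper's own treatment of this step is equally terse (it just says ``one uses Lemma~\ref{lemma9.11} to obtain'' the desired inequality without tracking the slack), so you are in good company, but you should not present this as a closed argument. Two remedies are available: either prove the coefficient is $3$ by sharpening Lemma~\ref{lemma9.11} under an extra index hypothesis, or settle for the coefficient-$2$ version
\[
C^{or}(\mathcal{H})=3I^*(\mathcal{H})+2(|\wh{\cS}|-L)+2|\mathcal{W}^{nt,nf}(\partial=1)|+2|\mathcal{W}^{nt,nf,or}(\partial>1)|,
\]
which follows cleanly from the slack computation and still satisfies the chain of inequalities in~\eqref{9.42a} (so in particular $C^{or}(\cH)\geq 2L$), which is all that is used downstream in Proposition~\ref{propos7.31} and the genus/spinning estimates of Theorem~\ref{mainStructure}.
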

\begin{proof}
The argument is very similar to the one for proving
Corollary~\ref{corol9.12}, so
we will only focus on the differences and use the same notation.
We first check that
\begin{equation}
\label{GenCMfb}
6\,I(\Delta)\geq -\chi(\Delta)+2S(\Delta)+e(\Delta) + C^{or}(\mathcal{H}),
\end{equation}
where $	C^{or}(\mathcal{H})$ is given by equation~\eqref{9.42}.
The proof of this fact proceeds exactly as in the proof of \eqref{GenCMf}
for the correction term $C(\cH)$, except in the estimate in \eqref{9.31}
one uses Lemma~\ref{lemma9.11} to obtain
\[
2e(\mathcal{V}_{\rm T})-3\#_c(\S_{\rm T})+3I(\S_{\rm T})
+\#_c(\S^{f}_{\rm T})
\geq 2(|\wh{\cS}_{\rm T}|-1)	
+2|\mathcal{W}_{\rm T}^{nt,nf}(\partial=1)|
+3|\mathcal{W}_{\rm T}^{nt,nf,or}(\partial>1)|.
\]
This completes the proof that for $\Delta$ orientable, \eqref{GenCMfb}
holds.
	
We next prove \eqref{9.42a} holds. If $\cH$ is trivial, then $|\wh{\cS}|
=L=\mathcal{O}(\mathcal{H})=0$ and
\[
C^{or}(\cH)=3I^*(\cH)\stackrel{\eqref{I*}}{=}3I(\Delta)-3.
\]
Consequently, equality holds in the first inequality
of~\eqref{9.42a}, while the second inequality reduces to $3I(\Delta)
-3\geq 0$, which holds since $I(\Delta)\geq 1$.
Suppose in the sequel that $\cH$ is non-trivial.
By additivity, we can reduce the proof to the case that each level $\cH'$ of $\cH$
satisfies
\begin{equation}
C^{or}(\mathcal{H}')\geq 3I^*(\mathcal{H}')
+2(|\wh{\cS}(\mathcal{H}')|-1)+2\mathcal{O}(\mathcal{H}')\geq 2,
\label{9.41}
\end{equation}
where
\[
C^{or}(\mathcal{H}')=3I^*(\mathcal{H}')
+2(|\wh{\cS}(\mathcal{H}')|-1)
+2|\mathcal{W}_{\cH'}^{nt,nf}(\partial=1)|
+3|\mathcal{W}_{\cH'}^{nt,nf}(\partial>1)|.
\]	

First suppose that $|\wh{\cS}(\mathcal{H}')|\geq 2$.
In this case, the second inequality in \eqref{9.41} clearly holds. Since
$\mathcal{O}(\mathcal{H}')=0$ in this case, then the first inequality
in~\eqref{9.41} also holds.
	
Suppose now that $|\wh{\cS}(\mathcal{H}')|=1$, and so
$\mathcal{O}(\mathcal{H}')=1$. The second
inequality in~\eqref{9.41} holds because $I^*(\cH')\geq 0$ and
$2(|\wh{\cS}(\mathcal{H}')|-1)+2\mathcal{O}(\mathcal{H}')=2$.
The first inequality also holds because in this case,
$2|\mathcal{W}_{\cH'}^{nt,nf}(\partial=1)|
+2|\mathcal{W}_{\cH'}^{nt,nf}(\partial>1)|\geq 2
= 2\mathcal{O}(\mathcal{H}')$.
Hence, \eqref{9.41} holds and the corollary is proved.
\end{proof}

\begin{proposition}
	\label{propos7.31}
Let $\Delta,\cH$ be as in Theorem~\ref{thm9.6}, with $\Delta$ connected.
\begin{enumerate}
\item \label{5.31(1)}
If $I(\Delta )=1$, then $\cH$ is trivial, $\Delta$ is
orientable, $g(\Delta)=0$, and $(e(\Delta),S(\Delta))\in
\{ (2,2),(1,3)\}$.
In particular, equality in~\eqref{GenCMf} holds.

\item \label{5.31(2)}
If $\cH$ is trivial and $\Delta$ is orientable, then
$2g(\Delta)\leq 3I(\Delta)-3$, $2e(\Delta)\leq 3I(\Delta)+1$ and
$2S(\Delta)\leq 3I(\Delta)+3$.

\item \label{5.31(3)}
If $\cH$ is trivial and $\Delta$ is non-orientable, then
$I(\Delta)\geq 2$, $S(\Delta)\geq 3$,
$g(\Delta)\leq 3I(\Delta)-4$, $2e(\Delta)\leq 3I(\Delta)-2$
and $2S(\Delta)\leq 3I(\Delta)+2$.

\item \label{5.31(4a)}
If $\cH$ is non-trivial with $L>0$ levels, then $S(\Delta)\geq 2$ and
$I(\Delta)\geq 1+L$.

\item \label{5.31(4)}
If $\cH$ is non-trivial with $L>0$ levels
and $\Delta$ is orientable, then
$g(\Delta)\leq 3I(\Delta)-L-3$, $e(\Delta)\leq 3I(\Delta)-L-1$
and $S(\Delta)\leq 3I(\Delta)-L$.

\item \label{5.31(5)}
If $\cH$ is non-trivial  with $L>0$ levels and $\Delta$ is
non-orientable, then $g(\Delta)\leq 6I(\Delta)-L-7$,
$2e\leq 6I-L-3$ and $2S(\Delta)\leq 6I(\Delta)-L-1$.
\end{enumerate}
\end{proposition}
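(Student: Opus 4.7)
The plan is to treat separately the trivial and non-trivial hierarchy cases, since the two regimes require fundamentally different tools. Items~(1)--(3) reduce $\Delta$ to its top-level limit $f_T\colon\S_T\la\R^3$, which (when $\cH$ is trivial) is a complete, non-flat, unbranched minimal immersion of finite total curvature with $I(f_T)=I(\Delta)$, $g(f_T)=g(\Delta)$, $e(f_T)=e(\Delta)$, $S(f_T)=S(\Delta)$; one exploits directly the Chodosh--Maximo bound~\eqref{eq:CMindex1} and Claim~\ref{claim2.12}. Items~(4)--(6) instead apply Theorem~\ref{thm9.6} through the sharpened Corollaries~\ref{corol9.12} and~\ref{corol7.30}.

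For items (1)--(3), I would first observe that item~(1) relies on item~(4): if $I(\Delta)=1$ and $\cH$ were non-trivial with $L\geq 1$ levels, item~(4) would yield $I(\Delta)\geq 1+L\geq 2$, a contradiction. Hence $\cH$ is trivial. Item~2 of Lemma~\ref{lema3.6} then excludes the non-orientable case for an index-one unbranched $f_T$, and the L\'opez--Ros classification invoked in Section~\ref{sec2.6} identifies $f_T$ as either a catenoid or an Enneper surface, yielding the two listed pairs $(e,S)\in\{(2,2),(1,3)\}$ with $g=0$. For item~(2), the orientable form $3I\geq 2g+2(S+e)-5$ of~\eqref{eq:CMindex1}, combined with $g\geq 0$, $e\geq 1$, $S\geq 2$, and $S+e\geq 4$ from Claim~\ref{claim2.12}, produces the three inequalities by inspection. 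For item~(3), the non-orientable form $3I\geq g(\wt{\Delta})+2(S+e)-4$ combined with $S+e\geq 4$ immediately gives $I\geq 2$; the bound $S(\Delta)\geq 3$ is obtained by combining the parity relation $S\equiv e\pmod 2$ of~\eqref{lem6.3c} (using $B=0$) with $S\geq e$ and $S+e\geq 4$: the only remaining exception is $(S,e)=(2,2)$ with all ends embedded, which I would rule out by passing to the orientable double cover (each embedded end of $\S_T$ lifts to two embedded ends of $\wt{\S}_T$, making $\wt{\S}_T$ an orientable non-flat minimal surface with $e(\wt{\S}_T)=4=S(\wt{\S}_T)$, and direct application of~\eqref{eq:CMindex1} to $\wt{\S}_T$---or equivalently the classical lower bound $\deg(G)\geq 3$ for non-orientable complete minimal surfaces of finite total curvature in $\R^3$---produces the contradiction). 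Substituting $S\geq 3$, $e\geq 1$, $g(\wt{\Delta})\geq 0$ back into~\eqref{eq:CMindex1} yields the remaining three inequalities.

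For items (4)--(6), the bound $I(\Delta)\geq 1+L$ in item~(4) is exactly item~3 of Remark~\ref{rem9.4}, while $S(\Delta)\geq 2$ follows from the multi-graph structure around any point of $\wh{\cS}_T$: non-triviality of $\cH$ forces at least one end of $f_T$ to have multiplicity at least two, i.e.\ some $m_{h,i}\geq 2$. For items~(5) and~(6) I would apply Corollaries~\ref{corol7.30} and~\ref{corol9.12} respectively, writing $\chi(\Delta)=2-2g(\Delta)-e(\Delta)$ in the orientable case and $\chi(\Delta)=1-g(\Delta)-e(\Delta)$ for connected non-orientable $\Delta$ (where $g(\Delta)$ means the genus of the oriented cover), and using the lower bounds $C^{or}(\cH)\geq 2L$ and $C^{no}(\cH)\geq L$ established there. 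Each of the three inequalities would then follow by substituting $S(\Delta)\geq 2$ and $e(\Delta)\geq 1$ into the resulting algebraic inequality.

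The hard part will be matching the precise constants appearing in items~(5) and~(6)---in particular the $-L-3$ in $g(\Delta)\leq 3I(\Delta)-L-3$ in the orientable case, and analogously the $-L-7$ in the non-orientable one. A naive substitution of $g\geq 0$, $S\geq 2$, $e\geq 1$, $C^{or}\geq 2L$ into~\eqref{CM:5.43} falls one unit short of the desired genus estimate. Closing this gap should require sharpening one of the constituent bounds: either improving to $e(\Delta)\geq 2$ (equivalently, $S+e\geq 4$) for non-trivial hierarchies by invoking Claim~\ref{claim2.12} on the non-flat components of $f_T$ whose presence is guaranteed by the ascension process, or using the finer decomposition of $C^{or}(\cH)$ in~\eqref{9.42} in place of the coarse bound $2L$, paying attention to how the terms $|\mathcal{W}^{nt,nf}(\partial=1)|$ and $|\mathcal{W}^{nt,nf,or}(\partial>1)|$ contribute level by level. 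A careful case analysis separating $|\wh{\cS}_T|=1$ (which forces $\mathcal{O}(\cH)\geq 1$) from $|\wh{\cS}_T|\geq 2$ (in which Lemma~\ref{lemma9.11} contributes extra units) is expected to produce the sharp constants uniformly.
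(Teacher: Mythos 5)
Your outline tracks the paper's proof closely for items~(1)--(4), with a few deviations worth noting. For item~(1) you derive triviality of $\cH$ from item~(4) (contrapositive of $I(\Delta)\geq 1+L$), whereas the paper cites Proposition~\ref{ass3.5} directly; both routes are sound, and yours is arguably cleaner because it avoids re-examining the blow-up machinery. For the bound $S(\Delta)\geq 3$ in item~(3) you combine the parity relation \eqref{lem6.3c} with an argument on the oriented double cover; the paper instead rules out $S(\Delta)=2$ by observing that this forces quadratic area growth of two planes, hence embeddedness of $f_1$ by the monotonicity formula, contradicting non-orientability. Your route works but is longer and, as you noted, reduces anyway to excluding $(S,e)=(2,2)$. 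For item~(4) your mechanism for $S(\Delta)\geq 2$ --- ``non-triviality forces some $m_{h,i}\geq 2$'' --- is unjustified and in fact not generally true: $f_T$ can have several ends each of multiplicity $1$ while $\cS_T\neq\varnothing$. The paper gets $S(\Delta)\geq 2$ simply from $S(f_1)\geq 2$ (the first local picture is non-flat, unbranched, hence catenoid- or Enneper-like by Claim~\ref{claim2.12}) together with the monotonicity formula.

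The substantive gap in your proposal is in items~(5)--(6). You correctly diagnose the one-unit shortfall in the genus estimate when substituting $g\geq 0$, $S\geq 2$, $e\geq 1$ into \eqref{5.78'} or \eqref{5.78}, but you leave the resolution open between two speculative options, neither of which is quite what the paper does. The paper's actual fix is simply to add $e(\Delta)+S(\Delta)\geq 4$ to the list $g(\Delta)\geq 0$, $e(\Delta)\geq 1$, $S(\Delta)\geq 2$ and then select, for each of the three target inequalities, the two facts that close the gap (for $g$ one uses $e+S\geq 4$; for $e$ one uses $g\geq 0$, $S\geq 2$; for $S$ one uses $g\geq 0$, $e\geq 1$). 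Your first suggested route --- invoking Claim~\ref{claim2.12} on a non-flat component of $f_T$ --- does not go through as stated, because Claim~\ref{claim2.12} is established for the unbranched first-stage limit $f_1$, while $f_T=f_{s_0}$ is generically branched and may consist entirely of flat components (only $f_1$ is guaranteed non-flat). The inequality $e+S\geq 4$ for $\Delta$ really descends from $e(f_1)+S(f_1)\geq 4$ (catenoid-or-Enneper dichotomy at the deepest scale) propagated up by the monotonicity formula, not from $f_T$. Your second option --- tracking the terms of $C^{or}(\cH)$ in \eqref{9.42} level by level --- would not in general yield more than the bound $C^{or}\geq 2L$ of \eqref{9.42a}, so it cannot by itself close the gap. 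You should commit to the first option with the corrected justification, and then the arithmetic you sketched produces all six inequalities in items~(5) and~(6) exactly as the paper records them.
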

\begin{proof}
Suppose $I(\Delta)=1$. Then, the non-flat limit minimal immersion
$f_1\colon \S_1\la \R^3$ found in Section~\ref{subseclocpic1} has index 1,
and Proposition~\ref{ass3.5} implies that the hierarchy $\cH$ of
$\Delta$ is trivial. Furthermore, Theorem~1.8 in~\cite{ChMa2} ensures
that $f_1$ must be two-sided, and since the index of $f_1$ is one,
$f_1(\S_1)$ is either a catenoid or an Enneper minimal
surface~\cite{lor2}. In particular, $g(\Delta)=0$ and
$(e(\Delta),S(\Delta))\in \{ (2,2),(1,3)\}$. This proves
item~\ref{5.31(1)}.

To prove items~\ref{5.31(2)} and~\ref{5.31(3)},
suppose that $\cH$ is trivial.
By Theorem~\ref{thm9.6}, \eqref{GenCMf} can be
written as $3I(\Delta)\geq -\chi(\Delta)+2S(\Delta)+e(\Delta)-3$.
After replacing $\chi(\Delta)$ by $2-2g(\Delta)-e(\Delta)$
provided that $\Delta$ is orientable (resp. by $1-g(\Delta)-e(\Delta)$ if
$\Delta$ is non-orientable), we get
\[
\begin{array}{ccrl}
3I(\Delta)&\geq &2g(\Delta)+2e(\Delta)+2S(\Delta)-5 &
\mbox{if $\Delta$ is orientable,}
\\
3I(\Delta) & \geq & g(\Delta)+2e(\Delta)+2S(\Delta)-4  &
\mbox{if $\Delta$ is non-orientable.}
\end{array}
\]
We next discuss on the orientability character of $\Delta$.
If $\Delta$ is orientable, the estimates from above for each of
$g(\Delta),e(\Delta),S(\Delta)$ in
item~\ref{5.31(2)} of the proposition follow from a straightforward
computation using two of the inequalities $g(\Delta)\geq 0$,
$e(\Delta)\geq 1$, $S(\Delta)\geq 2$, $e(\Delta)+S(\Delta)\geq 4$.
If $\Delta$ is non-orientable (in particular, $I(\Delta)\geq 2$
by item 1 of this
proposition) and we additionally suppose that $S(\Delta)=2$, then the area
growth at infinity of $f_1$ is that of two planes, which prevents
$f_1$ from having self-intersections by the monotonicity formula for area;
therefore, $f_1$ is properly embedded in $\R^3$, which contradicts
that $\Delta$ is non-orientable. Therefore, $S(\Delta)\geq 3$ provided
that $\Delta$ is non-orientable. Now similar arguments as those
in the orientable case show that the upper estimates for
$g(\Delta),e(\Delta),S(\Delta)$ in item~\ref{5.31(3)} of the proposition
hold.

Next suppose that $\cH(\Delta)$ is non-trivial with $L>0$ levels.
This implies that we can find $L+1$ blow-up limits
\[
f_i\colon \S_i\la \R^3,\quad i=1,\ldots ,L+1
\]
of suitable rescalings $\{ \l_{i,n}F_n\}_n$ of the original sequence
$\{F_n\}_n$ as in item (S2) above (centered at possibly different
points where the second
fundamental form of $F_n$ blows-up). Since the index increases each time
we add a level (by item~8(f) of Proposition~\ref{prop7.12}), then we
deduce that  $I(\Delta)\geq L+1$. Since the total spinning of $f_1$
is at least two, the monotonicity formula implies that $S(\Delta)\geq 2$.
This completes the proof of item~\ref{5.31(4a)}.

We finish by proving item~\ref{5.31(4)} and~\ref{5.31(5)}, so continue
assuming that $\cH(\Delta)$ is non-trivial with $L>0$ levels,
and suppose $\Delta$ is connected.
In the case that $\Delta$ is orientable, we apply
Corollary~\ref{corol7.30} with the
estimate for the correction term $C^{or}(\mathcal{H})$ given
in~\eqref{9.42a}, obtaining
\begin{eqnarray}
3\,I(\Delta)&\geq &
\textstyle{-\frac{1}{2}\chi (\Delta)+S(\Delta)
+\frac{1}{2}e(\Delta)+L} \nonumber
\\
&\stackrel{(\star)}{=}&g(\Delta)+S(\Delta)+e(\Delta)-1+ L,\label{5.78'}
\end{eqnarray}
where in $(\star)$ we have used that
$\chi(\Delta)=2-2g(\Delta)-e(\Delta)$.

In the case $\Delta$ is non-orientable, we apply
Corollary~\ref{corol9.12} with the
estimate for the correction term $C^{no}(\mathcal{H})$
given in~\eqref{9.34},
obtaining
\begin{eqnarray}
6\,I(\Delta)&\geq &-\chi(\Delta)+2S(\Delta)+e(\Delta) + L \nonumber
\\
&\stackrel{(\star)'}{=}&g(\Delta)+2S(\Delta)+2e(\Delta)-1 + L,
\label{5.78} %\nonumber
\end{eqnarray}
where in $(\star)'$ we have used that
$\chi(\Delta)=1-g(\Delta)-e(\Delta)$.

With inequalities~\eqref{5.78'}, \eqref{5.78} at hand, each of the
estimates from above for of $g(\Delta),e(\Delta),S(\Delta)$ in
items~\ref{5.31(4)} and~\ref{5.31(5)} of the proposition
follow from a straightforward computation using
two of the inequalities
$g(\Delta)\geq 0$, $e(\Delta)\geq 1$, $S(\Delta)\geq 2$ (which holds
by item~\ref{5.31(4a)}) and $e(\Delta)+S(\Delta)\geq 4$.
This completes the proof of the proposition.
\end{proof}

\subsection{Proofs of items (\ref{it2})-\ldots-(\ref{it8})
of Theorem~\ref{mainStructure}}
\label{sec2.9}

Next we will focus on the second step in our strategy of proving
Theorem~\ref{mainStructure}, see Section~\ref{sec3.4}.

Item~\ref{it2} of Theorem~\ref{mainStructure} follows from the fact that
$\Delta_1,\ldots ,\Delta _k$ are pairwise disjoint (by the already
proven item 1(c) of Theorem~\ref{mainStructure}).

We next prove item~\ref{it4}. The inequality $2\leq m=S(\Delta)$
for the total spinning of the boundary of $\Delta=\Delta_i$ follows since
each local picture of any element $F\in \Lambda$ has at least
either two embedded ends, or one immersed end of Enneper type,
with spinning number at least $3$; also see item~\ref{5.31(4a)} of
Proposition~\ref{propos7.31}. Item~\ref{it4}(a)
was proven in item~\ref{5.31(1)} of Proposition~\ref{propos7.31}.

Now assume $\Delta $ is orientable and $I(\Delta)\geq 2$. Then,
items~\ref{5.31(2)} and \ref{5.31(4)} of Proposition~\ref{propos7.31}
give that
\begin{eqnarray}
S(\Delta)&\leq&
\max \{ \textstyle{\frac{1}{2}(3I(\Delta)+3)},3I(\Delta)-L\}
\leq \max \{ \frac{1}{2}(3I(\Delta)+3),3I(\Delta)-1\}
=3I(\Delta)-1,
\nonumber
\\
e(\Delta)&\leq&
\max \{ \textstyle{\frac{1}{2}(3I(\Delta)+1)},3I(\Delta)-L-1\}
\leq \max \{ \frac{1}{2}(3I(\Delta)+1),3I(\Delta)-2\}
=3I(\Delta)-2,
\nonumber
\\
g(\Delta)&\leq&
\max \{ \textstyle{\frac{1}{2}(3I(\Delta)-3)},3I(\Delta)-L-3\}
\leq \max \{ \frac{1}{2}(3I(\Delta)-3),3I(\Delta)-4\}
=3I(\Delta)-4,
\nonumber
\end{eqnarray}
where $L\geq 1$ is the number of levels of the hierarchy of $\Delta$
provided this hierarchy is non-trivial. This proves item~\ref{it4}(b)
of Theorem~\ref{mainStructure}. Item~\ref{it4}(c) can be proven in
the same way, using items~\ref{5.31(3)} and \ref{5.31(5)} of
Proposition~\ref{propos7.31} (the fact that $I(\Delta)\geq 2$ follows from
items~\ref{5.31(3)} and~\ref{5.31(4a)} of Proposition~\ref{propos7.31}),
we leave the details to the reader.

The inequality $\chi(\Delta_i)\geq -6I(\Delta_i) +2m(i)+e(i)$
in item~B(d) of Theorem~\ref{mainStructure} follows directly
from~\eqref{GenCMf}: observe that the multiplicity of the multi-graph
associated to each boundary component (resp. the number of boundary
components) of $\Delta=\Delta_i$ is $m(i)$ (resp. $e(i)$)
with the notation of Theorem~\ref{mainStructure}.

The inequality $|\kappa(\Delta_i)-2\pi m(i)|\leq \frac{\tau}{m(i)}$ in
Item B(e)  of Theorem~\ref{mainStructure} follows from the multi-graphical
structure proven in item~2 of Theorem~\ref{mainStructure} and from
Lemma~\ref{ass4.4'}. As for the second inequality in Item B(e)  of
Theorem~\ref{mainStructure},
\[
\left| \kappa(\wt{M})+2\pi S\right| =
\left| -\sum_{i=1}^k\kappa(\Delta_i)+2\pi \sum_{i=1}^km(i)\right|
\leq \sum_{i=1}^k\left| \kappa(\Delta_i)-2\pi m(i)\right|
\leq \sum_{i=1}^k\frac{\tau}{m(i)}
\leq \frac{\tau}{2}k.
\]
Equation~\eqref{2.3a} follows directly from the last inequality, since
$\kappa(\wt{M})=-\sum_{i=1}^k\kappa(\Delta_i)$.

To finish the proof of item~\ref{it4} of Theorem~\ref{mainStructure},
it remains to demonstrate item~\ref{it4}(f), which we do next.
Choose a minimal  element $\Delta_q$ in the hierarchy
$\cH(\Delta)=(\wh{\cS},\cV,\cW)$
of $\cH$, being $q\in \wh{\cS}$. Then, $\Delta_q=\Delta_q(n)$ is a
connected compact surface with boundary inside $M_n$, and for $n$ large
enough, a certain rescaling of $\Delta_q(n)$ resembles arbitrarily well
the intersection with a large ball of a connected, complete, non-flat
minimal immersion $f\colon \S\la \R^3$ with finite total
curvature (see property (S1) above). As the total curvature of this limit
immersion $f$ is a negative multiple of $4\pi$ when $\S$ is orientable
and it is at least $-2\pi$ if $\S$ is non-orientable with the value
$-2\pi$ implying that $f$ is stable (see item~1 of the discussion in
Section~3 of~\cite{mpe20}), and the total curvature is invariant under
rescaling, we deduce that
\[
-\int_{\Delta_q(n)}K>3\pi
\]
for $n$ large enough.
When we ascend one level in $\cH(\Delta)$ of $\cH$ passing from
$\Delta_q$ to some $\Delta_{q'}\in \cV$ with $q'\in \wh{\cS}$ and
$\Delta_{q}\preceq\Delta_{q'}$, then a similar description holds
for $\Delta_{q'}(n)$ with $n$ large, with the difference that the related
complete minimal surface $f'\colon \S'\la \R^3$ with finite total
curvature associated to $\Delta_{q'}(n)$ may be flat, finitely
disconnected and finitely branched,
and the convergence of suitably rescaled portions of $\Delta_{q'}(n)$ to
a compact portion of $f'(\S')$ is away from finitely many points of
$f(\S')$, of which at least one corresponds to $f'(q)$. Since
\[
-\int_{\Delta_{q'}(n)}K=-\int_{\Delta_{q'}(n)\setminus \Delta_q(n)}K
-\int_{\Delta_q(n)}K
\]
and the first integral is either close to zero or larger than $3\pi$ for
$n$ large, we deduce that $-\int_{\Delta_{q'}(n)}K>3\pi$
for $n$ sufficiently large. Iterating
this process finitely many times, we get that $-\int_{\Delta}K>3\pi$,
as desired.
Adding up this last inequality in $\Delta_1,\ldots ,\Delta_k$
and using the Gauss-Bonnet formula, we deduce that
inequality~\eqref{2.4} holds. Now the proof of
item~\ref{it4} of Theorem~\ref{mainStructure} is complete.

We next prove item~\ref{it6} of Theorem~\ref{mainStructure}.
Suppose that the genus $g(M)$ of $M$ is finite and that $k\geq 1$.

Elementary surface topology of orientable surfaces implies that if $\S$
is a possibly disconnected orientable
surface (possibly with boundary) and $\Delta$ is a compact, possibly
disconnected,  smooth subsurface in the interior of $\S$,
then the genus  $g(\S)$ of $\S$, the genus $g(\Delta)$ of $\Delta$ and the
genus $g(\wt{\S})$ of $\wt{\S}=\S\setminus \Delta$ satisfy the following
inequality:
\begin{equation} \label{7.82}
g(\S)\leq g(\wt{\S}) +g(\Delta)+\#_c(\partial \Delta)-\#_c(\Delta),
\end{equation}
with equality if and only if each component of $\Delta$ does not
disconnect the component of $\S$ that contains it.

Applying \eqref{7.82} to $M$ with $\Delta=\cup_{i=1}^k\Delta_i$, gives
\begin{equation} \label{7.83}
g(M)\leq g(\wt{M}) +g(\cup_{i=1}^k \Delta_i)+\#_c(\cup_{i=1}^k
\partial \Delta_i)-k.
\end{equation}
Hence,
\begin{equation}
g(M)-g(\wt{M})\leq
\sum_{i=1}^k[g(\Delta_i)+e(\Delta_i)-1].
\label{7.84}
\end{equation}
If a domain $\Delta_i$ has
trivial hierarchy, then~\eqref{GenCMf} reduces to~\eqref{eq:CM1} and
thus,
\[
3I(\Delta_i)\geq -\chi(\Delta_i)+2S(\Delta_i)+e(\Delta_i)-3
\stackrel{(\star)}{=}2g(\Delta_i)+2e(\Delta_i)+2S(\Delta_i)-5,
\]
where in $(\star)$ we have used that
$\chi(\Delta_i)=2-2g(\Delta_i)-e(\Delta_i)$ as $\Delta_i$ must be
orientable. Therefore, in this case,
\begin{equation}
g(\Delta_i)+e(\Delta_i)-1\leq
\textstyle{ \frac{3}{2}I(\Delta_i)-S(\Delta_i)+\frac{3}{2}}.
\label{7.85}
\end{equation}
If $\Delta_i$ has non-trivial hierarchy with $L_i\geq 1$ levels,
then~\eqref{GenCMfb} and~\eqref{9.42a}   imply
\[
6I(\Delta_i)\geq -\chi(\Delta_i)+2S(\Delta_i)+e(\Delta_i)+2L_i
=2g(\Delta_i)+2e(\Delta_i)+2S(\Delta_i)+2L_i-2.
\]
Thus, in this case,
\begin{equation}
g(\Delta_i)+e(\Delta_i)-1\leq 3I(\Delta_i)-S(\Delta_i)-L_i
\leq 3I(\Delta_i)-S(\Delta_i)-1.
\label{7.86}
\end{equation}
Now \eqref{7.85} and \eqref{7.86} give the common upper bound estimate
\begin{equation}
g(\Delta_i)+e(\Delta_i)-1\leq \max\left\{
\textstyle{\frac{3}{2}I(\Delta_i)+\frac{3}{2}},
3I(\Delta_i)-1\right\} -S(\Delta_i).
\label{7.87}
\end{equation}
The function  $\max\left\{
\textstyle{\frac{3}{2}I(\Delta_i)+\frac{3}{2}},
3I(\Delta_i)-1\right\}$ has the value 3 if $I(\Delta_i)=1$ and the value
$3I(\Delta_i)-1$ if $I(\Delta_i)\geq 2$;
hence,
$\max\left\{
\textstyle{\frac{3}{2}I(\Delta_i)+\frac{3}{2}},
3I(\Delta_i)-1\right\}\leq 3I(\Delta_i)$ in all cases.
Therefore, since  it also holds that $S(\Delta_i) \geq 2$
for
all $i$, then \eqref{7.87} gives
\begin{equation}
g(\Delta_i)+e(\Delta_i)-1\leq 3I(\Delta_i)-S(\Delta_i)\leq
3I(\Delta_i)-2\qquad \mbox{for all $i=1,\ldots ,k$.}
\label{7.88}
\end{equation}
From \eqref{7.84} and~\eqref{7.88} we deduce that
\begin{equation}
g(M)-g(\wt{M})\leq \sum_{i=1}^k(3I(\Delta_i)-2)\leq 3I-2k\leq 3I-2
%\qquad \mbox{if $M$ is orientable,}
\label{7.89}
\end{equation}
which gives the desired inequality in item~\ref{it6} of
Theorem~\ref{mainStructure}. 

To finish the proof of Theorem~\ref{mainStructure}, it remains to
demonstrate item~\ref{it8}, which we do next. Suppose $k\geq 1$.
Item~\ref{it8} will be proven in three steps.

\begin{enumerate}[(R1)]
\label{Q1}
\item $\mbox{\rm Area}(\Delta_i)\leq 2\pi m(i)r_F(i)^2$ provided that
the constant $A_1\in [A_0,\infty)$ given in the main statement of
Theorem~\ref{mainStructure} is sufficiently large.

We will assume $i=1$ in order to use the notation introduced in
Section~\ref{sec2.7}; the cases $i\in \{ 2,\ldots ,k\} $ are similar.

Recall from property~(P1) above (and with the notation there)
that the intersection of
$F(\wt{\Delta}_1)$ between the extrinsic spheres $\partial B_X
(F(p_1),\frac{R_{s_0}}{2t})$ and $\partial B_X(F(p_1),\de_4)$
consists of $e_{s_0}$ multi-graphical annuli $\wh{G}_{s_0}(1),\ldots ,
\wh{G}_{s_0}(e_{s_0})$.
Also recall (first paragraph
after property (K2)') that $\Delta_1$ was defined as the component of
$F^{-1}(\ov{B}_X(F(p_1),r_F(1))$ that contains $p_1$, where
$r_F(1)=\de_1=\de_4/4$ and $\de_4$ is given by
Proposition~\ref{ass3.5'}.

For $j=1,\ldots, e_{s_0}$, define
\[
\wh{G}_{s_0} (j,R_{s_0}/t,r_F(1))
\]
to be portion of $F(\Delta_1)\cap
\wh{G}_{s_0}(j)$ between $\partial B_X
(F(p_1),R_{s_0}/t)$ and $\partial B_X(F(p_1),r_F(1))$. Thus,
\[
\bigcup_{j=1}^{e_{s_0}}\wh{G}_{s_0}
(j,R_{s_0}/t,r_F(1))=F(\Delta_1)\setminus
\ov{B}_X(F(p_1),R_{s_0}/t).
\]
Therefore,
\begin{equation}
\label{eq:2.21}
\frac{\mbox{\rm Area}(\Delta_1)}{\pi m(1)r_F(1)^2}=
\frac{\mbox{\rm Area}[\Delta_1\cap
F^{-1}(\ov{B}_X(F(p_1),R_{s_0}/t))]}{\pi m(1)r_F(1)^2}+
\sum_{j=1}^{e_{s_0}}
\frac{\mbox{\rm Area}(\wh{G}_{s_0} (j,R_{s_0}/t,r_F(1))}
{\pi m(1)r_F(1)^2}.
\end{equation}

Observe that for $t$ sufficiently large (equivalently, for $A_1$
sufficiently large, see equation~(\ref{eq:2.14})),
the extrinsic radius
$R_{s_0}/t$ becomes arbitrarily small (because $R_{s_0}$
is independent of $t$) and so, the first term of the
RHS of (\ref{eq:2.21}) also becomes arbitrarily small
for $A_1$ sufficiently large. Regarding  the second term of the
RHS of (\ref{eq:2.21}), observe that
\begin{equation}
\label{eq:2.22}
\sum_{j=1}^{e_{s_0}}\frac{\mbox{\rm Area}
(\wh{G}_{s_0} (j,R_{s_0}/t,r_F(1))}
{\pi m(1)r_F(1)^2}
\approx
\frac{\mbox{\rm Area}[f_{s_0}(\Sigma_{s_0})\cap \B (\vec{0},t\, r_F(1)]}
{m(1)\, \mbox{\rm Area}(\D (\vec{0},t\, r_F(1)))},
\end{equation}
where $f_{s_0}\colon \S_{s_0}\la \R^3$ is the complete, finitely branched
minimal immersion with finite total curvature
defined in the paragraph just before Proposition~\ref{ass3.5'}
(with the notation there, $\l_{s_0,n}=\frac{1}{r_{s_0,n}}$
plays the role of $t$ in our current notation),
and the symbol $\approx $ means arbitrarily close for $t$
large (to check this, re-scale the ambient metric of $X$ around
$F(p_1)$ by the factor $t$ and use either property (S2.a) or else
the adaptation of Proposition~\ref{prop7.12} after replacing $f_2$
by $f_{s_0}$).
Now, the monotonicity formula for minimal surfaces in $\R^3$ implies that
the quotient in (\ref{eq:2.22}) is less than or equal to 1
(and arbitrarily close to 1 provided that $t$ is large enough).
Therefore, (\ref{eq:2.21}) ensures that if $t$ is sufficiently large, we
have
\[
\frac{\mbox{\rm Area}(\Delta_1)}{\pi m(1)r_F(1)^2}\leq 2,
\]
which proves property (R1).

\item
$\pi \de_1^2\leq \mbox{\rm Area}(\Delta_i)$ provided that
$A_1$ is sufficiently large.

Using the notation of the already proven item~\ref{it3}
of Theorem~\ref{mainStructure}, is clearly suffices to prove that
\[
\mbox{\rm Area}\left( \cup_{h=1}^{e(i)}G_{i,h}\right)\geq \pi \de_1^2
\]
provided that $A_1$ is sufficiently large. Recall from
item~\ref{it3} of Theorem~\ref{mainStructure}
that $G_{i,h}$ is an annular
multi-graph (of multiplicity $m_{i,h}$) over its projection
$\Omega _{i,h}$
to $P_{i,h}=\varphi_{F(p_i)}(\D_h)$ and the boundary of
$G_{i,h}$ consists of two curves, each one lying on one of the extrinsic
spheres $\partial B_X(F(p_i),r_F(i)/2)$ and
$\partial B_X(F(p_i),r_F(i))$. Observe that the quotient
\[
\frac{\mbox{\rm Area}(\cup _{h=1}^{e(i)}G_{i,h})}
{\mbox{\rm Area}(\cup _{h=1}^{e(i)}\Omega_{i,h})}
\]
is invariant under re-scaling of the ambient metric centered at $F(p_i)$.
Arguing similarly as in (\ref{eq:2.22}) and with the notation there, we
have that for $t$ sufficiently large
\[
\frac{\mbox{\rm Area}(\cup _{h=1}^{e(i)}G_{i,h})}{\mbox{\rm Area}
(\cup _{h=1}^{e(i)}\Omega_{i,h})}
\approx
\frac{\mbox{\rm Area}[f_{s_0}(\Sigma_{s_0})\cap
(\ov{\B}(\vec{0},t\, r_F(i))
\setminus \ov{\B}(\vec{0},t\, r_F(i)/2)]}
{e(i)\, \pi t^2[r_F(i)^2-r_F(i)^2/4]}
\approx m(i).
\]
Therefore,
\[
\begin{array}{rcl}
\mbox{\rm Area}\left( \cup_{h=1}^{e(i)}G_{i,h}\right)
&\approx &
m(i)\, \mbox{\rm Area}(\cup _{h=1}^{e(i)}\Omega_{i,h})
\\
& \approx &
m(i)\pi [r_F(i)^2-\textstyle{\frac{r_F(i)^2}{4}}]=
m(i)\frac{3\pi}{4} r_F(i)^2
\geq m(i)\frac{3\pi}{4} \de_1^2\geq \pi \de_1^2,
\end{array}
\]
where in the last equality we have used that $m(i)\geq 2$.

\item $\mbox{\rm Area}(\widetilde{M})\geq 14\pi \sum_{i=1}^k
 m(i)r_F(i)^2$.

We continue using the notation of item (R1).
Recall that for $t$ sufficiently large,
$F(M)$ contains $e_{s_0}$ annular multi-graphs
$\wh{G}_{s_0} (1),\dots ,\wh{G}_{s_0} (e_{s_0})$ in
$\ov{B}_X(F(p_1),\de_4)\setminus
B_X(F(p_1),\frac{R_{s_0}}{2t})$ and
$e_{s_0}$ is the number of ends of $f_{s_0}$,	
and $[\wh{G}_{s_0} (1)\cup\dots \cup \wh{G}_{s_0} (e_{s_0})]\cap
B_X(F(p_1),r_F(1))$ is contained in $\Delta _1$.
Observe that the disjoint union
\[
[\wh{G}_{s_0} (1)\cup\ldots \cup \wh{G}_{s_0} (e_{s_0})]
\setminus B_X(F(p_1),r_F(1))
\]
is contained in $\wt{M}$.
A similar situation holds around each of the relative
maxima $p_2,\ldots ,p_k\in \mathcal{P}_F$ of $|A_M|$
(in the sense of item~\ref{it1}(d) of Theorem~\ref{mainStructure}), which
produces corresponding annular multi-graphs inside
$\wt{M}$ that will be denoted by
\begin{center}
\begin{tabular}{cc}
$[\wh{G}_{s_0} (1,1)\cup\dots \cup \wh{G}_{s_0} (1,e_{s_{0,1}})]
\setminus B_X(F(p_1),r_F(1)) $ & `around' $p_1$,
\\
$\cdots $ & $\cdots $
\\
$[\wh{G}_{s_0} (k,1)\cup\dots \cup \wh{G}_{s_0} (k,e_{s_{0,k}})]
\setminus B_X(F(p_k),r_F(k)) $ & `around' $p_k$,
\end{tabular}
\end{center}
all pairwise disjoint. Therefore,
\begin{equation}
\label{eq:2.23}
\mbox{\rm Area}(\wt{M})\geq \sum_{i=1}^k\mbox{Area}\left[
\left( \wh{G}_{s_0} (i,1)\cup\dots \cup
\wh{G}_{s_0} (i,e_{s_{0,i}})\right)  \setminus B_X(F(p_i),r_F(i))
\right]
\end{equation}
Given $i\in \{ 1,\ldots ,k\} $ and $h\in \{ 1,\ldots ,e_{s_0,i}\} $,
we call $\Omega'_{i,h}$ to the projection of $\wh{G}_{s_0} (i,h)\setminus
B_X(F(p_i),r_F(i)) $ to the corresponding `disk' $P_{i,h}$ defined as in
item~\ref{it3} of Theorem~\ref{mainStructure}.
Arguing as in item (R2), we have
\[
\frac{\mbox{\rm Area}\left[
(\cup _{h=1}^{e_{s_{0,i}}}\wh{G}_{s_0}(i,h))\setminus
B_X(F(p_i),r_F(i))\right]}{\mbox{\rm Area}\left(
\cup _{h=1}^{e_{s_{0,i}}}\Omega'_{i,h}\right)}
\approx
\frac{\mbox{\rm Area}[f_{s_{0,i}}(\Sigma_{s_{0,i}})\cap
(\ov{\B}(\vec{0},t\, \de_4)
\setminus \ov{\B}(\vec{0},t\, r_F(i))]}
{e_{s_0,i}\, \pi t^2[\de_4^2-r_F(i)^2]}
\approx
m(i),
\]
where $f_{s_{0,i}}\colon \Sigma_{s_{0,i}}\la \R^3$
is the corresponding complete, finitely branched minimal surface of
finite total curvature obtained as a local picture
around $F(p_i)$, and $e_{s_{0,i}}$ is the number of its ends.

Therefore,
\[
\begin{array}{rcl}
\mbox{\rm Area}\left[
(\cup _{h=1}^{e_{s_{0,i}}}\wh{G}_{s_0}(i,h))\setminus
B_X(F(p_i),r_F(i))\right]
&\approx &
m(i)\, \mbox{\rm Area}\left(
\cup _{h=1}^{e_{s_{0,i}}}\Omega'_{i,h}\right)
\\
& \approx &
m(i)\pi [\de_4^2-r_F(i)^2]=15 \, m(i)\pi r_F(i)^2.
\end{array}
\]
From here and (\ref{eq:2.23}) one concludes directly
the inequality (R3), which completes the proof of item~\ref{it8}
of Theorem~\ref{mainStructure}.
\end{enumerate}

\section{Sequential compactness results in $\L$ for $X$ fixed}
\label{sectionCompactness}
Fix $I\in \N\cup \{0\}$. An important consequence of the
statement and proof of the Structure
Theorem~\ref{mainStructure}  is that
certain sequences of immersions in $\L=\L(I, H_0,\ve_0,A_0,K_0)$
have natural limits
that are finitely branched $H$-surfaces for some $H\in [0,H_0]$.
A special case of this behavior applies to the following situation.
Suppose that $\{ F_n\colon M_n \la X\}_n$
is a sequence in $\L$ with common ambient space $X$,
the $M_n$ are connected with empty boundary,
and the norm of the second fundamental forms of $F_n$
are sufficiently large so that the points $p_1(n)\in M_n$ defined in
Theorem~\ref{mainStructure} exist and the sequence of points
$F_n(p_1(n))=x_n$ converges to $x\in X$. If in addition the norms of
the second fundamental forms of the $F_n$ are uniformly bounded,
then a subsequence of the $F_n$ converges smoothly on compact balls
in $M_n$ centered at $p_1(n)$ to a complete immersed
surface $F_\infty\colon \S\la X\in \L$
of constant mean curvature with a special point
$p_1(\infty)\in \S$ with $F_\infty(p_1(\infty))=x$.
The next theorem proves a similar result
holds when the norms of the second fundamental forms of the $F_n$ at
$p_1(n)$ diverge to $\infty$ as $n\to \infty$.
However, while the complete limit mapping $F_\infty \colon \S \la X$
in this case is smooth and defined on a limit
Riemann surface $\S$, the convergence is not smooth at a non-empty finite
set $\cB_\S\subset \S$ of points and $F_\infty$ may have
a finite set   of branch points that form a subset of $\cB_\S$, where the
total branching order is at most $3I$ and the index of $F_\infty$ is at
most $I-1$.

\begin{theorem}
\label{compactness:lemma}
%\mbox{}\newline
Given $I\in \N\cup \{0\}$, $\tau\in (0,\pi/10]$,
let $\L=\L(I, H_0,\ve_0,A_0,K_0)$ be as given in Definition~\ref{def:L}.
Let $F_n\colon M_n \la X$ be  a sequence of $H_n$-immersions
in $\L$ with $M_n$ connected with empty boundary, and with
the supremum of the norms of their
second fundamental forms $|A_{F_n}|$ greater than the constant	
$A_1=A_1(\L)$ given in Theorem~\ref{mainStructure} and let
$\cP_{Fn}=\{p_1(n),\ldots,p_{k(n)}(n)\}$ be the associated
non-empty set of (distinct) points	given in the
statement of the same theorem, $k(n)\leq I$.
Without loss of generality and after passing to a subsequence, we can
assume that both $k(n)=k$ and $\mbox{\rm Index}(F_n)=I'\leq I$
do not depend on $n$, and that $\lim_{n\to \infty}H_n=H_{\infty}
\in [0,H_0]$.

Suppose that the points $F_n(p_1(n))$ converge as $n\to \infty$ to
a point $x_1\in X$ and the norms of the second fundamental forms of $F_n$
at the points $p_1(n)$ are unbounded. Let $k'\in \{ 1,\ldots ,k\}$ be the
cardinality of the set of points in $\cP_{F_n}$ which do not diverge
intrinsically from $p_1(n)$, i.e.  after replacing by a
further subsequence and possibly re-indexing,
\[
\lim_{n\to \infty} d_{M_n}(p_1(n),p_j(n))
=\left\{
\begin{array}{cl}
d_j\in [\frac{14}{5}\de_1,\infty) & \mbox{if $2\leq j\leq k'$,}
\\
\infty & \mbox{if $k'+1\leq j\leq k$,}
\end{array}
\right.
\]
where $\de_1>0$ is defined in Theorem~\ref{mainStructure}
and $d_2\leq \ldots \leq d_{k'}$.
For each
$i\in \{ 1,\ldots ,k'\}$, let $\Delta_i(n)\subset M_n$
be the compact subdomain described in item~\ref{it1} of
Theorem~\ref{mainStructure} that contains the point $p_i(n)$.
Then, after replacing by a further subsequence:
\begin{enumerate}
\item \label{it1thm6.1}
For each $i\in \{1,\ldots,k'\} $, the points
$F_n(p_i(n))$ converge
as $n\to \infty$ to a point $x_i\in X$, where $x_1$ is
previously defined in the hypotheses of this theorem,
and the numbers $r_{F_n}(p_i)$ converge to some
$r_i\in [\de_1,\de/2]$, where $\de\geq 2\de_1$ is defined in
Theorem~\ref{mainStructure}.

\item \label{it2thm6.1}
For each $i\in \{ 1,\ldots ,k'\}$,
the $H_n$-multi-graphical immersions
%$F_n|_{\Delta_i(n)\setminus B_{M_n}(p_i(n),r_{F_n}(i)/2)}$
\[
F_n|_{\Delta_i(n)\setminus F_n^{-1}(B_{X}(p_i(n),r_{F_n}(i)/2))}
\]
converge, as $n\to \infty$, to a finite collection of
$e_i$ immersed compact $H_{\infty}$-annular
multi-graphs in $\ov{B}_X(x_i,r_i)\setminus B_X(x_i,r_i/2)$,
where $e_i\in \N$ is the number of boundary components
of $\Delta_i(n)$, and the multiplicity of each of these multi-graphs
is at most $3\, \mbox{\rm Index}(\Delta_i(n))\leq 3I'$;
let
\[
F_\infty^{\cA}\colon \cA\la \ov{B}_X(x_i,r_i)\setminus B_X(x_i,r_i/2)
\]
denote these explicit limit immersions, where
$\cA$ is a finite number of compact annular Riemannian surfaces.

\item \label{it3thm6.1}
There exists a partition $\{ 1,\ldots ,k'\}=\cB\cup \cU$ such that
$\{ |A_{F_n}|(p_i(n))\}_n$ is bounded (resp. unbounded)
if $i\in \cB$ (resp. $i\in \cU$); thus, we may assume
that after replacing by a further subsequence,
for each $i\in \cU$, $|A_{F_n}|(p_i(n))>n$.

\item  \label{it4thm6.1}
For each $i\in \cB$,  the restrictions $F_n|_{\Delta_i(n)}$
converge as $n\to \infty$ to an $H_{\infty}$-immersion
\[
F^i_{\infty}\colon \S_i\la \ov{B}_X(x_i,r_i)
\]
for some compact Riemannian surface $\S_i$ with boundary
diffeomorphic to $\Delta_i(n)$ for $n$ sufficiently large.
In this case, $F_{\infty}^i$ has its image boundary in
$\partial B_X(x_i,r_i)$ and its image in
$\ov{B}_X(x_i,r_i)\setminus B_X(x_i,r_i/2)$
consists of the $e_i$ multi-graphs described in
item~\ref{it2thm6.1}.

\item  \label{it5thm6.1}
For each $i\in \cU$, there exists a finitely
connected, finitely  branched $H_{\infty}$-immersion
\[
F_\infty^i\colon \S_i\la \ov{B}_X(x_i,r_i),
\]
where as in the previous case, $\S_i$ is compact with
smooth non-empty boundary and such that
we can identify $F_\infty^i$ restricted to
$(F_\infty^i)^{-1}[\ov{B}_X(x_i,r_i)
\setminus B_X(x_i,r_i/2)]$ with the multi-graphs in
item~\ref{it2thm6.1}.
Furthermore, there is a finite set 
$\cB_{\S_i}\subset (F_{\infty}^i)^{-1}[B_X(x_i,r_1/2)]$
satisfying the following properties:
\ben[(a)]
\item The set of branch points of $F^i_\infty$ is contained in
 $\cB_{\S_i}$.

\item There exist a positive integer $J(i)\leq\Ind(\Delta_i(n))\leq I'$,
a finite set of points
\[
Q_i(n)=\{q_1(i,n), \ldots, q_{J(i)}(i,n)\} \subset \Int(\Delta_i(n))
\]
with  $q_1(i,n)=p_i(n)$ and such that for each $j\in \{1,\ldots,J(i)\}$,
$|A_{F_n}|(q_j(i,n))>n$ for all $n\in \N$.
\item For any $\ve>0$ sufficiently small,
the restrictions of $ F_n$ to $\Delta_i(n)\setminus \bigcup_{q\in Q_i(n)}B_{M_n}(q,\ve) $
converge smoothly as $n\to \infty$ to
$F_{\infty}^i$ restricted to
$\S_i\setminus \bigcup_{b\in \cB_{\S_i}}
B_{\S_i}(b,\ve) $, and
\begin{itemize}
\item For $n$ sufficiently large, the number of boundary curves
of $\bigcup_{q\in Q_i(n)} B_{M_n}(q,\ve)$
coincides with the cardinality of $\cB_{\S_i}$.
\item The restriction of $F_{\infty}^i$ to
$\bigcup_{b\in \cB_{\S_i}} B_{\S_i}(b,\ve) $
is a finite collection of branched
$H_\infty$-disks, each of which can viewed as a
multi-graph in $X$ with associated finite multiplicity
$S_{\infty}(b)\in \N$ and branch point image at
$F_{\infty}^i(b)$. Hence, the  branching order
of $F^i_{\infty}$ at a given point
$b\in \cB_{\S_i}$ is equal to  $S_{\infty}(b)-1$.
\end{itemize}
\item
(Quotient space after collapsing of some points in $\cB_{\S_i}$).
For each $j\in \{1,\ldots,J(i)\}$,
there exists a non-empty subset
$\cB_{\S_i}(j)\subset \cB_{\S_i}$ which arises from the limits
of points in $\partial B_{M_n}(q_j(i,n),\ve)$
as $n\to \infty $ and
$\ve\to 0$. After identifying all the points in
$\cB_{\S_i}(j)$ to a single point, and identifying every point of
$\;\cup_{j=1}^{J(i)}(\S_i\setminus \cB_{\S_i}(j))$	with itself, we
define a quotient space $\wh{\S}_i$ and a related quotient map
$\pi_{i,j}\colon {\S}_i\to \wh{\S}_i$.
Then, the map $F_{\infty}^i$ induces a continuous map
$F^i_{\infty}\colon \wh{\S}_i\to \ov{B}_X(x_i,r_i)$,
so that the immersions $F_n|_{\Delta_i(n)}$ converge
to  $F_{\infty}^i\colon \wh{\S}_i \to \ov{B}_X(x_i,r_i)$.
\een

\item \label{it6thm6.1}
There exists a Riemann surface $\S$ and a conformal
branched $H_{\infty}$-immersion $F_\infty\colon \S\la X$
satisfying the following properties:
\ben
\item There is a conformal  embedding
$f\colon \cup_{i=1}^{k'}\S_i\to \S$ of the disjoint union $\cup_{i=1}^{k'}\S_i$
such that for any $i\in \{1,\ldots,k'\}$,
$F_\infty^i=F_\infty\circ (f|_{\S_i})$, where the mappings $F_\infty^i$
are defined in items~\ref{it4thm6.1} and~\ref{it5thm6.1}
above. Under conformal identification via $f$, henceforth consider
$\cup_{i=1}^{k'}\S_i$ to be contained in $\S$.
\item The set of branch points of $F_\infty$ is
contained in $\cup_{b\in \cB_{\S_i}} \cB_{\S_i} \subset \cup_{i=1}^{k'}\S_i$, and
so it is described in item~\ref{it5thm6.1} above.

\item $F_\infty$ can be viewed to be the limit of the immersions
$F_n$ in the following sense.  $F_\infty$ restricted to
$\S\setminus \cup_{i=1}^{k'}\S_i$ is the limit
in balls of $M_n$ centered at the points $p_1(n)$ of the
immersions $F_n\colon M_n\setminus \cup_{i=1}^k \Delta_i(n)\la X$,
and $F_\infty$ restricted to $\cup_{i=1}^{k'}\S_i$ is the limit of  $F_n$
restricted to $ \cup_{i=1}^k \Delta_i(n)$, as described
in items~\ref{it4thm6.1} and~\ref{it5thm6.1}
above.

\item The norm of the second fundamental of $F_\infty$ restricted
to $\S\setminus \cup_{i=1}^{k'}\S_i$ is bounded by $A_1$,
where $A_1$ is described in the first paragraph of the statement
of this theorem.
 \een
\end{enumerate}
\end{theorem}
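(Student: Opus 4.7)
The plan is to combine the Structure Theorem~\ref{mainStructure} with the hierarchy machinery developed in Sections~\ref{sec2.7} and~\ref{sec9.1} to extract, through a sequence of diagonal subsequence selections, a limit object that glues a ``background'' smooth limit (on the region where curvature is uniformly bounded by $A_1$) with finitely branched $H_\infty$-limits on each concentration piece $\Delta_i(n)$. I would carry this out in three stages matching the grouping of conclusions.

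Stage~1 handles items~\ref{it1thm6.1}--\ref{it4thm6.1}. Applying Theorem~\ref{mainStructure} to each $F_n$ gives the points $\mathcal{P}_{F_n}$ and domains $\Delta_i(n)$, with the a priori controls $k(n)\leq I$, $\text{Index}(F_n)\leq I$, $r_{F_n}(i)\in[\de_1,\de/2]$, $\text{diam}_{M_n}(\Delta_i(n))\leq \frac{5}{4}r_{F_n}(i)$, and boundary multiplicities $m_{i,h}\leq 3I$ (items~\ref{it1} and (B)). Successive subsequence extractions then stabilize $k(n)\equiv k$, $\text{Index}(F_n)\equiv I'$, $H_n\to H_\infty$, the combinatorial types $(e_i,m(i))$ of each $\Delta_i(n)$, the intrinsic distances $d_{M_n}(p_1(n),p_j(n))$, and the radii $r_{F_n}(i)\to r_i\in[\de_1,\de/2]$. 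Because $F_n$ is $1$-Lipschitz, bounded intrinsic distance from $p_1(n)$ forces $F_n(p_i(n))$ to remain in a bounded set of $X$, so after a further extraction $F_n(p_i(n))\to x_i$, proving item~\ref{it1thm6.1}. Item~\ref{it3thm6.1} is simply the definition of the partition $\mathcal{B}\cup \mathcal{U}$. For item~\ref{it2thm6.1}, the graphical estimate~\eqref{estimu} gives uniform $C^1$ control on the transition multi-graphs, and standard elliptic CMC regularity together with Arzel\`a--Ascoli produce the multi-graphical $H_\infty$-limits (compare Lemma~\ref{lemma7.10}(2)). Item~\ref{it4thm6.1} follows from standard compactness for CMC immersions of compact surfaces of uniformly bounded diameter, area (item~(E)) and second fundamental form.

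Stage~2 establishes item~\ref{it5thm6.1}, the technical heart. For $i\in \mathcal{U}$, the hierarchy $\mathcal{H}(\Delta_i(n))$ from Section~\ref{sec9.1} organises the failure of curvature bounds into at most $I'$ blow-up points, assembled from the sets $Q_s(n)$ of Definition~\ref{def:singSets} aggregated across all levels of the hierarchy and lifted back to $M_n$. Applying Proposition~\ref{prop7.12} at the top stage and Lemma~\ref{lemma7.10} iteratively down the hierarchy, one obtains: (a) a finite set $Q_i(n)=\{q_1(i,n),\ldots,q_{J(i)}(i,n)\}$ with $J(i)\leq \text{Ind}(\Delta_i(n))$ where $|A_{F_n}|$ blows up; (b) uniform curvature bounds on $\Delta_i(n)\setminus\bigcup_q B_{M_n}(q,\ve)$ for each fixed small $\ve>0$; hence (c) smooth subsequential convergence on such complements to an $H_\infty$-immersion; and (d) near each $q_j(i,n)$, rescaling by the appropriate $\l_{s,n}\to\infty$ from each stage of the hierarchy produces complete finitely branched minimal surfaces in $\R^3$ of finite total curvature (using that $H_n/\l_{s,n}\to 0$), whose contribution at the original scale is a finite union of branched $H_\infty$-disks glued onto the smooth limit, with multiplicities equal to the spinnings of the ends at each stage. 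The quotient space $\wh{\S}_i$ of~\ref{it5thm6.1}(d) then arises exactly as in Lemma~\ref{lemma7.13}: distinct branch points in $\mathcal{B}_{\S_i}$ whose extrinsic images coincide at a common $x_j=\lim F_n(q_j(i,n))\in X$ correspond to distinct ``ends'' (under rescaling) of a lower-stage limit that all accumulate at the same original-scale point, and must be identified to make $F_\infty^i$ a continuous map of a topological surface into $X$.

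Stage~3 produces the global object of item~\ref{it6thm6.1}. On $M_n\setminus \bigcup_{i=1}^k \Int(\Delta_i(n))$ the second fundamental form is bounded by $A_1$ by item~\ref{it7} of Theorem~\ref{mainStructure}, so standard CMC compactness on the exhausting metric balls $B_{M_n}(p_1(n),R)$ yields, along a diagonal subsequence in $R\to\infty$, smooth convergence to an $H_\infty$-immersion $F_\infty^{\text{out}}\colon\S^{\text{out}}\la X$. Its boundary annular pieces coincide with the multi-graph limits produced in item~\ref{it2thm6.1}, because both are smooth limits of the same graphical annuli of $F_n$ over $\partial \ov{B}_X(F_n(p_i(n)),r_{F_n}(i))$; analogously, the outer boundaries of the $\S_i$ coming from items~\ref{it4thm6.1} and~\ref{it5thm6.1} match these multi-graphs. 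Gluing $\S^{\text{out}}$ to the $\S_i$ along these common annular collars yields the Riemann surface $\S$ (with the conformal structure inherited from the CMC immersions), the embedding $f$, and the branched $H_\infty$-immersion $F_\infty$. The bound on the branch set and the curvature estimate away from $\bigcup_i\S_i$ follow by construction.

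The main obstacle will be the simultaneous diagonal extraction across all scales of the hierarchy in Stage~2, and verifying that the resulting limit near each $q_j(i,n)$ actually extends across its image singular set $\mathcal{B}_{\S_i}(j)$ as a branched $H_\infty$-immersion from the quotient space $\wh{\S}_i$. This requires combining (i) finite total curvature and completeness-away-from-punctures to invoke removable singularity results (as in the extension $\ov{F}_\infty$ of Lemma~\ref{lemma7.10}(6)); (ii) the fact that each intermediate-stage blow-up limit is minimal (since $H_n/\l_{s,n}\to 0$), so the branching orders stabilize and are controlled by the spinnings via~\eqref{7.14}; and (iii) a compatibility check that the identifications dictated by collapsing $\mathcal{B}_{\S_i}(j)$ to a point do not introduce multi-valuedness in $F_\infty^i$, which is exactly what item~4 of Lemma~\ref{lemma7.13} arranges for the top level, applied recursively down the hierarchy.
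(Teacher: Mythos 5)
Your proposal is correct and follows essentially the same route as the paper, which gives only a terse one-paragraph proof: stabilize the discrete data by passing to subsequences, obtain the graphical limits of item~2 from the stability-based curvature estimates on the transition annuli, and then invoke the hierarchy machinery (Lemma~\ref{lemma7.10}, Proposition~\ref{prop7.12}, Lemma~\ref{lemma7.13}) to handle items~\ref{it4thm6.1}--\ref{it6thm6.1}. Your plan usefully supplies the details the paper omits — the Lipschitz argument for convergence of $F_n(p_i(n))$, the explicit gluing of $\S^{\text{out}}$ to the $\S_i$ along the common annular collars, and the recursive application of Lemma~\ref{lemma7.13} to identify the branch points that collapse to common image points — but introduces no genuinely new ideas beyond what the paper's ``adaptation of Proposition~\ref{prop7.12}'' already encodes.
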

\begin{proof}
Assume that Theorem~\ref{mainStructure} holds for $I$ with associated constants
$\de_1,\delta, A_1$. The fact that $k(n)$, Index$(F_n)$ are independent of $n$
after passing to a subsequence, follows trivially since they are bounded
positive integers. Similar arguments give the convergence of $H_n$ to $H_{\infty}
\in [0,H_0]$ and also item~\ref{it1thm6.1}. The convergence of $H_n$-multi-graphs
is item~\ref{it2thm6.1} is also standard, as they have uniform curvature estimates
coming from stability. Item~\ref{it3thm6.1} is also standard by an
induction argument in $k'$ and a diagonal argument. Items~\ref{it4thm6.1},
\ref{it5thm6.1} and~\ref{it6thm6.1} follow from an adaptation of the proof
Proposition~\ref{prop7.12}.
\end{proof}

\begin{corollary}
Given $I\in \N\cup \{0\}$, $\tau\in (0,\pi/10]$,
let $\L=\L(I, H_0,\ve_0,A_0,K_0)$ be as given in Definition~\ref{def:L}.
Let $F_n\colon M_n \la X$ be  a sequence of $H_n$-immersions
in $\L$ where all of the $M_n$ are connected and $X$ is compact.
Then, given  base points $q_n\in M_n$, a subsequence of the $F_n$
converges to a branched $H$-immersion
$F_\infty \colon \S\la X$ of index at most $I$,
where the convergence  as $n\to \infty$
takes place in the intrinsic balls $B_{M_n}(q(n),i)$, $ i\in \N$,
and this convergence is described in Theorem~\ref{compactness:lemma}.
\end{corollary}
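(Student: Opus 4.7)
The plan is to reduce the corollary to Theorem~\ref{compactness:lemma} together with standard curvature estimates for CMC immersions, by a diagonal argument over the intrinsic balls $B_{M_n}(q_n,i)$, $i\in\N$.

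First, I would stabilize all the discrete data by passing to a subsequence. Compactness of $X$ automatically provides $\mathrm{Inj}(X)>0$ and an upper bound on $|\mathrm{sec}(X)|$, and yields a point $x_\infty \in X$ with $F_n(q_n)\to x_\infty$. The bounds $H_n\in[0,H_0]$ and $\mathrm{Index}(F_n)\leq I$ give, after extracting, $H_n\to H_\infty\in[0,H_0]$ and $\mathrm{Index}(F_n)=I'\leq I$ constant in $n$. Theorem~\ref{mainStructure} associates to each $F_n$ a concentration set $\cP_{F_n}=\{p_1(n),\ldots,p_{k(n)}(n)\}$ with $k(n)\leq I$; extracting further, $k(n)=k$ is constant, and for each $j\in\{1,\ldots,k\}$ the intrinsic distance $d_{M_n}(q_n,p_j(n))$ either converges to some $d_j\in[0,\infty)$ or diverges to $\infty$.

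Next, I would fix $R\in\N$ and study the restriction of $F_n$ to $B_{M_n}(q_n,R)$. Let $\cP_R\subset\{1,\ldots,k\}$ consist of those $j$ with $d_j<R$. For $n$ large, $B_{M_n}(q_n,R)$ contains the concentration pieces $\Delta_j(n)$ for $j\in\cP_R$ and is disjoint from the $\Delta_j(n)$ for $j\notin\cP_R$. On the complement $B_{M_n}(q_n,R)\setminus\bigcup_{j\in\cP_R}\Delta_j(n)$, item~\ref{it7} of Theorem~\ref{mainStructure} gives the uniform bound $|A_{F_n}|<A_1$, so standard curvature estimates for CMC immersions with uniformly bounded $|A|$ and $|H|$ produce smooth subsequential $C^{k,\alpha}$-convergence to a smooth $H_\infty$-immersion. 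On each concentration piece $\Delta_j(n)$ with $j\in\cP_R$, items~\ref{it4thm6.1} and~\ref{it5thm6.1} of Theorem~\ref{compactness:lemma} (whose proof is essentially Proposition~\ref{prop7.12} combined with the hierarchy analysis of Section~\ref{sec9.1}) supply subsequential convergence to either a smooth or a finitely branched $H_\infty$-immersion on a compact Riemann surface with boundary. The two local limits agree along the transition multi-graphs of item~\ref{it3} of Theorem~\ref{mainStructure}, whose graphing functions satisfy the small-slope bound~\eqref{estimu}, giving a branched $H_\infty$-immersion $F_{\infty,R}\colon\Sigma_R\la X$ on a compact Riemann surface $\Sigma_R$ with boundary.

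Taking a diagonal subsequence yields simultaneous convergence on every ball $B_{M_n}(q_n,R)$, $R\in\N$. The nested inclusions $\Sigma_R\hookrightarrow\Sigma_{R+1}$, conformal away from branch points and continuous through branch points after the quotient identification of item~\ref{it5thm6.1}(d), assemble into a single Riemann surface $\Sigma$ carrying a branched $H_\infty$-immersion $F_\infty\colon\Sigma\la X$. The index bound $\mathrm{Index}(F_\infty)\leq I$ follows because any $m$ pairwise disjoint compact unstable subdomains of $\Sigma$ pull back, via the convergence above, to $m$ pairwise disjoint compact unstable subdomains of $M_n$ for $n$ sufficiently large, forcing $m\leq\mathrm{Index}(F_n)\leq I$. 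The main obstacle is the case where the base points $q_n$ lie inside, or accumulate near, a concentration piece $\Delta_j(n)$ that itself is undergoing multi-scale blowup: there one must replay the hierarchy construction of Section~\ref{sec9.1} centered at the $p_j(n)$, and verify that the limit Riemann surface $\Sigma$ is independent (up to conformal equivalence) of the particular quotient identifications used to collapse the various singular sets $\cS_i$. This coherence is already encoded in the quotient construction of item~\ref{it5thm6.1}(d) and in the coherence of the $F_{\infty,R}$ under $R\to\infty$, so no additional ingredient beyond Theorem~\ref{compactness:lemma} is required.
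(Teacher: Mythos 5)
Your proposal is correct and follows the natural (and, since the paper states this corollary without proof, presumably intended) route: stabilize the discrete data by compactness, split each ball $B_{M_n}(q_n,R)$ into the uniformly bounded-curvature complement of the concentration pieces $\Delta_j(n)$ -- where item~\ref{it7} of Theorem~\ref{mainStructure} gives $|A_{F_n}|<A_1$ and standard CMC compactness applies -- and the concentration pieces with $d_j<R$, where items~\ref{it4thm6.1}--\ref{it5thm6.1} of Theorem~\ref{compactness:lemma} (resp.\ their proofs, which are adaptations of Proposition~\ref{prop7.12}) describe the convergence; then glue across the transition annuli of item~\ref{it3} and diagonalize in $R$. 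The index bound by pulling back disjoint unstable domains is standard once one arranges, as you implicitly do, that the test domains in $\S$ miss small neighborhoods of the finite singular/branch set (removing a small disk around a finite set of points does not change the Dirichlet index of a compact domain), so that the domains can be transplanted into $M_n$ for $n$ large.

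Two minor remarks. First, Theorem~\ref{compactness:lemma} is stated with the base point being the curvature maximum $p_1(n)$, whereas the corollary uses arbitrary $q_n$; your proof correctly treats this by, for each $R$, selecting the subset $\cP_R$ of concentration points remaining at bounded intrinsic distance from $q_n$, which is the right adaptation (though one should choose $R$ generically or use a small buffer so that no $\Delta_j(n)$ is ``straddling'' $\partial B_{M_n}(q_n,R)$ in the limit; this is a bookkeeping point, not a gap). Second, your closing paragraph identifies an ``obstacle'' about $q_n$ lying inside a multi-scale blowup and then correctly observes it is already resolved by the quotient construction of item~\ref{it5thm6.1}(d): the limit $F_\infty$ in item~\ref{it6thm6.1} is defined on the non-collapsed Riemann surface $\S$ containing all the $\S_i$, and the quotient identification only describes the \emph{mode} of convergence near the singular set, not the conformal type of the limit. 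So no genuine obstruction arises, and your conclusion stands.
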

\begin{remark}
{\rm
Consider a sequence $F_n\colon M_n \la X$ of complete $H_n$-immersions
in the space $\L$ as described in the statement of Theorem~\ref{compactness:lemma},
with limit branched $H_\infty$-immersion
$F_\infty\colon \S\la X$ described in item~6 of the theorem.
\begin{enumerate}
\item If $F_\infty$ has a branch point at some $q\in \S$ of branching order
$l\in \N$, then item~6(b) implies
$q\in \cup_{b\in \cB_{\S_i}} \cB_{\S_i} \subset \cup_{i=1}^{k'}\S_i$.
The proof of the theorem gives that there are blow-up points
$q(n)\in M_n$ that yield, under blowing-up, a
limit complete, possibly finitely branched minimal surface $M$ in $\rth$
with finite total curvature and such that one of the ends $E$ of $M$
has multiplicity $l+1$; such an end is not embedded, and there are
portions of the $F_n$ converging to $E$ which fail to be injective.
Hence, the existence of branch points for the limit branched
immersion  $F_\infty$ implies that for $n$ large,
the sequence $F_n$ restricted to $\cup_{i=1}^{k'}\Delta_i(n)$
is not injective.  In particular if $F_n$ is
injective for all $n\in \N$, then any limit $F_\infty\colon \S\la X$
given by the theorem has no branch points.
\item Assume $F_{\infty}$ has at least one branch point. By item~\ref{it5thm6.1}
of Theorem~\ref{compactness:lemma}, every branch point $b$ of $F_{\infty}$ lies
in some set $\cB_{\S_i}$ for some $i\in \cU$, and the branch order of $F_{\infty}$
at $b$ is equal to $S_{\infty}(b)-1$. Adding this along the set $\cB_{F_{\infty}}$
of branch points of $F_{\infty}$, we get that the total branching order of $F_{\infty}$
is at most
\[
\sum_{b\in \cB_{F_{\infty}}}[S_{\infty}(b)-1]\leq 3I-1.
\]
\end{enumerate}
}
\end{remark}
\section{Appendix A: Curvature estimates for stable $H$-surfaces}
\label{sec:curvatureEst}
Rosenberg, Toubiana and Souam~\cite[Main Theorem]{rst1}
proved that  there exists a universal constant
$C'_s>0$  such that for any $K_0\geq 0$ and any complete Riemannian
3-manifold $(Y,g)$ of absolute sectional curvature at most $K_0$, every
stable two-sided $H$-immersion $F\colon M\la  Y$ in satisfies
\begin{equation}
	\label{eqstablecurvestim}
	|A_M|(p)\leq \frac{C'_s}{\min \{ d_{M}(p,\partial M),\frac{\pi }{2\sqrt{K_0}}\} }.
\end{equation}
Observe that the above curvature estimate fails to hold when the
$H$-immersion is minimal and one-sided; a counterexample can be
constructed whenever a complete flat three-manifold $Y$ admits a
complete, non-totally geodesic,
stable one-sided minimal surface without boundary, see Remark~\ref{rem:stable} for examples.
The  next theorem is an adaptation of  \eqref{eqstablecurvestim}
that includes curvature estimates for the case of one-sided minimal surfaces in $Y$,
see also Corollaries 9 and 10 in~\cite{ros9}.

\begin{theorem}[Curvature estimate for stable $H$-surfaces]
	\label{stableestim1s}
	There exists $C''_s\geq 2\pi$ such that given $K_0>0$ and
	a complete Riemannian 3-manifold $(Y,g)$ of bounded sectional curvature
	$|K|\leq K_0$, then for any connected, immersed,
	one-sided, stable minimal surface $M\la Y$ and for any $p\in M$,
	\begin{equation}
		\label{eqcurvestim}
		|A_{M}|(p)\leq \frac{C''_s}{\min \{ \Inj_Y(p),d_{M}(p,\partial M),\frac{\pi}{2\sqrt{K_0}}\} }.
	\end{equation}
	
	Let $C_s:=\max\{C'_s,C''_s\}$, where $C'_s$ is defined by
	\eqref{eqstablecurvestim}. Given $\ve_0>0$, $K_0\geq 0$, if $X$ is a
	complete Riemannian 3-manifold with injectivity radius at
	least $\ve_0$ and bounded sectional curvature $|K|\leq K_0$,
	and $F\colon M\la X$ is a stable $H$-immersion,
	then
	\begin{equation}
		\label{eqcurvestim2}
		|A_{M}|(p)\leq \frac{C_s}{\min \{ \ve_0,d_{M}(p,\partial M),
			\frac{\pi}{2\sqrt{K_0}}\} }.
	\end{equation}
\end{theorem}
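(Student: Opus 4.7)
The two-sided estimate \eqref{eqstablecurvestim} is precisely the Main Theorem of Rosenberg-Souam-Toubiana \cite{rst1}, which supplies the constant $C'_s$. Consequently, the essential new content is the one-sided estimate \eqref{eqcurvestim} with constant $C''_s$; once that is established, setting $C_s=\max\{C'_s,C''_s\}$ and replacing $\Inj_Y(p)$ by the (smaller) global lower bound $\ve_0$ inside the min yields \eqref{eqcurvestim2} at once, since the RHS of \eqref{eqcurvestim} with $\Inj_Y(p)$ is dominated by the RHS of \eqref{eqcurvestim2} with $\ve_0$.

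To produce $C''_s$, I will argue by contradiction through a blow-up limit. If no such constant exists, then there are a sequence of complete Riemannian $3$-manifolds $(Y_n,g_n)$ with $|K_{Y_n}|\leq K_0$, connected one-sided stable minimal immersions $F_n\colon M_n\la Y_n$, and points $p_n\in M_n$ with
\[
|A_{M_n}|(p_n)\cdot r_n\to\infty,\qquad r_n=\min\{\Inj_{Y_n}(p_n),\,d_{M_n}(p_n,\partial M_n),\,\pi/(2\sqrt{K_0})\}.
\]
A standard Brezis-Coron-type point-picking argument, maximizing the scale-invariant quantity $|A_{M_n}|\cdot d_{M_n}(\cdot,\partial B_{M_n}(p_n,r_n/2))$ on its domain, lets me replace $p_n$ by a near-maximum so that, setting $\lambda_n:=|A_{M_n}|(p_n)\to\infty$, the rescaled immersions $F_n\colon M_n\la (Y_n,\lambda_n^2 g_n)$ have second fundamental form uniformly bounded (say, by $2$) on every fixed intrinsic ball $B_{M_n}(p_n,R)$.

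In the rescaled ambient metrics one has $|K|\leq K_0/\lambda_n^2\to 0$ and the injectivity radius at $F_n(p_n)$ is at least $\lambda_n r_n\to\infty$, so Cheeger-Gromov compactness in pointed harmonic coordinates forces the ambient spaces to converge to flat $\R^3$. Combining this with uniform elliptic regularity for minimal immersions of bounded second fundamental form (the rescaled mean curvatures vanish identically), I extract, after passing to a subsequence and a diagonal, smooth convergence of the rescaled $F_n$ to a complete connected stable minimal immersion $f_\infty\colon M_\infty\la\R^3$ (possibly one-sided) with $|A_{M_\infty}|=1$ at the limit of $p_n$. The classification of complete stable minimal surfaces in $\R^3$, namely Fischer-Colbrie-Schoen and do Carmo-Peng in the two-sided case and Ros \cite{ros9} in the one-sided case, then forces $M_\infty$ to be a plane, contradicting $|A_{M_\infty}|=1$ at the limit point. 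Enlarging the resulting constant so that $C''_s\geq 2\pi$ completes \eqref{eqcurvestim}, and \eqref{eqcurvestim2} follows as in the first paragraph.

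The main obstacle in this plan is propagating the one-sided stability of Definition~\ref{DefIndexNO} through the Cheeger-Gromov blow-up. One must control the two-sheeted orientation covers $\wt{M}_n\to M_n$ together with their deck transformations $\tau_n$, extract a limiting fixed-point-free involution $\tau_\infty$ on $\wt{M}_\infty$, and verify that compactly supported antisymmetric test functions on $\wt{M}_\infty$ are limits of compactly supported antisymmetric test functions on $\wt{M}_n$, so that the non-negativity of the Jacobi quadratic form passes to $f_\infty$. This step is precisely the technical core of the adaptation of Corollaries 9 and 10 of \cite{ros9} that the theorem alludes to, and it must be executed with enough uniformity to keep the constant $C''_s$ independent of $(Y,g)$.
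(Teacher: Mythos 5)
Your proposal follows essentially the same blow-up-by-contradiction strategy as the paper's proof: normalize the curvature bound, point-pick to a near-maximum of a scale-invariant quantity, rescale by the norm of the second fundamental form there, extract (via harmonic coordinates and Cheeger--Gromov-type convergence) a complete stable minimal immersion into $\R^3$, pass one-sided stability to the limit through anti-invariant compactly supported test functions on the two-sided covers, and contradict Ros's theorem that complete stable (possibly one-sided) minimal surfaces in $\R^3$ are planes \cite[Theorem~8]{ros9}. The only substantive difference is in the point-picking step, which is also where you assert rather than prove a key fact: your scale-invariant quantity $|A_{M_n}|\cdot d_{M_n}(\cdot,\partial B_{M_n}(p_n,r_n/2))$ does not control $\Inj_{Y_n}(p_n^*)$ directly, so the claim that the injectivity radius at the new basepoint in the rescaled metric diverges is not automatic. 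The paper handles this by including $\Inj_{Y_n}(\cdot)$ as one of the three terms in its point-picking function $f_n$, distinguishing cases (A)/(B)/(C) according to which term achieves the minimum at the maximizer, reducing (A) and (B) to (C) by a further rescaling, and invoking Lemma~2.2 of \cite{rst1} to spread the injectivity radius bound over a neighborhood of $p_n^*$. In your version, since the maximizer lies in $B_{M_n}(p_n,r_n/2)$ and $\Inj_{Y_n}(p_n)\geq r_n$, you still need the same injectivity-radius propagation lemma to conclude $\Inj_{Y_n}(p_n^*)\geq r_n/c$; make that explicit. Your identification of the passage of one-sided stability to the limit as the technical core is accurate and matches what the paper does.
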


\begin{proof}
	Clearly the validity of (\ref{eqcurvestim}) implies that
	(\ref{eqcurvestim2}) holds. Also observe that by
	Remark~\ref{rem:stable}, any $C''_s>0$ that satisfies
	\eqref{eqcurvestim}	must be at least $2\pi$; in particular, $C_s\geq 2\pi$
	(in fact, $C_s\geq C_s'>2\pi$, see Remark~\ref{rem:stable}).
	
	We next prove the existence of a universal constant $C_s''$
	satisfying (\ref{eqcurvestim}) by contradiction. Since
	(\ref{eqcurvestim}) is invariant under re-scaling, by
	scaling the ambient	Riemannian metric by
	$\frac{\sqrt{K_0}}{\pi}$ we may assume that there exists a sequence
	$\{ M_n\la Y_n\}_n$ of one-sided, stable
	minimal surfaces with boundary, immersed in complete
	Riemannian 3-manifolds $(Y_n,g_n)$ with absolute
	sectional curvature $|K_{Y_n}|\leq \pi^2 $, and points
	$p_n\in M_n$ such that for all $n\in \N$,
	\begin{equation}
		\label{eq:thm6.6a}
		|A_{M_n}|(p_n)\cdot \min \{ \Inj_{Y_n}(p_n),
		d_{M_n}(p_n,\partial M_n),1/2\} \geq n.
	\end{equation}
	
	Consider the open geodesic disk $D_n\subset M_n$ of	center
	$p_n$ and radius $d_{M_n}(p_n,\partial M_n)$. Let
	$p_n^*\in D_n$ be a maximum of the continuous function
	\[
	f_n\colon D_n\to \R, \quad f_n(x)=|A_{M_n}|(x)\cdot
	\min \{ \Inj_{Y_n}(x),d_{D_n}(x,\partial D_n),1/2 \}.
	\]
	After passing to a subsequence, we can assume that one of
	the following three cases occurs for all $n\in \N$.
	\begin{enumerate}[(A)]
		\item $\min \{ \Inj_{Y_n}(p_n^*),d_{D_n}(p_n^*,\partial
		D_n),1/2 \}=\Inj_{Y_n}(p_n^*)$.
		\item $\min \{ \Inj_{Y_n}(p_n^*),d_{D_n}(p_n^*,\partial
		D_n),1/2 \} =d_{D_n}(p_n^*,\partial D_n)$.
		\item $\min \{ \Inj_{Y_n}(p_n^*),d_{D_n}(p_n^*,\partial
		D_n),1/2 \} =1/2$.
	\end{enumerate}
	
	Suppose that case (C) holds. Since $\Inj_{Y_n}(p_n^*)\geq
	1/2$, then Lemma~2.2 in~\cite{rst1} implies:
	\begin{equation}
		\label{eq:thm7.1A}
		\mbox{\em The injectivity radius function of
			$B_{Y_n}(p_n^*,1/2)$ restricted to $B_{Y_n}(p_n^*,1/8)$
			is at least $1/8$.
		}
	\end{equation}
	Applying Theorem~2.1 in~\cite{rst1} to the choices
	$M=B_{Y_n}(p_n^*,1/2)$, $\Lambda=\pi^2$, $\Omega
	=B_{Y_n}(p_n^*,1/10)$, $\Omega(\de) =B_{Y_n}(p_n^*,1/8)$,
	$i=1/8$, we conclude that every point $x\in
	B_{Y_n}(p_n^*,1/10)$ admits harmonic coordinates centered
	at $x$ and defined on the geodesic ball $B_{Y_n}(x,\ve_0)$
	for some $\ve_0>0$ independent of $x$ and $n$, and the metric
	$g_n$ is $C^{1,\a}$-controlled in the sense of
	Definition~\ref{defharm} in terms of a constant $Q>1$ which
	is also independent of $n\in \N$.
	
	Let $\l_n=|A_{M_n}|(p_n^*)$, which tends to $\infty $ as
	$n\to \infty $ because
	\begin{equation}
		\label{eq:thm7.1B'}
		\frac{1}{2}|A_{M_n}|(p_n^*)=f_n(p_n^*)\geq f_n(p_n)
		\stackrel{(\ref{eq:thm6.6a})}{\geq}n.
	\end{equation}
	
	Define $B_n'=\left( B_{Y_n}(p_n^*,1/10),\l_n^2\, g_n
	\right) $. The sequence of $3$-manifolds $\{B_n'\}_n$ converges $C^{1,\a}$
	to $\R^3$ with its standard metric and
	the harmonic coordinates in $B'_n$ centered at $p_n^*$
	converge as $n\to \infty$ to the usual harmonic coordinates
	centered at the origin.
	
	Consider the sequence of immersed, one-sided, stable minimal
	surfaces
	\[
	\Delta_n=\left( B_{M_n}(p_n^*,1/10),\l_n^2\, g_n\right)
	\la B'_n.
	\]
	Observe that the intrinsic distances in $\Delta_n$ from
	$p_n^*$ to the boundary of $\Delta _n$ diverge to
	$\infty $. We claim that the $\Delta_n$ have uniformly
	bounded second fundamental form: Take $x\in
	B_{M_n}(p_n^*,1/10)$. Since $x\in D_n$ because we are in
	case (C), and thus
	\[
	|A_{M_n}|(x)\cdot
	\min \{ \Inj_{Y_n}(x),d_{D_n}(x,\partial D_n),1/2 \}
	=f_n(x)\leq f_n(p_n^*)=\frac{\l_n}{2},
	\]
	or equivalently,
	\begin{equation}
		\label{eq:thm7.1B}
		|A_{\Delta _n}|(x)\cdot
		\min \{ \Inj_{Y_n}(x),d_{D_n}(x,\partial D_n),1/2 \}
		\leq 1/2.
	\end{equation}
	Observe that $\Inj_{Y_n}(x)\geq 1/8$ by
	(\ref{eq:thm7.1A}). Also, $d_{D_n}(x,\partial D_n)\geq 2/5$
	because $x\in B_{M_n}(p_n^*,1/10)$,
	$B_{M_n}(p_n^*,1/2)\subset D_n$ and by the triangle
	inequality. Hence, the minimum in the LHS of
	(\ref{eq:thm7.1B}) is at least $1/8$, from where we deduce
	that $|A_{\Delta _n}|(x)\leq 4$, and our claim is proved.
	
	Therefore, after passing to a subsequence, the $\Delta_n$
	converge to a complete minimal surface $S$ immersed in
	$\R^3$ with bounded second fundamental form, see the
	arguments at the beginning of Section~\ref{sec2.6}
	for details.
	
	We claim that $S$ is stable. If $S$ is two-sided, this is
	standard, see e.g.~\cite[page 636]{rst1}.  We next give a
	different argument that is valid regardless of whether $S$
	is one- or two-sided. Stability of $S$ in the one-sided
	case amounts to show that
	\begin{equation}
		\label{eq:thm7.1C}
		\int_{\wt{S}}|A_{\wt{S}}|^2\phi^2\leq \int_{\wt{S}}|\nabla
		\phi |^2,
	\end{equation}
	for every compactly supported smooth function $\phi\in
	C_0^{\infty }(\wt{S})$ defined on the two-sided cover
	$\wt{S}$ of $S$ that is anti-invariant, see
	Definition~\ref{DefIndexNO}. Given such a function $\phi $,
	we can view $\phi $ for $n$ sufficiently large as a
	compactly supported smooth function $\phi_n$ defined on the
	two-sided cover $\wt{\Delta}_n$ of $\Delta_n$
	that is	anti-invariant, and thus, by stability of
	$\Delta_n$, we have
	\begin{equation}
		\label{eq:thm7.1D}
		\int_{\wt{\Delta}_n}(|A_{\wt{\Delta}_n}|^2+
		\mbox{Ric}_{B'_n}(N_n,N_n))\phi_n^2\leq
		\int_{\wt{\Delta}_n}|\nabla \phi_n|^2,
	\end{equation}
	where Ric$_{B'_n}$ denotes the Ricci curvature of $B'_n$
	and $N_n$ is a unit normal vector to $\wt{\Delta}_n$ in
	$B'_n$. The $C^{1,\a}$ convergence of the metrics $\l_n\,
	g_n$ to the flat metric on $\R^3$ allows us to take limits
	in (\ref{eq:thm7.1D}) as $n\to \infty $ to obtain
	(\ref{eq:thm7.1C}), and thus, $S$ is stable.
	
	The desired contradiction (that proves \eqref{eqcurvestim}
	in the case that (C) holds) comes from the fact that there
	are no complete stable non-flat minimal surfaces in
	$\R^3$, see Ros~\cite[Theorem 8]{ros9}.
	
	Next we will explain how to reduce case (A) to case (C).
	If case (A) holds, we have $\Inj_{Y_n}(p_n^*)\leq 1/2$.
	Let $\mu_n=1/\Inj_{Y_n}(p_n^*)$. Define $Y_n'=\left(
	Y_n,\mu_n^2\, g_n\right) $ and $M'_n=(M_n,\mu_n^2\, g_n)$.
	Note that $\Inj_{Y'_n}(p_n^*)=1$, the absolute sectional
	curvature of $Y'_n$ is less than or equal to $\frac{\pi^2}
	{\mu_n^2}\leq \pi ^2$, which implies we may use the upper
	estimate $K_0=\pi^2$ (in other words, $(M'_n,Y_n')$ is a
	possible counterexample to (\ref{eqcurvestim}) under the
	normalization introduced in the second paragraph of this
	proof), and so $\frac{\pi}{2\sqrt{K_{0}}}=1/2$. Observe
	that $(M'_n,Y'_n)$ lies in case (C) because 	
	$d_{M'_n}(p_n^*,\partial M'_n)=\mu_n\, d_{M_n}(p_n^*,
	\partial M_n)\geq 1$, and so, $\min \{ \Inj_{Y'_n}(p_n^*),
	d_{M'_n}(p_n^*,\partial M'_n),1/2)\}=1/2$. If we check that
	\begin{equation}
		\label{eq:thm7.1E}
		|A_{M'_n}|(p_n^*)\cdot \min \{ \Inj_{Y'_n}(p_n^*),
		d_{M'_n}(p_n^*,\partial M'_n),1/2)\} \to \infty
	\end{equation}
	then we will find a contradiction as we did in case (C).
	To see this, observe that two times the LHS of
	(\ref{eq:thm7.1E}) can be written as
	\[
	|A_{M'_n}|(p_n^*)=|A_{M'_n}|(p_n^*) \cdot \Inj_{Y'_n}(p_n^*)
	=|A_{M_n}|(p_n^*)\cdot \Inj_{Y_n}(p_n^*)
	=f_n(p_n^*)\geq n \to \infty,
	\]
	which finishes the proof in case (A) occurs.
Similar reasoning reduces case (B) to case (C), which completes
	the proof of Theorem~\ref{stableestim1s}.
\end{proof}

\begin{remark}[Lower bound estimates for $C'_s$ and $C_s''$]
	\label{rem:stable}
	{\rm
		We claim that $\pi$, $2\pi$ are lower bounds for $C'_s$,
		$C''_s$, respectively. To see this, consider the Scherk's doubly periodic
		minimal surface  $M(\t)$ in $\rth$, $\t\in (0,\pi/2]$, and
		its non-orientable, embedded quotient surface $\wh{M}(\t)$
		with total curvature $-2\pi$ in the flat quotient manifold
		$Y(\t)=T_\t^2\times \R$ where $T_\t=\R^2/\mbox{Span}
		\{ w_1(\t),w_2(\t)\}$, where
		\[
		w_1(\t)=\frac{\pi}{2}\left( \frac{1}{\cos (\t /2)},
		0,0\right) ,\quad
		w_2(\t)=\frac{\pi}{2}\left( 0, \frac{1}{\sin (\t /2)},
		0\right) .
		\]
		Here, the oriented cover $\wt{M}(\t)$ of $\wh{M}(\t)$ is
		conformally $(\C\cup\{\infty\})\setminus\{e^{\pm i\t/2}\}$
		with Weierstrass data
		\[
		\left( g(z)=z, \quad \omega=\frac{i\, dz}
		{\Pi(z\pm e^{\pm i\t/2})}\right).
		\]
		Straightforward calculations show that at $z=0$ in
		$(\C\cup\{\infty\})\setminus\{e^{\pm i\pi/4}\}$ viewed as a point
		of $\wh{M}(\t)$, the absolute Gaussian curvature is given
		by $|K|(0)=16$ and this point is the unique maximum of
		$|K|$ on $\wh{M}(\t)$. On the other hand, the injectivity
		radius of $Y(\t)$ (at every point) equals $\frac{\pi}
		{4\cos (\t/2)}$ which has a maximum value of $\frac{\pi}{2\sqrt{2}}$
		at $\t=\pi/2$. Therefore, for any $\theta \in /0,\pi/2]$ we have
		\[
		|A_{\wh{M}(\t)}|\cdot \Inj_{Y(\t)}\leq
		|A_{\wh{M}(\pi/2)}|(0)\cdot \Inj_{Y(\pi/2)}
		=|4\sqrt{2}|\frac{\pi}{2\sqrt{2}}=2\pi .
		\]
		Hence the constant $C''_s$ in the above theorem must be at least $2\pi$.
		
		The standard fundamental region $Q$ for $\wh{M}(\pi/4)$  in
		$\rth$ is a vertical graph bounded by 4
		vertical lines and
		%the function $|A_Q|(p)d_{M}(p,\partial Q)$
		%has a maximum value at  the ''same'' point $0$ as in
		%$\wh{M}(\pi/4)$:  for all $p\in Q$,
		\[
		%|A_Q|(p)\cdot d_{Q}(p,\partial Q)\leq
		|A_Q|(0)\cdot d_{Q}(0,\partial Q)=
		4\sqrt{2}\frac{\pi}{2\sqrt{2}}=2\pi,
		\]
		so the constant $C'_s$ in  \eqref{eqstablecurvestim} also
		must be at least $2\pi$. In fact, $C'_s$ can be seen to be strictly
		greater than $2\pi$ by consideration of the intersection of
		$M(\theta)$ with a ball of radius slightly larger than
		$\frac{\pi}{2\sqrt{2}}$. Therefore,
			the constant $C_s$ given in given in the above
		 theorem also must be greater than $2\pi$.

		Next consider the translational quotient of $H$ of a
		helicoid in $\rth$ such that $H$ is an embedded, one-sided,
		stable minimal surface in $Y=\rth/(\pi \Z)$ with finite
		total curvature $-2\pi$. Let $p\in H$ be any point on the
		axis of $H$. Then,
		\[
		|A_H|\cdot \Inj_Y\leq |A_H|(p)|\cdot \Inj_Y(p)
		=|\sqrt{2}|\frac{\pi}{2} =\frac{\pi}{\sqrt{2}}.
		\]
		The slab-type region $W$ of $H$ bounded by
		two straight lines inside $H$ at distance $\pi $ apart is
		stable, and the function $p\in W\mapsto
		|A_W|(p)d_{W}(p,\partial W)$ has a maximum value at  the
		mid point of the segment obtained by intersecting the axis
		of $H$ with $W$, hence
		\[
		|A_W|(p)\cdot d_{W}(p,\partial W)\leq
		|A_W|(0)\cdot d_{W}(0,\partial W)=|\sqrt{2}|\frac{\pi}{2}
		=\frac{\pi}{\sqrt{2}}.
		\]
		The above curvature  estimates for $\wh{M}(\pi/2)$, $Q$,
		$H$ and $W$ lead us to ask the following question.
		
		\begin{question} If $M$ is a complete, one-sided, stable
			minimal surface  in a complete flat 3-manifold $Y$,
			does  the following inequality  hold?
			\[
			\mbox{For all } p\in M,\quad |A_M|(p)|\cdot \Inj_Y(p)\leq 2\pi.
			\]
More generally, does setting $C_s''=2\pi$ work in Theorem~\ref{stableestim1s}?
		\end{question}
		These questions are also motivated by the result by
		Ros~\cite{ros9} that the only complete non-flat stable
		minimal surface in a quotient of $\R^3$ by a rank one
		(resp. two) group of translations is a quotient of the
		Helicoid (resp. quotients of the Scherk doubly periodic
		minimal surfaces) with total curvature $-2\pi$.
	}
\end{remark}

\section{Appendix B: Some results in reference~\cite{mpe20} used in this paper}
\label{sec:summary}

In this section we state, for the  readers convenience, some results
from~\cite{mpe20} that we frequently apply in the proofs of the
present paper. %The next result is Proposition~2.5 in ~\cite{mpe20}.

\begin{proposition}[Intrinsic monotonicity of area formula, Proposition~2.4 in~\cite{mpe20}]
\label{lemma8.4}
Let $\ov{B}_X(x_0,R_1)$ denote a closed geodesic ball in an
$m$-dimensional manifold $(X,g)$, where $0<R_1\leq \Inj_X(x_0)$,
and suppose that $K_{\mbox{\rm \footnotesize sec}}\leq a$ on $B_X(x_0,R_1)$
for some $a\in \R$.
% (if $a>0$ then we also suppose that $R_1\leq \pi/2$).
Given $H_0\geq 0$, define
\begin{equation} \label{eq:R0}
R_0(a,H_0)=\left\{ \begin{array}{cl}
\frac{1}{\sqrt{a}}\mbox{\rm arc cot}\left( \frac{H_0}{\sqrt{a}}\right) &  \mbox{if } a>0,
\\
1/H_0 & \mbox{if } a=0 \quad \mbox{(if $H_0=0$ we take $R_0(0,0)=\infty$)}
\\
\frac{1}{\sqrt{-a}}\mbox{\rm arc coth}\left( \frac{H_0}{\sqrt{-a}}\right), &  \mbox{if } a<0
\quad \mbox{(if $\frac{H_0}{\sqrt{-a}}\geq 1$ we take $R_0(a,H_0)=\infty$),}
\end{array}\right.
\end{equation}
and let $r_1=r_1(R_1,a,H_0)=\min \{ R_1,R_0(a,H_0)\}$.
	
	Suppose   $M$ is a complete, immersed, connected  $n$-dimensional submanifold of $X$
	and $x_0\in M$ is a point such that when $\partial M\neq \varnothing$,
	$d_M(x_0,\partial M)\geq R_1$ and the length of the mean curvature vector $\vec{H}$ of $M$
	restricted to $\ov{B}_X(x_0,R_1)$ is bounded from above by  $H_0$.
	Then:
	\ben
	\item If $M$ is compact without boundary, then  there exists $y\in M$
	such that the extrinsic distance from $x_0$ to $y$ is greater than or equal to $r_1$.
	\item The $n$-dimensional volume $A(r)$ of  $B_M(x_0,r)$ is
	a strictly increasing function of $r\in (0,r_1]$.
	\item For all $r\in (0,r_1]$ when $r_1\neq \infty$ or  otherwise, for all $r\in (0,\infty)$:
	\begin{equation}
		\label{eq:lemma8.4}
		A(r)\geq \left\{ \begin{array}{cc}
			\omega_n\, r^n e^{-nH_0r} & \mbox{ if $a\leq 0$},
			\\
			\omega_n\, r^n e^{-nr(H_0+\frac{1}{2}f_a(r_1)r)} & \mbox{ if $a>0$},
		\end{array}
		\right.
	\end{equation}
\een
where $\omega _n$ is the volume of the unit ball in $\R^n$ and
given $a>0$, the function $f_a\colon [0,\pi/\sqrt{a})\to \R$ is defined
by $f_a(t)=\frac{1}{t^2}\left[1-t\sqrt{a}\cot(\sqrt{a}t)\right]$, $t\in [0,\pi/\sqrt{a})$.
\end{proposition}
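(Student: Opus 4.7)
\medskip

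\noindent\textbf{Proof proposal for Proposition~\ref{lemma8.4}.}
The plan is to reduce everything to a comparison-type differential inequality for the function $A(r)=\mathrm{Vol}_n(B_M(x_0,r))$, obtained by combining the Hessian comparison theorem on the ambient extrinsic distance $\rho=d_X(x_0,\cdot)$ with the first variation formula for volume on $M$. Set $\rho_M=\rho|_M$. Because $R_1\leq \mathrm{Inj}_X(x_0)$, $\rho$ is smooth on $B_X(x_0,R_1)\setminus\{x_0\}$, and the standard Hessian comparison for $K_{\mathrm{sec}}\leq a$ gives $\mathrm{Hess}_X\rho(v,v)\geq c_a(\rho)\langle v,v\rangle$ on vectors perpendicular to $\nabla\rho$, where $c_a(t)=\sqrt{a}\cot(\sqrt{a}t)$ if $a>0$, $c_a(t)=1/t$ if $a=0$, and $c_a(t)=\sqrt{-a}\coth(\sqrt{-a}t)$ if $a<0$. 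Tracing along $T_pM$ and using the definition of the mean curvature vector, one obtains
\[
\Delta_M\rho_M\geq (n-1)c_a(\rho_M)\bigl(1-|\nabla_M\rho_M|^2\bigr)+\frac{|\nabla_M\rho_M|^2}{\rho_M}(\ast)\!-\!\langle \vec{H},\nabla\rho\rangle,
\]
so that $\Delta_M\rho_M\geq n\,c_a(\rho_M)-H_0$ on $B_M(x_0,r_1)\setminus\{x_0\}$ when $a\leq 0$, and an analogous bound involving $f_a$ when $a>0$. The choice of $R_0(a,H_0)$ is exactly the smallest positive $r$ where the model inequality $(n-1)c_a(r)\geq H_0$ fails to be strict; hence $\Delta_M\rho_M>0$ on $B_M(x_0,r_1)\setminus\{x_0\}$.

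For item~1, suppose $M$ is compact without boundary and $M\subset \overline{B}_X(x_0,r_1)$ (otherwise there is nothing to prove). Then $\rho_M$ attains its maximum at an interior point $y\in M$, but the strict subharmonicity $\Delta_M\rho_M>0$ just derived contradicts the maximum principle; one then verifies that the maximum value must be at least $r_1$. Item~2 is a direct consequence of the coarea formula: for almost every $r\in(0,r_1]$,
\[
A'(r)=\mathrm{Vol}_{n-1}\bigl(\partial B_M(x_0,r)\bigr)\geq \mathrm{Vol}_{n-1}\bigl(\{\rho_M=r\}\bigr)>0,
\]
since by item~1 (applied to components of $M\cap B_X(x_0,R_1)$) the level sets of $\rho_M$ are nontrivial for $r\in(0,r_1]$.

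The heart of the argument, and item~3, is the integration of the differential inequality. Applying Stokes' theorem on $B_M(x_0,r)$ with $\rho_M^2$ and using $|\nabla_M\rho_M|\leq 1$ together with the lower bound for $\Delta_M\rho_M$, one derives the key monotonicity inequality
\[
\frac{d}{dr}\Bigl(e^{n H_0 r}\,r^{-n}A(r)\Bigr)\geq 0\qquad\text{if }a\leq 0,
\]
and the analogous inequality with $f_a$ appearing as an extra quadratic correction when $a>0$. Integrating from $0$ to $r$ and using the tangent-cone limit $\lim_{r\to 0^+}\omega_n^{-1}r^{-n}A(r)=1$ yields the stated lower bounds~\eqref{eq:lemma8.4}. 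The main obstacle is twofold: first, justifying the Hessian comparison argument uniformly as $\rho_M\to 0$ (handled by the standard smoothing of $\rho^2$ at the center and passage to the limit), and second, keeping track of the constants in the $a>0$ case so that the Laplacian comparison survives all the way up to $r_1$, which requires using the precise quantitative form of $f_a$ and the defining property of $R_0(a,H_0)$.
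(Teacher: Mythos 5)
The paper you are reading does not actually prove Proposition~\ref{lemma8.4}: Appendix B is explicitly labelled as a collection of results quoted \emph{without proofs} from the authors' companion paper~\cite{mpe20} (see the sentence ``we include their statements (without proofs) in this paper for the sake of completeness'' in Section~\ref{sec:introduction}). So there is no proof in this paper against which your argument can be compared. What I can do is assess your proposal on its own merits.

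\textbf{Assessment.} Your overall strategy — ambient Hessian comparison under $K_{\mathrm{sec}}\leq a$, restriction to the submanifold via the tangential trace, a divergence/Stokes argument for the vector field $\rho_M\nabla_M\rho_M$ on intrinsic balls, and integration of the resulting logarithmic differential inequality with the tangent-cone normalisation $A(r)\sim\omega_n r^n$ — is the standard route to intrinsic monotonicity formulas, and it does give the stated conclusions. However, the intermediate inequality you write for $\Delta_M\rho_M$ is garbled (the stray $(\ast)$ and the $|\nabla_M\rho_M|^2/\rho_M$ term suggest you imported a formula for the \emph{ambient} Laplacian of $\rho$ rather than the intrinsic one). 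The correct version is
\[
\Delta_M\rho_M=\operatorname{tr}_M\bigl(\mathrm{Hess}_X\rho\bigr)+n\langle\vec{H},\nabla\rho\rangle
\geq c_a(\rho_M)\bigl(n-|\nabla_M\rho_M|^2\bigr)-nH_0,
\]
using $\mathrm{Hess}_X\rho(\nabla\rho,\cdot)=0$, the Hessian comparison on $(\nabla\rho)^\perp$, and the averaged mean-curvature convention $|\vec{H}|\leq H_0$ (the $n$ in front of $\vec{H}$ is what produces the $e^{-nH_0 r}$ rather than $e^{-H_0 r}$ in the conclusion). Two further corrections: (i) the threshold $R_0(a,H_0)$ is defined by $c_a(R_0)=H_0$, not by $(n-1)c_a(R_0)=H_0$; this is exactly what makes $\Delta_M\rho_M>0$ at an interior critical point of $\rho_M$ when $\rho_M<R_0$, giving item~1 and the strict monotonicity in item~2. (ii) For item~3 one should \emph{not} discard $|\nabla_M\rho_M|^2$ prematurely: keeping it and writing $c_a(t)=\tfrac1t-tf_a(t)$ for $a>0$ gives
\[
\operatorname{div}_M\bigl(\rho_M\nabla_M\rho_M\bigr)\geq n-n\rho_M^2 f_a(\rho_M)-n\rho_M H_0,
\]
so that $rA'(r)\geq\bigl(n-nr^2 f_a(r_1)-nrH_0\bigr)A(r)$ (the $|\nabla_M\rho_M|^2$ contributions cancel in the favourable direction, and one uses $f_a$ nondecreasing and $\rho_M\leq r\leq r_1$). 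Integrating this and letting $\varepsilon\to0^+$ yields precisely the exponent $-nr\bigl(H_0+\tfrac12 f_a(r_1)r\bigr)$ in the statement; when $a\leq0$ the $f_a$ term is absent and the same computation gives $-nH_0r$. With these repairs your proposal becomes a correct proof, and it is almost certainly in the same spirit as the one in~\cite{mpe20}, but as written the key display and the description of $R_0$ contain errors that must be fixed before the argument closes.
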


\begin{corollary}[Corollary~2.6 in~\cite{mpe20}]
\label{corol11.6}
Let $R_1>0$, $a\in \R$ and $H_0\geq 0$,	and suppose  that $X$
is a complete Riemannian $m$-dimensional manifold with
injectivity radius at least $R_1>0$ and  $K_{sec}\leq a$.
If $M\la X$ is a complete, non-compact immersed
$n$-dimensional submanifold with empty boundary and the mean curvature
vector $\vec{H}$ of $M$ satisfies $|\vec{H}|\leq H_0$, then $M$ has infinite
volume.
\end{corollary}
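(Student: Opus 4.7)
The plan is to deduce the corollary from Proposition~\ref{lemma8.4} by producing an infinite collection of pairwise disjoint intrinsic balls in $M$, each of volume bounded below by a uniform positive constant. The starting observation is that the hypotheses of the corollary exactly feed into the hypotheses of Proposition~\ref{lemma8.4}: at any point $x_0\in M$ we have $d_M(x_0,\partial M)=\infty$ (since $\partial M=\varnothing$), $\Inj_X(x_0)\geq R_1$ by assumption, and $|\vec{H}|\leq H_0$ on all of $M$. Hence, setting $r_1=\min\{R_1,R_0(a,H_0)\}>0$, the monotonicity estimate \eqref{eq:lemma8.4} produces a uniform constant $V_0=V_0(R_1,a,H_0,n)>0$ such that
\[
\mathrm{Vol}(B_M(x,r_1))\geq V_0 \qquad \mbox{for every }x\in M.
\]

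The second ingredient is a greedy $2r_1$-separated packing argument. Choose any $x_1\in M$; assuming $x_1,\ldots,x_k$ have been chosen with pairwise intrinsic distances at least $2r_1$, consider the set $U_k=\bigcup_{i=1}^k B_M(x_i,2r_1)$. If $U_k=M$, then $M$ would be a bounded subset of itself; as $M$ is a complete Riemannian manifold, the Hopf--Rinow theorem would then force $M$ to be compact, contradicting the non-compactness hypothesis. Therefore one can always select $x_{k+1}\in M\setminus U_k$, which by construction satisfies $d_M(x_{k+1},x_i)\geq 2r_1$ for all $i\leq k$. Iterating, we obtain an infinite sequence $\{x_k\}_{k\in\N}\subset M$ with pairwise distances at least $2r_1$, so that the intrinsic balls $B_M(x_k,r_1)$ are pairwise disjoint.

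Combining the two ingredients,
\[
\mathrm{Vol}(M)\geq \sum_{k=1}^{\infty}\mathrm{Vol}(B_M(x_k,r_1))\geq \sum_{k=1}^{\infty}V_0=\infty,
\]
which proves the corollary. The only step that requires a little care is the appeal to Hopf--Rinow, which relies on the fact that $M$, being a complete Riemannian submanifold with respect to its induced metric, is geodesically complete and hence has closed bounded subsets compact; this is why completeness (as opposed to mere properness of the immersion) is essential in the hypothesis. No further estimate is needed, since the uniform lower bound from Proposition~\ref{lemma8.4} is independent of the basepoint in $M$.
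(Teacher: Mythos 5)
Your proof is correct, and it is the natural deduction from Proposition~\ref{lemma8.4}: a uniform lower bound on $\mathrm{Vol}(B_M(x,r_1))$ independent of $x$, combined with a $2r_1$-separated greedy net and the Hopf--Rinow theorem, yields infinite volume. The paper does not reproduce the argument (it simply cites~\cite{mpe20}), but this packing route is essentially the only sensible way to pass from the uniform ball estimate to the global conclusion, and I am confident it matches the source.

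One small wrinkle worth flagging: Proposition~\ref{lemma8.4} is stated for \emph{connected} $M$, while Corollary~\ref{corol11.6} does not assume connectedness, and your appeal to Hopf--Rinow (``bounded and complete implies compact'') is also a statement about connected manifolds. Your argument survives this: if $M$ has a non-compact connected component, run your argument on that component; if every component is compact, there must be infinitely many of them (else $M$ itself would be compact), and Proposition~\ref{lemma8.4} applied to each component gives $\mathrm{Vol}(C)\geq V_0$ for each component $C$, so the total volume diverges. Spelling this out would make the write-up airtight, but the idea is already fully contained in what you wrote.
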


\begin{proposition}[Proposition~2.7 in~\cite{mpe20}]
\label{yau}
Given $R_1>0$, $a\in \R$ and $H_0\geq 0$, there exists
$r_2=r_2(R_1,a,H_0)\in (0,r_1]$
(here $r_1$ is given by Proposition~\ref{lemma8.4})
such that if $X$ is a complete Riemannian 3-manifold with
injectivity radius at least $R_1>0$ and
$K_{sec}\leq a$, and if $M\la X$ is
a complete, connected immersed surface with boundary,
whose mean curvature vector $\vec{H}$ satisfies
$|\vec{H}|\leq H_0$, then for all $p\in \Int(M)$ we have
\begin{equation} \label{yaulemma0}
\mathrm{Area}[B_M(p,r)]\geq 3 r^{2}, \quad
\mbox{whenever $0<r\leq \min \{r_2,d_M(p,\partial M)\} $.}
\end{equation}

Furthermore, given $\ve_0>0$ define $\ds C_A
=\min \{ \ve_0,\frac{r_2^2}{\ve_0}\} $. If $p\in M$ satisfies
$d_M(p,\partial M)\geq \ve_0$, then
\begin{equation}
\label{yaulemma2}
\mathrm{Area}[B_M(p,d_M(p,\partial M))]\geq C_A\, d_M(p,\partial M)
\end{equation}
and
\begin{equation}
\label{yaulemma1}
\mathrm{Area}[B_M(p,\ve_0)]\geq C_A\, \ve_0,
\end{equation}
\end{proposition}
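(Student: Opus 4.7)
The plan is to derive Proposition~\ref{yau} from the intrinsic monotonicity of area in Proposition~\ref{lemma8.4} in three stages: first extracting a clean quadratic lower bound on small intrinsic balls, then deducing the short-range area estimate~\eqref{yaulemma1}, and finally a packing argument along a minimizing geodesic for the long-range estimate~\eqref{yaulemma2}.

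First, I would define $r_2 = r_2(R_1,a,H_0) \in (0,r_1]$ to be the largest value such that the exponential factor appearing in~\eqref{eq:lemma8.4} satisfies
\[
e^{-2r\left(H_0 + \frac{1}{2}f_a(r_1) r\right)} \geq \frac{3}{\pi}
\qquad \text{for every } r \in (0, r_2],
\]
(with $f_a \equiv 0$ when $a \leq 0$). Since $\omega_2 = \pi$, this choice immediately yields~\eqref{yaulemma0}: for every $p \in \Int(M)$ and every $r \leq \min\{r_2, d_M(p,\partial M)\}$, Proposition~\ref{lemma8.4} gives $A(r) \geq \pi r^2 \cdot (3/\pi) = 3r^2$.

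Next, I would verify~\eqref{yaulemma1} by splitting into the two cases that define $C_A$. If $\ve_0 \leq r_2$, then $C_A = \ve_0$, and~\eqref{yaulemma0} applied with $r=\ve_0$ gives $\mathrm{Area}(B_M(p,\ve_0)) \geq 3\ve_0^2 \geq C_A \ve_0$. If $\ve_0 > r_2$, then $C_A = r_2^2/\ve_0$; since $B_M(p,r_2) \subset B_M(p,\ve_0)$, applying~\eqref{yaulemma0} with $r=r_2$ gives $\mathrm{Area}(B_M(p,\ve_0)) \geq 3 r_2^2 \geq r_2^2 = C_A \ve_0$. For the longer-range estimate~\eqref{yaulemma2}, set $\ell = d_M(p,\partial M) \geq \ve_0$. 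When $\ell \leq r_2$ the same ball estimate handles matters (as $C_A \leq \ve_0 \leq \ell$, so $C_A \ell \leq \ell^2 \leq 3\ell^2$). The essential case is $\ell > r_2$, which I would treat by a packing argument: fix a unit-speed minimizing path $\gamma\colon [0,\ell-\varepsilon] \to M$ emanating from $p$, set $p_i = \gamma(i r_2)$ for $0 \leq i \leq N$ with $N = \lfloor (\ell - r_2/2)/r_2\rfloor$, and observe by the triangle inequality that $d_M(p_i, \partial M) \geq r_2/2$. Thus~\eqref{yaulemma0} applies at every $p_i$ at scale $r_2/2$, and the balls $B_M(p_i, r_2/2)$ are pairwise disjoint, so summing gives
\[
\mathrm{Area}(B_M(p,\ell)) \;\geq\; (N+1)\cdot 3 \left(\frac{r_2}{2}\right)^2 \;\geq\; \frac{3}{4}\, r_2 \,\ell.
\]
Since $C_A = \min\{\ve_0, r_2^2/\ve_0\} \leq r_2$ in all cases and $\ve_0 \leq \ell$, the inequality $\mathrm{Area}(B_M(p,\ell)) \geq C_A\,\ell$ then follows after shrinking $r_2$ by a universal factor to absorb the $3/4$.

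The main obstacle I anticipate is the bookkeeping of constants so that the single, universal bound $C_A = \min\{\ve_0, r_2^2/\ve_0\}$ is compatible with the packing inequality without extra multiplicative factors. I would address this by tightening the exponential threshold in the definition of $r_2$ --- for instance by demanding $e^{-2r(H_0 + \frac{1}{2}f_a(r_1)r)} \geq 4/\pi$ rather than $3/\pi$, so that each ball in the packing contributes area at least $r_2^2$ and the chain of $\sim \ell/r_2$ such balls yields $\mathrm{Area} \geq r_2 \ell$, which dominates $C_A \ell$ in both cases $\ve_0 \lessgtr r_2$. A secondary (but routine) technical point is the dependence of $r_2$ on the sign of $a$: when $a>0$ one must also ensure $r_2$ is small enough that $f_a(r_1) r_2$ stays bounded, which is automatic from $r_1 \leq \pi/(2\sqrt{a})$ built into Proposition~\ref{lemma8.4}.
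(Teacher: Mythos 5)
Note first that Proposition~\ref{yau} is stated in the paper without proof --- it is quoted verbatim from Proposition~2.7 of~\cite{mpe20} in Appendix~B --- so there is no in-text argument to compare against. Judged on its own merits, your strategy (extract a clean quadratic lower bound from Proposition~\ref{lemma8.4}, then cover the long range by a disjoint-ball packing along a minimizing geodesic) is the natural one, and your derivations of \eqref{yaulemma0} and \eqref{yaulemma1} are correct as written.

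The gap is in the packing argument for~\eqref{yaulemma2}, and neither of the two fixes you propose actually closes it. You pack balls $B_M(p_i,r_2/2)$ at centers spaced $r_2$ apart, so each ball contributes area at least $3(r_2/2)^2=\tfrac{3}{4}r_2^2$, and the number of balls is $N+1\ge(\ell-r_2/2)/r_2$; the total is therefore bounded below only by roughly $\tfrac{3}{4}r_2\ell$. The target $C_A\ell$ with $C_A=\min\{\ve_0,r_2^2/\ve_0\}$ attains the value $r_2$ exactly when $\ve_0=r_2$, so in that regime you need a packing constant of at least $1$, and $3/4<1$ leaves a genuine deficit. \emph{Shrinking $r_2$} does not help: the worst-case ratio $C_A/r_2=1$ is scale invariant in $r_2$ (replace $r_2$ by $r_2/\lambda$ and take $\ve_0=r_2/\lambda$, and the same deficit reappears). \emph{Tightening the exponential threshold to $4/\pi$} raises the per-ball area to $r_2^2$, but the floor and the $-r_2/2$ offset in the count still leave you with a bound of the form $r_2\ell - O(r_2^2)$, which is strictly below $r_2\ell$ and hence below $C_A\ell$ when $C_A=r_2$, for every $\ell$. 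The combination with the single-ball estimate $\mathrm{Area}(B_M(p,r_2))\ge 4r_2^2$ only covers $\ell\le 4r_2$, leaving $\ell>4r_2$ uncovered in the worst case.

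The correct repair is to pack at the full scale: take centers $p_i=\gamma(2ir_2)$ (so the balls $B_M(p_i,r_2)$ are disjoint) for $0\le i\le N$ with $N=\lfloor(\ell-r_2)/(2r_2)\rfloor$. Each ball now has area at least $3r_2^2$ by~\eqref{yaulemma0} and sits inside $B_M(p,\ell)$. When $\ell>3r_2$ one has $N+1\ge(\ell-r_2)/(2r_2)>\ell/(3r_2)$, so $\mathrm{Area}(B_M(p,\ell))\ge r_2\ell\ge C_A\ell$; when $r_2<\ell\le 3r_2$ the single ball already gives $\mathrm{Area}(B_M(p,\ell))\ge 3r_2^2\ge r_2\ell\ge C_A\ell$. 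This closes the argument without adjusting the threshold or shrinking $r_2$, and the case $\ell\le r_2$ was already handled correctly in your proposal. One further small point you should make explicit: to apply Proposition~\ref{lemma8.4} at the packing centers you use only that $d_M(p_i,\partial M)\ge r_2$, so you are implicitly re-applying that proposition with $R_1$ replaced by $\min\{R_1,d_M(p_i,\partial M)\}$; this is harmless because for $a>0$ the function $f_a$ is increasing, so shrinking $r_1$ only improves the bound~\eqref{eq:lemma8.4} and your choice of $r_2$ remains valid.
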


We finish this summary of auxiliary results taken
from~\cite{mpe20} with the following scale-invariant weak chord-arc type
estimates for branched minimal surfaces of finite index in $\R^3$.
	
\begin{proposition}[Proposition~4.1 in~\cite{mpe20}]
\label{propos5.5}
Given $I,B\in \N\cup \{ 0\} $, let $f\colon(\S, p_0)\la (\R^3,\vec{0})$
be a complete, connected, pointed branched
minimal surface with index at most $I$ and total branching order at most $B$.
Given $R>0$, let $\Omega _R$ denote  the component
of $f^{-1}(\ov{\B}(R))$ that contains $p_0$. Then, the following
scale-invariant	estimates hold and depend only on $I$, $B$:
\ben
\item For any $p\in\Omega_R$,
\begin{equation}
\label{eq:lemma5.50}
d_{\Omega_R}(p,\partial \Omega _R)<\wh{L}R,
\end{equation}
where $\wh{L}=\sqrt{\frac{1}{2}(3I+2B+3)}$.
\item If $f $ is  injective with image  a plane, then the
distance between any two points of $\Omega_R$ is less than
or equal to $2R$. Otherwise, given points $p,q$ in $\Omega_R$,
\begin{equation}
\label{eq:lemma5.53}
d_{\Omega_{2R}}(p,q)< \wh{C}R,
\end{equation}
where $\wh{C}=\wh{C}(I,B)=8\wh{L}^3+2\pi
\wh{L}^2-20\wh{L} -\frac{\pi}{2}$. In particular,
$\Omega_{R}\subset B_{\S}(p,\wh{C}R)$ for every $p\in \Omega_R$.
\een
\end{proposition}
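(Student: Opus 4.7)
The plan is to exploit three ingredients in concert: (i) that finite index together with finite total branching forces $f$ to have finite total curvature (this is Fischer-Colbrie's theorem in the orientable case, together with the extensions used in the proof of Lemma~\ref{lema3.6}), so $\ov{\S}$ is a compact Riemann surface and $f$ is proper; (ii) the Chodosh-Maximo-Karpukhin estimate~\eqref{eq:CM1}, which controls the complexity of $\S$ (genus, number and multiplicities of the ends) in terms of $I$ and $B$; and (iii) the monotonicity formula for (possibly branched) minimal surfaces, which gives $\mathrm{Area}(f(B_\S(p,r)))\geq \pi r^2$ (counted with multiplicity and branching density) whenever $B_\S(p,r)$ has injective extrinsic representative.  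Since the statement is scale invariant, I would normalize $R=1$.

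First I would prove~\eqref{eq:lemma5.50}.  Arguing by contradiction, assume some $p\in \Omega_R$ has $d_{\Omega_R}(p,\partial\Omega_R)\geq \wh LR$, so that the intrinsic ball $B_\S(p,\wh LR)$ sits inside $\Omega_R$ and hence its image lies in $\ov{\B}(R)$.  Using the coarea formula applied to the extrinsic distance function from the origin on $\S$, together with the fact that its gradient has norm at most $1$, one obtains an upper bound on $\mathrm{Area}(\Omega_R)$ in terms of $R^2$ and the topological/geometric complexity of $\Omega_R$; the precise bound follows from Gauss-Bonnet plus the Jorge-Meeks formula~\eqref{lem6.3a} for branched surfaces, and after replacing the complexity invariants by their estimates via~\eqref{eq:CM1} one obtains an inequality of the form $\mathrm{Area}(\Omega_R)\leq \tfrac{1}{2}(3I+2B+3)\pi R^2 = \wh L^{2}\pi R^2$.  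On the other hand, monotonicity yields $\mathrm{Area}(B_\S(p,\wh LR))\geq \pi(\wh LR)^2 = \wh L^{2}\pi R^2$, and strict inequality in either side (coming from the fact that $B_\S(p,\wh LR)\subsetneq \Omega_R$ as soon as $p$ is an interior point) produces the desired contradiction.

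For~\eqref{eq:lemma5.53}, the strategy is to exhibit an explicit path from $p$ to $q$ inside $\Omega_{2R}$ whose length is explicitly bounded. The idea is that, because $f$ has finite total curvature, outside a large compact set the surface $f(\S)$ splits into $e\leq$ (complexity bounded by~\eqref{eq:CM1}) asymptotically planar multi-graph ends of multiplicities summing to $S$. One first uses~\eqref{eq:lemma5.50} applied to rescaled balls to travel from $p$ to a point $p'$ lying in an end representative inside $\Omega_{2R}\setminus \Omega_R$, at intrinsic cost $\leq \wh LR$; one does the same from $q$; and then one connects the two end representatives by a path lying in the annular region $\ov{\B}(2R)\setminus \B(R)$.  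Bounding the length of this connecting path requires an extrinsic-to-intrinsic estimate using the multiplicities and the near-flat behaviour of the ends, which accounts for the cubic-in-$\wh L$ term $8\wh L^3$ (one factor for distance travelled along a multi-graph end of multiplicity controlled by $\wh L^2$, multiplied by a circumference factor) together with the $2\pi\wh L^2$ circumferential contribution.  The main technical obstacle is precisely this bookkeeping: converting the a priori topological/index control given by~\eqref{eq:CM1} into a clean extrinsic bound for the length of a connecting path that stays inside $\Omega_{2R}$, and matching the resulting constants to the exact form of $\wh C$ stated in the proposition.  Once this length estimate is established, the enclosed estimate $\Omega_R\subset B_\S(p,\wh CR)$ follows from~\eqref{eq:lemma5.53} by taking $q$ to be any point of $\Omega_R$.
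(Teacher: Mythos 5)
The paper itself contains no proof of Proposition~\ref{propos5.5}: it is reproduced in Appendix~B from reference~\cite{mpe20}, and the authors explicitly say they include such statements ``without proofs'' for the reader's convenience. So there is no in-paper proof to compare against, and your proposal must be assessed on its own.

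For item~1 the overall strategy is sound, but the specific route you describe for the area upper bound runs the wrong way. The coarea formula applied to $u=|f|$, together with $|\nabla u|\leq 1$, gives $\mathrm{Area}(\Omega_R)\geq \int_0^R\mathrm{Length}(u^{-1}(t))\,dt$, a \emph{lower} bound rather than the upper bound you need; and Gauss--Bonnet plus the Jorge--Meeks formula controls total curvature, not area. The clean route is the extrinsic monotonicity formula centered at $f(p_0)=\vec 0$: the ratio $r\mapsto \mathrm{Area}(f^{-1}(\ov{\B}(r)))/(\pi r^2)$ is nondecreasing and tends to the total spinning $S$ as $r\to\infty$, so $\mathrm{Area}(\Omega_R)\leq\pi S R^2$. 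Combining the Chodosh--Maximo--Karpukhin inequality~\eqref{eq:CM1} with the elementary observation $\chi-e\leq 0$ for any connected noncompact surface gives $2S\leq 3I+2B+3=2\wh L^2$, hence $\mathrm{Area}(\Omega_R)\leq\pi\wh L^2R^2$. The intrinsic lower bound $\mathrm{Area}(B_\S(p,\wh LR))\geq\pi\wh L^2R^2$ is Proposition~\ref{lemma8.4} with $a=H_0=0$. You also need a short rigidity argument to convert the resulting non-strict comparison into the strict inequality asserted in~\eqref{eq:lemma5.50}: equality forces the area ratio to be eventually constant, hence $f$ to be a plane, which is incompatible with $S=\wh L^2\geq 3/2$.

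For item~2 you give only an outline and explicitly acknowledge the missing bookkeeping; this is a genuine gap, not merely a cosmetic one. No argument is offered for why a connecting path between the end representatives of $p$ and $q$ exists, stays inside $\Omega_{2R}$, and has length less than $\wh C R$, nor why that length should match the polynomial $\wh C=8\wh L^3+2\pi\wh L^2-20\wh L-\tfrac{\pi}{2}$. The heuristic you propose (multiplicity times circumference) produces only positive contributions, so it cannot account for the negative terms in $\wh C$; the sign pattern indicates a carefully engineered decomposition of $\ov{\B}(2R)\setminus\B(R)$ that your sketch does not reconstruct.
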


\center{William H. Meeks, III at  profmeeks@gmail.com\\
Mathematics Department, University of Massachusetts, Amherst, MA 01003}
\center{Joaqu\'\i n P\'{e}rez at jperez@ugr.es\\
Department of Geometry and Topology and Institute of Mathematics
(IMAG), University of Granada, 18071, Granada, Spain
\bibliographystyle{plain}
\bibliography{bill}
\end{document}